\newtheorem{theorem}{Theorem} %%%[section]
\newtheorem{lemma}{Lemma} %%%[theorem]
\theoremstyle{definition}
\newtheorem{remark}{Remark}  %%%[theorem]
\newcommand{\E}{\mathbb{E}}
\newcommand{\R}{\mathbb{R}}
\renewcommand{\P}{\mathbb{P}}
\newcommand{\bs}{\boldsymbol}
\begin{document}

\title{Testing degree heterogeneity in directed networks\footnote{Lu Pan and Qiuping Wang contribute equally to this work.}}
\author{Lu Pan\thanks{School of Mathematics and Statistics, Shangqiu Normal University, Shangqiu, 476000, China.
\texttt{Email:} panlulu@mails.ccnu.edu.cn}
\hspace{4mm}
Qiuping Wang\thanks{School of Mathematics and Statistics, Zhaoqing University, Zhaoqing, 526000, China.
\texttt{Email:} qp.wang@mails.ccnu.edu.cn}
\hspace{4mm}
Ting Yan\thanks{Department of Statistics, Central China Normal University, Wuhan, 430079, China.
\texttt{Email:} tingyanty@mail.ccnu.edu.cn}
}
\date{}
%\date{\today}

\maketitle
\begin{abstract}{In this study, we focus on the likelihood ratio tests in the $p_0$ model for testing degree heterogeneity in directed networks,
which is an exponential family distribution on directed graphs
with the bi-degree sequence as the naturally sufficient statistic.
For testing the homogeneous null hypotheses $H_0: \alpha_1 = \cdots = \alpha_r$, we establish Wilks-type results in both increasing-dimensional and fixed-dimensional settings. For increasing dimensions, the normalized log-likelihood ratio statistic $[2\{\ell(\widehat{\bs\theta})-\ell(\widehat{\bs\theta}^{0})\}-r]/(2r)^{1/2}$ converges in distribution to a standard
normal distribution. For fixed dimensions, $2\{\ell(\widehat{\bs{\theta}})-\ell(\widehat{\bs{\theta}}^0)\}$ converges in distribution to a chi-square distribution with $r-1$ degrees of freedom as $n\rightarrow \infty$, independent of the nuisance parameters.
Additionally,
we present a Wilks-type theorem for the specified null $H_0: \alpha_i=\alpha_i^0$, $i=1,\ldots, r$ in high-dimensional settings, where the normalized log-likelihood ratio statistic also converges in distribution to a standard normal distribution.
These results extend the work of \cite{yan2025likelihood} to directed graphs in a highly non-trivial way,
where  we need to analyze much more expansion terms in the fourth-order asymptotic expansions of the likelihood function
and develop new approximate inverse matrices under the null restricted parameter spaces for approximating the inverse of the Fisher information matrices
in the $p_0$ model.
Simulation studies and real data analyses are presented to verify our theoretical results.}
\vskip 5pt \noindent
\textbf{Keywords}: Degree heterogeneity, Directed graph, Likelihood ratio test, Wilks-type theorem.

\end{abstract}

\section{Introduction}

Degree heterogeneity is a very common phenomenon in real-world networks that
describes variation in the degrees of nodes.
In directed networks, there is an out-degree and in-degree for each node, which
represent the number of outgoing edges from a node and number of ingoing edges pointing to the node, respectively.
The $p_0$ model \citep{Yan:Leng:Zhu:2016,Yan2021} can be used to describe degree heterogeneity in directed networks.
It is an exponential random graph model
with the out-degree sequence and in-degree sequence as naturally sufficient statistics.
Specifically, it assumes that all edges are independent and the random edge $a_{i,j}$ between nodes $i$ and $j$ is distributed as a Bernoulli random variable with the edge probability
\begin{equation}\label{p0model}
\P(a_{i,j}=1) = \frac{ e^{\alpha_i + \beta_j} }{ 1 + e^{\alpha_i + \beta_j} },
\end{equation}
where $\alpha_i$ represents the outgoingness parameter of node $i$, which quantifies the effect of an outgoing edge, and $\beta_j$ denotes the popularity parameter of node $j$, which quantifies the effect of an incoming edge.
The $p_0$ model is a simplified version of an earlier $p_1$ model \citep{Holland:Leinhardt:1981} without the reciprocity parameter.

Since the number of parameters increases with the number of nodes in the $p_{0}$ model,
statistical inferences are non-standard and challenging \citep{Goldenberg2010,Fienberg2012}.
\cite{Yan:Leng:Zhu:2016} established consistency
and the asymptotically normal distribution of the maximum likelihood estimation (MLE) when the number of nodes goes to infinity.
However, likelihood ratio tests (LRTs) were not investigated in their study.
It is of interest to test degree heterogeneity amongst a subset of nodes, as most real network data
exhibit some degree of heterogeneity, and LRTs are powerful tools for such analysis.
\cite{Holland:Leinhardt:1981} made the conjecture of a chi-square approximation when testing the reciprocity parameter in the $p_{1}$ model.
\cite{FienbergWasserman1981} suggested a normal approximation for
the scaled LRTs under the null hypothesis that all out-degree parameters (in-degree parameters) are equal to zero. However, these
conjectures have not been formally resolved.

Likelihood ratio theory has experienced tremendous success in parametric hypothesis testing problems due to
the celebrated Wilks theorem \cite{wilks1938}, which states that the minus twice log-likelihood ratio test statistic under the null
converges in distribution to a chi-square distribution, independent of nuisance parameters.
If such a property holds in high-dimensional settings,  it is called the Wilks phenomenon \cite{fan2001}.
\cite{yan2025likelihood} revealed Wilks-type theorems under increasing- and fixed-dimensional null hypotheses
in the $\beta$-model \citep{Chatterjee:Diaconis:Sly:2011} for undirected graphs
and Bradley--Terry model \citep{bradley-terry1952} for paired comparisons.
Since directed networks are equally ubiquitous with undirected networks,
it is crucially important to extend the likelihood ratio methods for testing degree heterogeneity in undirected networks \citep{yan2025likelihood} to directed cases.
 Here,  we aim to investigate whether such results continue to hold in the $p_0$ model.

Our main contributions can be summarized as follows.
For testing homogeneous null hypotheses $H_0: \alpha_1 = \cdots = \alpha_r$,
we establish Wilks-type results in both increasing-dimensional and fixed-dimensional settings.
For an increasing dimension $r$, the normalized log-likelihood ratio statistic $[2\{\ell(\widehat{\bs\theta})-\ell(\widehat{\bs\theta}^{0})\}-r]/(2r)^{1/2}$ converges in distribution to a standard normal distribution. In other words, $2\{\ell(\widehat{\bs{\theta}}) - \ell(\widehat{\bs{\theta}}^0)\}$ is asymptotically a chi-square distribution with a large degree of freedom $r$.
For a fixed dimension $r$, $2\{\ell(\widehat{\bs{\theta}}) - \ell(\widehat{\bs{\theta}}^0)\}$ converges in distribution to a chi-square distribution with $r-1$ degrees of freedom as $n\rightarrow \infty$, independent of the nuisance parameters.
Therefore, the LRTs behave in a classic manner
as long as the difference between the dimension of the full space and dimension of the null space is fixed.
Here, $\ell( \bs{\theta})$ is the log-likelihood function,
$\widehat{\bs{\theta}}$ is the unrestricted MLE of $\bs\theta$,
and $\widehat{\bs{\theta}}^0$ is the restricted MLE of $\bs\theta$ under $H_{0}$.
Additionally, for the specified null $H_0: \alpha_i=\alpha_i^0$, $i=1,\ldots, r$ in high-dimensional settings,
we present a Wilks-type theorem in which
the normalized log-likelihood ratio statistic converges in distribution to a standard normal distribution.
Our proof strategies apply the principled methods developed in \cite{yan2025likelihood}, which
demonstrates that their methods can be extended to other models.
Our results also demonstrate the conjecture of the normal approximation when testing a large number of parameters presented in \cite{FienbergWasserman1981}, in the absence of a reciprocity parameter in the $p_{1}$ model.
However, for the specified null $H_0: \alpha_i=\alpha_i^0$, where $i=1,\ldots, r$ with a fixed $r$,
a different phenomenon is discovered, where
$2\{\ell(\widehat{\bs{\theta}}) - \ell(\widehat{\bs{\theta}}^0)\}$ does not asymptotically follow a chi-square distribution
as observed in \cite{yan2025likelihood} for the Bradley--Terry model.
This clarifies that  the conjecture of a chi-square approximation when testing a single degree parameter is not reasonable,
in contrast to simulation results presented in \cite{Holland:Leinhardt:1981} for testing the reciprocity parameter in the  $p_1$ model.
Simulation studies and several real data analyses are presented to verify our theoretical results.

In contrast to the $\beta$-model for the undirected case in \cite{yan2025likelihood},
there are two new theoretical challenges as follows.
First, we need to analyze much more expansion terms in the fourth-order asymptotic expansions of the likelihood function.
This is because the total number of parameters doubles in the $p_0$ model, where
each node has two parameters--the out- and in-degree parameters while each node in the $\beta$-model has only one parameter.
Second, we develop new approximate inverse matrices under the null restricted parameter spaces for approximating the inverse of the Fisher information matrices in the $p_0$ model since the matrix for approximating the inverse the Fisher information matrix in \cite{yan2025likelihood}
does not work in the directed cases.
In addition, we need to otbain an upper bound of the absolute entry-wise maximum norm between two
approximate inverses under the full space and the restricted null space in the $p_0$ model.

The remainder of this paper is organized as follows.
Section \ref{section-2} presents the Wilks-type theorems for the $p_{0}$ model.
Simulation studies and several real data
analyses are presented in Section \ref{section-numerical}.
Some further discussion is provided in Section \ref{section:discussion}.
Section \ref{section:proof} presents the proofs of Theorems \ref{theorem-LRT-beta} and \ref{theorem-ratio-p-fixed}.
The supporting lemmas are relegated to the Supplemental Material.

\section{Main results}
\label{section-2}
Consider a simple directed graph $\mathcal{G}_n$ with $n$ nodes labelled as ``$1,\ldots,n$".
Let $A=(a_{i,j})_{\{1\leq i,j\leq n\}}\in\{0,1\}^{n\times n}$ be its adjacency matrix,
where $a_{i,j}=1$ if there is a directed edge from $i$ to $j$; otherwise, $a_{i,j}=0$.
We assume that there are no self-loops (i.e., $a_{i,i}=0$ for all $i=1,\ldots, n$).
%An edge that connects a vertex to itself is called a self-loop,
Let $d_i=\sum_{j \neq i} a_{i,j}$ be the out-degree of node $i$
and $\bs d=(d_1, \ldots, d_n)^\top$ the out-degree sequence of the graph $\mathcal{G}_n$.
Similarly, let $b_j=\sum_{j \neq i} a_{i,j}$ be the in-degree of node $j$
and $\bs b=(b_1, \ldots, b_{n})^\top$ the in-degree sequence.
The pair $( \bs d, \bs b)$ is referred to as the bi-degree sequence.

The $p_{0}$ model postulates that all $a_{i,j}, 1\leq i\neq j\leq n$ are mutually independent Bernoulli random variables with edge probabilities given in (\ref{p0model}).
Let $\bs\alpha=(\alpha_{1},\ldots,\alpha_{n})^{\top}$ and $\bs\beta=(\beta_{1},\ldots,\beta_{n})^{\top}$.
Since $\sum_{i}d_{i}=\sum_{j}b_{j}$, the probability distribution is invariant under the transforms $(\bs{\alpha},\bs{\beta})$ to $(\bs{\alpha}-c,\bs{\beta}+c)$ for a constant $c$.
As in \cite{Yan:Leng:Zhu:2016}, for the identification of the $p_{0}$ model, we set
\begin{equation}\label{eq-identification}
\beta_{n}=0.
\end{equation}
The log-likelihood function under the $p_{0}$ model can be written as
\[
\ell(\bs{\theta})=\sum_{i=1}^{n}\alpha_i d_{i}+\sum_{j=1}^{n-1}\beta_j b_{j}-\sum_{i\neq j}\log(1+e^{\alpha_i + \beta_j}),
\]
where $\bs{\theta}=(\alpha_{1},\ldots,\alpha_{n},\beta_{1},\ldots,\beta_{n-1})^\top$.
The likelihood equations are
\begin{equation}
\label{eq1}
    \begin{split}
&d_{i}=\sum_{j\neq i}\frac{ e^{{\hat\alpha}_i + {\hat\beta}_j} }{ 1 +  e^{{\hat\alpha}_i + {\hat\beta}_j} },~~~i=1,\ldots,n, \\
&b_{j}=\sum_{i\neq j}\frac{ e^{{\hat\alpha}_i + {\hat\beta}_j} }{ 1 + e^{{\hat\alpha}_i + {\hat\beta}_j} },~~~j=1,\ldots,n-1,
\end{split}
\end{equation}
where $\widehat{\bs\theta}=( \hat{\alpha}_1, \ldots, \hat{\alpha}_{n},\hat{\beta}_1, \ldots, \hat{\beta}_{n-1} )^\top$ is the MLE of $\boldsymbol{\theta}$ with $\hat{\beta}_{n}=0$.
The fixed-point iteration algorithm or frequency iterative algorithm in \cite{Holland:Leinhardt:1981}
can be used to solve the above equations.
We present the fixed-point iteration algorithm in Algorithm \ref{alg:homogeneous_mle} for solving the restricted MLE under the null $H_0: \alpha_1 = \cdots = \alpha_r$. The algorithms for other scenarios are provided in the supplementary materials.
Since the log-likelihood function is strictly convex, it guarantees the unique of the MLE.
In our simulations and real data analyses, the fixed-point iterative algorithm converges if the MLE exists.
Conditions for guaranteeing the convergence of the fixed-point iterative algorithm can be found in the book by \cite{Berinde-2007}.
The computational complexity of per iteration is $O(n^{2})$ and the total complexity is $O(K\cdot n^{2})$ if
the number of iterations to reach the required accuracy is $K$.

\begin{algorithm}[!h]
\caption{Restricted MLE under homogeneous null $H_0: \alpha_1 = \cdots = \alpha_r$}
\label{alg:homogeneous_mle}
\begin{algorithmic}[1]
\Require Adjacency matrix $A$, number of restricted nodes $r$, tolerance $\epsilon$
\State Initialize: $\boldsymbol{\alpha}^{(0)} \leftarrow (\alpha_1=\cdots=\alpha_r,\alpha^0_{r+1},\ldots,\alpha^0_n)$, $\boldsymbol{\beta}^{(0)}$, $k \leftarrow 0$
\State Compute out-degrees $d_i \leftarrow \sum_{j\neq i}a_{ij}$ and in-degrees $b_j \leftarrow \sum_{i\neq j}a_{ij}$
\Repeat
    \For{$i = 1$ to $r$}
        \State $\alpha^{(k+1)}_i \leftarrow \log \sum_{i=1}^r d_i - \log \left(\sum_{i=1}^r \sum_{j\neq i} \frac{e^{\beta^{(k)}_j}}{1+e^{\alpha^{(k)}_i+\beta^{(k)}_j}}\right)$
    \EndFor
    \For{$i = r+1$ to $n$}
        \State $\alpha^{(k+1)}_i \leftarrow \log d_i - \log \sum_{j\neq i} \frac{e^{\beta^{(k)}_j}}{1+e^{\alpha^{(k)}_i+\beta^{(k)}_j}}$
    \EndFor
    \For{$j = 1$ to $n$}
        \State $\beta^{(k+1)}_j \leftarrow \log b_j - \log \sum_{i\neq j} \frac{e^{\alpha^{(k)}_i}}{1+e^{\alpha^{(k)}_i+\beta^{(k)}_j}}$
    \EndFor
    \State Compute convergence:
    \State $y_i \leftarrow \frac{1}{d_i}\sum_{j\neq i} \frac{e^{\alpha^{(k+1)}_i+\beta^{(k+1)}_j}}{1+e^{\alpha^{(k+1)}_i+\beta^{(k+1)}_j}}$ for $i = r+1,\ldots,n$
    \State $z_j \leftarrow \frac{1}{b_j}\sum_{i\neq j} \frac{e^{\alpha^{(k+1)}_i+\beta^{(k+1)}_j}}{1+e^{\alpha^{(k+1)}_i+\beta^{(k+1)}_j}}$ for $j = 1,\ldots,n$
   \State $k \gets k + 1$
\Until{$\max(\max_i|y_i-1|, \max_j|z_j-1|) < \epsilon$}
\State \Return $\hat{\boldsymbol{\alpha}} \leftarrow \boldsymbol{\alpha}^{(k)}$, $\hat{\boldsymbol{\beta}} \leftarrow \boldsymbol{\beta}^{(k)}$
\end{algorithmic}
\end{algorithm}

We use $V$ to denote the Hessian matrix of the negative log-likelihood function, and the elements of $V$ ($=(v_{i,j})_{(2n-1)\times (2n-1)}$) are
\begin{equation}\label{definition-v-beta}
\begin{array}{l}%
v_{i,i} = \sum_{j\neq i} \frac{e^{\alpha_i+\beta_j}}{(1 + e^{\alpha_i+\beta_j})^2},~~i=1,\dots,n,\\
v_{n+j,n+j} = \sum_{j\neq i} \frac{e^{\alpha_i+\beta_j}}{(1 + e^{\alpha_i+\beta_j})^2},~~j=1,\dots,n-1,\\
v_{i,j}=0,~~i,j=1,\dots,n,i\neq j,\\
v_{i,j}=0,~~i,j=n+1,\dots,2n-1,i\neq j,\\
v_{i,n+j}=v_{n+j,i}= \frac{e^{\alpha_i+\beta_j}}{(1 + e^{\alpha_i+\beta_j})^2},~~i=1,\dots,n,j=1,\dots,n-1,j\neq i,\\
v_{i,n+i}=v_{n+i,i}=0,~~i=1,\dots,n-1.
\end{array}
\end{equation}
It should be noted that $V$ is also the Fisher information matrix of $\bs{\theta}$ and the covariance matrix of $\mathbf{g}=(d_{1},\ldots,d_{n},b_{1},\ldots,b_{n-1})^{\top}$.
We define the following two notations that play important roles on guaranteeing good properties of $\boldsymbol{\theta}$:
\begin{equation}\label{definition-bncn}
b_n = \max_{i,j} \frac{(1 + e^{\alpha_i+\beta_j})^2}{e^{\alpha_i+\beta_j}}, \quad c_n=\min_{i,j}\frac{(1 + e^{\alpha_i+\beta_j})^2}{e^{\alpha_i+\beta_j}},
\end{equation}
where  $b_n^{-1}$ and $c_n^{-1}$ are equal to the minimum and maximum variances, respectively, of $a_{i,j}$ over $i\neq j$, and $c_n \ge 4$.

We consider a random adjacency matrix $A$ generated from the $p_0$ model with a parameter vector $\boldsymbol{\theta}$.
We consider a homogeneous null
$H_0: \alpha_1=\cdots=\alpha_r$, and a specified null $H_0: \alpha_i=\alpha_i^0$ for $i=1,\ldots,r$, where $\alpha_i^0$'s are known numbers.
Then, the $\log$-likelihood ratio is $2\{ \ell(\boldsymbol{\widehat{\theta}}) - \ell(\boldsymbol{\widehat{\theta}}^0)\}$,
where $\boldsymbol{\widehat{\theta}^{0}}$ denotes the restricted MLE of $\bs{\theta}$ under the null parameter space.
Theorem \ref{theorem-LRT-beta} establishes Wilks-type theorems for testing homogeneous null hypotheses in both increasing-dimensional and fixed-dimensional settings.

\begin{theorem}
\label{theorem-LRT-beta}
\begin{itemize}
\item[\rm{(a)}]If $b_n^{15}/c_n^{12} =o( r^{1/2}/(\log n)^{2})$,
then under the homogenous null $H_0: \alpha_1 = \cdots = \alpha_r$
with increasing dimension, the $\log$-likelihood ratio statistic $2\{ \ell(\boldsymbol{\widehat{\theta}}) - \ell(\boldsymbol{\widehat{\theta}}^0)\}$
is asymptotically normally distributed in the sense that
\begin{equation} \label{statistics-beta}
(2r)^{-1/2}(2\{ \ell(\boldsymbol{\widehat{\theta}}) - \ell(\boldsymbol{\widehat{\theta}}^0)\} - r) \stackrel{L}{\rightarrow} N(0,1), ~~\mbox{as}~~ n\to\infty,
\end{equation}
where $\boldsymbol{\widehat{\theta}^{0}}=\arg\max_{\bs{\theta}\in \Theta_0} \ell(\bs{\theta})$ and $\Theta_0=\{ \bs{\theta}: \bs{\theta}\in \R^{2n-1}, \alpha_1=\cdots=\alpha_r\}$.
\item[\rm{(b)}]If $b_n^{21}/c_n^{17} = o( n^{1/2}/(\log n)^{5/2})$,
then under the homogenous null $H_0: \alpha_1 = \cdots = \alpha_r$, with fixed $r$,
the $\log$-likelihood ratio statistic $2\left\{\ell(\widehat{\bs{\theta}}) - \ell( \widehat{\bs{\theta}}^0) \right\}$
converges in distribution to a chi-square distribution with $r-1$ degrees of freedom.

\end{itemize}
\end{theorem}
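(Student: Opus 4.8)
\noindent The plan is to reduce both parts to the asymptotic analysis of a single quadratic form in the score, and then to control the higher-order remainders of a fourth-order Taylor expansion of $\ell$. Let $\bs\theta^*$ be the true parameter, which lies in $\Theta_0$ under $H_0$, and write $s=\mathbf{g}-\E_{\bs\theta^*}\mathbf{g}$ for the score at the truth, so $\E s=0$ and $\mathrm{Cov}(s)=V$. I parametrize $\Theta_0$ by $\bs\eta\in\R^{2n-r}$ through a linear embedding $\bs\theta=W\bs\eta$, where the single coordinate replacing $\alpha_1=\cdots=\alpha_r$ contributes a column with ones in its first $r$ entries; $W$ then has full column rank $2n-r$ and the restricted Fisher information is $W^\top V W$. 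Expanding $\ell$ to fourth order about $\bs\theta^*$ and substituting the Newton expansions $\widehat{\bs\theta}-\bs\theta^*=V^{-1}s+\cdots$ and $\widehat{\bs\theta}^0-\bs\theta^*=W(W^\top V W)^{-1}W^\top s+\cdots$ obtained from the two likelihood equations, I would arrive at
\[
2\{\ell(\widehat{\bs\theta})-\ell(\widehat{\bs\theta}^0)\}=s^\top Q\, s+(R-R_0),\qquad Q:=V^{-1}-W(W^\top V W)^{-1}W^\top,
\]
with $R,R_0$ collecting the cubic and quartic contributions of the two expansions.

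Whitening by $u:=V^{-1/2}s$ turns the leading term into $s^\top Q\,s=u^\top P u=\|Pu\|^2$, where
\[
P:=V^{1/2}Q V^{1/2}=I-V^{1/2}W(W^\top V W)^{-1}W^\top V^{1/2}
\]
is the orthogonal projection onto the orthogonal complement of the column space of $V^{1/2}W$; hence $P$ is idempotent with $\mathrm{tr}(P)=\mathrm{tr}(P^2)=(2n-1)-(2n-r)=r-1$, and $\E[s^\top Q\,s]=\mathrm{tr}(QV)=r-1$ exactly, identifying the degrees of freedom. The vector $u$ is a normalized sum of independent bounded edge contributions with identity covariance, so $\|Pu\|^2$ behaves like the squared norm of the projection of a near-Gaussian vector onto an $(r-1)$-dimensional subspace. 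Carrying this out requires sharp control of $V^{-1}$ and $(W^\top V W)^{-1}$, which are not available in closed form; following \cite{yan2025likelihood}, I would construct explicit approximate inverses $S\approx V^{-1}$ and $S_0\approx(W^\top V W)^{-1}$ adapted to the block structure of $V$ in \eqref{definition-v-beta}, prove entry-wise bounds on $\|V^{-1}-S\|_{\max}$ and $\|(W^\top V W)^{-1}-S_0\|_{\max}$ in terms of $b_n,c_n$, and bound the entry-wise distance between the full-space and null-space approximate inverses. These estimates let me evaluate $P$ entry by entry and verify the Lindeberg-type condition $\max_i P_{ii}\to0$ together with the negligibility of the fourth-cumulant correction to $\mathrm{Var}(\|Pu\|^2)=2(r-1)(1+o(1))$.

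Given this control of $P$, part (b) follows from a multivariate central limit theorem: for fixed $r$ the range of $P$ is asymptotically a fixed $(r-1)$-dimensional space of contrasts among the first $r$ out-degree scores, the corresponding coordinates of $u$ converge jointly to $N(0,I_{r-1})$, and therefore $\|Pu\|^2\to\chi^2_{r-1}$ in distribution; the whitening removes the scaling by $\bs\beta$ and $\alpha_{r+1},\ldots,\alpha_n$, which is why the limit is free of the nuisance parameters. Part (a) follows from a central limit theorem for the quadratic form $\|Pu\|^2$ of the high-dimensional near-Gaussian $u$ with a projection of growing rank $r-1$: after centering by $\mathrm{tr}(P)=r-1$ and scaling by $\sqrt{2(r-1)}$, the standardized statistic converges to $N(0,1)$, which I would establish by the method of moments or a martingale decomposition over the edges $a_{i,j}$, using the entry-wise bounds on $P$ to verify the Lindeberg and fourth-cumulant conditions. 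Since $r-1$ differs from $r$ by a quantity negligible relative to $\sqrt{r}$, the centering by $r$ and scaling by $(2r)^{1/2}$ in \eqref{statistics-beta} give the same limit.

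I expect the dominant difficulty to be proving that the remainder $R-R_0$ is negligible at the required scale, namely $o_p(r^{1/2})$ for part (a) and $o_p(1)$ for part (b). Because each node in the $p_0$ model carries both an out- and an in-degree parameter, the third- and fourth-order derivative tensors of $\ell$ contain many more nonzero blocks than in the $\beta$-model, so the Newton corrections to $\widehat{\bs\theta}$ and $\widehat{\bs\theta}^0$ must be expanded and matched term by term, and the cross terms between the $\alpha$- and $\beta$-blocks must be tracked. Bounding these contributions rests on sharp $\ell_\infty$ rates for both the restricted and unrestricted MLEs combined with the approximate-inverse estimates, and it is precisely at this step that the growth conditions $b_n^{15}/c_n^{12}=o(r^{1/2}/(\log n)^2)$ and $b_n^{21}/c_n^{17}=o(n^{1/2}/(\log n)^{5/2})$ are consumed.
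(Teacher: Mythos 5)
Your proposal follows the same architecture as the paper's proof: a fourth-order Taylor expansion splitting $2\{\ell(\widehat{\bs\theta})-\ell(\widehat{\bs\theta}^0)\}$ into a leading quadratic form plus cubic and quartic remainders, Newton-type expansions of both MLEs, explicit approximate inverses of the full and restricted Fisher information matrices for entry-wise control, and consumption of the $b_n,c_n$ conditions in the remainder bounds. Your restricted information $W^\top VW$ is exactly the paper's $\widetilde V$ in \eqref{definition-tilde-V}, and your $s^\top Q s$ with $Q=V^{-1}-W(W^\top VW)^{-1}W^\top$ is the paper's $\bs{\bar{\mathbf{g}}}^\top V^{-1}\bs{\bar{\mathbf{g}}}-(\bs{\widetilde{\mathbf{g}}}-\E\bs{\widetilde{\mathbf{g}}})^\top\widetilde V^{-1}(\bs{\widetilde{\mathbf{g}}}-\E\bs{\widetilde{\mathbf{g}}})$. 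Where you genuinely diverge is the distributional step: you keep the exact projection $P=V^{1/2}QV^{1/2}$ of rank $r-1$ and invoke a quadratic-form CLT for $\|Pu\|^2$, whereas the paper uses the approximate inverses $S,\widetilde S$ to peel the quadratic form down to the explicit scalar $\sum_{i=1}^r\bar d_i^{\,2}/v_{i,i}-(\sum_{i=1}^r\bar d_i)^2/\tilde v_{11}$ plus negligible terms (see \eqref{eq-theorem2-B101}), then applies the high-dimensional CLT of \cite{chernozhuokov2022improved} to the degree residuals for part (a), and the classical CLT together with the rank-$(r-1)$ idempotent $I_r-\mathbf 1_r\mathbf 1_r^\top/r$ for part (b). Your formulation is cleaner in that the degrees of freedom and the exact centering $\mathrm{tr}(QV)=r-1$ fall out immediately; its cost is that a CLT for $\|Pu\|^2$ with growing rank must ultimately descend to the independent edge variables anyway, which is in effect what the paper's explicit reduction accomplishes.

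There is one concrete step in your plan that would fail as stated: the ``Lindeberg-type condition $\max_i P_{ii}\to 0$'' is false. Under the homogeneous null, $P$ restricted to the first $r$ whitened degree coordinates is essentially $I_r-\mathbf 1_r\mathbf 1_r^\top/r$ (exactly the matrix appearing in the paper's Step 1 for part (b)), so its diagonal entries are $1-1/r\to 1$, not $0$. This does not sink the approach, but it shows that no coordinate-level Lindeberg condition in the $u$-coordinates can be the right one: the coordinates of $u=V^{-1/2}s$ are dependent, and the smallness driving the CLT lives at the level of the independent edges $\bar a_{i,j}$, each of which contributes only $O(b_n/n)$ to the kernel of the quadratic form. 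Your alternative suggestion of a martingale or moment argument ``over the edges $a_{i,j}$'' is the correct route, and it is the route the paper takes implicitly by reducing to $\sum_{i=1}^r\bar d_i^{\,2}/v_{i,i}$, a quadratic form in sums of independent edges, and invoking Lemma \ref{lemma:weighte-degree-al}. Replace the $P_{ii}$ condition with edge-level Lindeberg and fourth-cumulant conditions on the kernel $A^\top QA$, where $A$ maps edge variables to degree residuals; with that correction, and granting the substantial deferred labor of the remainder bounds (the paper's Lemmas \ref{lemma-tilde-W}--\ref{lemma-gg0}, which occupy most of its argument and where the conditions $b_n^{15}/c_n^{12}=o(r^{1/2}/(\log n)^2)$ and $b_n^{21}/c_n^{17}=o(n^{1/2}/(\log n)^{5/2})$ are indeed spent), your plan yields the theorem.
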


The condition in Theorem \ref{theorem-LRT-beta} (a) reflects how sparser networks and greater degree heterogeneity necessitate larger $r$ to satisfy $b_n^{15}/c_n^{12} = o\big(r^{1/2}/(\log n)^2\big)$.
First, if all parameters are bounded by a fixed constant, then the Wilks-type result holds as long as $r \gg (\log n)^6$. Second, when $b_n \asymp c_n$, the condition becomes
$
b_n = o\left(r^{1/6}/(\log n)^{2/3}\right).
$
If all parameters $\alpha_i$  and $\beta_i$ tend to $-\infty$ at the same rate, then \(b_n \asymp c_n\) and $b_n\to\infty$, which correspond to
sparse networks.
In this case, the network density, a defined as \(\frac{\sum_{i,j} A_{i,j}}{n(n-1)}\), is then  \(1/b_n\).
The condition in Theorem 1 (a) becomes \(b_n = o\left( \frac{r^{1/6}}{(\log n)^{2/3}} \right)\). If we let $r=O(n)$,
the lowest network density is in the order of $O( (\log n)^{2/3}/n^{1/6})$ under which the Wilks-type results hold.
Alternatively, the condition in Theorem \ref{theorem-LRT-beta} (b) is stronger than that in Theorem \ref{theorem-LRT-beta} (a). This is because it must ensure the validity of the chi-square approximation, which is more sensitive to parameter growth rates.

Conditions imposed on $b_n$ and $c_n$ in the $p_0$ model and the $\beta$-model are different, owing to
that the derived consistency rates of the restricted MLE and the naive MLE are different.
In the $\beta$-model, we use a tight bound for the inverse of the Fisher information matrix in terms of the $\ell_\infty$-norm
to derive the consistency rate,
which leads to weaker conditions on $b_n$ and $c_n$.
In the $p_0$ model, we use a different method to derive the consistency rate since
there are no such tight bounds for the inverse of the Fisher information matrix.

Next, we present the Wilks-type theorem under the specified testing problem with an increasing dimension.
\begin{theorem}
\label{theorem-ratio-p-fixed}
If $b_n^{15}/c_n^{12} =o( r^{1/2}/(\log n)^{2})$, then under the null
$H_0: \bs{\theta}\in \Theta_0=\{
 \bs{\theta}: \bs{\theta}\in \R^{2n-1}, (\alpha_1, \ldots, \alpha_r) = (\alpha_1^0, \ldots, \alpha_r^0) \}$,
the normalized $\log$-likelihood ratio statistic
in \eqref{statistics-beta} converges in distribution to the standard normal distribution.
\end{theorem}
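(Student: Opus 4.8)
The plan is to mirror the proof of Theorem \ref{theorem-LRT-beta}(a), isolating the one place where the specified null differs from the homogeneous null. Write the true parameter as $\bs{\theta}^*$, which lies in $\Theta_0$ under $H_0$, and partition $\bs{\theta}=(\bs{\theta}_1,\bs{\theta}_2)$ with $\bs{\theta}_1=(\alpha_1,\ldots,\alpha_r)^\top$ the tested block and $\bs{\theta}_2=(\alpha_{r+1},\ldots,\alpha_n,\beta_1,\ldots,\beta_{n-1})^\top$ the nuisance block. Partition the score $\mathbf{s}=\mathbf{g}-\E_{\bs{\theta}^*}\mathbf{g}=(\mathbf{s}_1,\mathbf{s}_2)$ and the Fisher information $V$ conformably into blocks $V_{11},V_{12},V_{21},V_{22}$. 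A fourth-order Taylor expansion of $\ell$ at $\bs{\theta}^*$, combined with the likelihood equations for $\widehat{\bs{\theta}}$ (free over $\R^{2n-1}$) and for $\widehat{\bs{\theta}}^0$ (free only in $\bs{\theta}_2$, with $\bs{\theta}_1$ held at $\bs{\theta}_1^0=\bs{\theta}_1^*$), gives $\widehat{\bs{\theta}}-\bs{\theta}^*\approx V^{-1}\mathbf{s}$ and $\widehat{\bs{\theta}}_2^0-\bs{\theta}_2^*\approx V_{22}^{-1}\mathbf{s}_2$, so that to leading order the statistic $2\{\ell(\widehat{\bs{\theta}})-\ell(\widehat{\bs{\theta}}^0)\}$ equals the difference of quadratic forms $\mathbf{s}^\top V^{-1}\mathbf{s}-\mathbf{s}_2^\top V_{22}^{-1}\mathbf{s}_2$.

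The first step is then a purely algebraic reduction. Applying the block-inverse (Schur complement) identity with $S=V_{11}-V_{12}V_{22}^{-1}V_{21}$ collapses this difference to a single standardized quadratic form,
\begin{equation}
\mathbf{s}^\top V^{-1}\mathbf{s}-\mathbf{s}_2^\top V_{22}^{-1}\mathbf{s}_2 = \mathbf{w}^\top S^{-1}\mathbf{w}, \qquad \mathbf{w}=\mathbf{s}_1-V_{12}V_{22}^{-1}\mathbf{s}_2,
\end{equation}
where $\mathbf{w}$ is the efficient score for $\bs{\theta}_1$ and satisfies $\mathrm{Cov}(\mathbf{w})=S$. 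This is precisely where the specified null departs from the homogeneous case: here $\mathbf{w}$ is $r$-dimensional, so the form has mean $\approx r$, whereas the $r-1$ equality constraints of the homogeneous null produce an $(r-1)$-dimensional form. After the $(2r)^{-1/2}$ normalization this distinction is asymptotically immaterial, which is why both theorems share the centering in \eqref{statistics-beta}.

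The second step is a central limit theorem for $\mathbf{w}^\top S^{-1}\mathbf{w}$. Since the edges $\{a_{i,j}\}$ are independent, $\mathbf{w}$ is a linear functional of the centered Bernoulli variables $a_{i,j}-\E a_{i,j}$, so after replacing $S^{-1}$ by an explicit approximation the statistic becomes a quadratic form in independent summands. Here lies the first genuinely new ingredient relative to \cite{yan2025likelihood}: because the $p_0$ Fisher information $V$ in \eqref{definition-v-beta} has the bipartite out-/in-degree block structure, the diagonal approximate inverse used for the $\beta$-model no longer applies. I would build an explicit approximate inverse $\bar V$ of $V$ of Yan--Leng--Zhu type for the full model and a separate approximate inverse of the restricted information $V_{22}$, then bound their discrepancy on shared blocks in the entry-wise maximum norm. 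With these in hand one computes $\E(\mathbf{w}^\top S^{-1}\mathbf{w})=r+o(\sqrt r)$ and $\mathrm{Var}(\mathbf{w}^\top S^{-1}\mathbf{w})=2r(1+o(1))$ and verifies a Lindeberg/martingale condition for quadratic forms in independent variables; the hypothesis $b_n^{15}/c_n^{12}=o(r^{1/2}/(\log n)^2)$ is exactly what makes these moment and negligibility estimates go through, yielding $(2r)^{-1/2}(\mathbf{w}^\top S^{-1}\mathbf{w}-r)\stackrel{L}{\rightarrow}N(0,1)$.

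The last step, and the main obstacle, is to show that every term beyond the leading quadratic form is $o_p(\sqrt r)$ and hence vanishes after normalization. This requires the consistency rate of both $\widehat{\bs{\theta}}$ and $\widehat{\bs{\theta}}^0$ under the restricted space, obtained through the approximate inverses above, fed into the third- and fourth-order remainder terms of the expansion. The difficulty is twofold and is exactly what the introduction flags: doubling the parameters per node multiplies the number of cross-terms that must be bounded in the fourth-order expansion, and the entry-wise bound on the difference between the full and restricted approximate inverses is delicate because of the coupling between the $\alpha$- and $\beta$-blocks. Establishing that sharp entry-wise bound, and then propagating it through all remainder terms without exceeding the $\sqrt r$ budget, is the crux; once it is secured, the theorem follows by combining the reduction, the CLT, and the remainder control via Slutsky's theorem.
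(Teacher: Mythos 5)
Your plan is correct in outline, and the Schur-complement identity you build it on is exact: with $\mathbf{w}=\mathbf{s}_1-V_{12}V_{22}^{-1}\mathbf{s}_2$ and $\Sigma=V_{11}-V_{12}V_{22}^{-1}V_{21}$ one indeed has $\mathbf{s}^\top V^{-1}\mathbf{s}-\mathbf{s}_2^\top V_{22}^{-1}\mathbf{s}_2=\mathbf{w}^\top\Sigma^{-1}\mathbf{w}$ with $\mathrm{Cov}(\mathbf{w})=\Sigma$, so the efficient-score reduction is a legitimate alternative. It is, however, a genuinely different route from the paper's. The paper never forms $\mathbf{w}$ or $\Sigma$: it substitutes the explicit approximate inverses $S$ of \eqref{definition-S} and $S_{22}$ of \eqref{definition-S221} (the latter built around the restricted-space quantity $\widetilde{v}_{2n,2n}$, which is the new ingredient here) directly into $\bar{\mathbf{g}}^\top V^{-1}\bar{\mathbf{g}}-\bar{\mathbf{g}}_2^\top V_{22}^{-1}\bar{\mathbf{g}}_2$, producing the decomposition \eqref{eq-theorem2-B10}, whose leading term $\sum_{i=1}^r\bar d_i^{\,2}/v_{i,i}$ is a sum of \emph{independent} summands to which the CLT of Lemma \ref{lemma:weighte-degree-al} applies at once; the remaining explicit terms ($\bar b_n^{\,2}/v_{2n,2n}$, $(\bar b_n-\sum_{i\le r}\bar d_i)^2/\widetilde{v}_{2n,2n}$, the $W$- and $\widetilde W_{22}$-forms controlled by Lemma \ref{lemma-clt-beta-W}, and the $\mathbf{h}$-forms) are each shown to be $o_p(r^{1/2})$, while $B_2$, $B_2^0$, $B_3$, $B_3^0$ are handled by Lemma \ref{lemma:beta3:err} and the consistency rate of Lemma \ref{lemma-consi-beta}. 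Your route trades that bookkeeping for a single concentrated difficulty: a CLT for $\mathbf{w}^\top\Sigma^{-1}\mathbf{w}$, a quadratic form whose $r$ components are \emph{correlated} (through $V_{12}V_{22}^{-1}\mathbf{s}_2$) and whose kernel $\Sigma^{-1}$ is not explicit. You assert the mean, variance, and Lindeberg conditions follow from $b_n^{15}/c_n^{12}=o(r^{1/2}/(\log n)^{2})$, but making that rigorous would require a genuine quadratic-form CLT (de Jong/martingale type) together with spectral control of $\Sigma^{-1}$ deduced from the approximate inverses — precisely the analysis the paper's construction of $S_{22}$ is engineered to avoid. In short, your approach buys algebraic exactness ($\E\,\mathbf{w}^\top\Sigma^{-1}\mathbf{w}=r$ exactly, with no approximation until $\Sigma^{-1}$ is replaced) and a cleaner conceptual picture via the efficient score; the paper's approach buys the preservation of independence in the leading term, so that all probabilistic work reduces to lemmas already established, and this is why its proof closes while your second step remains the unproven crux.
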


\begin{remark}
Theorems \ref{theorem-LRT-beta} and \ref{theorem-ratio-p-fixed} only establish the Wilks-type results under the null hypotheses $H_0: \alpha_1 = \cdots = \alpha_r$, and $H_0: \alpha_i = \alpha_{i}^{0}$ for $i = 1,\ldots,r$. For the analogous hypotheses concerning $\bs\beta$ parameters, namely $H_0: \beta_1 = \cdots = \beta_r$ and $H_0: \beta_i = \beta_{i}^{0}, i = 1,\ldots,r$, similar results hold.
\end{remark}

The above theorem does not address the case of a fixed-dimensional specified null $H_0: \alpha_i = \alpha_i^0$ for $i = 1,\ldots,r$, unlike Theorem \ref{theorem-LRT-beta}, which covers both specified and homogeneous null hypotheses. In fact, the Wilks-type result does not hold when testing $H_0: \alpha_i=\alpha_i^0, i=1, \ldots, r$.
Some explanations for this phenomenon are as follows.
By using $S_{22}$ defined in \eqref{definition-S221} to approximate $V_{22}^{-1}$
and similar arguments as in the proof of \eqref{eq-ell-difference} and \eqref{eq-theorem2-B10}, we have
\begin{equation*}
2\left\{ \ell(\widehat{\bs{\theta}}) - \ell( \widehat{\bs{\theta}}^0) \right\}=
\sum_{i=1}^r \frac{ \bar{d}_i^{\,2} }{v_{i,i}}+\frac{ \bar{b}_n^{\,2} }{v_{2n,2n}}-\frac{(\bar{b}_n -\sum_{i=1}^{r}\bar{d}_i)^{2}}{\widetilde{v}_{2n,2n}} + \frac{1}{3}(B_2-B_2^0)+ \frac{1}{12}(B_3-B_3^0),
\end{equation*}
where
$(B_2-B_2^0)$ represents the difference in the third-order expansion terms of the log-likelihood function between the full and null parameter spaces, and $(B_3-B_3^0)$ is the corresponding difference in the fourth-order expansion term.
Here, $B_2-B_2^0$ does not vanish. The magnitude of this term is governed by $\max_{i=r+1,\ldots,2n-1}|\widehat{\theta}_i - \widehat{\theta}_i^0|$,
where the differences under the specified null are represented as follows:
\begin{align*}
\widehat{\alpha}_i - \widehat{\alpha}_i^0 & = \frac{ \bar{b}_n }{ v_{2n,2n} } - \frac{ \bar{b}_n -\sum_{i=1}^r\bar{d}_i }{ \tilde{v}_{2n,2n} } + O_p\left( \frac{ b_n^9 \log n }{ nc_{n}^{7}} \right),~~ i=r+1, \ldots, n,\\
\widehat{\beta}_j - \widehat{\beta}_j^0 & = -\frac{ \bar{b}_n }{ v_{2n,2n} } +\frac{ \bar{b}_n -\sum_{i=1}^r\bar{d}_i }{ \tilde{v}_{2n,2n} } + O_p\left( \frac{ b_n^9 \log n }{ nc_{n}^{7}} \right),~~j=1, \ldots, n-1.
\end{align*}
The difference between the two distributions of $\bar{b}_n / v_{2n,2n} $ and $( \bar{b}_n -\sum_{i=1}^r\bar{d}_i )/\tilde{v}_{2n,2n} $ is much larger than the order of $\log n/n$.
Under the homogenous null,
the difference between the two terms above is not contained in $\widehat{\theta}_i - \widehat{\theta}_i^0$.
As a result, $B_2-B_2^0$ vanishes in the homogenous null but remains in the specified null.
Thus, the Wilks-type result does not hold in the fixed-dimensional specified null.

We plot the density curve of $2\left\{ \ell(\widehat{\bs{\theta}}) - \ell( \widehat{\bs{\theta}}^0) \right\}$ against the chi-square distribution to provide some understanding on the distribution under the specified null. We consider three specified nulls:
(i) $H_0: \alpha_1=0$, (ii) $H_0: (\alpha_1, \alpha_2)=(0,0)$, and (iii) $H_0: (\alpha_1, \alpha_2)=(0.1,0.2)$.
The other parameters are as follows: $\beta_{i}=\alpha_{i}$ and $\alpha_i= 0.1(i-1)\log n/(n-1)$.
The simulation is run $5,000$ times and the plots are presented in Figure \ref{fig:bt-a}.
One can see that the distribution is far away from chi-square distributions with one, two, or three degrees of freedom.

\begin{figure}\centering
\includegraphics[ height=1.5in, width=5in, angle=0]{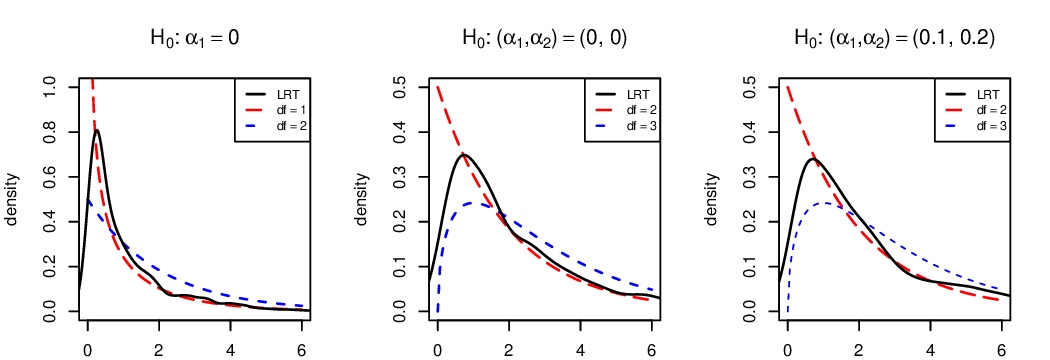}
\caption{Comparisons of density curves between LRT and chi-square distributions with one, two, or three degrees of freedom (n = 200).}
\label{fig:bt-a}
\end{figure}

\section{Numerical experiments}
\label{section-numerical}

\subsection{Simulation studies}

We conduct simulations to evaluate the performance of the log-likelihood ratio statistics
with finite sizes of networks.
We consider three null hypotheses:\\
(i) $H_{01}$: $\alpha_i=(i-1)L_{n}/(n-1)$, $i=1,\ldots,n$;\\
(ii) $H_{02}$: $\alpha_1=\cdots=\alpha_r$, $r=n/2$; and\\
(iii) $H_{03}$: $\alpha_1=\cdots=\alpha_r$, $r=10$;\\
where $L_{n}$ is used to control different asymptotic regimes.
We choose three values for $L_n$: $L_n= 0$, $0.1\log n$, and $0.2\log n$.
Under $H_{02}$ and $H_{03}$, we set the left $n-r$ parameters as $\alpha_i = (i-1)L_{n}/(n-1)$ for $i=r+1,\ldots,n$.
For simplicity, we set $\beta_i=\alpha_i$ as in \cite{Yan:Leng:Zhu:2016} and \cite{Yan2021}, and $\beta_n=0$.
$H_{01}$ aims to test a specified null.
$H_{02}$ and $H_{03}$ aim to test whether a given set of parameters with increasing or fixed dimensions are equal.
For the homogenous nulls $H_{02}$ and $H_{03}$, we set $\alpha_1 = \cdots=\alpha_r=0$. We consider $n=100, 200$, and $n=500$.
Remark that the null $H_{03}$ is different from the null (i.e., $\alpha_1=0, \alpha_2=0$)  in Figure \ref{fig:bt-a},
which  assumes that the parameters take specific fixed values. As a result, the MLEs under these two different null hypotheses are different.
Under each simulation setting, $5,000$ datasets are generated.

We draw QQ-plots to evaluate the asymptotic distributions of the log-likelihood ratio statistics.
Owing to space limitations, we only present plots of quantiles of chi-square distributions or standard normal distributions versus
sample quantiles in the case of $n=500$ under $H_{01}$. Other plots, including those for the cases of $n=100$ or $n=200$, can be found in the Supplementary Material.
Figure \ref{fig:p1} parts (a) and (b) present the results of the normal approximation of the normalized LRT
and chi-square approximation of the LRT under $H_{01}$, respectively. The sample quantiles agree well with theoretical quantiles,
except that there are slight deviations from the reference line $y = x$ in the tail of curves for the normalized LRT.
%Figure \ref{fig:p3} shows the chi-square approximation under $H_{03}$ for testing a fixed number of parameters being equal.
%For this figure, we can see that the quantiles of the LRT is very close to those of the chi-square distributions.

\begin{figure}
\centering
\subfigure[QQ-plots for normalized log-likelihood ratio statistics]{\includegraphics[width=0.9\textwidth]{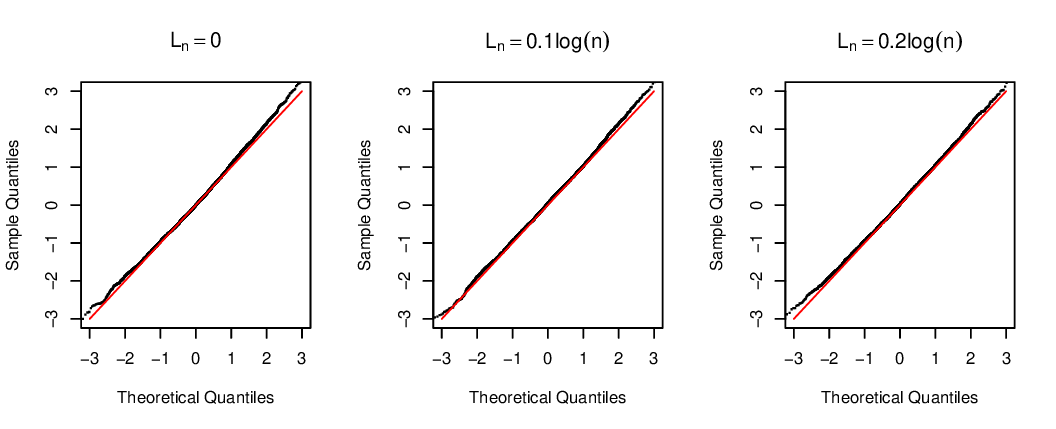}}
\subfigure[QQ-plots for log-likelihood ratio statistics]{\includegraphics[width=0.9\textwidth]{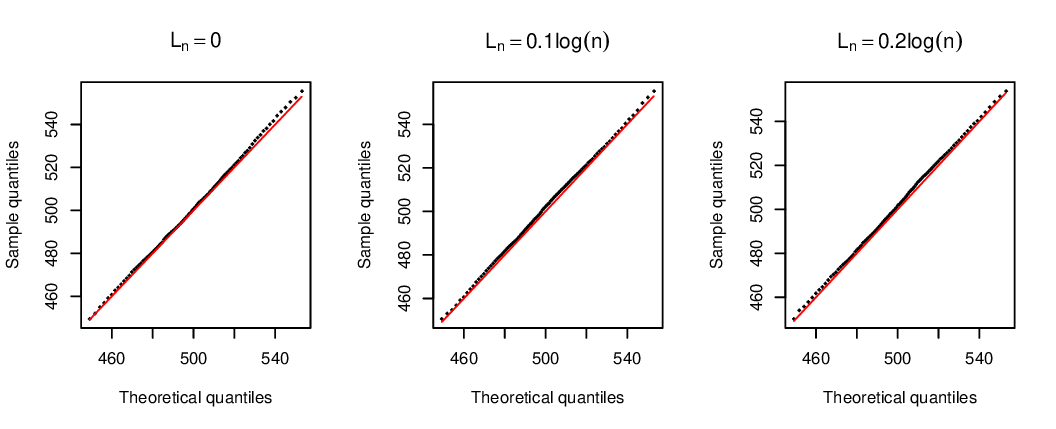}}
\caption{QQ-plots under $H_{01}$ (n=500). The horizontal and vertical axes in each QQ-plot are the theoretical and empirical quantiles, respectively.
The straight lines correspond to $y=x$.}
\label{fig:p1}
\end{figure}

We also record the Type-I errors under two nominal levels $\tau_1=0.05$ and $\tau_2=0.1$ in Table \ref{table-type-I}.
In this table, one can see that the simulated Type-I errors are very close to the nominal level.
For example, the largest relative difference $(a-b)/a$ is $0.084$ when $n=200$ under $H_{01}$,
where $a$ denotes the normal level and $b$ is the simulated Type-I error.

\begin{table}
\centering
\caption{{\color{black}Simulated Type-I errors $(\times 100)$ of the LRTs under two nominal levels are as follows:
$\tau_1=0.05$, $\tau_2=0.1$}}
\label{table-type-I}
\vskip5pt
\small
\begin{tabular}{ccc ccc}
\hline
NULL      &   $n$      & Normal Level    & $ L_{n} =0$  & $L_{n} = 0.1\log(n)$  & $L_{n} = 0.2\log(n) $    \\
\hline
$H_{01}$  &   $100$    & $\tau_1$          &$ 4.92 $&$ 5.06 $&$ 5.30 $  \\
          &            & $\tau_2$          &$ 9.92 $&$ 10.36 $&$ 10.36 $  \\
          &   $200$    & $\tau_1$          &$ 5.42 $&$ 5.12 $&$ 5.18 $   \\
          &            & $\tau_2$           &$ 10.42 $&$ 10.82 $&$ 10.08 $ \\
          &   $500$    & $\tau_1$          &$ 5.20 $&$ 5.20 $&$ 5.14 $    \\
          &            & $\tau_2$           &$ 10.68 $&$ 10.14 $&$ 9.98 $   \\
$H_{02}$  &  $100$     & $\tau_1$           &$ 5.06 $&$ 5.40 $&$ 5.46 $  \\
          &            & $\tau_2$          &$ 9.96 $&$ 10.26 $&$ 11.02 $ \\
          &  $200$     & $\tau_1$         &$ 5.32 $&$ 5.52 $&$ 4.26 $\\
          &            & $\tau_2$            &$ 10.32 $&$ 10.58 $&$ 9.66 $ \\
          &  $500$     & $\tau_1$          &$ 6.00$&$ 4.90 $&$ 4.48 $\\
          &            & $\tau_2$            &$ 10.42 $&$ 9.90 $&$ 9.42 $ \\
$H_{03}$  &  $100$     & $\tau_1$           &$ 4.74 $&$ 5.44 $&$ 5.14 $ \\
          &            & $\tau_2$           &$ 10.16 $&$ 10.50 $&$ 10.26 $  \\
          &  $200$     & $\tau_1$           &$ 4.80 $&$ 4.74 $&$ 5.18 $  \\
          &            & $\tau_2$           &$ 10.52 $&$ 9.40 $&$ 9.88 $\\
          &  $500$     & $\tau_1$          &$ 5.22 $&$ 5.24 $&$ 5.54 $ \\
          &            & $\tau_2$           &$ 10.40 $&$ 9.90 $&$ 9.98 $ \\
\hline
\end{tabular}
\end{table}

Next, we compare the powers of the log-likelihood ratio tests and Wald-type test statistics for the fixed-dimensional null hypotheses $H_0$: $\alpha_1=\cdots=\alpha_r$.
Following \cite{Yan:Leng:Zhu:2016}, the Wald-type test statistic can be constructed as follows:

\begin{equation}
\label{eq-wald-test}
\begin{pmatrix}
\widehat{\alpha}_1 - \widehat{\alpha}_2 \\ \widehat{\alpha}_{2} - \widehat{\alpha}_3\\
 \ldots\\
 \widehat{\alpha}_{r-1} - \widehat{\alpha}_r
\end{pmatrix}^\top
\begin{pmatrix}
\frac{1}{ \hat{v}_{11} } + \frac{1}{ \hat{v}_{22} } & - \frac{1}{ \hat{v}_{22} } &  0 & \ldots & 0 \\
- \frac{1}{ \hat{v}_{22} } & \frac{1}{ \hat{v}_{22} } + \frac{1}{ \hat{v}_{33} } &  -\frac{1}{ \hat{v}_{33} }  & \ldots & 0 \\
& & \vdots & & \\
0  &\ldots & 0 &  -\frac{1}{ \hat{v}_{r-1,r-1} } & \frac{1}{ \hat{v}_{r-1,r-1} } + \frac{1}{\hat{v}_{rr}}
\end{pmatrix}^{-1}
\begin{pmatrix}
\widehat{\alpha}_1 - \widehat{\alpha}_2 \\ \widehat{\alpha}_{2} - \widehat{\alpha}_3\\
 \ldots\\
 \widehat{\alpha}_{r-1} - \widehat{\alpha}_r
\end{pmatrix}
\end{equation}

The true model is set to be $\alpha_i=ic/r$ with $i=1,\ldots,r$.
The other parameters are set to $\alpha_i=0.2i\log n/n$ for $i=r+1,\ldots,n$
and $\beta_i = 0.2 i\log n/n$ for $i=1,\ldots,n-1$ with $\beta_n=0$.
The simulated results are presented in Table \ref{powers-a} with the nominal level $0.05$.
When $H_0$ (i.e., $c=0$) is true, the powers are close to the nominal level, as expected.
Furthermore, when $n$ and $r$ are fixed, the power tends to increase with $c$ and approaches $100\%$ when $c=1.3$.
When $n$ and $c$ are fixed while $r$ increases, a similar phenomenon can be observed.
Figure \ref{fig2} shows the powers of the LRT and the Wald-type test change with $r$ when $n=200$ and $c=1$.
As expected, we can see that the powers of both the LRT and the Wald test increase with $r$  since
the heterogeneity of parameters being tested becomes more severe. In addition, the power of the LRT is a little higher than those of the Wald test,
which agrees with the finding in Table \ref{powers-a}.
\begin{table}
\centering
\caption{Comparison between the powers of the LRTs and Wald-type tests in parentheses.}
\label{powers-a}
\small
\begin{tabular}{ccc ccc ccc ccc ccc}
\hline
  $n$            & $r$    &\multicolumn{2}{c} {$c=0$ }    &\multicolumn{2}{c}{$c=0.2$} &  \multicolumn{2}{c}{$c=0.4$} &  \multicolumn{2}{c}{$c=0.7$}  &\multicolumn{2}{c}{$c=1.0$}  &\multicolumn{2}{c}{$c=1.3$} \\
  \cmidrule(r){3-4} \cmidrule(r){5-6} \cmidrule(r){7-8} \cmidrule(r){9-10} \cmidrule(r){11-12} \cmidrule(r){13-14}
    &     & LRT & Wald & LRT & Wald  & LRT & Wald & LRT & Wald& LRT & Wald & LRT & Wald\\
\hline
 $100$          & $5$    &$ 5.76$&$5.58 $&$ 5.74$&$5.34$&$ 9.56 $&$8.8$&$ 24.52 $&$22.24$&$ 46.04$&$43.62 $&$ 70.8 $&$67.8$             \\
                & $10$   &$ 5.86$&$5.58 $&$ 6.8$&$6.36 $&$ 11.68$&$10.96 $&$ 38.52$&$36.7 $&$ 72.6$&$70.96 $&$ 93.26$&$92.5$              \\
 $200$          & $5$    &$ 4.90 $&$4.78  $&$ 7.08 $&$ 6.82 $&$ 15 $&$ 14.82  $&$ 47.8 $&$ 46.96  $&$ 81.98 $&$ 81.44 $&$ 96.54 $&$96.16  $  \\
                & $10$   &$ 4.66 $&$4.50 $&$ 7.18 $&$6.80 $&$ 22 $&$ 21.12  $&$ 74.1 $&$ 73.48  $&$ 97.78 $&$ 97.66  $&$ 99.98 $&$ 99.96  $  \\
 $500$          & $5$    &$ 5.22$&$5.24 $&$ 10.4$&$10.36 $&$ 40.22$&$39.9 $&$ 92.36$&$92.14 $&$ 100$&$99.76 $ & $100$&$100$ \\
                & $10$   &$ 5.46$&$5.38 $&$ 13.98$&$13.62 $&$ 63.04$&$62.78 $&$ 99.54$&$99.52$&$ 100$&$100 $ &$ 100$&$100 $  \\
\hline
\end{tabular}
\end{table}

\begin{figure}
\centering
{\includegraphics[width=0.4\textwidth]{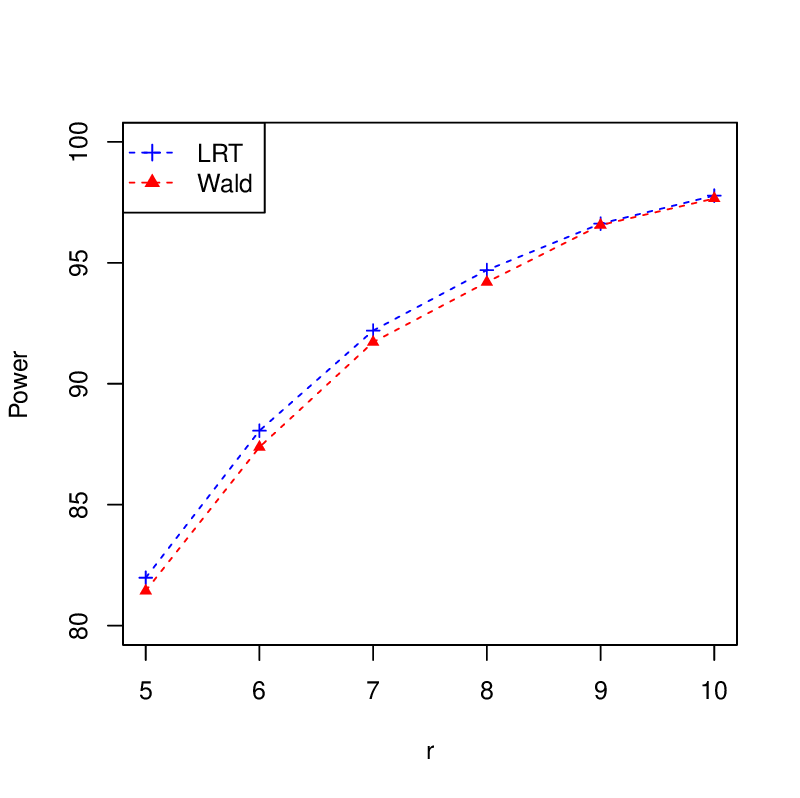}}
\caption{Trends of powers between LRT and Wald Tests changing with $r$ ($n=200, c=1$).}
\label{fig2}
\end{figure}

\subsection{Testing degree heterogeneity in real-world data}

We apply likelihood ratio statistics to test degree heterogeneity in three real-world network
datasets, namely the email-Eu-core network \citep{yin2017local}, UC Irvine messages network \citep{opsahl2009clustering}
and US airports network \citep{kunegis2013konect}.
All three networks are openly available. The first can be downloaded from
\url{https://snap.stanford.edu/data/email-Eu-core.html} and the latter two from \url{http://www.konect.cc/networks}.

The email-Eu-core network consists of $1,005$ nodes and $25,571$ directed edges, where a node denotes a member in a European research institution.
A directed edge between two members represents who sent an email to whom at least once.
The UC Irvine messages network consists of $1,899$ nodes, where a node represents a user.
The original data are weighted but we transform them into an unweighted network  as in \cite{Hu01102020} and \cite{Wang03042023},
where a directed edge between two users denotes who sent a message to whom at least once.
A total of $20,296$ directed edges are generated.
The US airports network illustrates the flight connections between $1,574$ US airports.
The original data are weighted but we treated them as an unweighted network, where a directed edge denotes
that there is a flight from one airport to another at least once.
As a result, there are a total of $28,236$ directed edges.

We test whether there is degree heterogeneity in the subset of nodes. Since we study three different real-world data sets, we choose nodes with the $r$ largest id numbers for illustrating examples in a unified manner. We could also test other sets of nodes of interest.
Table \ref{Test} reports the p-values for testing $H_{01}: \alpha_{n-r+1}=\cdots = \alpha_n$ or $H_{02}: \beta_{n-r+1}=\cdots = \beta_{n-1}$,
where $\beta_n=0$ is the condition for model identification.
One can see that in the email-Eu-core data, the p-value is insignificant when testing $\alpha_{n-1}=\alpha_n$ but
begins to become significant when $r\ge 5$. When $r$ increases to $10$, it is very significant, indicating strong evidence for out-degree heterogeneity.
For testing in-degree heterogeneity, it is insignificant when $r=5$ but very significant when $r=10$.
A similar phenomenon can also observed in the UC Irvine messages dataset, except that it requires a larger $r$ to detect degree heterogeneity.
However, the US airports data exhibit a different phenomenon. When testing $H_{01}$ or $H_{02}$ with $r=10, 20, 50, 100, 200, 300$, the p-values are all insignificant,
indicating that there is no degree heterogeneity in a large set of nodes.
When $r$ increases to $400$, the p-value becomes significant.  Furthermore, the p-values under $r=500$ are much smaller than those under $r=400$.
These results demonstrate that degree heterogeneity is a common phenomenon and varies across different sets of nodes in different datasets.

\begin{table}
\centering
\caption{Testing degree heterogeneity in several network datasets.}
\label{Test}
\vskip5pt
\small
\begin{tabular}{cc cccc cc}
\hline
&&\multicolumn{1}{c}{$\bs\alpha$} &&&\multicolumn{1}{c}{$\bs\beta$}\\
\cmidrule(r){2-4} \cmidrule(r){5-7}
 Data set          & $r$  & \multicolumn{2}{c}{p-value} & $r$  & \multicolumn{2}{c}{p-value} \\
 \cmidrule(r){3-4} \cmidrule(r){6-7}
                   &      & LRT & Wald &      & LRT & Wald \\
\hline
email-Eu-core      & $2$  & 0.432 & 0.239 & $5$   & 0.190 & 0.232 \\
                   & $5$  & 0.015 & 0.037 & $10$  & $<10^{-5}$ & 0.027 \\
                   & $15$ & $<10^{-5}$ & $<10^{-5}$ & $50$ & $<10^{-5}$ & $<10^{-5}$ \\
\hline
UC Irvine messages & $5$  & 0.284 & 0.216 & $10$  & 0.443 & 0.503 \\
                   & $10$ & $7\times10^{-3}$ & 0.019 & $30$ & 0.024 & 0.088 \\
                   & $50$ & $<10^{-5}$ & $<10^{-5}$ & $50$ & $<10^{-5}$ & $<10^{-5}$ \\
\hline
US airports        & $300$ & 0.816 & 0.732 & $300$ & 0.891 & 0.499 \\
                   & $400$ & $8\times10^{-5}$ & $1\times10^{-5}$ & $400$ & $8\times10^{-4}$ & $<10^{-5}$ \\
                   & $500$ & $<10^{-5}$ & $<10^{-5}$ & $500$ & $<10^{-5}$ & $<10^{-5}$ \\
\hline
\end{tabular}
\end{table}

\section{Discussion}
\label{section:discussion}

We studied the asymptotic behaviors of LRTs in the $p_{0}$ model. Wilks-type results were established
for fixed- and increasing-dimensional parameter hypothesis testing problems.
It is worth noting that the conditions imposed on $b_{n}$ and $c_n$ could not be optimal.
Even in cases where these prerequisites are not met, the simulation results demonstrate that there are still reasonable asymptotic approximations.
It is crucial to note that the asymptotic behaviors of likelihood ratio statistics are influenced not only by $b_{n}$ (or $c_n$) but also by the entire parameter configuration.

In the real data analysis, we transform the weighted edges in the UC Irvine and Airports datasets into binary edges as in \cite{Hu01102020} and \cite{Wang03042023}.
This conversion may lead to some potential information loss.
On the other hand, if different weighting-to-binary thresholds are used to
construct the unweighted networks, the inference results may differ.
Although this study focused on binary edges,
there is potential to extend the edge generation scheme from Bernoulli to other discrete or continuous distribution,
where the edge weight may take a set of finite discrete values, the positive natural number or any real number \cite[e.g.][]{Yan:Zhao:Qin:2015,Yan:Leng:Zhu:2016,li2027jssc}.
It is beyond of the current
paper to investigate it. We would like to study this issue in the future.

\section{Appendix}
\label{section:proof}

We introduce some notations.
For a vector $\mathbf{x}=(x_1, \ldots, x_n)^\top\in \R^n$,
let $\|\mathbf{x}\|$ be a general norm on vectors with the special cases
$\|\mathbf{x}\|_\infty = \max_{1\le i\le n} |x_i|$ and $\|\mathbf{x}\|_1=\sum_i |x_i|$ for the $\ell_\infty$- and $\ell_1$-norm of $\mathbf{x}$ respectively.
For an $n\times n$ matrix $J=(J_{ij})$, $\|J\|_\infty$ denotes the matrix norm induced by the $\ell_\infty$-norm on vectors in $\R^n$, i.e.,
\[
\|J\|_\infty = \max_{\mathbf{x}\neq 0} \frac{ \|J\mathbf{x}\|_\infty }{\|\mathbf{x}\|_\infty}
=\max_{1\le i\le n}\sum_{j=1}^n |J_{ij}|,
\]
and $\|J\|$ is a general matrix norm. $\|J\|_{\max}$ denotes the maximum absolute entry-wise norm, i.e., $\|J\|_{\max}=\max_{i,j} |J_{ij}|$.
The  notation $f(n)=O\left(g(n)\right)$ or
$f(n)\lesssim g(n)$ means  there is a constant $c>0$ such
that $\left|f(n)\right|\leq c|g(n)|$. $f(n) \asymp g(n)$ means that $f(n)\lesssim g(n)$ and $g(n)\lesssim f(n)$.
%there exist two constants $c_1>0$ and $c_2>0$ such that $c_1|f(n)| \le |g(n)| \le c_2|f(n)|$.
$f(n)=o(g(n))$ means  $\lim_{n\rightarrow\infty}f(n)/g(n)=0$.

Define one matrices $S=(s_{i,j})_{(2n-1)\times (2n-1)}$ with
\begin{equation}\label{definition-S}
  s_{i,j}=
   \begin{cases}
  \displaystyle \frac{\delta_{i,j}}{v_{i,i}}+\frac{1}{v_{2n,2n}}, &i,j=1,\ldots,n,\\
    \displaystyle -\frac{1}{v_{2n,2n}}, &i=1,\ldots,n,j=n+1,\ldots,2n-1,\\
     \displaystyle -\frac{1}{v_{2n,2n}}, &i=n+1,\ldots,2n-1,j=1,\ldots,n,\\
      \displaystyle\frac{\delta_{i,j}}{v_{i,i}}+\frac{1}{v_{2n,2n}}, &i,j=n+1,\ldots,2n-1.\\
   \end{cases}
  \end{equation}
\cite{Yan:Leng:Zhu:2016} proposed to use the matrix $S$ to approximate $V^{-1}$.

Since $a_{i,j}, 1\leq i\neq j\leq n$ are mutually independent Bernoulli random variables, by Theorem 2.1 of \cite{chernozhuokov2022improved},
the vector $(\frac{\bar d_1}{\sqrt{v_{1,1}}},  \frac{\bar d_2 }{\sqrt{v_{2,2}}}, \cdots, \frac{\bar d_r}{\sqrt{v_{r,r}}} )^{\top}$ with an increasing dimension $r$
asymptotically follows a high-dimensional standard normal distribution, where $\bar{d}_i = d_i - \E d_i$.
Therefore, we have the following lemma, whose proofs are omitted.

\begin{lemma}\label{lemma:weighte-degree-al}
Under the $p_{0}$-model,
if  $b_{n}^{2}\log^{5} n/n=o(1)$, then
$\sum_{i=1}^r \bar{d}_i^{\,2}/v_{i,i}$ is asymptotically normally distributed with mean $r$ and variance $2r$,
where $\bar{d}_i= d_i - \E d_i$.
\end{lemma}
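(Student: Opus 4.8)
The plan is to exploit a structural feature special to directed graphs: the out-degrees $d_1,\dots,d_n$ are mutually independent. Indeed, $d_i=\sum_{j\neq i}a_{i,j}$ depends only on the $i$-th row of the adjacency matrix, and distinct rows involve disjoint collections of the independent Bernoulli variables $a_{i,j}$. Consequently, writing $X_i=\bar d_i^{\,2}/v_{i,i}$, the statistic $T_r=\sum_{i=1}^r X_i$ is a sum of $r$ independent (but not identically distributed) random variables, and the claim reduces to a classical Lyapunov central limit theorem. This is in sharp contrast to the undirected $\beta$-model, where $a_{i,j}=a_{j,i}$ forces $d_i$ and $d_j$ to share the edge $a_{i,j}$, so the degrees are dependent and one genuinely needs the high-dimensional Gaussian approximation of \cite{chernozhuokov2022improved}.

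First I would compute the first two moments. Since $v_{i,i}=\operatorname{Var}(d_i)$, we have $\E X_i=1$ and hence $\E T_r=r$ exactly. For the variance, set $\sigma_{i,j}^2=e^{\alpha_i+\beta_j}/(1+e^{\alpha_i+\beta_j})^2$, so that $v_{i,i}=\sum_{j\neq i}\sigma_{i,j}^2$. Using the fourth moment of a sum of independent centered Bernoulli variables together with $\E\bar a_{i,j}^4=\sigma_{i,j}^2(1-3\sigma_{i,j}^2)$, a direct calculation gives
\[
\operatorname{Var}(X_i)=\frac{\E\bar d_i^{\,4}-v_{i,i}^2}{v_{i,i}^2}=2+\frac{1}{v_{i,i}}-\frac{6\sum_{j\neq i}\sigma_{i,j}^4}{v_{i,i}^2}.
\]
Because $\sigma_{i,j}^2\in[1/b_n,1/c_n]$, one has $v_{i,i}\ge (n-1)/b_n$, and the assumption $b_n^2\log^5 n/n=o(1)$ forces $b_n=o(\sqrt n)$ and thus $v_{i,i}\to\infty$ uniformly in $i$; moreover $\sum_{j\neq i}\sigma_{i,j}^4\le v_{i,i}/c_n$, so both correction terms are $O(b_n/n)=o(1)$. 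Hence $\operatorname{Var}(X_i)=2+o(1)$ uniformly and $\operatorname{Var}(T_r)=2r(1+o(1))$.

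Next I would verify the Lyapunov condition with exponent $2+\delta=4$, for which it suffices to bound $\E(X_i-1)^4$ uniformly. Writing $\E(X_i-1)^4\lesssim \E\bar d_i^{\,8}/v_{i,i}^4+1$ and applying Rosenthal's inequality to the independent centered summands $\bar a_{i,j}$ (with $|\bar a_{i,j}|\le1$), we get $\E\bar d_i^{\,8}\lesssim v_{i,i}^4+\sum_{j\neq i}\E\bar a_{i,j}^8\lesssim v_{i,i}^4$, so $\E(X_i-1)^4=O(1)$ uniformly. Therefore
\[
\frac{\sum_{i=1}^r \E(X_i-1)^4}{\big(\operatorname{Var}(T_r)\big)^{2}}=\frac{O(r)}{(2r)^2(1+o(1))}=O(1/r)\to0,
\]
which is the Lyapunov condition. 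The Lyapunov CLT then yields $(T_r-r)/\sqrt{\operatorname{Var}(T_r)}\stackrel{L}{\rightarrow}N(0,1)$, and since $\operatorname{Var}(T_r)/(2r)\to1$, Slutsky's theorem gives $(T_r-r)/\sqrt{2r}\stackrel{L}{\rightarrow}N(0,1)$, as claimed.

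The main point — rather than a genuine obstacle — is the initial observation that directedness decouples the out-degrees into independent summands, which collapses the problem to a textbook CLT; the remaining work is only the moment bookkeeping needed to pin down $\operatorname{Var}(T_r)=2r(1+o(1))$ and the uniform fourth-moment bound. Had the degrees been dependent (as in the undirected case), the difficulty would shift to transferring the rectangle-type Gaussian approximation of \cite{chernozhuokov2022improved} to the quadratic functional $\sum_i\bar d_i^{\,2}/v_{i,i}$, which would require an additional smoothing or anti-concentration argument; the independence structure here makes that unnecessary.
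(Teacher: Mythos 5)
Your proof is correct, and it takes a genuinely different route from the paper's. The paper justifies this lemma in one line by invoking Theorem 2.1 of \cite{chernozhuokov2022improved}: since the $a_{i,j}$ are independent Bernoullis, the vector $(\bar d_1/\sqrt{v_{1,1}},\ldots,\bar d_r/\sqrt{v_{r,r}})^\top$ admits a high-dimensional Gaussian approximation, and the lemma is then stated with ``proofs omitted'' --- mirroring the treatment in \cite{yan2025likelihood} for the undirected $\beta$-model, where the degrees share edges and are genuinely dependent. You instead observe that in the directed model the out-degrees $d_1,\ldots,d_r$ involve disjoint rows of $A$ and are therefore mutually independent, so $T_r=\sum_{i=1}^r \bar d_i^{\,2}/v_{i,i}$ is a sum of independent terms and the classical Lyapunov CLT suffices. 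Your bookkeeping checks out: $\E X_i=1$ exactly; the fourth-moment identity for sums of independent centered Bernoullis gives $\mathrm{Var}(X_i)=2+1/v_{i,i}-6\sum_{j\neq i}\sigma_{i,j}^4/v_{i,i}^2=2+O(b_n/n)$ uniformly, using $v_{i,i}\ge (n-1)/b_n$ and $c_n\ge 4$; and the Rosenthal bound $\E\bar d_i^{\,8}\lesssim v_{i,i}^4$ yields the uniform fourth moment needed for Lyapunov with $\delta=2$, so the condition is $O(1/r)\to 0$ as $r\to\infty$ (which is also implicitly required by the paper, since the statement is only meaningful for increasing $r$).

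Your approach buys two things. First, it is self-contained and closes a step the paper leaves open: the rectangle-type Gaussian approximation of \cite{chernozhuokov2022improved} controls probabilities of hyperrectangles, whereas the sublevel sets of the quadratic functional $\sum_i \bar d_i^{\,2}/v_{i,i}$ are ellipsoids, so the paper's cited justification would still require an additional transfer or anti-concentration argument that is never spelled out; independence makes this unnecessary. Second, it shows the hypothesis can be weakened: your argument only needs $b_n/n\to 0$ (you use $b_n=o(\sqrt n)$, which follows from the stated condition), so the $\log^5 n$ factor in $b_n^2\log^5 n/n=o(1)$ is an artifact of the high-dimensional CLT machinery rather than intrinsic to the result. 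The trade-off is that your argument is specific to the directed setting; the paper's route is the one that generalizes to the undirected case where the degree vector is dependent.
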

We define a function $\mu(x) =  e^x/(1 + e^x)$ and a notation $\pi_{ij}=\alpha_i+\beta_j$ for easy of exposition.
A direct calculation gives that the derivative of $\mu(x)$ up to the third order are
\begin{eqnarray}\label{eq-derivative-mu-various}
\mu^\prime(x) = \frac{e^x}{ (1+e^x)^2 },~~  \mu^{\prime\prime}(x) = \frac{e^x(1-e^x)}{ (1+e^x)^3 },~~ \mu^{\prime\prime\prime}(x) =  \frac{ e^x [ (1-e^x)^2 - 2e^x] }{ (1 + e^x)^4 }.
\end{eqnarray}
According to the definition of $c_n$ in \eqref{definition-bncn}, we have the following inequalities:
\begin{equation}\label{ineq-mu-deriv-bound}
|\mu^\prime(\pi_{ij})| \le \frac{1}{c_n}, ~~ |\mu^{\prime\prime}(\pi_{ij})| \le \frac{1}{c_n},~~ |\mu^{\prime\prime\prime}(\pi_{ij})| \le \frac{1}{c_n}.
\end{equation}
The above inequalities will be used in the proofs repeatedly.
Recall that $\bar{a}_{i,j} = a_{i,j} - \E(a_{i,j})$ denotes
the centered random variable of $a_{i,j}$ and define $\bar{a}_{i,i}=0$ for all $i=1, \ldots, n$.
Correspondingly, denote $\bar{d}_i = d_i - \E(d_i)=\sum_j \bar{a}_{i,j}$, $\bar{b}_j = b_j - \E(b_j)= \sum_i \bar{a}_{i,j}$ and
$\bs{\bar{\mathbf{g}}}=(\bar{d}_1, \ldots, \bar{d}_n, \bar{b}_1,\ldots, \bar{b}_{n-1})^\top$.

\subsection{Proofs for Theorem \ref{theorem-LRT-beta} (a)}
\label{section:theorem1-b}
Let $\bs{\widetilde{\mathbf{g}}}=(\sum_{i=1}^r d_i, d_{r+1},\ldots, d_n,b_{1},\ldots,b_{n-1})^{\top}$ and
 $\widetilde{V}$ denote the Fisher information matrix of $\widetilde{\bs{\theta}}=(\alpha_1, \alpha_{r+1}, \ldots, \alpha_n,\beta_1, \ldots, \beta_{n-1})^\top$
under the null $H_0: \alpha_1 = \cdots= \alpha_r$ with $r\leq n$, where
\begin{equation}\label{definition-tilde-V}
\widetilde{V}=\begin{pmatrix} \tilde{v}_{11} & \bs{\tilde{v}}_{12}^\top \\ \bs{\tilde{v}}_{12} & V_{22} \end{pmatrix},
\end{equation}
where $V_{22}$ is the lower right $(2n-1-r)\times (2n-1-r)$ block of $V$, $\bs{\tilde{v}}_{12} =
(\tilde{v}_{1,r+1}, \ldots, \tilde{v}_{1, 2n-1})^\top$, and
\begin{align*}
  \tilde{v}_{11}& =\sum_{j\neq 1}\frac{  e^{\alpha_1 + \beta_j } }{ ( 1 + e^{\alpha_1 + \beta_j})^2 }+\cdots+ \sum_{j\neq r}\frac{  e^{\alpha_1 + \beta_j } }{ ( 1 + e^{\alpha_1 + \beta_j})^2 },\\
  \tilde{v}_{1,i} & = 0,~i=r+1, \ldots, n,\\
  \tilde{v}_{1,i}&=\frac{ (r-1) e^{\alpha_1 + \beta_j } }{ ( 1 + e^{\alpha_1 + \beta_j})^2 },~i=n+1, \ldots, n+r,~j=i-n,\\
   \tilde{v}_{1,i}&=\frac{ r e^{\alpha_1 + \beta_j } }{ ( 1 + e^{\alpha_1 + \beta_j})^2 },~i=n+r+1, \ldots, 2n-1,~j=i-n.
\end{align*}
Note that $\widetilde{V}$ is also the covariance matrix of $\bs{\widetilde{\mathbf{g}}}$.
Similar to approximate $V^{-1}$ by $S$, we use $\widetilde{S}$ to approximate $\widetilde{V}^{-1}$, where
$\widetilde{S}$($=(s_{i,j})_{(2n-r)\times (2n-r)}$)
\begin{equation}\label{definition-S222}
  s_{i,j}=
   \begin{cases}
  \displaystyle \frac{1}{\tilde{v}_{11}}+\frac{1}{{v}_{2n,2n}}, &i,j=1,\\
    \displaystyle \frac{1}{{v}_{2n,2n}}, &i=1,j=r+1,\ldots,n;i=r+1,\ldots,n,j=1,\\
     \displaystyle -\frac{1}{{v}_{2n,2n}}, &i=1,\ldots,n,j=n+1, \ldots,2n-1;i=n+1, \ldots,2n-1,j=1,\ldots,n,\\
      \displaystyle\frac{\delta_{i,j}}{{v}_{i,i}}+\frac{1}{{v}_{2n,2n}}, &i,j=r+1,\ldots,n;~~i,j=n+1,\ldots,2n-1.\\
   \end{cases}
  \end{equation}
The approximation error is stated in the following lemma, which extends the result of Proposition 1 of \cite{Yan:Leng:Zhu:2016}
to a general case. Therefore, we omit the proof of Lemma \ref{lemma-beta-approx-ho} here.

\begin{lemma}\label{lemma-beta-approx-ho}
For any $r\in\{0, \ldots, n-1\}$ and $n\ge 2$, we have
\begin{equation}\label{approxi-inv2-beta-ho}
\|\widetilde{W}:= \widetilde{V}^{-1}-\widetilde{S} \|_{\max} \le \frac{ b_n^{3} }{ n^2 c_n^2 }.
\end{equation}
\end{lemma}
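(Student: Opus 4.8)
The plan is to follow the proof of Proposition 1 of \cite{Yan:Leng:Zhu:2016}, treating the full-model estimate $\|V^{-1}-S\|_{\max}\le b_n^3/(n^2c_n^2)$ (the $r=0$ specialization) as the base case and carefully tracking the new features created by aggregating the first $r$ out-degree coordinates. First I would record the algebraic structure. Away from its first row and column, $\widetilde S$ from \eqref{definition-S222} coincides with the diagonal-plus-rank-one matrix $\mathrm{diag}(v_{i,i}^{-1})+v_{2n,2n}^{-1}\mathbf{s}\mathbf{s}^\top$, where $\mathbf{s}$ carries the entry $+1$ on the out-degree block and $-1$ on the in-degree block; its first row/column is the corresponding aggregation of the first $r$ coordinates. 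I would likewise note that $\widetilde V$ from \eqref{definition-tilde-V} inherits the bipartite structure of $V$: the within-block off-diagonals vanish, the cross entries lie in $[1/b_n,1/c_n]$ up to the integer multiplicities $r-1$ and $r$ appearing in $\tilde v_{1,i}$, and each diagonal entry equals the \emph{full} cross-row sum $v_{i,i}=\sum_{j=1}^{n}v_{i,n+j}$, which includes the term $v_{i,2n}=e^{\alpha_i}/(1+e^{\alpha_i})^2$ that is discarded when $\beta_n=0$ is imposed.

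The backbone is the identity $\widetilde W=\widetilde V^{-1}-\widetilde S=-\widetilde V^{-1}(\widetilde V\widetilde S-I)$, so I would first compute the residual $R:=\widetilde V\widetilde S-I$ entrywise. Using the row-sum identity, the bulk of the entries cancel exactly. For a generic diagonal entry in the out-degree block,
\begin{equation*}
R_{i,i}=v_{i,i}\Bigl(\frac{1}{v_{i,i}}+\frac{1}{v_{2n,2n}}\Bigr)-\frac{1}{v_{2n,2n}}\sum_{j=1}^{n-1}v_{i,n+j}-1=\frac{v_{i,2n}}{v_{2n,2n}},
\end{equation*}
and the same computation shows that every entry of $R$ in the out-degree block equals $v_{i,2n}/v_{2n,2n}$ (so that block is exactly rank one), while the cross-block entries take the form $v_{i,n+j}/v_{n+j,n+j}-v_{i,2n}/v_{2n,2n}$. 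Since $v_{i,2n}\le 1/c_n$ and $v_{2n,2n}\ge (n-1)/b_n$, this gives $\|R\|_{\max}=O\!\left(b_n/(nc_n)\right)$, and more importantly $R$ has a coherent structure in which each column differs from a multiple of a fixed vector only by entries of the same order.

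The main obstacle is that the crude bound $\|\widetilde W\|_{\max}\le\|\widetilde V^{-1}\|_\infty\|R\|_{\max}$ is too weak: because $\widetilde V^{-1}\approx\widetilde S$ carries order-$n$ off-diagonal entries of size $\asymp b_n/n$ per row, one only has $\|\widetilde V^{-1}\|_\infty=O(b_n)$, which yields $O(b_n^2/(nc_n))$ rather than the claimed $O(b_n^3/(n^2c_n^2))$. To recover the sharp rate I would substitute $\widetilde V^{-1}=\widetilde S+\widetilde W$ and estimate the leading term $\widetilde S R$ by direct entrywise summation rather than a submultiplicative inequality: in each sum $\sum_k s_{i,k}R_{k,j}$ the $\pm1$ pattern of $\mathbf{s}$ inside $\widetilde S$ meets the coherent columns of $R$, so the signed partial sums $\sum_k \mathbf{s}_k R_{k,j}$ collapse by cancellation across the out- and in-degree blocks, contributing the extra power of $1/n$. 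Equivalently, one may carry out a Schur-complement inversion in which the sums over the $n-1$ in-degree coordinates produce these factors. This is exactly the entrywise analysis of Proposition 1 of \cite{Yan:Leng:Zhu:2016}, which is why only the new features have to be checked.

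I expect two steps to absorb most of the labor. First, re-deriving the residual entries in the modified first row and column, where the diagonal $\tilde v_{11}$, the vector $\tilde{\mathbf v}_{12}$, and the multiplicities $r-1$ and $r$ replace the clean symmetry used for $V$, and verifying that they contribute at the same $O(b_n^3/(n^2c_n^2))$ order uniformly over $r\in\{0,\dots,n-1\}$. Second, executing the cancellation bookkeeping for $\widetilde S R$ precisely enough to obtain the sharp constant $1$ in the bound rather than an unspecified multiple; this is the part for which the approximation $\widetilde S$ was designed, and it is the reason the proof is omitted in favour of the cited proposition once the first-block discrepancy is handled.
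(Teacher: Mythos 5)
Your proposal is structurally sound in its observations, but it is incomplete at exactly the point where all of the analytic difficulty lives. For context: the paper itself never proves this lemma; it declares the result an extension of Proposition 1 of \cite{Yan:Leng:Zhu:2016} and omits the proof, so your instinct---extend that proposition and audit only the new features created by aggregating the first $r$ out-degree coordinates---matches the paper's intent. Your entrywise computations of the residual $R:=\widetilde V\widetilde S-I$ are also correct: the out-by-out block is row-constant with entries $v_{i,2n}/v_{2n,2n}$ (with multiplicity $r$ in the aggregated row), the cross-block entries are $v_{i,n+j}/v_{n+j,n+j}-v_{i,2n}/v_{2n,2n}$, and the signed sums $\mathbf{s}^{\top}R$ do collapse, so that the leading term obeys $\|\widetilde S R\|_{\max}=O\bigl(b_n^2/(n^2c_n)\bigr)$, which is within the claimed order since $c_n\le b_n$.

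The gap is the passage from these facts to a bound on $\widetilde W$ itself. Writing $\widetilde W=-\widetilde V^{-1}R$ and substituting $\widetilde V^{-1}=\widetilde S+\widetilde W$ gives $\widetilde W=-\widetilde S R-\widetilde W R$, and your cancellation argument controls only the first term. For the second term the only available estimate is $\|\widetilde W R\|_{\max}\le\|\widetilde W\|_{\max}\|R\|_{1}$, where $\|R\|_1$ is the maximum absolute column sum of $R$; but the columns of $R$ are nonnegative-dominated vectors whose block sums are of order one, not $o(1)$: in a generic out-column one has $\sum_{k\,\in\,\text{out}}R_{kj}=\bigl(rv_{1,2n}+\sum_{k>r}v_{k,2n}\bigr)/v_{2n,2n}=1$ and the in-block part contributes another $1-v_{j,2n}/v_{jj}$, so $\|R\|_1\approx 2$. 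Hence the remainder is as large as $\widetilde W$ itself, the Neumann series in $R$ need not converge, and the fixed-point inequality $\|\widetilde W\|_{\max}\le\|\widetilde S R\|_{\max}+2\|\widetilde W\|_{\max}$ yields nothing. Equivalently, $(I+R)^{-1}=\widetilde S^{-1}\widetilde V^{-1}$, so inverting $I+R$ is the original problem in disguise---the scheme is circular. The Schur-complement alternative you mention in one sentence does not rescue this: eliminating the in-degree block produces an $(n-r+1)$-dimensional matrix with positive diagonal, negative off-diagonals, and tiny row sums $v_{i,2n}$, i.e.\ another member of the same hard-to-invert class, which again requires an entrywise inverse-approximation lemma as input. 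That missing ingredient---positivity/diagonal-dominance (Simons--Yao-type) estimates for inverses of matrices in this class, which is what the proof of Proposition 1 of \cite{Yan:Leng:Zhu:2016} actually supplies---is the heart of the cited result, and your sketch replaces it with a mechanism that does not close. (Your concern about obtaining the sharp constant $1$ is a red herring: compare Lemma \ref{lemma-appro-beta-VS}, which the paper states with an unspecified constant $c_0$; only the order matters here.)
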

%As we can see, the order of the above approximation error is the same as that in \eqref{ineq-V-S-appro-upper-b} and is independent of $r$.

%Similar to Lemma \ref{lemma-clt-beta-W}, we have the following bound, which is independent of $r$.

\begin{lemma}\label{lemma-tilde-W}
For any given $r\in\{1, \ldots, n-1\}$, we have
\[
(\bs{\widetilde{\mathbf{g}}} - \E\bs{\widetilde{\mathbf{g}}} )^\top \widetilde{W}  (\bs{\widetilde{\mathbf{g}}} - \E\bs{\widetilde{\mathbf{g}}} ) = O_p\left( \frac{ b_n^3 }{ c_n^3 } \right).
\]
\end{lemma}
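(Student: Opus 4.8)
The plan is to rewrite the quadratic form
$Q:=(\bs{\widetilde{\mathbf{g}}}-\E\bs{\widetilde{\mathbf{g}}})^\top\widetilde{W}(\bs{\widetilde{\mathbf{g}}}-\E\bs{\widetilde{\mathbf{g}}})$
as a quadratic form in the \emph{independent} centered edge variables $\bar a_{i,j}$ and then control its mean and variance separately. Every coordinate of $\bs{\widetilde{\mathbf{g}}}-\E\bs{\widetilde{\mathbf{g}}}$ is a $0/1$-linear combination of the $\bar a_{i,j}$, since $\bar d_i=\sum_{j\ne i}\bar a_{i,j}$, $\bar b_j=\sum_{i\ne j}\bar a_{i,j}$, and the pooled first coordinate equals $\sum_{i=1}^r\bar d_i$. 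Hence there is a symmetric array $B=(B_{(i,j),(k,l)})$ with $Q=\sum_{i\ne j}\sum_{k\ne l}B_{(i,j),(k,l)}\,\bar a_{i,j}\bar a_{k,l}$, where $\widetilde{W}$ is symmetric as the difference $\widetilde{V}^{-1}-\widetilde{S}$. The crucial structural observation is that each edge $(i,j)$ contributes to at most two coordinates of $\bs{\widetilde{\mathbf{g}}}$: its out-coordinate (the index of $d_i$, or the pooled first coordinate when $i\le r$) and its in-coordinate $b_j$ (which is absent when $j=n$, since $b_n$ is excluded). Consequently each $B_{(i,j),(k,l)}$ is a sum of at most four entries of $\widetilde{W}$, so that $|B_{(i,j),(k,l)}|\le 4\|\widetilde{W}\|_{\max}\le 4b_n^3/(n^2c_n^2)$ by Lemma~\ref{lemma-beta-approx-ho}.

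I would then bound $\E Q$. Because distinct $\bar a_{i,j}$ are independent and centered, only the diagonal survives and $\E Q=\sum_{i\ne j}B_{(i,j),(i,j)}\,\mathrm{Var}(a_{i,j})$. Using $|B_{(i,j),(i,j)}|\le 4\|\widetilde{W}\|_{\max}$, $\mathrm{Var}(a_{i,j})\le 1/c_n$, and the fact that there are $n(n-1)\le n^2$ edges, this gives $|\E Q|\le 4n^2\cdot \tfrac{b_n^3}{n^2c_n^2}\cdot\tfrac{1}{c_n}=O(b_n^3/c_n^3)$. (Equivalently, one can identify $\E Q=\mathrm{tr}(\widetilde{W}\widetilde{V})$ and bound it from the sparsity of $\widetilde{V}$, but the edge-level count is shorter and avoids analyzing the pooled row of $\widetilde{V}$.)

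For the fluctuation I would invoke the standard variance identity for a quadratic form in independent mean-zero variables,
\[
\mathrm{Var}(Q)=\sum_{i\ne j}B_{(i,j),(i,j)}^2\,\mathrm{Var}(\bar a_{i,j}^2)+2\!\!\sum_{(i,j)\ne(k,l)}\!\!B_{(i,j),(k,l)}^2\,\mathrm{Var}(a_{i,j})\,\mathrm{Var}(a_{k,l}),
\]
together with $|\bar a_{i,j}|\le 1$ (hence $\mathrm{Var}(\bar a_{i,j}^2)\le \E\bar a_{i,j}^2\le 1/c_n$) and $\mathrm{Var}(a_{i,j})\le 1/c_n$. The diagonal part sums over $\le n^2$ terms and contributes at most $c_n^{-1}\cdot n^2\cdot(4\|\widetilde{W}\|_{\max})^2=O(b_n^6/(n^2c_n^5))$, while the off-diagonal part sums over $\le n^4$ index pairs and contributes at most $2c_n^{-2}\cdot n^4\cdot(4\|\widetilde{W}\|_{\max})^2=O(b_n^6/c_n^6)$. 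Thus $\mathrm{Var}(Q)=O(b_n^6/c_n^6)$, and Chebyshev's inequality yields $Q=\E Q+O_p(\sqrt{\mathrm{Var}(Q)})=O_p(b_n^3/c_n^3)$, as claimed.

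The main obstacle is the bookkeeping behind the bound $|B_{(i,j),(k,l)}|\le 4\|\widetilde{W}\|_{\max}$: one must verify carefully that pooling the first $r$ out-degrees into a single coordinate, and dropping $b_n$, still leaves every edge contributing to at most two coordinates of $\bs{\widetilde{\mathbf{g}}}$, so that no entry of $B$ aggregates more than four entries of $\widetilde{W}$. Once $\|\widetilde{W}\|_{\max}$ is controlled through Lemma~\ref{lemma-beta-approx-ho}, the remaining ingredients — the edge-level representation of $Q$, the first- and second-moment identities, and the concluding Chebyshev step — are routine.
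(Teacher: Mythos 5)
Your proposal is correct, and while it shares the paper's high-level skeleton (control the mean and variance of the quadratic form $Q := (\bs{\widetilde{\mathbf{g}}} - \E\bs{\widetilde{\mathbf{g}}})^\top\widetilde{W}(\bs{\widetilde{\mathbf{g}}} - \E\bs{\widetilde{\mathbf{g}}})$ via the entrywise bound $\|\widetilde{W}\|_{\max}\le b_n^3/(n^2c_n^2)$ from Lemma \ref{lemma-beta-approx-ho}, then apply Chebyshev), your execution of both steps is genuinely different. The paper claims the mean is \emph{exactly} zero through the trace identity $\E Q=\mathrm{tr}(\widetilde{W}\widetilde{V})=\mathrm{tr}(I_{2n-r}-\widetilde{S}\widetilde{V})=0$, and handles the variance by ``unpooling'' the first coordinate: it introduces the matrix $R$ whose blocks replicate $\tilde{w}_{11}$ and $\bs{\tilde{w}}_{12}$, so that $Q=\bs{\bar{\mathbf{g}}}^\top R\,\bs{\bar{\mathbf{g}}}$ with $\|R\|_{\max}=\|\widetilde{W}\|_{\max}$, and then reuses the covariance case analysis from the proof of Lemma \ref{lemma-clt-beta-W}, which works with the \emph{dependent} degree coordinates and enumerates coincidence patterns of four indices. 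You instead descend to the independent edge variables $\bar{a}_{i,j}$: your observation that each edge touches at most two coordinates of $\bs{\widetilde{\mathbf{g}}}$ (so each coefficient $B_{(i,j),(k,l)}$ aggregates at most four entries of $\widetilde{W}$) plays exactly the role of the paper's $R$ construction, after which the exact mean/variance identities for quadratic forms in independent centered variables replace the case analysis. Your route is more self-contained, and also more robust, because it needs only $|\E Q|=O(b_n^3/c_n^3)$ rather than exact cancellation. That robustness is not academic: the trace identity is delicate, since with $v_{2n,2n}=\sum_{i=1}^{n-1}\mu^\prime(\alpha_i+\beta_n)$ (the convention of \cite{Yan:Leng:Zhu:2016}) each diagonal entry of $\widetilde{V}\widetilde{S}$ attached to an out-degree coordinate exceeds $1$ by $\mu^\prime(\alpha_i+\beta_n)/v_{2n,2n}$ — the edges pointing into node $n$, whose in-degree is excluded from $\bs{\widetilde{\mathbf{g}}}$ — and these excesses sum to exactly $1$, giving $\mathrm{tr}(\widetilde{S}\widetilde{V})=2n-r+1$ and hence $\E Q=-1$ rather than $0$ as asserted in \eqref{wd-expectation2}. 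Similarly, the displayed variance order in \eqref{Wd-op2} should read $O(b_n^6/c_n^6)$ (which is what your computation and the Lemma \ref{lemma-clt-beta-W}-style argument actually deliver), since that is what Chebyshev requires for the stated $O_p(b_n^3/c_n^3)$. Both slips are harmless for the lemma's conclusion, but your bound-based argument sidesteps them entirely; what the paper's route buys in exchange is modularity, since the variance computation is done once in Lemma \ref{lemma-clt-beta-W} and merely cited here.
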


Recall that $\bs{\widehat{\theta}}^0$ denotes
the restricted MLE of $\bs{\theta}=(\alpha_{1},\ldots,\alpha_{n},\beta_{1},\ldots,\beta_{n-1})^\top$. Under the null $H_0: \alpha_1=\cdots = \alpha_r$,
we have $\widehat{\alpha}_1^0 = \cdots = \widehat{\alpha}_r^0$.
%Similar to the proof of Lemma \ref{lemma-consi-beta}, we have the following consistency result.

\begin{lemma}\label{lemma-con-beta-b}
Under the null $H_0: \alpha_1=\cdots = \alpha_r$, if
\[
\left( \frac{(n-r+2)b_{n}}{c_{n}} + \frac{b_n^3}{c_n^3} \right)\left\{ b_n + \frac{ b_n^3 }{ c_n^2} \left( \frac{ r^{1/2} }{ n } + \frac{2n-r-1}{n} \right) \right\} =o\left(\sqrt{\frac{n}{\log n}}\right),
\]
then with probability at least $1-2(2n-r)/n^2$, $\bs{\widehat{\theta}}^0$ exists and satisfies
\begin{equation*}
\| \bs{\widehat{\theta}}^0 - \bs{\theta} \|_\infty \lesssim \left(  b_n + \frac{ b_n^3 }{ c_n^2} ( \frac{r^{1/2}}{n} + \frac{2n-r-1}{n} )  \right)\sqrt{ \frac{\log n}{n} } .
\end{equation*}
Further, if  $\bs{\widehat{\theta}}^0$ exists, it must be unique.
\end{lemma}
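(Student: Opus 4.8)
The plan is to identify $\bs{\widehat\theta}^0$ with the unique root of the reduced score equation and to prove its existence and the stated $\ell_\infty$ rate by a Newton--Kantorovich (fixed-point) argument anchored at the true parameter and built around the explicit approximate inverse $\widetilde S$ of Lemma \ref{lemma-beta-approx-ho}. Write $\widetilde{\bs\theta}=(\alpha_1,\alpha_{r+1},\dots,\alpha_n,\beta_1,\dots,\beta_{n-1})^\top$ for the free parameters under $H_0$, with true value $\widetilde{\bs\theta}^\ast$. Since the restricted log-likelihood depends on the data only through $\bs{\widetilde{\mathbf{g}}}$, the restricted MLE solves $F(\widetilde{\bs\theta}):=\E_{\widetilde{\bs\theta}}\bs{\widetilde{\mathbf{g}}}-\bs{\widetilde{\mathbf{g}}}=\mathbf{0}$, whose Jacobian is exactly the Fisher information $\widetilde V$ of \eqref{definition-tilde-V}. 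First I would set up the Newton-type iteration started at $\widetilde{\bs\theta}^{(0)}=\widetilde{\bs\theta}^\ast$, replacing the intractable $\widetilde V^{-1}$ by $\widetilde S$ and controlling this replacement through $\|\widetilde W\|_{\max}\le b_n^3/(n^2c_n^2)$ from Lemma \ref{lemma-beta-approx-ho}, and then verify that the iterates contract inside a shrinking $\ell_\infty$-ball.

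The argument rests on three quantitative ingredients, which I would assemble in order. First, a bound on the initial residual $F(\widetilde{\bs\theta}^\ast)=\E\bs{\widetilde{\mathbf{g}}}-\bs{\widetilde{\mathbf{g}}}$: each generic coordinate $\bar d_i$ or $\bar b_j$ is a sum of independent centered Bernoulli variables of variance at most $1/c_n$, so Bernstein's inequality gives $\max_i|\bar d_i|,\max_j|\bar b_j|=O_p(\sqrt{n\log n/c_n})$, while the aggregated first coordinate $\sum_{i=1}^r\bar d_i$, a sum over $r(n-1)$ independent edges, is $O_p(\sqrt{rn\log n/c_n})$; a union bound over the $2n-r$ coordinates, each failing with probability at most $2/n^2$ at threshold proportional to $\sqrt{\log n}$, yields the asserted probability $1-2(2n-r)/n^2$. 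Second, the operator-norm estimate $\|\widetilde V^{-1}\|_\infty\le\|\widetilde S\|_\infty+(2n-r)\|\widetilde W\|_{\max}\lesssim b_n+b_n^3/(nc_n^2)$, read off the row sums of \eqref{definition-S222}. Third, a Lipschitz estimate for $\widetilde V$: every entry is a sum of terms $\mu'(\pi_{ij})$ whose parameter derivatives are governed by $|\mu''|,|\mu'''|\le 1/c_n$ from \eqref{ineq-mu-deriv-bound}, and summing over the active indices produces a Lipschitz modulus of order $(n-r)/c_n$, reflecting the reduced dimension of the null space. The product of the inverse-norm and this modulus is precisely the first bracket $\tfrac{(n-r+2)b_n}{c_n}+\tfrac{b_n^3}{c_n^3}$ in the hypothesis.

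Combining these via the Kantorovich criterion, the hypothesis $(\cdots)=o(\sqrt{n/\log n})$ is equivalent to $(\text{first bracket})\times(\text{rate})=o(1)$, the smallness condition that keeps the Newton iterates inside the neighborhood where both the Lipschitz bound and the $\widetilde S$-approximation remain valid and drives them to a root $\bs{\widehat\theta}^0$ with $\|\bs{\widehat\theta}^0-\bs\theta\|_\infty\lesssim\|\widetilde V^{-1}F(\widetilde{\bs\theta}^\ast)\|_\infty$. Expanding $\widetilde V^{-1}=\widetilde S+\widetilde W$ then yields the two-scale rate: the diagonal entries $1/v_{i,i}\lesssim b_n/n$ of $\widetilde S$ acting on the generic residuals give the leading $b_n\sqrt{\log n/n}$, whereas $\widetilde W$ acting on the aggregated coordinate contributes $O(\tfrac{b_n^3}{c_n^2}\cdot\tfrac{r^{1/2}}{n}\sqrt{\log n/n})$ and $\widetilde W$ acting on the remaining $2n-r-1$ coordinates contributes $O(\tfrac{b_n^3}{c_n^2}\cdot\tfrac{2n-r-1}{n}\sqrt{\log n/n})$, recovering the stated bound. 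Uniqueness is separate and easy: since $\ell$ is strictly concave in $\widetilde{\bs\theta}$ (its negative Hessian $\widetilde V$ is positive definite under the identification \eqref{eq-identification}), any stationary point of the restricted likelihood is the unique global maximizer, so the root produced above is the unique restricted MLE.

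The hard part will be the anisotropy introduced by the aggregation $\sum_{i=1}^r d_i$: the first coordinate carries a larger fluctuation scale ($\sqrt{rn}$ rather than $\sqrt{n}$) and a curvature $\tilde v_{11}$ of order $r$ times that of a generic coordinate, so the componentwise $\ell_\infty$ control cannot come from a single uniform estimate but must be split into the aggregated block and the generic block and then recombined through the off-diagonal $-1/v_{2n,2n}$ structure of $\widetilde S$, where the leading-order cancellations must be tracked carefully. Verifying that the iterates stay in the region where Lemma \ref{lemma-beta-approx-ho} and the Lipschitz bound apply --- that is, that the self-improving contraction step holds uniformly over this neighborhood --- is the delicate point; everything else reduces to the concentration estimate and the matrix-norm bounds already available.
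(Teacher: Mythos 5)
Your proposal follows essentially the same route as the paper's own proof: a Newton--Kantorovich argument anchored at the true parameter, with the score residual controlled by exponential concentration plus a union bound (the paper uses Hoeffding where you use Bernstein, which is immaterial), the Jacobian's Lipschitz behavior and the inverse split as $\widetilde{V}^{-1}=\widetilde{S}+\widetilde{W}$ via Lemma~\ref{lemma-beta-approx-ho}, the hypothesis entering exactly as the smallness condition $K\eta=o(1)$, and the two-scale rate recovered from the $\widetilde{S}$ and $\widetilde{W}$ contributions (including the separate treatment of the aggregated coordinate $\sum_{i=1}^r \bar d_i$), with uniqueness from strict concavity. This matches the paper's three-step proof in Section~\ref{section-lemma8}.
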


From the above lemma, we can see that the condition to guarantee consistency and the error bound depends on
$r$.
%Larger $r$ means a weaker condition and a smaller error bound.
A rough condition in regardless of $r$ to guarantee consistency is $b_n^6/c_n^5=o( (n/\log n)^{1/2})$ that
will be used in the proof of Theorem \ref{theorem-LRT-beta} (a).
%To simplify calculations, we shall
%use the condition $b_n^6/c_n^5=o( (n/\log n)^{1/2})$ for consistency,
It implies an error bound $(b_n^3/c_n^2)(\log n/n)^{1/2}$.

The asymptotic representation of $\bs{\widehat{\theta}}^0$ is given below.
\begin{lemma}
\label{lemma-beta-homo-expan}
Under the null $H_0: \alpha_1=\cdots=\alpha_r$,
if $b_n^6/c_n^5 = o( (n/\log n)^{1/2})$, then for any given $r\in\{0, \ldots, n-1\}$, we have
\begin{eqnarray}
% \nonumber to remove numbering (before each equation)
\label{alpha1}
\widehat{\alpha}_1^0 - \alpha_1 & = & \frac{ \sum_{i=1}^r \bar{d}_i }{ \tilde{v}_{11} }+\frac{\bar{b}_{n}}{v_{2n,2n}}+\gamma_1^{0}, \\
\label{alphai}
 \widehat{\alpha}_i^0 - \alpha_i & = & \frac{ \bar{d}_i }{ v_{i,i} }+\frac{\bar{b}_{n}}{v_{2n,2n}} + \gamma_i^{0},~~ i=r+1, \ldots, n,\\
\label{betaj}
\widehat{\beta}_j^0 - \beta_j & = & \frac{ \bar{b}_j }{ v_{n+j,n+j} }-\frac{\bar {b}_{n}}{v_{2n,2n}} + \gamma_{n+j}^{0},~~ j=1, \ldots, n-1,
\end{eqnarray}
where $\gamma_1^{0}, \gamma_{r+1}^{0}, \ldots, \gamma_{2n-1}^{0}$ with probability at least $1- O(n^{-1})$ satisfy
\[
\gamma_i^{0} = (\widetilde{V}^{-1}\bs{\widetilde{h}})_i + [\widetilde{W} (\bs{\widetilde{\mathbf{g}}}- \E \bs{\widetilde{\mathbf{g}}})]_i = O\left( \frac{b_n^9 \log n}{ nc_n^7 } \right),
\]
uniformly, and  $\bs{\widetilde{h}}=(\tilde{h}_1, \tilde{h}_{r+1}, \ldots, \tilde{h}_{2n-1})^\top$ satisfies
\begin{equation}\label{eq-homo-tildeh}
\begin{array}{rcl}
|\tilde{h}_1| & \lesssim & \frac{rb_n^6\log n}{c_n^5}, \\
\max_{i=r+1, \ldots, 2n-1} |\tilde{h}_i| & \lesssim & \frac{ b_n^6 \log n}{c_n^5}.
\end{array}
\end{equation}
\end{lemma}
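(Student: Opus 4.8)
The plan is to linearize the restricted score equations and then split the expansion into an explicit leading term produced by $\widetilde{S}$ and a remainder shown to be uniformly negligible. Because the $p_0$ model is an exponential family, the restricted likelihood equations simply equate the reduced sufficient statistic to its mean, $\bs{\widetilde{\mathbf{g}}} = \E_{\widehat{\bs\theta}^0}\bs{\widetilde{\mathbf{g}}}$. Subtracting $\E_{\widetilde{\bs\theta}}\bs{\widetilde{\mathbf{g}}}=\E\bs{\widetilde{\mathbf{g}}}$ from both sides and Taylor-expanding the mean map $\bs\theta\mapsto\E_{\bs\theta}\bs{\widetilde{\mathbf{g}}}$ to second order about the truth $\widetilde{\bs\theta}$, whose Jacobian is exactly the Fisher information $\widetilde{V}$ in \eqref{definition-tilde-V}, I would write
\[
\bs{\widetilde{\mathbf{g}}} - \E\bs{\widetilde{\mathbf{g}}} = \widetilde{V}(\widehat{\bs\theta}^0 - \widetilde{\bs\theta}) - \bs{\widetilde{h}},
\]
where $\bs{\widetilde{h}}$ collects the quadratic remainder. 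Inverting and inserting $\widetilde{V}^{-1}=\widetilde{S}+\widetilde{W}$ from Lemma \ref{lemma-beta-approx-ho} gives
\[
\widehat{\bs\theta}^0 - \widetilde{\bs\theta} = \widetilde{S}(\bs{\widetilde{\mathbf{g}}} - \E\bs{\widetilde{\mathbf{g}}}) + \underbrace{\widetilde{W}(\bs{\widetilde{\mathbf{g}}} - \E\bs{\widetilde{\mathbf{g}}}) + \widetilde{V}^{-1}\bs{\widetilde{h}}}_{\gamma^0},
\]
so the task reduces to (i) evaluating $\widetilde{S}(\bs{\widetilde{\mathbf{g}}} - \E\bs{\widetilde{\mathbf{g}}})$ in closed form and (ii) bounding $\gamma^0$.

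For step (i) I would read the entries of $\widetilde{S}$ from \eqref{definition-S222} and compute the matrix–vector product row by row. For a regular out-degree row $i\in\{r+1,\dots,n\}$, the coupling entries $\pm 1/v_{2n,2n}$ act on all centered degrees, and the edge-count identity $\sum_{i}\bar d_i=\sum_{j=1}^{n}\bar b_j$ collapses the off-diagonal contribution to the single term $\bar b_n/v_{2n,2n}$; this reproduces \eqref{alphai}, and the same bookkeeping yields \eqref{alpha1} (with $\tilde v_{11}$ on the aggregated coordinate) and \eqref{betaj} (with the sign of the $\bar b_n$ term reversed). This part is a direct, if careful, calculation.

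For step (ii) I would first invoke the rough consistency bound $\|\widehat{\bs\theta}^0 - \widetilde{\bs\theta}\|_\infty\lesssim (b_n^3/c_n^2)\sqrt{\log n/n}$ from Lemma \ref{lemma-con-beta-b}. Since each coordinate of the mean map is a sum of $\mu(\pi_{ij})$ terms and $|\mu''|\le c_n^{-1}$ by \eqref{ineq-mu-deriv-bound}, the quadratic remainder in a regular coordinate is a sum of $O(n)$ contributions, giving $|\tilde h_i|\lesssim n(b_n^3/c_n^2)^2(\log n/n)/c_n = b_n^6\log n/c_n^5$; the aggregated coordinate sums $r$ such blocks and hence acquires the extra factor $r$, giving $|\tilde h_1|\lesssim r b_n^6\log n/c_n^5$, which is \eqref{eq-homo-tildeh}. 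Then $\widetilde{V}^{-1}\bs{\widetilde h}=\widetilde S\bs{\widetilde h}+\widetilde W\bs{\widetilde h}$: writing $R_{d_i}$ and $R_{b_j}$ for the per-coordinate quadratic remainders, the dangerous $\tilde h_1$ term inside $\widetilde S\bs{\widetilde h}$ cancels against the remaining remainders via $\sum_{i=1}^n R_{d_i}=\sum_{j=1}^{n}R_{b_j}$, leaving only the ordinary-sized $R_{b_n}$, whereas $\widetilde W\bs{\widetilde h}$ is controlled crudely by $\|\widetilde W\|_{\max}\le b_n^3/(n^2c_n^2)$ and $\|\bs{\widetilde h}\|_\infty\lesssim r b_n^6\log n/c_n^5$ together with $r\le n$, yielding the stated $O(b_n^9\log n/(nc_n^7))$. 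Finally $\widetilde W(\bs{\widetilde{\mathbf{g}}}-\E\bs{\widetilde{\mathbf{g}}})$ is a centered linear form in the $a_{i,j}$ whose standard deviation is read off from $\|\widetilde W\|_{\max}$ and the covariance structure $\widetilde V$ of $\bs{\widetilde{\mathbf{g}}}$, and a Bernstein inequality with a union bound over the $2n-r$ coordinates keeps it within the same order with probability $1-O(n^{-1})$.

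The main obstacle is the aggregated coordinate. Its remainder $\tilde h_1$ is genuinely a factor $r$ larger than the others, and a naive triangle-inequality estimate of $\widetilde{V}^{-1}\bs{\widetilde h}$ would be lossy by exactly this factor. The observation that rescues the argument is the edge-count conservation identity $\sum_{i=1}^n R_{d_i}=\sum_{j=1}^{n}R_{b_j}$, which forces the large $\tilde h_1$ to telescope against the remaining out- and in-degree remainders inside the well-structured matrix $\widetilde S$, leaving behind only a single ordinary remainder $R_{b_n}$; the residual factor-$r$ growth survives only in the unstructured piece $\widetilde W\bs{\widetilde h}$, where the smallness $\|\widetilde W\|_{\max}=O(b_n^3/(n^2c_n^2))$ absorbs it and produces precisely the claimed rate. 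Verifying this cancellation and tracking the powers of $b_n$ and $c_n$ uniformly over all $2n-r$ coordinates is where the bulk of the work lies.
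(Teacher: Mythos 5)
Your proposal is correct and follows essentially the same route as the paper's proof: a second-order Taylor expansion of the restricted likelihood equations, the splitting $\widetilde{V}^{-1}=\widetilde{S}+\widetilde{W}$ from Lemma \ref{lemma-beta-approx-ho}, the consistency bound of Lemma \ref{lemma-con-beta-b} to control $\bs{\widetilde{h}}$, and Bernstein's inequality with a union bound for $\widetilde{W}(\bs{\widetilde{\mathbf{g}}}-\E\bs{\widetilde{\mathbf{g}}})$. Your telescoping identity for the coupling part of $\widetilde{S}\bs{\widetilde{h}}$ (reducing it to the single ordinary-sized in-degree remainder) is exactly what the paper uses implicitly through its $\tilde{h}_{2n}/v_{2n,2n}$ term, and your handling of the aggregated coordinate via $\tilde{v}_{11}\asymp rn/b_n$ matches as well.
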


Now, we are ready to prove  Theorem \ref{theorem-LRT-beta} (a).

\proof [Proof of Theorem \ref{theorem-LRT-beta} \rm(a)]
Under the null $H_0: \alpha_1=\cdots=\alpha_r$, the data generating parameter $\bs{\theta}$ is equal to $(\underbrace{\alpha_1, \ldots, \alpha_1}_r, \alpha_{r+1}, \ldots, \alpha_n, \beta_{1}, \ldots, \beta_{n-1})^\top$.
The following calculations are based on the event $E_n$ that $\bs{\widehat{\theta}}$ and $\bs{\widehat{\theta}}^0$ simultaneously exist and satisfy
\begin{equation}\label{ineq-beta-beta0-upp1}
 \max\left\{ \| \widehat{\bs{\theta}} - \bs{\theta} \|_\infty, \|\widehat{\bs{\theta}}^0 - \bs{\theta} \|_\infty\right\}\lesssim \frac{b_n^3}{c_n^2}\sqrt{ \frac{\log n}{n} }.
\end{equation}
By Lemma \ref{lemma-con-beta-b}, $\P(E_n) \ge 1 - O(n^{-1})$ if $b_n^6/c_n^5 = o( (n/\log n)^{1/2})$.

Applying a fourth-order Taylor expansion to $\ell(\widehat{\bs{\theta}} ) $ at point $ \bs{\theta}$, it yields
\begin{eqnarray*}
\ell(\widehat{\bs{\theta}} ) - \ell( \bs{\theta} ) & = &
\underbrace{
\frac{\partial \ell( \bs{\theta} ) }{ \partial \bs{\theta}^\top } ( \widehat{\bs{\theta}} - \bs{\theta} ) +
\frac{1}{2} ( \widehat{\bs{\theta}} - \bs{\theta} )^\top \frac{ \partial^2 \ell( \bs{\theta} ) }{ \partial \bs{\theta} \partial\bs{\theta}^\top } ( \widehat{\bs{\theta}} - \bs{\theta} ) }_{B_1} \\
& & +  \frac{1}{6} \underbrace{\sum_{i=1}^{2n-1} \sum_{j=1}^{2n-1} \sum_{k=1}^{2n-1} \frac{ \partial^3 \ell(\bs{\theta})}{ \partial \theta_i \partial \theta_j \partial \theta_k }
( \widehat{\theta}_i - \theta_i)( \widehat{\theta}_j - \theta_j)( \widehat{\theta}_k - \theta_k) }_{B_2} \\
&& +  \frac{1}{4!} \underbrace{ \sum_{t=1}^{2n-1}\sum_{i=1}^{2n-1} \sum_{j=1}^{2n-1} \sum_{k=1}^{2n-1} \frac{ \partial^4 \ell(\bs{\tilde{\theta}})}{ \partial \theta_t \partial \theta_i \partial \theta_j \partial \theta_k } ( \widehat{\theta}_t - \theta_t)
( \widehat{\theta}_i - \theta_i)( \widehat{\theta}_j -\theta_j)( \widehat{\theta}_k - \theta_k) }_{B_3},
\end{eqnarray*}
where $\bs{\tilde{\theta}} = t \bs{\theta} + (1-t ) \bs{\widehat{\theta}}$ for some $t\in(0,1)$.
Correspondingly, $\ell(\widehat{\bs{\theta}}^0 )$ has the following expansion:
\begin{equation*}
\ell(\widehat{\bs{\theta}}^0 ) - \ell( \bs{\theta} ) = B_1^0 + \frac{1}{6}B_2^0 + \frac{1}{4!}B_3^0,
\end{equation*}
where $B_i^0$ is the version of $B_i$ with $\widehat{\bs{\theta}}$ replaced by $\widehat{\bs{\theta}}^0$.
Therefore,
\begin{equation}\label{eq-ell-difference}
2\{ \ell(\widehat{\bs{\theta}} ) - \ell(\widehat{\bs{\theta}}^0 ) \}
= 2( B_1 - B_1^0)  + \frac{1}{3}(B_2 - B_2^0) + \frac{1}{12}(B_3 - B_3^0).
\end{equation}
Recall that
\begin{eqnarray*}
\frac{\partial \ell( \bs{\theta} ) }{\partial \bs{\theta}^\top }  =  \bs{\mathbf{g}} - \E \bs{\mathbf{g}},
~~
V= - \frac{\partial^2 \ell( \bs{\theta} )
}{\partial \bs{\theta} \partial \bs{\theta}^\top }.
\end{eqnarray*}
Therefore, $B_1$ can be written as
\begin{equation}
\label{lrt-a-beta-B1}
B_1  =   ( \bs{\widehat{\theta} } - \bs{\theta})^\top \bs{\bar{\mathbf{g}}}
- \frac{1}{2} ( \bs{\widehat{\theta} } - \bs{\theta})^\top V( \bs{\widehat{\theta} } - \bs{\theta}).
\end{equation}
Let $\ell'_{i}(\boldsymbol{\theta})=(\ell'_{i,r+1}(\boldsymbol{\theta}),\ldots,\ell'_{i,2n-1}(\boldsymbol{\theta})):=
(\frac{\partial\ell_{i}}{\partial\alpha_{r+1}},\ldots,\frac{\partial\ell_{i}}{\partial\alpha_{n}},\frac{\partial\ell_{i}}{\partial\beta_{1}},\ldots,
\frac{\partial\ell_{i}}{\partial\beta_{n-1}}).$ For the third-order expansion terms in $B_2$, for $i=r+1,\ldots,n$, we have
\begin{equation}\label{ell-thrid}
\begin{split}
  \frac{\partial^3 \ell_{i}(\boldsymbol{\theta})}{\partial \alpha_i^3 }&=-\sum_{j\neq i} \mu^{\prime\prime}( \pi_{ij} );~~\frac{\partial^3 \ell_{i}( \bs{\theta} )
}{\partial\alpha_i\partial\alpha_j\partial\alpha_k } = 0,i\neq j\neq k,\\
\frac{\partial^3 \ell_{i}(\boldsymbol{\theta})}{\partial \alpha_i^2\partial \beta_j }&= -\mu^{\prime\prime}( \pi_{ij} ),j\neq i;~~
\frac{\partial^3 \ell_{i}(\boldsymbol{\theta})}{\partial \alpha_i^2\partial \beta_i }=0,\\
\frac{\partial^3 \ell_{i}(\boldsymbol{\theta})}{\partial \alpha_i\partial \beta_j^{2} }&= -\mu^{\prime\prime}( \pi_{ij} ),i\neq j;~~
\frac{\partial^3 \ell_{i}(\boldsymbol{\theta})}{\partial \alpha_i\partial \beta_j \partial \beta_k }=0, i\neq j\neq k,
\end{split}
\end{equation}
and if there are at least three different values among the four indices $i, j, k, t$, we have
\begin{equation}\label{ell-fourth}
 \frac{\partial^4 \ell( \bs{\theta} )
}{\partial \alpha_i \partial \alpha_j \partial \alpha_k \partial \alpha_t } = 0.
\end{equation}
For $i=n+1,\ldots,2n-1$, we also have the similar result.
Therefore, $B_2$ and $B_3$ have the following expressions:
\begin{eqnarray}
\nonumber
-B_2  & = &  \sum_{i} (\widehat{\alpha}_i-\alpha_i)^3 \sum_{j\neq i} \mu^{\prime\prime}( \pi_{ij} )
+ 3\sum_{i}\sum_{j, j\neq i}  (\widehat{\alpha}_i-\alpha_i)^2(\widehat{\beta}_j-\beta_j)\mu^{\prime\prime}( \pi_{ij} )\\
\label{lrt-a-beta-B2}
&+&\sum_{j}  (\widehat{\beta}_j-\beta_j)^3 \sum_{j\neq i} \mu^{\prime\prime}( \pi_{ij} )
+3\sum_{i}\sum_{j,j\neq i}  (\widehat{\alpha}_i-\alpha_i)(\widehat{\beta}_j-\beta_j)^2\mu^{\prime\prime}( \pi_{ij} ), \\
\nonumber
-B_3 & = &  \sum_{i}  (\widehat{\alpha}_i-\alpha_i)^4 \sum_{j\neq i} \mu^{\prime\prime\prime}( \bar{\pi}_{ij} )
 +\sum_{j}  (\widehat{\beta}_j-\beta_j)^4 \sum_{j\neq i} \mu^{\prime\prime\prime}( \bar{\pi}_{ij} )\\
\nonumber
&+&4\sum_{i}\sum_{j,j\neq i}  (\widehat{\alpha}_i-\alpha_i)(\widehat{\beta}_j-\beta_j)^3\mu^{\prime\prime\prime}( \bar{\pi}_{ij} )
+6\sum_{i}\sum_{j,j\neq i}  (\widehat{\alpha}_i-\alpha_i)^2(\widehat{\beta}_j-\beta_j)^2\mu^{\prime\prime\prime}( \bar{\pi}_{ij}) \\
\label{lrt-a-beta-B3}
&+&4\sum_{i}\sum_{j, j\neq i}  (\widehat{\alpha}_i-\alpha_i)^3(\widehat{\beta}_j-\beta_j)\mu^{\prime\prime\prime}( \bar{\pi}_{ij} ),
\end{eqnarray}
where $\bar{\pi}_{ij}$ lies between $\widehat{\pi}_{ij}$ and $\pi_{ij}$.

It is sufficient to demonstrate:
(1) $\{2( B_1 - B_1^0)-r\}/(2r)^{1/2}$ converges in distribution to the standard normal distribution;
(2) $(B_2 - B_2^0)/r^{1/2}=o_p(1)$; (3) $(B_3-B_3^0)/r^{1/2}=o_p(1)$.
The fourth-order Taylor expansion for $\ell(\bs{\widehat{\theta}}^0)$ here is with regard to the vector $(\alpha_1, \alpha_{r+1}, \ldots, \alpha_n, \beta_1, \ldots, \beta_{n-1})^\top$
because $\alpha_1, \ldots, \alpha_r$ are the same under the null here. As we shall see, the expressions of $B_1^0$ and $B_2^0$ are a little different from $B_1$ and $B_2$
except from the difference $\bs{\widehat{\theta}}$ and $\bs{\widehat{\theta}}^0$.
In view of \eqref{ineq-mu-deriv-bound} and \eqref{ineq-beta-beta0-upp1}, if $b_n^{12}/c_n^{9}=o( r^{1/2}/(\log n)^2 )$, then
\begin{eqnarray}
\label{ineq-B3-upper}
\frac{ |B_3| }{ r^{1/2} } & \lesssim & \frac{1}{r^{1/2}} \cdot
\frac{n^2}{c_n} \cdot \| \bs{\widehat{\theta}} - \bs{\theta} \|_\infty^4 \lesssim \frac{ b_n^{12}(\log n)^2 }{ r^{1/2} c_n^{9} } = o(1), \\
\label{ineq-B30-upper}
\frac{ |B_3^0| }{ r^{1/2} } & \lesssim & \frac{1}{r^{1/2}} \cdot
\frac{n(n-r)}{c_n} \cdot \| \bs{\widehat{\theta}}^0 - \bs{\theta} \|_\infty^4 \lesssim \frac{ b_n^{12}(\log n)^2 }{ r^{1/2} c_n^{9} } = o(1),
\end{eqnarray}
which shows the third claim.
%In Lemma \ref{lemma:beta3:err}, we show $B_2/r^{1/2} = o_p(1)$.
For second claim, it is sufficient to show $B_2^0/r^{1/2}=o_p(1)$.
Under the null $H_0:  \alpha_1=\cdots=\alpha_r$,  $B_2^0$ can be written as
\begin{eqnarray*}
-B_2^0  & = & \sum_{i=1}^{r}\sum_{j\neq i}\mu^{\prime\prime}(\pi_{ij}) (\widehat{\alpha}_1^0 - \alpha_1)^3
+\sum_{j=1}^{r} \sum_{i=r+1,i\neq j}^n (\mu^{\prime\prime}( \pi_{ij})+(r-1)\mu^{\prime\prime}(\pi_{1j}))( \widehat{\beta}_j^0 - \beta_j)^3  \\
&&+\sum_{j=r+1}^{n-1} \sum_{i=r+1,i\neq j}^n (\mu^{\prime\prime}( \pi_{ij})+r\mu^{\prime\prime}(\pi_{1j}))( \widehat{\beta}_j^0 - \beta_j)^3+ \sum_{i=r+1}^n \sum_{j\neq i}\mu^{\prime\prime}( \pi_{ij}) ( \widehat{\alpha}_i^0  - \alpha_i)^3 \\
&&+3(r -1)\sum_{j=1}^r \mu^{\prime\prime}(\pi_{1j}) ( \widehat{\alpha}_1^0  - \alpha_1)^2 ( \widehat{\beta}_j^0 - \beta_j)
+3r\sum_{j=r+1}^{n-1} \mu^{\prime\prime}(\pi_{1j}) ( \widehat{\alpha}_1^0  - \alpha_1)^2 ( \widehat{\beta}_j^0 - \beta_j)\\
&&+3(r -1)\sum_{j=1}^r \mu^{\prime\prime}(\pi_{1j}) ( \widehat{\alpha}_1^0  - \alpha_1) ( \widehat{\beta}_j^0 - \beta_j)^2
+3r\sum_{j=r+1}^{n-1} \mu^{\prime\prime}(\pi_{1j}) ( \widehat{\alpha}_1^0  - \alpha_1) ( \widehat{\beta}_j^0 - \beta_j)^2\\
&&+3\sum_{i=r+1}^n\sum_{j\neq i} \mu^{\prime\prime}(\pi_{ij}) ( \widehat{\alpha}_i^0  - \alpha_i)^2 ( \widehat{\beta}_j^0 - \beta_j)
+3\sum_{i=r+1}^n\sum_{j\neq i} \mu^{\prime\prime}(\pi_{ij}) ( \widehat{\alpha}_i^0  - \alpha_i) ( \widehat{\beta}_j^0 - \beta_j)^2.
\end{eqnarray*}
With the use of the asymptotic representation of $\bs{\widehat{\theta}}^0$ in Lemma \ref{lemma-beta-homo-expan}, if
\[
b_n^{11}/c_n^8=o\left( \frac{n}{(\log n)^2r} \right)
 \mbox{~~and~~} b_n^{11}/c_n^8=o\left( \frac{n}{(\log n)^2(n-r)} \right),
\]
then we have
\begin{equation}\label{eq-thereom1b-z43}
\frac{B_2^0}{r^{1/2}} = o_p( 1 ),
\end{equation}
whose detailed calculations are given in the supplementary material.

Next, we show first claim. This contains three steps.
Step 1 is about explicit expressions of $\widehat{\bs{\theta}}$ and $\widehat{\bs{\theta}}^0$.
Step 2 is about the explicit expression of $B_1-B_1^0$. Step 3 is about showing that the main term involved with $B_1-B_1^0$
asymptotically follows a normal distribution and the remainder terms goes to zero.

Step 1. We characterize the asymptotic representations of $\widehat{\bs{\theta}}$ and $\widehat{\bs{\theta}}^0$.
Recall that $\pi_{ij}=\alpha_i+\beta_j$.
To simplify notations, define $\widehat{\pi}_{ij} = \widehat{\alpha}_i + \widehat{\beta}_j$.
A second-order Taylor expansion gives that
\begin{eqnarray*}\label{eq-expansion-beta-a}
\mu( \widehat{\pi}_{ij} )
&=& \mu( \pi_{ij} ) + \mu^\prime(\pi_{ij}) (\widehat{\pi}_{ij} - \pi_{ij}) +
\frac{1}{2} \mu^{\prime\prime}( \tilde{\pi}_{ij} ) (\widehat{\pi}_{ij} - \pi_{ij})^2,
\end{eqnarray*}
where $\tilde{\pi}_{ij}$ lies between $\widehat{\pi}_{ij}$ and $\pi_{ij}$.
Let\begin{equation}\label{eq:definition:h}
\begin{split}
  h_{i,j} & = \frac{1}{2}\mu^{\prime\prime}( \tilde{\pi}_{ij} )(\widehat{\pi}_{ij} - \pi_{ij})^2,
    h_{i} =\sum_{k=1;k\neq i}^{n}h_{i,k}, ~~i=1,\ldots,n,\\
    h_{n+i}&=\sum_{k=1;k\neq i}^{n}h_{k,i}, ~~i=1,\ldots,n-1,
    \textbf{h}=(h_{1},\ldots,h_{2n-1})^{\top}.
\end{split}
\end{equation}

In view of \eqref{ineq-mu-deriv-bound}, we have $|h_{i,j}|\leq \frac{1}{2c_{n}}\| \widehat{\boldsymbol{\theta}} - \boldsymbol{\theta}\|_\infty^2,
  |h_{i}|\leq\sum_{j=1;j\neq i}^{n}\Big|h_{i,j}\Big|\leq \frac{ n-1}{2c_{n}}\| \widehat{\boldsymbol{\theta}} - \boldsymbol{\theta}\|_\infty^2.$
\iffalse
\[
    \begin{split}
 & |h_{i,j}|\leq \frac{1}{2c_{n}}\| \widehat{\boldsymbol{\theta}} - \boldsymbol{\theta}\|_\infty^2,\\
 & |h_{i}|\leq\sum_{j=1;j\neq i}^{n}\Big|h_{i,j}\Big|\leq \frac{ n-1}{2c_{n}}\| \widehat{\boldsymbol{\theta}} - \boldsymbol{\theta}\|_\infty^2.
    \end{split}
    \]
 \fi
Writing the above equations into the matrix form, we have
\begin{equation*}
\bs{\mathbf{g}} - \E( \bs{\mathbf{g}} ) = V ( \widehat{\boldsymbol{\theta}} - \boldsymbol{\theta} ) + \bs{h}.
\end{equation*}
It yields that
\begin{equation}\label{eq-expansion-hatbeta-beta1}
\boldsymbol{\widehat{\theta}} - \boldsymbol{\theta} = V^{-1} \bs{\bar{\mathbf{g}}} - V^{-1}\mathbf{h}.
\end{equation}
Recall  $\bs{\widetilde{\mathbf{g}}}=(\sum_{i=1}^r d_i, d_{r+1},\ldots, d_n,b_{1},\ldots,b_{n-1})^{\top}$. Let $\boldsymbol{\widetilde{\theta}}^0= (\widehat{\alpha}_{1}^0,\widehat{\alpha}_{r+1}^0, \ldots, \widehat{\alpha}_n^0, \widehat{\beta}_{1}^{0}, \ldots, \widehat{\beta}_{n-1}^{0})^\top$ and
$\boldsymbol{\widetilde{\theta}}= ({\alpha}_{1},{\alpha}_{r+1}, \ldots, {\alpha}_n, {\beta}_{1}, \ldots,{\beta}_{n-1})^\top$.
Similar to \eqref{eq-expansion-hatbeta-beta1}, we have
\begin{equation}\label{eq-beta0-exapnsion1}
\boldsymbol{\widetilde{\theta}}^0 - \boldsymbol{\widetilde\theta} = \widetilde{V}^{-1}\bs{\widetilde{\mathbf{g}}}  - \widetilde{V}^{-1}\widetilde{\mathbf{{h}}},
\end{equation}
where $\mathbf{\widetilde{h}} = (\tilde{h}_{1}, \tilde{h}_{r+1},\ldots, \tilde{h}_{2n-1})^\top$ and
\begin{equation}\label{defintion-tilde-h}
 \tilde h_{i,j}  = \frac{1}{2}\mu^{\prime\prime}( \tilde{\pi}_{ij}^{0} )(\widehat{\pi}_{ij}^{0} - \pi_{ij})^2.
\end{equation}
In the above equation, $\tilde{\pi}_{ij}^0$ lies between $\pi_{ij}$ and $\widehat{\pi}_{ij}^0=\widehat{\alpha}_i^0 + \widehat{\beta}_j^0$.

Step 2. We derive the explicit expression of $B_1-B_1^0$.
Substituting \eqref{eq-expansion-hatbeta-beta1} and \eqref{eq-beta0-exapnsion1} into the expressions of $B_1$ in \eqref{lrt-a-beta-B1} and $B_1^0$ respectively, it yields
\begin{eqnarray*}\label{B1-expression}
2B_1 & = & \bs{\bar{\mathbf{g}}}^\top V^{-1} \bs{\bar{\mathbf{g}}} - \bs{h}^\top V^{-1} \bs{h},\\
\label{likelihood-beta-composite}
2B_1^0 & = &( \bs{\widetilde{\mathbf{g}}} - \E \bs{\widetilde{\mathbf{g}}} )^\top \widetilde{V}^{-1} ( \bs{\widetilde{\mathbf{g}}} - \E \bs{\widetilde{\mathbf{g}}} )
-\bs{\widetilde{h}}^\top \widetilde{V}^{-1} \bs{\widetilde{h}}.
\end{eqnarray*}
By using $\widetilde{S}$ and $S$ to approximate $\widetilde{V}^{-1}$ and $V^{-1}$ respectively,
we have
\begin{equation}\label{eq-theorem2-B101}
2(B_1 - B_1^0) =  \sum_{i=1}^r \frac{ \bar{d}_i^{\,2} }{ v_{i,i} } - \frac{ ( \sum_{i=1}^{r}\bar{d}_{i}) ^2 }{ \tilde{v}_{11} }
 +\bs{\bar{\mathbf{g}}}^\top W \bs{\bar{\mathbf{g}}} - ( \mathbf{\widetilde{\mathbf{g}}} - \E \mathbf{\widetilde{\mathbf{g}}} )^\top \widetilde{W} ( \mathbf{\widetilde{\mathbf{g}}} - \E \mathbf{\widetilde{\mathbf{g}}} )
-\bs{{h}}^\top V^{-1} \bs{{h}} + \bs{\widetilde{h}}^\top \widetilde{V}^{-1} \bs{\widetilde{h}}.
\end{equation}

Step 3. By Proposition 1 in \cite{Yan:Leng:Zhu:2016}, we have
\[
|\mathbf{h}^\top V^{-1} \mathbf{h}| \le (2n-1) \| \mathbf{h} \|_\infty \| V^{-1} \mathbf{h} \|_\infty
\lesssim
 (2n-1) \cdot \frac{b_n^6\log n}{c_n^{5}} \cdot \frac{ b_n^9 \log n}{nc_n^{7}} \lesssim \frac{b_n^{15} (\log n)^2}{c_n^{12}}.
\]
If $b_n^{15}/c_n^{12} =o( r^{1/2}/(\log n)^{2})$, then
\begin{equation}\label{eq-simi-aVh}
\frac{1}{r^{1/2}} |\mathbf{h}^\top V^{-1} \mathbf{h}| \lesssim \frac{ b_n^{15}(\log n)^2 }{r^{1/2}c_n^{12}}  = o_{p}(1).
\end{equation}
In view of \eqref{approxi-inv2-beta-ho} and \eqref{eq-homo-tildeh}, setting $\widetilde{V}^{-1}=\widetilde{S}+\widetilde{W}$ yields
\begin{eqnarray}
\nonumber
\bs{\widetilde{h}}^\top \widetilde{V}^{-1} \bs{\widetilde{h}} & \le &
\underbrace{\frac{\tilde{h}_1^2}{\tilde{v}_{11}} + \sum_{i=r+1}^{2n-1} \frac{ h_i^2 }{ v_{i,i} }+\frac{\tilde{h}_{2n}^2}{{v}_{2n,2n}} } + \underbrace{ |\tilde{w}_{11}| \tilde{h}_1^2 + \| \widetilde{W} \|_{\max} \left(
2 |\tilde{h}_1| \sum_{i=r+1}^{2n-1} |\tilde h_i| + \sum_{i,j=r+1}^{2n-1} |\tilde h_i| |\tilde h_j| \right)}\\
\nonumber
& \lesssim &  b_n \left( \frac{b_n^6 \log n}{ c_n^5}  \right)^2 + \frac{ b_n^3 }{ n^2 c_n^2 } \left\{ r^2 + 2r(2n-r-1)+(2n-r-1)^2 \right\} \left( \frac{b_n^6 \log n}{ c_n^5}  \right)^2 \\
\label{eq-thereom1b-a1}
& \lesssim & \frac{ b_n^{15} (\log n)^2 }{ c_n^{12}}.
\end{eqnarray}
This shows that if $b_n^{15}/c_n^{12} = o\left( r^{1/2}/(\log n)^2 \right)$,
\begin{equation*}\label{eq-thereom1b-a}
\frac{ |\bs{\widetilde{h}}^\top \bs{\widetilde{V}}^{-1} \bs{\widetilde{h}} | }{\sqrt{r}} = o_p(1).
\end{equation*}

By Lemma %\ref{lemma-clt-beta-W}
\ref{lemma-tilde-W}, if $b_n^3/c_n^3=o(r^{1/2})$, then
\begin{equation}\label{eq-theorem1b-c}
\frac{1}{r^{1/2}} \max\{ \bs{\bar{\mathbf{g}}} W \bs{\bar{\mathbf{g}}}, ( \mathbf{\widetilde{\mathbf{g}}} - \E \mathbf{\widetilde{\mathbf{g}}} )^\top \widetilde{W} ( \mathbf{\widetilde{\mathbf{g}}} - \E \mathbf{\widetilde{\mathbf{g}}} ) \} = o_p(1).
\end{equation}
Since $\sum_{i=1}^{r}\bar{d}_i=\sum_{i=1}^{r}\sum_{j\neq i}\bar{a}_{i,j}$, by the central limit theorem for bounded case [\cite{Loeve:1977} (p.289)], $\tilde{v}_{11}^{-1/2}{\sum_{i=1}^{r}\bar{d}_i}$ converges in distribution to the standard normal distribution if $\tilde{v}_{11}\rightarrow\infty$. Therefore, as $r\rightarrow\infty$,
\[
\frac{ [\sum_{i=1}^r \{ d_i-\E(d_i) \}]^2/\tilde{v}_{11} }{ r } = o_p(1).
\]
By combining \eqref{eq-theorem2-B101}, \eqref{eq-thereom1b-a1} and \eqref{eq-theorem1b-c}, it yields
\[
\frac{2(B_1-B_1^0)}{\sqrt{2r}}
= \frac{1}{\sqrt{2r}} \sum_{i=1}^r \frac{ ( d_i- \E d_i )^2 }{ v_{i,i} }  + o_p(1).
\]
Therefore, the first claim immediately follows from Lemma \ref{lemma:weighte-degree-al}.
This completes the proof.

\subsection{Proofs for Theorem \ref{theorem-LRT-beta} (b)}
\label{section:theorem2}
Recall that $\bs{\bar{\mathbf{g}}}_2 = ( \bar{d}_{r+1}, \ldots, \bar{d}_n, \bar{b}_{1}, \ldots, \bar{b}_{n-1})^\top$, $\bs{\widetilde{\mathbf{g}}}=(\sum_{i=1}^r d_i, d_{r+1},\ldots, d_n,b_{1},\ldots,b_{n-1})^{\top}$ and $\widetilde{V}$ is given in \eqref{definition-tilde-V}.
$\widetilde{V}$ is the Fisher information matrix of $\widetilde{\bs{\theta}}=(\alpha_1, \alpha_{r+1}, \ldots, \alpha_n, \beta_1, \ldots, \beta_{n-1})^\top$
under the null $H_0: \alpha_1 = \cdots= \alpha_r$.
Let $\bs{\bar{\mathbf{g}}}_1 = ( \bar{d}_1, \ldots, \bar{d}_r)^\top$. Remark that $r$ is a fixed constant in this section.
Partition $V$, $W$ and $\widetilde{W}$ into four blocks
\begin{equation}\label{VW-divide}
V =
\begin{pmatrix} V_{11}  & V_{12} \\
V_{21} & V_{22}
\end{pmatrix}, ~~
W = \begin{pmatrix} W_{11} & W_{12} \\
W_{21} & W_{22}
\end{pmatrix},~~
\widetilde{W} = \begin{pmatrix} \tilde{w}_{11} & \bs{\tilde{w}}_{12} \\
\bs{\tilde{w}}_{21} & \widetilde{W}_{22}
\end{pmatrix},
\end{equation}
where $V_{11}$ and $W_{11}$ are respective $r\times r$ dimensional sub-matrices of $V$ and $W$, and $W=V^{-1}-S$.
%Recall that $V_{22}$ denotes the Fisher information matrix of $\bs{\theta}_2$ under the null $H_0: \alpha_1=\cdots=\alpha_r$.
$\tilde{w}_{11}$ is a scalar and the dimension of $\widetilde{W}_{22}$ is $(2n-r-1)\times (2n-r-1)$.
%It should be noted $\widetilde{W}_{22}$ is different from $\widetilde{W}_{22}$
%in the proof of Theorem \ref{theorem-LRT-beta} (a),
%where $\widetilde{W}_{22}=V_{22}^{-1}-S_{22}$.
%With some ambiguity of notation, we use the same notation here. However, both share very similar properties.

To prove Theorem \ref{theorem-LRT-beta} (b), we need the following four lemmas. The following lemma gives the upper bounds of five remainder terms in \eqref{eq-theorem2b-B1022} that tend to zero. The proof of Lemma \ref{lemma-W-widetilde-d-2b} is similar to that of Lemma 14 in \cite{yan2025likelihood} and is omitted.

\begin{lemma}\label{lemma-W-widetilde-d-2b}
Suppose $r$ is a fixed constant. \\
(a)If $b_n^3/c_n^2=o( n^{3/2}/(\log n)^{1/2})$, then $\mathbf{\bar{\mathbf{g}}}_1^\top W_{11} \mathbf{\bar{\mathbf{g}}}_1 = o_p(1)$. \\
(b)If $b_n^3/c_n^3=o( n^{1/2} )$, then $\mathbf{\bar{\mathbf{g}}}_1^\top W_{12} \mathbf{\bar{\mathbf{g}}}_2 = o_p(1)$. \\
(c)If $b_n^3/c_n^2=o( n^{3/2}/(\log n)^{1/2})$, then $(\sum_{i=1}^r\bar{d}_i) \tilde{w}_{11} (\sum_{i=1}^r\bar{d}_i) = o_p(1)$. \\
(d)If $b_n^3/c_n^3=o( n^{1/2} )$, then $(\sum_{i=1}^r\bar{d}_i) \bs{\tilde{w}}_{12}^\top \bar{\mathbf{g}}_2 = o_p(1)$. \\
(e)If $b_n^3/c_n^3=o( n^{3/4} )$, then
\[
\bar{\mathbf{g}}_2^\top ( W_{22} - \widetilde{W}_{22}) \bar{\mathbf{g}}_2 = o_p(1).
\]
\end{lemma}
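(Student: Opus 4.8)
The plan is to treat all five expressions as quadratic or bilinear forms in the centered bi-degree vector and to bound each through its mean and variance, invoking Chebyshev's inequality. Three structural facts drive every estimate. First, the out-degrees $\bar d_1,\dots,\bar d_n$ are mutually independent (distinct rows of $A$ use disjoint edges), while the only nontrivial couplings are $\mathrm{Cov}(\bar d_i,\bar b_j)=\mu'(\pi_{ij})$; consequently $\mathrm{Cov}(\bar{\mathbf{g}})=V$, so that for any conformable matrix $M$ one has $\E[\bar{\mathbf{g}}_S^\top M\bar{\mathbf{g}}_S]=\mathrm{tr}(M\,\mathrm{Cov}(\bar{\mathbf{g}}_S))$. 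Second, each degree is a sum of independent bounded variables, so $v_{i,i}\in[(n-1)/b_n,(n-1)/c_n]$, $\E\bar d_i^2=v_{i,i}\lesssim n/c_n$, and $\E\bar d_i^4\lesssim v_{i,i}^2\lesssim n^2/c_n^2$. Third, the correction matrices are uniformly tiny: $\|W\|_{\max}$ and $\|\widetilde W\|_{\max}$ are both $\lesssim b_n^3/(n^2c_n^2)$ by Proposition 1 of \cite{Yan:Leng:Zhu:2016} and Lemma \ref{lemma-beta-approx-ho}.

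For parts (a) and (c) the forms live on the fixed $r$-dimensional block. Since $V_{11}=\mathrm{diag}(v_{1,1},\dots,v_{r,r})$, the mean of (a) is $\mathrm{tr}(W_{11}V_{11})=\sum_{i\le r}W_{ii}v_{i,i}$, a sum of a fixed number of terms each controlled by $\|W\|_{\max}\,v_{i,i}$, and its variance is governed by $\sum_{i\le r}W_{ii}^2\,\E\bar d_i^4$ using independence; Chebyshev then gives (a), and (c) is identical with $\tilde w_{11}$ and the single variable $\sum_{i\le r}\bar d_i$, whose variance is $\tilde v_{11}=\sum_{i\le r}v_{i,i}\lesssim rn/c_n$. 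Parts (b) and (d) are the cross terms coupling the fixed block with the growing vector $\bar{\mathbf{g}}_2$; here I would expand $\bar{\mathbf{g}}_1^\top W_{12}\bar{\mathbf{g}}_2=\sum_{i\le r}\sum_k (W_{12})_{ik}\,\bar g_{1,i}\,\bar g_{2,k}$, bound the mean through the $\mathrm{Cov}(\bar d_i,\bar b_l)=\mu'(\pi_{il})$ entries, and bound the variance by Cauchy--Schwarz, so that the $O(n)$-fold sum over $k$ is paid for by the factor $n^{-2}$ sitting inside $\|W\|_{\max}$. These four estimates reproduce the stated rates.

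The genuine obstacle is part (e), where both indices range over the growing set $\{r+1,\dots,2n-1\}$, so the crude bound $\|W_{22}-\widetilde W_{22}\|_{\max}\,\|\bar{\mathbf{g}}_2\|_1^2$ is hopeless. The key simplification is that the lower-right blocks of the two approximants coincide, $S_{22}=\widetilde S_{22}$, whence $W_{22}-\widetilde W_{22}=(V^{-1})_{22}-(\widetilde V^{-1})_{22}$: a difference of two exact inverse blocks, not of two approximation errors. Writing each as the inverse of a Schur complement, $(V^{-1})_{22}=(V_{22}-V_{21}V_{11}^{-1}V_{12})^{-1}$ and $(\widetilde V^{-1})_{22}=(V_{22}-\tilde{\mathbf{v}}_{12}\tilde v_{11}^{-1}\tilde{\mathbf{v}}_{12}^\top)^{-1}$, the resolvent identity $A^{-1}-B^{-1}=A^{-1}(B-A)B^{-1}$ reduces matters to the difference of rank-corrections $V_{21}V_{11}^{-1}V_{12}-\tilde{\mathbf{v}}_{12}\tilde v_{11}^{-1}\tilde{\mathbf{v}}_{12}^\top$. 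Here the null is exploited: since $\alpha_1=\cdots=\alpha_r$ forces $\mu'(\pi_{ij})=\mu'(\pi_{1j})$ for all $i\le r$, the two corrections agree to leading order (with $\tilde v_{11}=\sum_{i\le r}v_{i,i}$), and their difference is \emph{second} order in the fluctuations $v_{i,i}-v_{1,1}=O(1/c_n)$ across $i\le r$. Carrying out this cancellation yields a max-norm bound on $W_{22}-\widetilde W_{22}$ strictly sharper than $\|W\|_{\max}$, after which the trace identity $\E[\bar{\mathbf{g}}_2^\top(W_{22}-\widetilde W_{22})\bar{\mathbf{g}}_2]=\mathrm{tr}((W_{22}-\widetilde W_{22})V_{22})$ together with a matching variance bound closes the argument under $b_n^3/c_n^3=o(n^{3/4})$.

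I expect this sharp max-norm bound on $W_{22}-\widetilde W_{22}$ to be the crux: it is precisely the ``upper bound of the absolute entry-wise maximum norm between two approximate inverses'' flagged in the introduction, and it is special to the directed model, where the rank-correction lives entirely in the $\bs\beta$-block and couples the two parameter types. The attendant risk is bookkeeping --- propagating the entrywise cancellation of $B-A$ through $A^{-1}(B-A)B^{-1}$ requires controlling the Schur complements in the operator $\ell_\infty$-norm (via $\|S_{22}\|_\infty\lesssim b_n$), and the variance of the high-dimensional quadratic form in (e) needs the fourth-moment bounds above together with the near-diagonality of $V_{22}$.
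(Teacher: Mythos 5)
Your plan for parts (a)--(d) is sound and is essentially the intended (omitted) argument: Chebyshev bounds on the means and variances of these fixed-dimensional quadratic and bilinear forms, using independence of the $\bar d_i$ together with the entrywise bounds $\|W\|_{\max},\|\widetilde W\|_{\max}\lesssim b_n^3/(n^2c_n^2)$; the paper itself skips this, citing the analogue in \cite{yan2025likelihood}.

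Part (e) contains a genuine gap, and it sits exactly where you predicted the crux would be. Your reduction is the right starting point and parallels the paper's: since the lower-right blocks of $S$ and $\widetilde S$ coincide, $W_{22}-\widetilde W_{22}$ is a difference of exact inverse blocks, and the paper (Lemma \ref{w2-error-2b}) extracts the identity $W_{22}-\widetilde W_{22}=V_{22}^{-1}(\bs{\tilde w}_{21}\bs{\tilde v}_{12}-W_{21}V_{12})$ directly from the two block-inverse equations rather than through Schur complements. The problem is your finishing move. First, the advertised output --- a max-norm bound ``strictly sharper than $\|W\|_{\max}$'' --- is not attainable: the cancellation in $V_{21}V_{11}^{-1}V_{12}-\bs{\tilde v}_{21}\tilde v_{11}^{-1}\bs{\tilde v}_{12}$ is second order only for entries whose $\beta$-indices both exceed $r$; entries involving $\beta_1,\dots,\beta_r$ retain first-order terms coming from the excluded diagonals $v_{i,n+i}=0$, and propagating these through $A^{-1}(\cdot)B^{-1}$ lands you at the scale of the paper's $H_1$, namely $\|H_1\|_{\max}=O\bigl(b_n^4/(n^2c_n^3)\bigr)$, which is \emph{larger} than $\|W\|_{\max}\asymp b_n^3/(n^2c_n^2)$ by a factor $b_n/c_n\ge1$. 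Second, and more fundamentally, no entrywise bound --- however obtained --- can close (e): for a quadratic form in $\bar{\mathbf{g}}_2$ ranging over $\sim n^2$ entries, the generic trace/variance bounds you invoke (the same computation as in Lemma \ref{lemma-clt-beta-W}) give mean and standard deviation of order $\|M\|_{\max}n^2/c_n$, so $o_p(1)$ forces $\|M\|_{\max}=o(c_n/n^2)$; at the scale above this would require $b_n^4/c_n^4=o(1)$, impossible since $b_n\ge c_n$. What actually rescues (e) is structure, not size: Lemma \ref{w2-error-2b} shows $W_{22}-\widetilde W_{22}=H_2+v_{2n,2n}^{-1}H_1P$ with $P$ the $\pm\mathbf{1}\mathbf{1}^\top$ pattern matrix, and $P\bar{\mathbf{g}}_2=\pm\bigl(\bar b_n-\sum_{i=1}^r\bar d_i\bigr)\mathbf{1}$, so the large-but-structured part collapses into a product of a single degree fluctuation and a single linear form $\bar{\mathbf{g}}_2^\top H_1\mathbf{1}$, each handled by Hoeffding/Bernstein; only the residual $H_2$, whose max-norm $O\bigl(b_n^6/(n^3c_n^5)\bigr)$ really is tiny, is treated by generic quadratic-form bounds. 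Your Schur-complement route could be completed, but only by re-expanding $A^{-1}$ and $B^{-1}$ through their own $S$-type approximations to isolate precisely this rank-two pattern plus a small remainder --- that is, by reproducing Lemma \ref{w2-error-2b}; as written, the proposal stops one essential step short.
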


The lemma below gives an upper bound of $\| W_{22} - \widetilde{W}_{22} \|_{\max}$.

\begin{lemma}\label{w2-error-2b}
For a fixed constant $r$, the entry-wise error between $W_{22}$ and $\widetilde{W}_{22}$ is
\begin{equation}\label{ineq-WW-diff-2b}
\begin{split}
( W_{22} - \widetilde{W}_{22})_{ij} & =H_{2}+\frac{1}{v_{2n,2n}}H_{1}\begin{array}{ccc}
\begin{pmatrix} \mathbf{1}& \mathbf{-1}\\
\mathbf{-1}& \mathbf{1} \\
\end{pmatrix}
&
\begin{tiny}
\begin{array}{l}
\left.\rule{-5mm}{4mm}\right\}{n-r}\\
\left.\rule{-5mm}{4mm}\right\}{n-1}
\end{array}
\end{tiny}
\\
\begin{tiny}
\begin{array}{cc}
\underbrace{\rule{8mm}{-15mm}}_{n-r}
\underbrace{\rule{8mm}{-15mm}}_{n-1},
\end{array}
\end{tiny}
\end{array}
\end{split}
\end{equation}
where
\begin{equation}\label{H1h2}
\|H_{1}\|_{\max}=O(\frac{ b_n^4 }{ n^2c_n^3 }),~~\|H_{2}\|_{\max}=O(\frac{ b_n^6 }{ n^3c_n^5 }).
\end{equation}
\end{lemma}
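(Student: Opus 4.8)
The plan is to reduce the entry-wise comparison to the two exact inverses and then extract their difference through a Schur-complement identity. The first step is the observation that the lower-right blocks of the two approximate inverses are \emph{identical}: comparing \eqref{definition-S} with \eqref{definition-S222} over the common coordinate set $\{r+1,\ldots,n\}\cup\{n+1,\ldots,2n-1\}$, both $S$ and $\widetilde{S}$ reduce to $\delta_{i,j}/v_{i,i}+1/v_{2n,2n}$ on the $\alpha\alpha$ and $\beta\beta$ blocks and to $-1/v_{2n,2n}$ on the $\alpha\beta$ blocks, so $S_{22}=\widetilde{S}_{22}$. Hence the approximation terms drop out and $W_{22}-\widetilde{W}_{22}=(V^{-1})_{22}-(\widetilde{V}^{-1})_{22}$, which is a statement about the true inverses alone.

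Since $V_{22}$ is the common $(2,2)$ block of both $V$ and $\widetilde{V}$ in \eqref{definition-tilde-V}, the block-inverse formula gives $(V^{-1})_{22}=(V_{22}-C)^{-1}$ and $(\widetilde{V}^{-1})_{22}=(V_{22}-\widetilde{C})^{-1}$ with $C=V_{21}V_{11}^{-1}V_{12}$ and $\widetilde{C}=\bs{\tilde{v}}_{12}\,\tilde{v}_{11}^{-1}\,\bs{\tilde{v}}_{12}^{\top}$. The resolvent identity then yields
\[
(V^{-1})_{22}-(\widetilde{V}^{-1})_{22}=(V_{22}-C)^{-1}\,(C-\widetilde{C})\,(V_{22}-\widetilde{C})^{-1},
\]
and I would replace the two outer factors by $S_{22}$ using $(V^{-1})_{22}=S_{22}+W_{22}$ and $(\widetilde{V}^{-1})_{22}=S_{22}+\widetilde{W}_{22}$, with the maximum-norm bounds of Lemma \ref{lemma-beta-approx-ho} (and its $r=0$ specialization for $W$) absorbing the resulting $W_{22}(C-\widetilde{C})S_{22}$-type cross terms into the remainder $H_{2}$.

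The core of the argument is the analysis of $C-\widetilde{C}$ and of the sandwich $S_{22}(C-\widetilde{C})S_{22}$. Because the $\alpha$-part of both $V_{12}$ and $\bs{\tilde{v}}_{12}$ vanishes, $C$ and $\widetilde{C}$ are supported on the $\beta\beta$ block; writing $p_{j}=\mu^{\prime}(\pi_{1j})$, under the null one has $C_{\beta_j\beta_{j'}}=p_{j}p_{j'}\sum_{i=1}^{r}v_{i,i}^{-1}$ and $\widetilde{C}_{\beta_j\beta_{j'}}=m_{j}m_{j'}p_{j}p_{j'}/\tilde{v}_{11}$ up to exclusion indicators, where $m_{j}=r-1$ for $j\le r$ and $m_{j}=r$ for $j>r$. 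The identity $\tilde{v}_{11}=\sum_{i=1}^{r}v_{i,i}$ together with the fact that the $v_{i,i}$ differ only through the single omitted term $\mu^{\prime}(\pi_{1i})=O(1/c_{n})$ forces the leading parts to nearly cancel, so that $C-\widetilde{C}$ is a small, structured perturbation. The key algebraic fact is that, for \emph{any} $\beta$-supported vector $\mathbf{x}$, a direct computation from \eqref{definition-S} gives $(S_{22}\mathbf{x})_{k}=v_{2n,2n}^{-1}\big(\sum_{t}x_{t}\big)\sigma_{k}+x_{k}v_{k,k}^{-1}\mathbb{1}[k\in\beta]$, where $\bs{\sigma}$ equals $-1$ on the $\alpha$ coordinates and $+1$ on the $\beta$ coordinates. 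Consequently the sandwich collapses onto
\[
S_{22}(C-\widetilde{C})S_{22}=\frac{\mathbf{1}^{\top}(C-\widetilde{C})\mathbf{1}}{v_{2n,2n}^{2}}\,\bs{\sigma}\bs{\sigma}^{\top}+(\text{local terms}),
\]
and since $\bs{\sigma}\bs{\sigma}^{\top}$ is precisely the block matrix $\begin{pmatrix}\mathbf{1}&-\mathbf{1}\\-\mathbf{1}&\mathbf{1}\end{pmatrix}$ appearing in \eqref{ineq-WW-diff-2b}, the first summand supplies the main term $v_{2n,2n}^{-1}H_{1}$, while the local terms, the non-$\bs{\sigma}$ corrections, and the cross terms from the outer factors $W_{22},\widetilde{W}_{22}$ are collected into $H_{2}$. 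Bounding $\mathbf{1}^{\top}(C-\widetilde{C})\mathbf{1}$ and the remainders through \eqref{ineq-mu-deriv-bound}, the definitions of $b_{n},c_{n}$, and the crude bounds $1/v_{i,i},\,1/v_{2n,2n}\lesssim b_{n}/n$ then delivers the stated orders $\|H_{1}\|_{\max}=O(b_n^{4}/(n^{2}c_n^{3}))$ and $\|H_{2}\|_{\max}=O(b_n^{6}/(n^{3}c_n^{5}))$.

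The step I expect to be the main obstacle is making the near-cancellation in $C-\widetilde{C}$ quantitative. One cannot bound $C$ and $\widetilde{C}$ separately, since each has entries of order $b_{n}/(nc_n^{2})$ — far larger than the target — so the argument must exploit the exact relation $\tilde{v}_{11}=\sum_{i}v_{i,i}$ together with the arithmetic--harmonic gap $\sum_{i}v_{i,i}^{-1}-r^{2}/\sum_{i}v_{i,i}$, which is controlled by the spread of the nearly-equal $v_{i,i}$. A particular delicacy is that the $\beta_j$-rows with $j\le r$ carry heavier corrections (through the $m_{j}=r-1$ weights and the exclusion indicators) than the bulk rows; these must be shown to feed into the scalar $\mathbf{1}^{\top}(C-\widetilde{C})\mathbf{1}$, hence into the sign-structured $H_{1}$, rather than to spoil $H_{2}$, which requires tracking the $m_{j}$ weights and the omitted diagonal terms carefully through both matrix multiplications.
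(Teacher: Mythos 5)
Your reduction steps are sound: indeed $S_{22}=\widetilde{S}_{22}$ entrywise, so $W_{22}-\widetilde{W}_{22}=(V^{-1})_{22}-(\widetilde{V}^{-1})_{22}$, and the Schur--complement/resolvent identity $(V^{-1})_{22}-(\widetilde{V}^{-1})_{22}=(V^{-1})_{22}(C-\widetilde{C})(\widetilde{V}^{-1})_{22}$ is exact. The genuine gap is in the step you yourself flag as the main obstacle, and it is not merely unexecuted but mis-described. The claim that $\tilde{v}_{11}=\sum_{i\le r}v_{i,i}$ forces the leading parts of $C$ and $\widetilde{C}$ to nearly cancel is false on the $r\times r$ corner $\{j,j'\le r\}$ of the $\beta\beta$ block, where the exclusion indicators and the $m_j=r-1$ weights do \emph{not} balance. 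Concretely, writing $p_j=\mu'(\alpha_1+\beta_j)$, for $r=2$ one has $C_{n+1,n+2}=0$ exactly (each summand contains a factor $v_{i,n+i}=0$), while $\widetilde{C}_{n+1,n+2}=p_1p_2/\tilde{v}_{11}$; in general, for $j\neq j'\le r$ the bracket equals $\sum_{i\le r,\,i\neq j,j'}v_{i,i}^{-1}-(r-1)^2/\sum_{i\le r}v_{i,i}=-1/(rv_{1,1})+O(b_n^2/(c_n n^2))$. Hence these entries of $C-\widetilde{C}$ are of order $b_n/(nc_n^2)$, the same order as the entries of $C$ itself: no cancellation at all. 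Moreover, your proposed remedy --- that such heavy terms be shown to feed into the scalar $\mathbf{1}^\top(C-\widetilde{C})\mathbf{1}$, hence into $H_1$, rather than spoil $H_2$ --- cannot work as stated: by your own expansion of $S_{22}(C-\widetilde{C})S_{22}$, every entry of $C-\widetilde{C}$ also enters the local terms $(C-\widetilde{C})_{kl}/(v_{k,k}v_{l,l})$ and the row/column-sum terms, so the corner cannot be routed exclusively into the rank-one part. What actually saves your route is different: the corner has only $r^2=O(1)$ entries and each local occurrence is damped by two diagonal factors of order $b_n/n$, giving $O(b_n^3/(n^3c_n^2))$, which sits inside $\|H_2\|_{\max}=O(b_n^6/(n^3c_n^5))$ only because $c_n\le b_n$. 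Since this three-regime analysis of $C-\widetilde{C}$ (corner; mixed rows $j\le r<j'$, of order $b_n^2/(n^2c_n^3)$; bulk via the AM--HM gap, of order $b_n^3/(n^3c_n^4)$), the sandwich bookkeeping, and the cross terms $W_{22}(C-\widetilde{C})S_{22}$, $W_{22}(C-\widetilde{C})\widetilde{W}_{22}$ are all missing, the core of the proof is absent.

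For comparison, the paper's proof never has to cancel large quantities against each other. It subtracts the $(2,2)$-blocks of the exact identities $(S+W)V=I_{2n-1}$ and $(\widetilde{S}+\widetilde{W})\widetilde{V}=I_{2n-r}$; under the null, besides $S_{22}=\widetilde{S}_{22}$ one also has $\bs{\tilde{v}}_{12}^\top=\mathbf{1}_r^\top V_{12}$, whence $\bs{\tilde{s}}_{21}\bs{\tilde{v}}_{12}^\top=S_{21}V_{12}$, so all $S$-parts drop out and $W_{22}-\widetilde{W}_{22}=V_{22}^{-1}(\bs{\tilde{w}}_{21}\bs{\tilde{v}}_{12}^\top-W_{21}V_{12})$ exactly. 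The factor in parentheses is a product of $W$-type blocks, already of max-norm $O(b_n^3/(n^2c_n^2))$ by Lemma \ref{lemma-beta-approx-ho}, with bounded $V$-type blocks, hence $O(b_n^3/(n^2c_n^3))$ entrywise with no cancellation required; splitting $V_{22}^{-1}$ as $(V_{22}^{-1}-S_{22})$ plus the diagonal part plus $v_{2n,2n}^{-1}$ times the sign block then yields \eqref{ineq-WW-diff-2b} and \eqref{H1h2} at once. If you wish to keep your route, you must supply the corner/mixed/bulk estimates above and push them through both matrix multiplications; adopting the paper's identity is far shorter and gives the bounds unconditionally.
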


The lemma below establishes the upper bound of $ \widehat{\theta}_i - \widehat{\theta}_i^0 $. This lemma is a direct conclusion of Theorem 2 in \cite{Yan:Leng:Zhu:2016} and Lemma 8.

\begin{lemma}\label{lemma-hat-beta-diff-2b}
Under the null $H_0: \alpha_1=\cdots = \alpha_r$ with a fixed $r$,
if $b_n^6/c_n^{5} = o( (n/\log n)^{1/2})$, then with probability at least $1-O(n^{-1})$,
\[
\max_{i=r+1, \ldots, 2n-1} | \widehat{\theta}_i - \widehat{\theta}_i^0 | \lesssim \frac{b_n^9 \log n}{nc_n^7}.
\]
\end{lemma}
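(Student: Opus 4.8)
The plan is to prove the bound by differencing the asymptotic representations of the unrestricted and restricted MLEs coordinate by coordinate, exploiting the fact that under the homogeneous null the coordinates $i\in\{r+1,\ldots,2n-1\}$ correspond to \emph{unconstrained} nuisance parameters, so the two representations share identical leading stochastic terms and only the higher-order remainders survive.

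First I would record the unrestricted expansion. Starting from \eqref{eq-expansion-hatbeta-beta1} and writing $V^{-1}=S+W$ with $S$ as in \eqref{definition-S}, a direct computation of $[S\bar{\mathbf g}]_i$—using the explicit entries of $S$ together with the identity $\sum_{k=1}^n\bar d_k-\sum_{k=1}^{n-1}\bar b_k=\bar b_n$—collapses the leading term to the clean form
\[
\widehat\alpha_i-\alpha_i=\frac{\bar d_i}{v_{i,i}}+\frac{\bar b_n}{v_{2n,2n}}+R_i,\qquad
\widehat\beta_j-\beta_j=\frac{\bar b_j}{v_{n+j,n+j}}-\frac{\bar b_n}{v_{2n,2n}}+R_{n+j},
\]
valid on an event of probability $1-O(n^{-1})$, where $R_i=[W\bar{\mathbf g}]_i-[V^{-1}\mathbf h]_i$. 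This is the content of Theorem 2 of \cite{Yan:Leng:Zhu:2016}, which also supplies $\max_i|R_i|=O_p(b_n^9\log n/(nc_n^7))$: the $V^{-1}\mathbf h$ contribution is controlled by $\|V^{-1}\mathbf h\|_\infty\lesssim b_n^9\log n/(nc_n^7)$ (Proposition 1 of \cite{Yan:Leng:Zhu:2016}, as already used before \eqref{eq-simi-aVh}), and the stochastic term $[W\bar{\mathbf g}]_i$ is handled by the same second-moment argument that bounds its restricted analogue. The consistency rate underlying this expansion is guaranteed by the hypothesis $b_n^6/c_n^5=o((n/\log n)^{1/2})$.

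Next I would invoke the restricted expansion of Lemma \ref{lemma-beta-homo-expan}, which yields for the same indices
\[
\widehat\alpha_i^0-\alpha_i=\frac{\bar d_i}{v_{i,i}}+\frac{\bar b_n}{v_{2n,2n}}+\gamma_i^0,\qquad
\widehat\beta_j^0-\beta_j=\frac{\bar b_j}{v_{n+j,n+j}}-\frac{\bar b_n}{v_{2n,2n}}+\gamma_{n+j}^0,
\]
with $\max_{i\ge r+1}|\gamma_i^0|=O(b_n^9\log n/(nc_n^7))$ on an event of probability $1-O(n^{-1})$. The decisive structural observation is that these leading terms are \emph{literally the same} as in the unrestricted expansion for every $i\ge r+1$: the null restricts only $\alpha_1,\ldots,\alpha_r$, so the estimating-equation blocks for the free coordinates are unaffected at first order. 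Subtracting the two displays on the intersection of the two high-probability events, the leading terms cancel identically and the triangle inequality gives
\[
\max_{i=r+1,\ldots,2n-1}|\widehat\theta_i-\widehat\theta_i^0|\le \max_i|R_i|+\max_i|\gamma_i^0|\lesssim \frac{b_n^9\log n}{nc_n^7},
\]
which is the assertion; the governing event has probability at least $1-O(n^{-1})$ as the intersection of two such events.

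The main obstacle is not the differencing itself, which is immediate once the two expansions are in hand, but verifying that the dominant stochastic contributions genuinely coincide and that the surviving remainders are of the same order $b_n^9\log n/(nc_n^7)$ in both problems. Concretely, one must confirm that the leading term of the unrestricted MLE for $i\ge r+1$ reduces to exactly $[S\bar{\mathbf g}]_i$ rather than a block-dependent quantity (this is the computation using the form of $S$ and the degree-sum identity above), and that the stochastic remainder $[W\bar{\mathbf g}]_i$—which a crude $\ell_1$ bound would overestimate—is controlled to the sharper order by a second-moment estimate, paralleling the treatment of $[\widetilde W(\widetilde{\mathbf g}-\E\widetilde{\mathbf g})]_i$ inside $\gamma_i^0$. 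Once these two points are checked, the remaining algebra is routine.
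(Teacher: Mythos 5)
Your proposal is correct and follows essentially the same route as the paper: the paper likewise quotes the unrestricted representation from Theorem 2 of \cite{Yan:Leng:Zhu:2016} (leading terms $\bar d_i/v_{i,i}+\bar b_n/v_{2n,2n}$ and $\bar b_j/v_{n+j,n+j}-\bar b_n/v_{2n,2n}$ with remainder of order $b_n^9\log n/(nc_n^7)$), quotes the restricted representation from Lemma \ref{lemma-beta-homo-expan}, and subtracts, so the identical leading terms cancel and only the two remainders survive. Your additional remarks on deriving the unrestricted expansion via $V^{-1}=S+W$ and controlling $[W\bar{\mathbf g}]_i$ by a second-moment bound are exactly the ingredients behind the cited results, not a different argument.
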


The above error bound is in the magnitude of $n^{-1}$, up to a factor $b_n^9\log n$,
which makes the remainder terms in \eqref{eq-theorem2b-B1022} be asymptotically neglected.

\begin{lemma}\label{lemma-gg0}
If $b_n^6/c_n^5=o( (n/\log n)^{1/2})$, then $\gamma_i - \gamma_i^0$, $i=r+1, \ldots, 2n-1$ are bounded by
\begin{equation}\label{ineq-ggg0}
\max_{i=r+1, \ldots, 2n-1} | \gamma_i - \gamma_i^0 | \lesssim \frac{ b_n^{15} (\log n)^{3/2} }{ n^{3/2}c_n^{12}},
\end{equation}
where
\begin{eqnarray}
\nonumber
\gamma_i & = &  (W\bs{\bar{\mathbf{g}}})_i + (V^{-1}\bs{h})_i, \\
\nonumber
\gamma_i^0 & = &  (\widetilde{V}^{-1}\bs{\widetilde{h}})_i + [\widetilde{W} (\bs{\widetilde{\mathbf{g}}}- \E \bs{\widetilde{\mathbf{g}}})]_i.
\end{eqnarray}
\end{lemma}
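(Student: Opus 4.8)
The plan is to extract the cancellation between the full‑model remainder $\gamma_i$ and the restricted remainder $\gamma_i^0$. Bounding the two separately only yields the crude rate $O\!\left(b_n^{9}\log n/(nc_n^{7})\right)$ coming from Lemma~\ref{lemma-beta-homo-expan}, so the improvement to $b_n^{15}(\log n)^{3/2}/(n^{3/2}c_n^{12})$ must come from differencing. Writing $V^{-1}=S+W$ and $\widetilde V^{-1}=\widetilde S+\widetilde W$, I would split
$$\gamma_i-\gamma_i^0=\big[(S\bs h)_i-(\widetilde S\bs{\widetilde h})_i\big]+\big[(W\bs h)_i-(\widetilde W\bs{\widetilde h})_i\big]+\big[(W\bs{\bar{\mathbf g}})_i-(\widetilde W(\bs{\widetilde{\mathbf g}}-\E\bs{\widetilde{\mathbf g}}))_i\big],$$
and bound the three brackets on the high‑probability event \eqref{ineq-beta-beta0-upp1} where both MLEs are consistent.

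The first step is a pointwise estimate of $h_{i,j}-\tilde h_{i,j}$ from their definitions in \eqref{eq:definition:h} and \eqref{defintion-tilde-h}. Writing $h_{i,j}-\tilde h_{i,j}=\tfrac12\mu''(\tilde\pi_{ij})[(\widehat\pi_{ij}-\pi_{ij})^2-(\widehat\pi_{ij}^0-\pi_{ij})^2]+\tfrac12[\mu''(\tilde\pi_{ij})-\mu''(\tilde\pi_{ij}^0)](\widehat\pi_{ij}^0-\pi_{ij})^2$ and using $|\mu''|,|\mu'''|\le 1/c_n$, the governing quantity is $\widehat\pi_{ij}-\widehat\pi_{ij}^0=(\widehat\alpha_i-\widehat\alpha_i^0)+(\widehat\beta_j-\widehat\beta_j^0)$. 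Two regimes must be separated. When both coordinates are free ($i\ge r+1$, any $j$), Lemma~\ref{lemma-hat-beta-diff-2b} gives $|\widehat\pi_{ij}-\widehat\pi_{ij}^0|\lesssim b_n^{9}\log n/(nc_n^{7})$, hence $|h_{i,j}-\tilde h_{i,j}|\lesssim b_n^{12}(\log n)^{3/2}/(n^{3/2}c_n^{10})$. When $i\le r$ the constrained coordinate obeys only the consistency bound $|\widehat\alpha_i-\widehat\alpha_i^0|\lesssim (b_n^3/c_n^2)\sqrt{\log n/n}$, giving the weaker per‑term bound $b_n^{6}\log n/(nc_n^{5})$; crucially, since $r$ is fixed, only $O(1)$ summands fall in this regime.

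For the first bracket I use the explicit forms \eqref{definition-S} and \eqref{definition-S222}. For an $\alpha$‑index $i\ge r+1$ these give $(h_i-\tilde h_i)/v_{ii}$ plus a cross term equal to $v_{2n,2n}^{-1}$ times a difference of $\alpha$‑sums minus $\beta$‑sums of the remainders. The decisive observation is the conservation identity $\sum_{m=1}^n h_m-\sum_{j=1}^{n-1}h_{n+j}=\sum_{m=1}^{n-1}h_{m,n}$ together with its null analogue, which is exactly the identity that already collapses $S\bs{\bar{\mathbf g}}$ to $\bar d_i/v_{ii}+\bar b_n/v_{2n,2n}$ in Lemma~\ref{lemma-beta-homo-expan}. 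It rewrites the cross term as $v_{2n,2n}^{-1}\sum_{k=1}^{n-1}(h_{k,n}-\tilde h_{k,n})$, a sum of pointwise remainder \emph{differences} rather than of the remainders themselves. Feeding in the two‑regime bound ($O(1)$ constrained terms plus $\sim n$ free terms) and $v_{ii}^{-1},v_{2n,2n}^{-1}\lesssim b_n/n$, both $(h_i-\tilde h_i)/v_{ii}$ and the cross term are $\lesssim b_n^{13}(\log n)^{3/2}/(n^{3/2}c_n^{10})$; this is the dominant contribution and is $\le$ the target after invoking $c_n\le b_n$. The $\beta$‑block indices are handled symmetrically. I expect this identity to be the main obstacle: a naive bound of the cross term by the sizes of $\sum_m h_m$ and $\sum_j h_{n+j}$ separately leaves a term of order $b_n^{7}\log n/(nc_n^{5})$, which is \emph{not} dominated by the target under $b_n^6/c_n^5=o((n/\log n)^{1/2})$, so the telescoping cancellation is indispensable.

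The remaining two brackets are lower order. For $(W\bs{\bar{\mathbf g}})_i-(\widetilde W(\bs{\widetilde{\mathbf g}}-\E\bs{\widetilde{\mathbf g}}))_i$ I split the inner product over the constrained block (only $O(1)$ terms, each controlled by $\|W\|_{\max},\|\widetilde W\|_{\max}\lesssim b_n^3/(n^2c_n^2)$ from Proposition~1 of \cite{Yan:Leng:Zhu:2016} and Lemma~\ref{lemma-beta-approx-ho}) and over the free block (where $\|W_{22}-\widetilde W_{22}\|_{\max}$ from Lemma~\ref{w2-error-2b} enters), using $\max_k|\bar g_k|\lesssim\sqrt{n\log n}$ with high probability; this yields $\lesssim b_n^{6}(\log n)^{1/2}/(n^{3/2}c_n^{5})$. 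For $(W\bs h)_i-(\widetilde W\bs{\widetilde h})_i$ I again isolate the combined first coordinate (contributing $O(b_n^{9}\log n/(n^2c_n^{7}))$ since $r$ is fixed) and the free block, on which I write each difference as $(W_{22}-\widetilde W_{22})_{ik}h_k+\widetilde W_{ik}(h_k-\tilde h_k)$ and bound the factors by the estimates already obtained. Every such term is $o$ of the target under the stated growth condition. Summing the three brackets and maximizing over $i=r+1,\dots,2n-1$ then gives \eqref{ineq-ggg0}; the secondary difficulty throughout is carrying the constrained‑versus‑free index dichotomy consistently across all three brackets.
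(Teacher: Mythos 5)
Your proposal follows essentially the same route as the paper's own proof: the same splitting into an $S$-difference, a $Wh$-difference, and a $W\bar{\mathbf{g}}$-difference (the paper's $T_1+T_2$, $T_4$, $T_3$), the same mean-value per-entry estimate for $h_{i,j}-\tilde h_{i,j}$ driven by Lemma~\ref{lemma-hat-beta-diff-2b}, the column-sum identity $h_{2n}=\sum_{k<n}h_{k,n}$ (which the paper uses only implicitly, via its definition of $h_{2n}$ and the bound $|h_{2n}|\lesssim b_n^{6}\log n/c_n^{5}$), Lemma~\ref{w2-error-2b} plus concentration for the $\bar{\mathbf{g}}$-bracket, and the split $(W_{22}-\widetilde W_{22})\bs{h}_2+\widetilde W_{22}(\bs{h}_2-\bs{\widetilde h}_2)$ for the $h$-bracket. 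One correction to your accounting: the second bracket is not $o$ of the target — its piece $\widetilde W_{22}(\bs{h}_2-\bs{\widetilde h}_2)$ is bounded by $\|\widetilde W_{22}\|_{\max}\cdot 2n\cdot\|\bs{h}_2-\bs{\widetilde h}_2\|_\infty \asymp \big(b_n^{3}/(n^{2}c_n^{2})\big)\cdot n\cdot b_n^{12}(\log n)^{3/2}/(n^{1/2}c_n^{10}) = b_n^{15}(\log n)^{3/2}/(n^{3/2}c_n^{12})$, i.e., exactly the target order, and it is this term (the paper's $T_4$), not your first bracket at $b_n^{13}(\log n)^{3/2}/(n^{3/2}c_n^{10})$, that dictates the exponent pair $(15,12)$ in the lemma. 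Since every bracket is still $\lesssim$ the stated bound, your conclusion goes through; only your identification of the dominant contribution is off.
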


Now, we are ready to prove Theorem \ref{theorem-LRT-beta} (b).

\begin{proof}[Proof of Theorem \ref{theorem-LRT-beta} (b)]
Note that $\bs{\widehat{\theta}}^0$ denotes the restricted MLE under the null space $\Theta_0 = \{ \bs{\theta}\in \R^{2n-1}: \alpha_1= \cdots = \alpha_r \}$.
The following calculations are based on the event $E_n$ that $\bs{\widehat{\theta}}$ and $\bs{\widehat{\theta}}^0$ simultaneously exist and satisfy
\begin{equation}\label{ineq-beta-beta0-upp}
\max\left\{ \| \widehat{\bs{\theta}} - \bs{\theta} \|_\infty, \| \widehat{\bs{\theta}}^0 - \bs{\theta} \|_\infty \right\}
\le \frac{b_{n}^{3}}{c_{n}^2}\sqrt{\frac{\log n}{n}}.
\end{equation}.
By Lemma \ref{lemma-hat-beta-diff-2b}, $\P(E_n) \ge 1 - O(n^{-1})$ if $b_n^6/c_n^5 = o( (n/\log n)^{1/2})$.

Similar to the proof of Theorem \ref{theorem-LRT-beta} (a),
it is sufficient to demonstrate: (1) $2( B_1 - B_1^0)$ converges in distribution to the chi-square distribution with $r-1$ degrees of freedom;
(2)
\[
B_2-B_2^0 = O_p\left(\frac{ b_n^{21} (\log n)^{5/2}}{ n^{1/2}c_{n}^{17}}\right), ~~ B_3 - B_3^0 = O_p\left( \frac{ b_n^{18} (\log n)^{5/2} }{ n^{1/2} c_n^{14} } \right).
\]
These two claims are proved in the following three steps, respectively.

Step 1. We show $2( B_1 - B_1^0)\stackrel{L}{\to} \chi^2_{r-1}$.
Using the matrix form in \eqref{VW-divide}, $ B_1 - B_1^0$ can be written as
\begin{eqnarray}
\nonumber
& &2(B_1 - B_1^0) \\
\nonumber
&  = & \underbrace{\sum_{i=1}^r \frac{ \bar{d}_i^{\,2} }{v_{i,i}}  - \frac{ ( \sum_{i=1}^r \bar{d}_i )^2 }{ \tilde{v}_{11} }}_{Z_1}
 + \mathbf{\bar{\mathbf{g}}}_1^\top W_{11} \mathbf{\bar{\mathbf{g}}}_1 + 2\mathbf{\bar{\mathbf{g}}}_1^\top W_{12} \mathbf{\bar{\mathbf{g}}}_2 + (\sum_{i=1}^r \bar{d}_i )^2 \tilde{w}_{11} \\
 \label{eq-theorem2b-B1022}
&&  + 2 ( \sum_{i=1}^r \bar{d}_i) \bs{\tilde{w}}_{12}^\top  \mathbf{\bar{\mathbf{g}}}_2
+  \mathbf{\bar{\mathbf{g}}}_2^\top ( W_{22} - \widetilde{W}_{22}) \mathbf{\bar{\mathbf{g}}}_2
- \underbrace{\mathbf{h}^\top V^{-1} \mathbf{h} + \mathbf{\widetilde{h}}^\top \widetilde{V}^{-1} \mathbf{\widetilde{h}}}_{Z_2},
\end{eqnarray}
where $\mathbf{\widetilde{h}}$ is in \eqref{eq-homo-tildeh}.
In view of Lemma \ref{lemma-W-widetilde-d-2b}, it is sufficient to demonstrate: (i) $Z_1$ converges in distribution to a chi-square distribution with $r-1$ degrees of freedom;
(ii) $Z_2=o_p(1)$.
Because $\bar{d}_i = \sum_{j=r+1}^n a_{i,j}$ is independent over $i=1,\ldots, r$ and $r$ is a fixed constant,
the classical central limit theorem for the bounded case \cite[e.g.][p. 289]{Loeve:1977} gives that
the vector $(\bar{d}_1/v_{11}^{1/2}, \ldots, \bar{d}_r/v_{rr}^{1/2})$
follows a $r$-dimensional standard normal distribution.
Because $v_{11}=\cdots=v_{rr}$ under the null $H_0:\alpha_1=\cdots=\alpha_r$ and $\tilde{v}_{11}=rv_{11}$, we have
\[
\sum_{i=1}^r \frac{ \bar{d}_i^{\,2} }{v_{ii}}  - \frac{ ( \sum_{i=1}^r \bar{d}_i )^2 }{ \tilde{v}_{11} }
= \left( \frac{\bar{d}_1}{v_{11}^{1/2}}, \ldots, \frac{\bar{d}_r}{v_{11}^{1/2}}\right)\left( I_r - \frac{1}{r} \mathbf{1}_r \mathbf{1}_r^\top \right)
\left( \frac{\bar{d}_1}{v_{11}^{1/2}}, \ldots, \frac{\bar{d}_r}{v_{11}^{1/2}}\right)^\top.
\]
Because $\mathrm{rank}( I_r - \mathbf{1}_r \mathbf{1}_r^\top/r )=r-1$, it follows that we have claim (i).
Now, we show $Z_2=o_p(1)$.
By setting $V^{-1}=S+W$ and $\widetilde{V}^{-1}=\widetilde{S}+\widetilde{W}$, we have
\begin{eqnarray*}
\mathbf{h}^\top V^{-1} \mathbf{h} &  = & \sum_{i=1}^{2n-1} \frac{ h_i^2 }{ v_{i,i} } +\frac{h_{2n}^{2}}{v_{2n,2n}}+ \mathbf{h_1}^\top W_{11} \mathbf{h_1}
+ 2 \mathbf{h_1}^\top W_{12} \mathbf{h_2} + \mathbf{h_2}^\top W_{22}\mathbf{h_2}, \\
\mathbf{\widetilde{h}}^\top \widetilde{V}^{-1} \mathbf{\widetilde{h}} & = & \frac{ \tilde{h}_1^2 }{ \tilde{v}_{11}}
+ \sum_{i=r+1}^{2n-1} \frac{ \tilde{h}_i^2 }{ v_{i,i} }+\frac{\tilde{h}_{2n}^{2}}{v_{2n,2n}}
+ \tilde{w}_{11} \tilde{h}_1^2 + 2 \tilde{h}_1 \bs{\tilde{w}}_{12} \bs{\tilde{h}}_2 + \bs{\widetilde{h}}_2^\top \widetilde{W}_{22} \bs{\widetilde{h}}_2,
\end{eqnarray*}
where $\mathbf{h}_1=(h_1, \ldots, h_r)^\top$, $\mathbf{h}_2=(h_{r+1}, \ldots, h_{2n-1})^\top$, and $h_i$ and  $\tilde{h}_i$ are given in \eqref{eq:definition:h} and \eqref{eq-homo-tildeh},
respectively.
%Because $\max\{\|\mathbf{h}\|_\infty, \| \bs{\widetilde{h}} \|_\infty\} \lesssim b_n^2\log n /c_n$ (see \eqref{ineq-beta-h} and %\eqref{eq-homo-tildeh}) and $r$ is a fixed constant,
Since $\|\bs h\|_{\infty}\lesssim b_{n}^{6}\log n/c_{n}^{5}$, we have
\[
\sum_{i=1}^{2n-1} \frac{ h_i^2 }{ v_{i,i} } - \sum_{i=r+1}^{2n-1} \frac{ \tilde{h}_i^2 }{ v_{i,i} } = \sum_{i=r+1}^{2n-1} \frac{ h_i^2 - \tilde{h}_i^2  }{ v_{i,i} } + O( \frac{ b_n^{13}(\log n)^2 }{nc_n^{11}}).
\]
The difference $h_i-\tilde{h}_i$ is bounded as follows:
\begin{eqnarray}
\nonumber
\max_{i=r+1,\ldots, {2n-1}} |h_i-\tilde{h}_i| & \le &   \sum_{j\neq i} \left\{ |\mu^{\prime\prime}(\tilde{\pi}_{ij})( \widehat{\pi}_{ij} - \pi_{ij})^2 - \mu^{\prime\prime}(\tilde{\pi}_{ij}^0)
( \widehat{\pi}_{ij} - \pi_{ij})^2| \right. \\
\nonumber
&&
\left.+ |\mu^{\prime\prime}(\tilde{\pi}_{ij}^0) ( \widehat{\pi}_{ij} - \pi_{ij})^2 - \mu^{\prime\prime}(\tilde{\pi}_{ij}^0)( \widehat{\pi}_{ij}^{0} - \pi_{ij})^2]| \right\} \\
\nonumber
& \lesssim & \frac{n}{c_n} \left\{ | \tilde{\pi}_{ij}-\tilde{\pi}_{ij}^0| \cdot ( \widehat{\pi}_{ij} - \pi_{ij})^2 +
 | \widehat{\pi}_{ij}-\widehat{\pi}_{ij}^0| \cdot ( |\widehat{\pi}_{ij} - \pi_{ij}|  +  |\widehat{\pi}_{ij}^0- \pi_{ij}|) \right\} \\
\nonumber
&\lesssim & \frac{n}{c_n} \cdot \left(\frac{b_n^{3}}{c_{n}^{2}}\sqrt{\frac{\log n}{n}}\right)^3 + \frac{n}{c_n} \cdot \frac{ b_n^9 \log n}{nc_n^{7}} \cdot \frac{b_n^{3}}{c_{n}^{2}}\sqrt{\frac{\log n}{n}}\\
\label{ineq-h-h-r1}
& \lesssim & \frac{ b_n^{27} (\log n)^2 }{n^{1/2} c_n^{9}},
\end{eqnarray}
where the second inequality is due to \eqref{ineq-mu-deriv-bound} and the mean value theorem.
%and the third inequality follows from \eqref{ineq-beta-beta0-upp}.
Therefore, if $ b_n^{13}/c_n^{11} = o( n/(\log n)^2 )$, then
\begin{equation}\label{eq-theorem2-hhd}
| \sum_{i=1}^{2n-1} \frac{ h_i^2 }{ v_{i,i} } - \sum_{i=r+1}^{2n-1}\frac{ \tilde{h}_i^2 }{ v_{i,i} } | =O( \frac{ b_n^{13}(\log n)^2 }{nc_n^{11}}) + O(\frac{ b_n^{28} (\log n)^2 }{n^{3/2} c_n^{9}})  = o(1).
\end{equation}
By \eqref{ineq-V-S-appro-upper-b}, \eqref{approxi-inv2-beta-ho} and \eqref{eq-homo-tildeh}, we have
\begin{eqnarray*}
|\bs h_1^\top W_{11}\bs h_1 | & \le & r^2 \| W_{11} \|_{\max} \|\bs h_1 \|_\infty^{2} \lesssim r^2 \cdot (b_n^6 \log n)^2 \cdot \frac{ b_n^3 }{ n^2c_n^2 }\lesssim  \frac{ b_n^{15} (\log n)^2}{ n^{2}c_n^2},
\\
|\bs h_1^\top W_{12}\bs h_2 | & \lesssim & r(2n-r) \cdot (b_n^6\log n)^2 \cdot \frac{ b_n^3 }{ n^2c_n^2 }
\lesssim \frac{ b_n^{15} (\log n)^2}{ nc_n^2 },  \\
\frac{ \tilde{h}_1^2 }{ \tilde{v}_{11}} & \lesssim & \frac{ r^2 ( b_n^6\log n/c_n^{5})^2 }{ rn/b_n} \lesssim \frac{ b_n^{13} (\log n)^2 }{ nc_n^{10}}, \\
|\tilde{w}_{11} \tilde{h}_1^2| & \lesssim &  ( b_n^6\log n/c_n^{5})^2 \cdot \frac{ b_n^3 }{ n^2 c_n^2 } \lesssim \frac{ b_n^{15} (\log n)^2 }{ n^2 c_n^{12}}, \\
 |\tilde{h}_1 \bs{\tilde{w}}_{12} \bs{\tilde{h}}_2| & \lesssim & 2n-r \cdot ( b_n^6\log n/c_n^{5})^2 \cdot \frac{ b_n^3 }{ n^2 c_n^2 }\lesssim \frac{ b_n^{15} (\log n)^2 }{ n c_n^{12}}.
\end{eqnarray*}
To evaluate the bound of $\bs h_2^\top W_{22} \bs h_2 - \widetilde{\bs h}_2^\top \widetilde{W}_{22} \widetilde{\bs h}_2$,
we divide it into three terms:
\begin{eqnarray}
\nonumber
& & \bs h_2^\top W_{22} \bs h_2 - \widetilde{\bs h}_2^\top \widetilde{W}_{22} \widetilde{\bs h}_2 \\
\label{eq-theorem2-whh}
& = & \underbrace{\bs h_2^\top W_{22} \bs h_2 - \bs h_2^\top \widetilde{W}_{22} \bs h_2}_{C_1} +
\underbrace{\bs h_2^\top \widetilde{W}_{22} \bs h_2
- \widetilde{ \bs h}_2^{\top} \widetilde{W}_{22} \bs h_2}_{C_2}
+
\underbrace{\widetilde{\bs h}_2^\top \widetilde W_{22} \bs h_2 - \widetilde{\bs h}_2^{\top} \widetilde{W}_{22} \widetilde{\bs h}_2}_{C_3}.
\end{eqnarray}
The first term $C_1$ is bounded as follows.
By \eqref{ineq-WW-diff-2b}, we have
\begin{eqnarray}
\nonumber
| \bs h_2^\top ( W_{22} - \widetilde{W}_{22} ) \bs h_2 | & \le & (2n-r)^2  \| \bs h_{2} \|_\infty^{2} \frac{ b_n^6 }{ n^3c_n^5 } \\
\nonumber
&+&\frac{1}{v_{2n,2n}}|H_{1}(-1)^{1{(i>n-r)}}(\sum_{i=r+1}^{n}h_{i}-\sum_{i=n+1}^{2n-1}h_{i})\mathbf{1}_{2n-r-1}\bs h_{2}|\\
\label{ineq-upper-B1}
& \lesssim & \frac{ b_n^{18}(\log n)^2}{ n c_n^{15}}+(2n-r)\cdot  \frac{b_{n}}{n}\cdot \frac{b_n^3}{n^2c_n^3}\cdot\left(\frac{ b_n^6 \log n}{c_n^{5}} \right)^2\\
\nonumber
& \lesssim & \frac{ b_n^{18}(\log n)^2}{ n c_n^{15}}.
\end{eqnarray}
In view of \eqref{approxi-inv2-beta-ho} and \eqref{ineq-h-h-r1}, the upper bounds of $C_2$ and $C_3$ are derived as follows:
\begin{eqnarray}
\nonumber
|C_2|& = %%& | \bs{h}_2^\top \widetilde{W}_{22} \bs{h}_2 - \widetilde{ \bs{h} }_2 \widetilde{W}_{22} \bs{h}_2 |
&   |( \bs{h}_2- \widetilde{\bs{h} }_2) ^\top \widetilde{W}_{22} \bs{h}_2 |
 \le  (2n-r)^2 \| \widetilde{W}_{22} \|_{\max} \| \bs{h}_2- \widetilde{\bs{h}}_2 \|_\infty \| \bs{h}_2 \|_\infty \\
\label{ineq-upper-B2}
& \lesssim & n^2 \cdot \frac{b_n^3}{n^2c_n^2} \cdot \frac{ b_n^{27} (\log n)^{2} }{ n^{1/2}c_n^{9} } \cdot b_n^6 \log n
 \lesssim \frac{ b_n^{36} (\log n)^{3} }{ n^{1/2}c_n^{11} }.
\end{eqnarray}
and
\begin{eqnarray}
\nonumber
|C_3|& = & |\widetilde{\bs{h}}_2^\top \widetilde{W}_{22} (\bs{h}_2 -  \widetilde{\bs{h}}_2) |
 \le  (2n-r)^2 \|\widetilde{\bs{h}}_2\|_\infty  \| \widetilde{W}_{22} \|_{\max} \| \bs{h}_2 -  \widetilde{\bs{h}}_2 \|_\infty \\
\label{ineq-upper-B3}
& \lesssim & \frac{ b_n^{36} (\log n)^{3} }{ n^{1/2}c_n^{11} }.
\end{eqnarray}
By combining \eqref{eq-theorem2-hhd}--\eqref{ineq-upper-B3}, it yields
\begin{equation*}
|\mathbf{h}^\top V^{-1} \mathbf{h} -
\mathbf{\widetilde{h}}^\top \widetilde{V}^{-1} \mathbf{\widetilde{h}}|
\lesssim \frac{ b_n^{36} (\log n)^{3} }{ n^{1/2}c_n^{11} }.
\end{equation*}
This completes the proof of the first step.

Step 2. To prove that this term does go to zero, we did a careful analysis on the difference
$B_2-B_2^0$ by using asymptotic representations of $\widehat{\theta}_i - \theta_i$ and $\widehat{\theta}_i^0 - \theta_i$.
With the use of Lemma \ref{lemma-hat-beta-diff-2b}, we have
\begin{equation}\label{ineq-B2-B20}
  B_2-B_2^0 = O_p\left(  \frac{ b_n^{21} (\log n)^{5/2}}{ n^{1/2}c_{n}^{17}}\right),
\end{equation}
whose detailed proofs are given in Section \ref{subsection:B2B20} in Supplementary Material.

Step 3. With the use of asymptotic representations of $\widehat{\theta}_i - \theta_i$ and $\widehat{\theta}_i^0 -\theta_i$ and Lemma \ref{lemma-hat-beta-diff-2b}, we can show
\begin{equation}\label{ineq-B3-B20}
  B_3-B_3^0 = O_p\left(  \frac{ b_n^{18} (\log n)^{5/2}}{ n^{1/2}c_{n}^{14}}\right),
\end{equation}
whose detailed proofs are given in Section \ref{subsection-B3B30} in Supplementary Material.

\end{proof}

\subsection{Proofs for Theorem  \ref{theorem-ratio-p-fixed}}
\label{section:theorem1-a}
To prove Theorem \ref{theorem-ratio-p-fixed}, we need four lemmas below.

%By Lemma 3 in \cite{Yan2022WilksTI}, we have the following lemma.
Define one matrices $S_{22}=(s_{i,j})_{(2n-1-r)\times (2n-1-r)}$ with
\begin{equation}\label{definition-S221}
  s_{i,j}=
   \begin{cases}
  \displaystyle \frac{\delta_{i,j}}{v_{i,i}}+\frac{1}{\widetilde{v}_{2n,2n}}, &i,j=r+1,\ldots,n,\\
    \displaystyle -\frac{1}{\widetilde{v}_{2n,2n}}, &i=r+1,\ldots,n,j=n+1,\ldots,2n-1,\\
     \displaystyle -\frac{1}{\widetilde{v}_{2n,2n}}, &i=n+1,\ldots,2n-1,j=r+1,\ldots,n,\\
      \displaystyle\frac{\delta_{i,j}}{v_{i,i}}+\frac{1}{\widetilde{v}_{2n,2n}}, &i,j=n+1,\ldots,2n-1,\\
   \end{cases}
  \end{equation}
where $\widetilde{v}_{2n,2n}=\sum_{i=r+1}^{2n-1}\widetilde{v}_{i,2n}=\sum_{i=r+1}^{n}{v}_{i,2n}+\sum_{i=n+1}^{2n-1}\sum_{j=1}^{r}{v}_{i,j}$ and $\delta_{i,j}$ is the Kronecker delta function.

\begin{lemma}\label{lemma-appro-beta-VS}
For $V\in \mathcal{L}_n(1/b_n, 1/c_n)$ with $n\ge 2$ and its bottom right $(2n-1-r)\times (2n-1-r)$ block $V_{22}$ with $r\in\{1,\ldots, n-1\}$, if
\[
\frac{(n-r)c_{n} }{ (n-1)rb_{n} } = o(1),
\]
 we have
\begin{equation}\label{ineq-V-S-appro-upper-b}
\max\{ \|V^{-1} - S \|_{\max}, \| V_{22}^{-1} - S_{22} \|_{\max} \}  \le \frac{c_0 b_{n}^{3}}{c_{n}^{2}(n-1)^{2} },
\end{equation}
where $c_0$ is a fixed constant.
\end{lemma}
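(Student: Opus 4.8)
My plan is to treat the two max-norm bounds separately, since the first is essentially already available and the second is the genuinely new ingredient. For $\|V^{-1}-S\|_{\max}$, the matrix $V$ in \eqref{definition-v-beta} is precisely the Fisher information matrix lying in the class $\mathcal{L}_n(1/b_n,1/c_n)$ and $S$ in \eqref{definition-S} is the approximate inverse of \cite{Yan:Leng:Zhu:2016}; I would simply cite their Proposition 1, which gives $\|V^{-1}-S\|_{\max}\le c_0 b_n^3/\{c_n^2(n-1)^2\}$. All the work then concerns the sub-block $V_{22}$, whose rows and columns are indexed by the $n-r$ surviving outgoingness coordinates $\{r+1,\dots,n\}$ and all $n-1$ popularity coordinates $\{n+1,\dots,2n-1\}$. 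I would first record its bipartite structure $V_{22}=\bigl(\begin{smallmatrix}D_\alpha & B\\ B^\top & D_\beta\end{smallmatrix}\bigr)$, with $D_\alpha,D_\beta$ the diagonal blocks built from the $v_{i,i}$ and $v_{n+j,n+j}$ and $B=(v_{i,n+j})$ the cross block.

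The crux is to explain why $S_{22}$ in \eqref{definition-S221} is the right candidate, despite $V_{22}$ no longer being balanced. Setting $\mathbf{s}_{22}=(\mathbf{1}_{n-r}^\top,-\mathbf{1}_{n-1}^\top)^\top$, one has $S_{22}=\diag(1/v_{i,i})+\widetilde v_{2n,2n}^{-1}\mathbf{s}_{22}\mathbf{s}_{22}^\top$. Using the balance identities $v_{i,i}=\sum_{j}v_{i,n+j}$ and $v_{n+j,n+j}=\sum_{i}v_{i,n+j}$ valid in the full matrix, a direct computation shows that deleting the coordinates $1,\dots,r$ leaves $V_{22}\mathbf{s}_{22}$ with entries $v_{i,2n}$ on the $\alpha$-block and $-\sum_{i=1}^{r}v_{i,n+j}$ on the $\beta$-block, i.e. exactly the row and column deficits created by the deletion, and that $\mathbf{s}_{22}^\top V_{22}\mathbf{s}_{22}=\widetilde v_{2n,2n}$ with $\widetilde v_{2n,2n}$ the aggregated deficit defined below \eqref{definition-S221}. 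These two identities are the analogues of the balanced relations $V\mathbf{s}=(v_{1,2n},\dots,v_{n,2n},0,\dots,0)^\top$ and $\mathbf{s}^\top V\mathbf{s}=v_{2n,2n}$, with $\mathbf{s}=(\mathbf{1}_n^\top,-\mathbf{1}_{n-1}^\top)^\top$, that make $S$ work in the full matrix, and they single out $S_{22}$ and the replacement of $v_{2n,2n}$ by $\widetilde v_{2n,2n}$.

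With $S_{22}$ thus justified, I would estimate $V_{22}^{-1}-S_{22}$ by repeating, for the unbalanced $V_{22}$, the sharp entrywise analysis underlying \cite{Yan:Leng:Zhu:2016} Proposition 1 rather than a crude submultiplicative bound: the diagonal of $V_{22}\diag(1/v_{i,i})$ equals one, its off-diagonal part consists of the scaled cross terms $v_{i,n+j}/v_{j,j}$, and the rank-one correction $\widetilde v_{2n,2n}^{-1}(V_{22}\mathbf{s}_{22})\mathbf{s}_{22}^\top$ is built to cancel the leading off-diagonal part while exploiting the sign pattern of $\mathbf{s}_{22}$. The growth hypothesis $\tfrac{(n-r)c_n}{(n-1)rb_n}=o(1)$ is used here to control the deletion deficits $\sum_{i=r+1}^{n}v_{i,2n}$ and $\sum_{j}\sum_{i=1}^{r}v_{i,n+j}$ relative to the aggregated normalizer $\widetilde v_{2n,2n}$, so that $\widetilde v_{2n,2n}$ plays the role of the balanced normalizer $v_{2n,2n}$ and the approximation retains the sharp $(n-1)^{-2}$ rate. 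Taking the maximum of the two bounds then yields the stated inequality.

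The main obstacle is exactly this loss of balance. Because the surviving rows and columns of $V_{22}$ no longer sum to their diagonal entries, $V_{22}$ does not literally belong to $\mathcal{L}_n(1/b_n,1/c_n)$, so one cannot quote the balanced approximation result verbatim, and a naive bound of the form $\|V_{22}^{-1}-S_{22}\|_{\max}\le\|V_{22}^{-1}\|_\infty\|V_{22}S_{22}-I\|_{\max}$ is far too lossy to reach the $(n-1)^{-2}$ order. The heart of the proof is therefore to show that aggregating all deletion deficits into the single scalar $\widetilde v_{2n,2n}$ restores the cancellation structure of the balanced case, and to verify --- by a careful bookkeeping of the $\alpha$- and $\beta$-deficit terms under the stated growth condition --- that this restoration is accurate to the same order $c_0 b_n^3/\{c_n^2(n-1)^2\}$, uniformly over the admissible range $r\in\{1,\dots,n-1\}$.
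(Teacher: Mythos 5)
Your proposal takes essentially the same route as the paper: the paper gives no written proof of this lemma, stating only that it ``is an extension of that of Proposition 1 in Yan, Leng and Zhu (2016)'' and omitting the details, and that extension---citing Proposition 1 verbatim for $\|V^{-1}-S\|_{\max}$ and rerunning its entrywise approximation argument on the unbalanced block $V_{22}$ with $v_{2n,2n}$ replaced by the aggregated deficit $\widetilde v_{2n,2n}$---is exactly what you outline. Your supporting identities are all correct (indeed $S_{22}=\diag(1/v_{i,i})+\widetilde v_{2n,2n}^{-1}\mathbf{s}_{22}\mathbf{s}_{22}^{\top}$ with $\mathbf{s}_{22}=(\mathbf{1}_{n-r}^{\top},-\mathbf{1}_{n-1}^{\top})^{\top}$, the entries of $V_{22}\mathbf{s}_{22}$ are precisely the deletion deficits $v_{i,2n}$ and $-\sum_{i\le r}v_{i,n+j}$, and $\mathbf{s}_{22}^{\top}V_{22}\mathbf{s}_{22}=\widetilde v_{2n,2n}$), so your sketch supplies the structural scaffolding that the paper leaves implicit.
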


Lemma \ref{lemma-appro-beta-VS} is an extension of that of Proposition 1 in \cite{Yan:Leng:Zhu:2016} and therefore the proof of Lemma \ref{lemma-appro-beta-VS}  is omitted.

\begin{lemma} \label{lemma-clt-beta-W}
Recall that $V_{22}$ is the bottom right $(2n-1-r)\times (2n-1-r)$ block of $V$.
Let $W=V^{-1}-S$,
$\widetilde{W}_{22}=V_{22}^{-1}-S_{22}$ and $\bs{\bar{\mathbf{g}}}_2=(\bar{d}_{r+1}, \ldots, \bar{d}_n, \bar{b}_{1}, \ldots, \bar{b}_{n-1})^\top$. For any given $r\in \{0, \ldots, n-1\}$, we have
\[
\bs{\bar{\mathbf{g}}}_2^\top \widetilde{W}_{22} \bs{\bar{\mathbf{g}}}_2 = O_p\left( \frac{b_n^3}{c_n^3}(2-\frac{r}{n})^{3/2} \right),
\]
where $r=0$ implies $\bs{\bar{\mathbf{g}}}_2=\bs{\bar{\mathbf{g}}}$, $V_{22}=V$ and $\widetilde{W}_{22}=W$.
\end{lemma}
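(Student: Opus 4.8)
The plan is to establish the bound by a mean--variance (Chebyshev) argument applied to the quadratic form $Q:=\bs{\bar{\mathbf{g}}}_2^\top \widetilde{W}_{22}\bs{\bar{\mathbf{g}}}_2$. The first observation is that, since $V$ is the covariance matrix of $\mathbf{g}=(d_1,\dots,d_n,b_1,\dots,b_{n-1})^\top$ and $\bs{\bar{\mathbf{g}}}_2$ collects exactly the centered coordinates indexed by $\{r+1,\dots,2n-1\}$, the covariance of $\bs{\bar{\mathbf{g}}}_2$ equals $V_{22}$. Because $\widetilde{W}_{22}=V_{22}^{-1}-S_{22}$ is symmetric and $\E\bs{\bar{\mathbf{g}}}_2=\mathbf{0}$, this gives the clean identity $\E[Q]=\mathrm{tr}(\widetilde{W}_{22}V_{22})$. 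It then suffices to show that $|\E[Q]|$ and $\sqrt{\operatorname{Var}(Q)}$ are both $O\big(\tfrac{b_n^3}{c_n^3}(2-\tfrac{r}{n})^{3/2}\big)$, after which Chebyshev's inequality yields $Q=\E[Q]+O_p(\sqrt{\operatorname{Var}(Q)})$ at the claimed rate. Throughout I will use that the effective dimension is $2n-1-r\asymp n(2-r/n)$.

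For the mean, I would bound $\mathrm{tr}(\widetilde{W}_{22}V_{22})=\sum_{i,j}(\widetilde{W}_{22})_{ij}(V_{22})_{ji}$ using the entrywise bound $\|\widetilde{W}_{22}\|_{\max}\le c_0 b_n^3/(c_n^2(n-1)^2)$ from Lemma~\ref{lemma-appro-beta-VS} together with the sparse structure of $V_{22}$ read off from \eqref{definition-v-beta}: each row of $V_{22}$ has one diagonal entry of order $n/c_n$ and at most $n-1$ bipartite off-diagonal entries bounded by $1/c_n$. The diagonal contribution is then at most $(2n-1-r)\cdot\|\widetilde{W}_{22}\|_{\max}\cdot O(n/c_n)=O\big((2-\tfrac{r}{n})b_n^3/c_n^3\big)$, while the off-diagonal contribution, having $O((n-r)(n-1))$ nonzero terms, is $O\big((1-\tfrac{r}{n})b_n^3/c_n^3\big)$. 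Since $2-r/n\ge 1$, both are dominated by $(2-r/n)^{3/2}b_n^3/c_n^3$.

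For the fluctuation I would pass to the independent layer by writing $\bs{\bar{\mathbf{g}}}_2=B\bar{\mathbf{a}}$, where $\bar{\mathbf{a}}=(\bar a_{k,l})$ is the vector of centered, independent, bounded edge variables and $B$ is the edge-to-degree incidence matrix, so that $Q$ becomes a quadratic form in independent variables. Applying the standard variance identity for such forms reduces $\operatorname{Var}(Q)$, up to lower-order fourth-cumulant (edge-kurtosis) terms, to $2\,\mathrm{tr}\big((\widetilde{W}_{22}V_{22})^2\big)=2\|\widetilde{W}_{22}V_{22}\|_F^2$. I would bound the entries of $E:=\widetilde{W}_{22}V_{22}$ by $|E_{ij}|\le\|\widetilde{W}_{22}\|_{\max}\sum_k|(V_{22})_{kj}|=O\big(b_n^3/(c_n^3 n)\big)$, using again the row-boundedness of $V_{22}$; summing the $O((2n-1-r)^2)$ squared entries then controls $\|E\|_F^2$ and hence $\operatorname{Var}(Q)$ at an order whose square root is $\lesssim (2-r/n)^{3/2}b_n^3/c_n^3$.

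The main obstacle is tracking the sharp dependence on $r$ (equivalently on the effective dimension $2n-1-r$) without losing a factor of $n$: a naive $|Q|\le\|\bs{\bar{\mathbf{g}}}_2\|_1^2\,\|\widetilde{W}_{22}\|_{\max}$ or $\|\bs{\bar{\mathbf{g}}}_2\|_2^2\,\|\widetilde{W}_{22}\|_{\mathrm{op}}$ bound overshoots by a polynomial factor, so the argument must genuinely exploit both the uniform entrywise smallness of $\widetilde{W}_{22}$ guaranteed by Lemma~\ref{lemma-appro-beta-VS} and the fact that $V_{22}$ has only $O(n)$ nonzero entries per row (a bounded diagonal of order $n/c_n$ and bipartite off-diagonals of order $1/c_n$), both in the trace computation and in the Frobenius-norm bound for the variance. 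Once these structural bounds are in place, combining the mean and variance estimates through Chebyshev gives the stated $O_p\big(\tfrac{b_n^3}{c_n^3}(2-\tfrac{r}{n})^{3/2}\big)$ rate, with the $r=0$ case recovering the full-matrix bound for $W$.
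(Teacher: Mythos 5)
Your proposal is correct and follows the same high-level skeleton as the paper's proof (control the mean of the quadratic form, control its variance, conclude by Chebyshev), but both key steps are executed by a genuinely different route. For the mean, the paper gets the exact identity $\E[\bs{\bar{\mathbf{g}}}_2^\top \widetilde{W}_{22}\bs{\bar{\mathbf{g}}}_2]=\mathrm{tr}(V_{22}\widetilde{W}_{22})=\mathrm{tr}(I_{2n-1-r}-V_{22}S_{22})=0$, which exploits the special structure of $S_{22}$ (the off-diagonal $-1/\widetilde{v}_{2n,2n}$ entries cancel the bipartite part of $V_{22}$ row by row); you instead bound $|\mathrm{tr}(\widetilde{W}_{22}V_{22})|$ by $\|\widetilde{W}_{22}\|_{\max}$ times the absolute entry sum of $V_{22}$, obtaining $O\big((2-r/n)b_n^3/c_n^3\big)$. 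Since $r\le n-1$ forces $2-r/n\in(1,2]$, this weaker bound still sits inside the claimed rate, so nothing is lost. For the variance, the paper brute-forces it by enumerating the four coincidence patterns of index quadruples and bounding each covariance $g_{ij\zeta\eta}$ separately; you lift to the edge level, write $\bs{\bar{\mathbf{g}}}_2=B\bar{\mathbf{a}}$ in terms of independent bounded edge variables, and invoke the standard quadratic-form variance formula, which collapses that combinatorics into $2\,\mathrm{tr}\big((\widetilde{W}_{22}V_{22})^2\big)$ plus a diagonal fourth-moment term, via $B\Sigma B^\top=V_{22}$. This is cleaner and more systematic; what the paper's enumeration buys is only that it stays entirely at the degree level. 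The differing powers of $(2-r/n)$ in the two variance bounds ($2$ for yours, $3$ for the paper's) are immaterial because that factor is bounded between $1$ and $2$.

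Two small repairs are needed. First, $\mathrm{tr}\big((\widetilde{W}_{22}V_{22})^2\big)=\|\widetilde{W}_{22}V_{22}\|_F^2$ is not an identity, since $E:=\widetilde{W}_{22}V_{22}$ is not symmetric; but the inequality you actually need, $\mathrm{tr}(E^2)=\sum_{i,j}E_{ij}E_{ji}\le\sum_{i,j}E_{ij}^2=\|E\|_F^2$ (Cauchy--Schwarz, or equivalently $\mathrm{tr}(A^2B^2)-\mathrm{tr}((AB)^2)=\tfrac12\|AB-BA\|_F^2\ge 0$ for symmetric $A,B$), does hold, so replace the equality by "$\le$" and your entrywise bound $\|E\|_F^2\lesssim (2-r/n)^2 b_n^6/c_n^6$ carries the argument. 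Second, you should verify rather than assert that the fourth-cumulant term is negligible: since each column of $B$ has at most two ones, $|M_{kk}|\le 4\|\widetilde{W}_{22}\|_{\max}$ for $M=B^\top\widetilde{W}_{22}B$, so this term is $\lesssim n^2\|\widetilde{W}_{22}\|_{\max}^2/c_n\asymp b_n^6/(n^2c_n^5)$. This is dominated by $b_n^6/c_n^6$ under the same implicit regime in which the paper's own Case~1 absorbs its $\sum_j\mathrm{Var}(\bar{a}_{i,j}^2)$ contribution into $n^2/c_n^2$, so it is a wash between the two proofs, but it should be stated explicitly rather than labeled "lower-order" without computation.
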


Lemma \ref{lemma-clt-beta-W}
states that the remainder terms $\bs{\bar{\mathbf{g}}}_2^\top \widetilde{W}_{22} \bs{\bar{\mathbf{g}}}_2$ and $\bs{\bar{\mathbf{g}}}^\top W \bs{\bar{\mathbf{g}}}$ in \eqref{eq-theorem2-B10}
 is in the order of $O_p\left( (b_n^3/c_n^3)(2-r/n)^{3/2} \right)$ for any given $r\ge 0$.

\begin{lemma}\label{lemma-consi-beta}
Under the null $H_0: (\alpha_1, \ldots, \alpha_r)=(\alpha_1^0, \ldots, \alpha_r^0)$ for any given $r\in\{0, \ldots, n-1\}$,
if
\begin{equation}\label{con-lemma3-beta-c}
\frac{2b_{n}^{6}(2n-1-r)^2}{(n-1)^{2}c_{n}^{5}}
=o\left(\sqrt{\frac{n}{\log n}}\right),
\end{equation}
 then with probability at least $1-4/n$, the restricted MLE $\widehat{\bs{\theta}}^0$ exists and satisfies
\begin{equation*}\label{ineq-En-beta}
\| \widehat{\bs{\theta}}^0 - \bs{\theta} \|_\infty  \le  \frac{b_{n}^{3}}{c_{n}^2}\sqrt{\frac{\log n}{n}},
\end{equation*}
where $r=0$ means there is no any restriction on $\bs{\theta}$ and implies $\widehat{\bs{\theta}}^0=\widehat{\bs{\theta}}$. %% and $\widehat{\bs{\beta}}^0$ is the MLE in this case.
Further, if the restricted MLE exists, it must be unique.
\end{lemma}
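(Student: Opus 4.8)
The plan is to establish existence and consistency of the restricted MLE $\widehat{\bs\theta}^0$ by reducing to the free-parameter block $\bs\theta_2=(\alpha_{r+1},\ldots,\alpha_n,\beta_1,\ldots,\beta_{n-1})^\top$ and running a Newton--Kantorovich fixed-point argument around the true value $\bs\theta_2^\ast$. Under the specified null, $\alpha_1,\ldots,\alpha_r$ are held at the known values $\alpha_1^0,\ldots,\alpha_r^0$, which equal the data-generating values, so the restricted score equations are exactly the gradient of $\ell$ in the $2n-1-r$ free coordinates with these values substituted. Writing $F(\bs\theta_2)=\bs{\mathbf g}_2-\E_{\bs\theta}[\bs{\mathbf g}_2]$ for the restricted score, its Jacobian is $-V_{22}$ and $F(\bs\theta_2^\ast)=\bs{\bar{\mathbf g}}_2$, so the analysis hinges on three quantities: the size of the residual $\bs{\bar{\mathbf g}}_2$, an operator-norm bound for $V_{22}^{-1}$, and a Lipschitz bound for $V_{22}(\cdot)$.

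First I would control the residual probabilistically. Since each $\bar d_i=\sum_{j\ne i}\bar a_{i,j}$ and $\bar b_j=\sum_{i\ne j}\bar a_{i,j}$ is a sum of independent centered Bernoulli variables with variance $v_{i,i}\le (n-1)/c_n$, Bernstein's inequality gives $\P(|\bar g_i|\gtrsim \sqrt{(n/c_n)\log n})\le 2/n^2$, and a union bound over the at most $2n$ degrees yields, with probability at least $1-4/n$, the event $E_n$ on which $\|\bs{\bar{\mathbf g}}\|_\infty\lesssim \sqrt{(n/c_n)\log n}$. This $E_n$ is exactly the source of the stated $1-4/n$, and every later estimate is deterministic given $E_n$.

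Next I would run the Kantorovich iterate $\bs\theta_2^{(k+1)}=\bs\theta_2^{(k)}+V_{22}(\bs\theta_2^{(k)})^{-1}F(\bs\theta_2^{(k)})$ started at $\bs\theta_2^\ast$. The initial displacement $\delta=\|V_{22}^{-1}\bs{\bar{\mathbf g}}_2\|_\infty$ is estimated by replacing $V_{22}^{-1}$ with $S_{22}$ via Lemma \ref{lemma-appro-beta-VS}; since $(S_{22}\bs{\bar{\mathbf g}}_2)_i=\bar g_i/v_{i,i}+(1/\tilde v_{2n,2n})\sum_{\pm}\bar g_j$ and the signed sum enjoys a near-cancellation through the identity $\sum_i d_i=\sum_j b_j$, one gets $\delta\lesssim (b_n/\sqrt{c_n})\sqrt{(\log n)/n}$, comfortably within the target radius $\rho:=(b_n^3/c_n^2)\sqrt{(\log n)/n}$. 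The Lipschitz constant of the Fisher information follows from \eqref{eq-derivative-mu-various}--\eqref{ineq-mu-deriv-bound}: each entry of $V_{22}$ varies by $O(1/c_n)$ per coordinate over $O(n)$ active entries per row, so $\|V_{22}(\bs x)-V_{22}(\bs y)\|_\infty\lesssim (n/c_n)\|\bs x-\bs y\|_\infty$. Feeding $\|V_{22}^{-1}\|_\infty$ (bounded through $S_{22}$ and $\|V_{22}^{-1}-S_{22}\|_{\max}\le c_0 b_n^3/(c_n^2(n-1)^2)$) together with this Lipschitz term and the radius $\rho$ into the Kantorovich criterion $h=\|V_{22}^{-1}\|_\infty\cdot\lambda\cdot\delta\le \tfrac12$ is precisely what produces hypothesis \eqref{con-lemma3-beta-c}; the factor $(2n-1-r)^2/(n-1)^2$ and the power $c_n^5$ emerge from this combination. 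Once $h\le\tfrac12$, the iteration converges to a zero of $F$ in the ball of radius $2\delta\le (b_n^3/c_n^2)\sqrt{(\log n)/n}$, giving both existence and the claimed consistency bound; the degenerate case $r=0$ is recovered by taking $V_{22}=V$, $S_{22}=S$. Uniqueness is then immediate from strict concavity of $\ell$ (equivalently $V_{22}\succ 0$), which forces any interior critical point to be the unique maximizer.

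The main obstacle is verifying the Kantorovich criterion, not the concentration step. The subtlety is that $\|V_{22}^{-1}\|_\infty$ is genuinely delicate: the approximant $S_{22}$ carries off-diagonal entries of size $1/\tilde v_{2n,2n}$ whose row sums fail to be small unless $r$ is large, so the crude bound $\|V_{22}^{-1}\|_\infty\lesssim (2n-1-r)\|V_{22}^{-1}\|_{\max}$ costs a full dimensional factor. Keeping this term tight enough that $h$ stays below $\tfrac12$ under only \eqref{con-lemma3-beta-c} — while simultaneously exploiting the near-cancellation in $S_{22}\bs{\bar{\mathbf g}}_2$ so that $\delta$ attains the smaller order $(b_n/\sqrt{c_n})\sqrt{(\log n)/n}$ rather than the naive $\|V_{22}^{-1}\|_\infty\|\bs{\bar{\mathbf g}}_2\|_\infty$ — is where the real work lies, and it is here that Lemma \ref{lemma-appro-beta-VS} and the explicit structure of $\tilde v_{2n,2n}$ are indispensable.
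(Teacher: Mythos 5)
Your overall skeleton matches the paper's proof: the paper also reduces to the free coordinates $(\alpha_{r+1},\ldots,\alpha_n,\beta_1,\ldots,\beta_{n-1})$, sets up the score system $F$, controls $\|F(\bs\theta)\|_\infty$ by concentration plus a union bound (Hoeffding there, giving the event of probability $1-4/n$), and runs the Newton--Kantorovich theorem (Lemma \ref{lemma:Newton:Kantovorich}) started at the true parameter, with $V_{22}^{-1}$ handled through the splitting $S_{22}+\widetilde W_{22}$ of Lemma \ref{lemma-appro-beta-VS}. However, there is a genuine gap in the step you yourself flag as the crux: your Kantorovich criterion $h=\|V_{22}^{-1}\|_\infty\cdot\lambda\cdot\delta\le\tfrac12$ cannot be verified under \eqref{con-lemma3-beta-c}, no matter how carefully the three factors are bounded. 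The reason is structural: $\|V_{22}^{-1}\|_\infty$ is genuinely of order $b_n$ (the flat $\pm 1/\tilde v_{2n,2n}$ block of $S_{22}$ contributes about $(2n-1-r)/\tilde v_{2n,2n}\asymp b_n$ to every absolute row sum, and this is not an artifact of the approximation), while the Lipschitz constant of $V_{22}$ in the induced $\ell_\infty$-norm is $\lambda\asymp n/c_n$. Hence your $K=\|V_{22}^{-1}\|_\infty\lambda\asymp nb_n/c_n$ carries a factor of $n$, and $h\asymp (nb_n/c_n)\,\delta\gtrsim (b_n^4/c_n^3)\sqrt{n\log n}$. Even in the most favorable regime $b_n\asymp c_n\asymp 1$ (where \eqref{con-lemma3-beta-c} holds trivially), this $h$ diverges like $\sqrt{n\log n}$, so the claim that this combination ``is precisely what produces hypothesis \eqref{con-lemma3-beta-c}'' is false: only the $\widetilde W_{22}$ contribution reproduces the constant $2$, the factor $(2n-1-r)^2/(n-1)^2$ and the power $c_n^5$; the $S_{22}$ contribution, bounded via its operator norm, destroys the argument.

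The fix, which is what the paper actually does, is to never factor through $\|V_{22}^{-1}\|_\infty$: bound $\|[F'(\bs\theta)]^{-1}(F'(\mathbf x)-F'(\mathbf y))\|_\infty \le \|S_{22}(F'(\mathbf x)-F'(\mathbf y))\|_\infty + \|\widetilde W_{22}(F'(\mathbf x)-F'(\mathbf y))\|_\infty$ directly. For the $S_{22}$ part, the diagonal entries are $1/v_{i,i}=O(b_n/n)$, which exactly absorbs the $O((2n-r-1)/c_n)$ row sums of the Jacobian difference to give $O(b_n/c_n)\|\mathbf x-\mathbf y\|_\infty$, and the flat $\pm 1/\tilde v_{2n,2n}$ part hits \emph{signed column sums} of the Jacobian difference, which collapse (all contributions cancel except the single $\mu'(\alpha_j+\beta_n)$ term tied to the fixed $\beta_n=0$) to $O(\|\mathbf x-\mathbf y\|_\infty/c_n)$; for the $\widetilde W_{22}$ part, the max-norm bound $b_n^3/(c_n^2(n-1)^2)$ absorbs the doubly-summed Jacobian difference. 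This yields $K\lesssim b_n^3(2n-1-r)^2/((n-1)^2c_n^3)$ with no dangling factor of $n$, and then $K\eta$ is exactly the quantity in \eqref{con-lemma3-beta-c}. Two smaller inaccuracies: your $\delta$ bound $(b_n/\sqrt{c_n})\sqrt{\log n/n}$ omits the $\widetilde W_{22}$ contribution $(2n-1-r)\|\widetilde W_{22}\|_{\max}\|\bs{\bar{\mathbf g}}_2\|_\infty\asymp (b_n^3/c_n^{5/2})\sqrt{\log n/n}$, which dominates since $b_n\ge c_n$ (harmless here, since it stays within the target radius, but the stated order is wrong); and the iteration should be checked to stay inside the ball prescribed by the Kantorovich theorem, which your text asserts but which depends on the corrected $\delta$ and $K$.
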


%Lemma \ref{lemma-consi-beta} gives the error bound for the r MLE $\bs{\widehat{\beta}}^0$.

From Lemma \ref{lemma-consi-beta}, we can see that the consistency rate for the restricted MLE $\bs{\widehat{\theta}}^0$ in terms of the $L_\infty$-norm
is independent of $r$ while the condition depends on $r$. The larger $r$ is, the weaker the condition is. When $r=0$, the lemma gives the error bound for the MLE $\bs{\widehat{\theta}}$.

\begin{lemma}
\label{lemma:beta3:err}
%Recall $\pi_{ij}=\beta_i+\beta_j$.
%Suppose that $\beta_1, \ldots, \beta_r$ are known with a given $r\in\{0, \ldots, n-1\}$.
If \eqref{con-lemma3-beta-c} holds,  then for an arbitrarily given $r\in\{0, \ldots, n-1\}$, %%with probability at least $1-O(1/n)$, we have the following bounds:
\begin{eqnarray}
\sum_{i=r+1}^{n} (\widehat{\alpha}_i-\alpha_i)^3 \sum_{j\neq i} \mu^{\prime\prime}( \pi_{ij} )
\label{eq-S1-bound}
= O_p\left( \frac{ (1-r/n)b_n^{11} (\log n)^2 }{  c_n^8 } \right)+\frac{n(n-r)\bar b_{n}^3}{c_{n}v_{2n,2n}^3},\\
\sum_{j=1}^{n-1}  (\widehat{\beta}_j-\beta_j)^3 \sum_{j\neq i} \mu^{\prime\prime}( \pi_{ij} )
\label{eq-S2-bound}
= O_p\left(  \frac{ (1-r/n)b_n^{11} (\log n)^2 }{  c_n^8 }  \right)-\frac{n(n-r)\bar b_{n}^3}{c_{n}v_{2n,2n}^3},\\
\sum_{i=r+1 }^n \sum_{j=1,j\neq i}^{n-1} (\widehat{\alpha}_i-\alpha_i)^2(\widehat{\beta}_j-\beta_j)\mu^{\prime\prime}( \pi_{ij} )  \label{eq-S3-bound}
= O_p
\left(\frac{ (1-r/n)b_n^{11} (\log n)^2 }{  c_n^8 } \right)-\frac{n(n-r)\bar b_{n}^3}{c_{n}v_{2n,2n}^3},\\
\sum_{i=r+1 }^n \sum_{j=1,j\neq i}^{n-1} (\widehat{\alpha}_i-\alpha_i)(\widehat{\beta}_j-\beta_j)^2\mu^{\prime\prime}( \pi_{ij} ) \label{eq-S4-bound}
 = O_p
\left( \frac{ (1-r/n)b_n^{11} (\log n)^2 }{  c_n^8 } \right)+\frac{n(n-r)\bar b_{n}^3}{c_{n}v_{2n,2n}^3}.
\end{eqnarray}
If $\widehat{\alpha}_i$ is replaced with $\widehat{\alpha}_i^0$ for $i=r+1,\ldots,n$, then the above upper bound still holds.
\end{lemma}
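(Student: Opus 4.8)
The plan is to prove the four identities \eqref{eq-S1-bound}--\eqref{eq-S4-bound} by inserting the stochastic expansion of each coordinate of $\widehat{\bs\theta}$ into the cubic sums and peeling off a single ``common-mode'' contribution from a negligible remainder. Approximating $V^{-1}$ by $S$ in $\widehat{\bs\theta}-\bs\theta = V^{-1}\bs{\bar{\mathbf{g}}} - V^{-1}\bs{h}$ and reading the rows of $S$ from \eqref{definition-S}, one gets, on the consistency event furnished by Lemma \ref{lemma-consi-beta},
\[
\widehat{\alpha}_i-\alpha_i = \frac{\bar{d}_i}{v_{i,i}} + \frac{\bar{b}_n}{v_{2n,2n}} + \gamma_i,
\qquad
\widehat{\beta}_j-\beta_j = \frac{\bar{b}_j}{v_{n+j,n+j}} - \frac{\bar{b}_n}{v_{2n,2n}} + \gamma_{n+j},
\]
with $\gamma_i = (W\bs{\bar{\mathbf{g}}})_i + (V^{-1}\bs{h})_i$ uniformly of order $b_n^9\log n/(nc_n^7)$, exactly as in Lemma \ref{lemma-beta-homo-expan}. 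The term $\bar{b}_n/v_{2n,2n}$ is shared by every coordinate, and it is its pure cube that produces the displayed contribution $\pm\, n(n-r)\bar{b}_n^3/(c_n v_{2n,2n}^3)$: in \eqref{eq-S1-bound}, for instance, the summand $(\bar{b}_n/v_{2n,2n})^3$ does not depend on $i$ and so accumulates the weight $\sum_{i=r+1}^n\sum_{j\ne i}\mu''(\pi_{ij})$, whose modulus is at most $n(n-r)/c_n$ by \eqref{ineq-mu-deriv-bound}. The $\pm$ sign is fixed by the parity of the number of $\beta$-coordinates in the monomial, since $\bar{b}_n/v_{2n,2n}$ enters the $\beta$-coordinates with a minus sign; this explains the pattern $+,-,-,+$ across \eqref{eq-S1-bound}--\eqref{eq-S4-bound}.

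First I would expand each cube or product multinomially into the common-mode part, the ``individual'' parts built from $\bar{d}_i/v_{i,i}$ and $\bar{b}_j/v_{n+j,n+j}$, and the parts carrying at least one factor $\gamma$. For the individual parts I would use that $\bar{d}_{r+1},\ldots,\bar{d}_n$ are mutually independent (disjoint edge sets), and likewise the $\bar{b}_j$, so that Bernstein-type tail bounds give $\max(|\bar{d}_i/v_{i,i}|, |\bar{b}_j/v_{n+j,n+j}|)\lesssim b_n\sqrt{\log n/(nc_n)}$ with probability $1-O(n^{-1})$; together with $|\mu''|\le 1/c_n$ from \eqref{ineq-mu-deriv-bound}, the remainder bound on $\gamma$, and the $\ell_\infty$-consistency rate of Lemma \ref{lemma-consi-beta}, the dominant surviving contribution is the cross term of type $(\text{leading})^2\gamma$, which summed over the $n-r$ (resp.\ $(n-r)(n-1)$) indices is $O_p\big((1-r/n)\,b_n^{11}(\log n)^2/c_n^8\big)$.

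The replacement of $\widehat{\alpha}_i$ by $\widehat{\alpha}_i^0$ is handled by the same argument: the restricted MLE admits an expansion of identical shape, now governed by the block matrix $S_{22}$ of \eqref{definition-S221} and $\widetilde v_{2n,2n}$ in place of $S$ and $v_{2n,2n}$; it obeys the same $\ell_\infty$ rate by Lemma \ref{lemma-consi-beta}, and its remainder is of the same order, so each bound is reproduced verbatim with the cubic common-mode term merely carrying $\widetilde v_{2n,2n}$ rather than $v_{2n,2n}$.

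The hard part will be the bookkeeping of the common-mode terms of intermediate degree — those carrying one or two factors of $\bar{b}_n/v_{2n,2n}$ together with the individual fluctuations — which are a priori of the same size as the displayed cubic term. The delicate step is to organize their deterministic expectations and fluctuations so that, after extracting the single cubic term recorded in each identity, one can show that what remains is genuinely of order $O_p(b_n^{11}(\log n)^2/c_n^8)$; the precise sign on the cubic term is chosen so that, once the four sums are recombined into $B_2$ (weights $1,1,3,3$, signs $+,-,-,+$, with $1-1-3+3=0$) and differenced against $B_2^0$, all these common-mode contributions cancel. Making the powers of $b_n$ and $c_n$ in the residual small enough to be $o_p(r^{1/2})$ under $b_n^{15}/c_n^{12}=o(r^{1/2}/(\log n)^2)$ is where the independence-based concentration, rather than the cruder uniform bound $\|\widehat{\bs\theta}-\bs\theta\|_\infty$, must be invoked.
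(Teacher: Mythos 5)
Your overall plan coincides with the paper's proof: insert the representation $\widehat{\theta}_i-\theta_i=(\text{leading term})+\gamma_i$ with $\max_i|\gamma_i|=O(b_n^9\log n/(nc_n^7))$, expand each sum multinomially, keep the pure common-mode cube with the parity-determined signs (your $+,-,-,+$ pattern and the $1-1-3+3=0$ cancellation inside $B_2$ are exactly the paper's intent), and show the rest is negligible. The genuine gap is in how you bound the ``individual'' terms. You propose max-type bounds $\max_i|\bar d_i/v_{i,i}|\lesssim b_n\sqrt{\log n/(nc_n)}$ and assert that the dominant residual is the $(\text{leading})^2\gamma$ cross term. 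But for the pure cube $\sum_{i=r+1}^{n}(\bar d_i/v_{i,i})^3\sum_{j\ne i}\mu''(\pi_{ij})$ this route only yields
\[
(n-r)\cdot\frac{n-1}{c_n}\cdot\Bigl(b_n\sqrt{\tfrac{\log n}{nc_n}}\Bigr)^{3}\asymp\frac{(n-r)\,b_n^{3}(\log n)^{3/2}}{n^{1/2}c_n^{5/2}},
\]
which exceeds the target $O_p\bigl((1-r/n)b_n^{11}(\log n)^2/c_n^{8}\bigr)$ by a factor of order $n^{1/2}c_n^{11/2}/\{b_n^{8}(\log n)^{1/2}\}$; this diverges whenever $b_n$ and $c_n$ stay bounded, so the step fails. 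No per-index bound can repair this, because $|\bar d_i|^3$ is typically of size $(n/c_n)^{3/2}$. What the paper exploits instead is cancellation in the sum: $\E\bar d_i^{\,3}=\sum_{j\ne i}\E\bar a_{i,j}^{\,3}=O(n/c_n)$, a factor of order $(n/c_n)^{1/2}$ smaller than the typical size of $|\bar d_i|^3$, and the cubes are independent across $i$; a mean bound (\eqref{eq-ed33}) plus a variance bound (Lemma 17 of \cite{yan2025likelihood}, giving \eqref{ineq-d-firstaa}) and Chebyshev's inequality then produce $O_p\bigl(b_n^3c_n^{-5/2}((n-r)/n)^{1/2}\bigr)$. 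The same device (Lemma 18 of \cite{yan2025likelihood}) is required for $\sum_{i}\sum_{j}\mu''(\pi_{ij})(\bar d_i/v_{i,i})^2(\bar b_j/v_{n+j,n+j})$ in \eqref{eq-S3-bound}. Your closing remark that ``independence-based concentration'' must replace crude uniform bounds points in the right direction, but the concrete mechanism — explicit third-moment and variance computations for these two sums, not sharper max bounds — is the missing idea, and it is the substantive content of the lemma.

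Two smaller corrections. First, for the restricted MLE the common mode read off from $S_{22}$ in \eqref{definition-S221} is $(\bar b_n-\sum_{i\le r}\bar d_i)/\widetilde v_{2n,2n}$, not $\bar b_n/\widetilde v_{2n,2n}$, so the replacement is not quite ``verbatim'' (the paper itself gives no details for this case). Second, independence is not what underlies your max bounds — a union bound over individual Hoeffding/Bernstein tails suffices for those; independence across $i$ is needed precisely where you do not invoke it, namely in the variance computation for the cubic sums described above.
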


\begin{proof}[Proof of Theorem \ref{theorem-ratio-p-fixed}]
Under the null $H_0: (\alpha_1, \ldots, \alpha_r)=(\alpha_1^0, \ldots, \alpha_r^0)$, the data generating parameter $\bs{\theta}$ is equal to $(\alpha_1^0, \ldots, \alpha_r^0, \alpha_{r+1}, \ldots, \alpha_n, \beta_{1}, \ldots, \beta_{n-1})^\top$.
For convenience, we suppress the superscript $0$ in $\alpha_i^0, i=1,\ldots,r$ when causing no confusion.
The following calculations are based on the event $E_n$ that $\bs{\widehat{\theta}}$ and $\bs{\widehat{\theta}}^0$ simultaneously exist and satisfy
\begin{equation}
\max\left\{ \| \widehat{\bs{\theta}} - \bs{\theta} \|_\infty, \| \widehat{\bs{\theta}}^0 - \bs{\theta} \|_\infty \right\}
\le \frac{b_{n}^{3}}{c_{n}^2}\sqrt{\frac{\log n}{n}}.
\end{equation}
By Lemma \ref{lemma-consi-beta}, $\P(E_n) \ge 1 - O(n^{-1})$ if $b_n^6/c_n^5=o\left\{ (n/\log n)^{1/2} \right\}$.

Similar to the proof of Theorem \ref{theorem-LRT-beta} (a),
it is sufficient to demonstrate:
 (1) $\{2( B_1 - B_1^0)-r\}/(2r)^{1/2}$ converges in distribution to the standard normal distribution as $r\to\infty$;
(2) $(B_2 - B_2^0)/r^{1/2}=o_p(1)$; (3) $(B_3-B_3^0)/{r^{1/2}}=o_p(1)$.
The second claim is a direct result of Lemma \ref{lemma:beta3:err}.
Note that $\widehat{\alpha}_i^0 = \alpha_i^{0}$, $i=1, \ldots, r$. So $B_3^0$ has less terms than $B_3$.
In view of \eqref{ineq-mu-deriv-bound} and \eqref{ineq-beta-beta0-upp}, if $b_n^{12}/c_n^{9}=o( r^{1/2}/(\log n)^2 )$, then
\begin{eqnarray}
\label{ineq-B3-upper}
\frac{ |B_3| }{ r^{1/2} } & \lesssim & \frac{1}{r^{1/2}} \cdot
\frac{n^2}{c_n} \cdot \| \bs{\widehat{\theta}} - \bs{\theta} \|_\infty^4 \lesssim \frac{ b_n^{12}(\log n)^2 }{ r^{1/2} c_n^{9} } = o(1), \\
\label{ineq-B30-upper}
\frac{ |B_3^0| }{ r^{1/2} } & \lesssim & \frac{1}{r^{1/2}} \cdot
\frac{n(n-r)}{c_n} \cdot \| \bs{\widehat{\theta}}^0 - \bs{\theta} \|_\infty^4 \lesssim \frac{ b_n^{12}(\log n)^2 }{ r^{1/2} c_n^{9} } = o(1),
\end{eqnarray}
which shows the third claim.
Therefore, the remainder of the proof is to verify claim (1).

Step 1. Similar to the proof of Theorem \ref{theorem-LRT-beta} (a), we have
\begin{equation*}
\bs{\mathbf{g}} - \E( \bs{\mathbf{g}} ) = V ( \widehat{\boldsymbol{\theta}} - \boldsymbol{\theta} ) + \bs{h}.
\end{equation*}
It yields that
\begin{equation}\label{eq-expansion-hatbeta-beta}
\boldsymbol{\widehat{\theta}} - \boldsymbol{\theta} = V^{-1} \bs{\bar{\mathbf{g}}} - V^{-1}\mathbf{h}.
\end{equation}
By Lemma \ref{lemma-appro-beta-VS}, we have
\[
|(W\textbf{h})_{i}|\leq \|W\|_{\max}\times \left[(2n-1)\max_{i}|h_{i}|\right]\leq O(\frac{b_{n}^{9}\log n}{c_{n}^{7}n}).
\]
Note that $(S\textbf{h})_{i}=\cfrac{h_{i}}{v_{i,i}}+(-1)^{1{(i>n)}}\cfrac{h_{2n}}{v_{2n,2n}}$, $h_{2n}:=\sum_{i=1}^{n}h_{i}-\sum_{i=n+1}^{2n-1}h_{i},$ we have
\[
|(S\textbf{h})_{i}|\leq \cfrac{|h_{i}|}{v_{i,i}}+\cfrac{|h_{2n}|}{v_{2n,2n}}\leq O(\frac{b_{n}^{7}\log n}{c_{n}^{5}n}).
 \]
Therefore, we have
\begin{equation}\label{eqA4}
|(V^{-1}\textbf{h})_{i}|\leq |(S\textbf{h})_{i}|+|(W\textbf{h})_{i}|\leq O(\frac{b_{n}^{9}\log n}{c_{n}^{7}n}).
    \end{equation}
Recall  $\bs{\bar{\mathbf{g}}}_2=(\bar{d}_{r+1}, \ldots, \bar{d}_n, \bar{b}_{1}, \ldots, \bar{b}_{n-1})^\top$. Let $\boldsymbol{\widehat{\theta}}_2^0= (\widehat{\alpha}_{r+1}^0, \ldots, \widehat{\alpha}_n^0, \widehat{\beta}_{1}^{0}, \ldots, \widehat{\beta}_{n-1}^{0})^\top$ and
$\boldsymbol{{\theta}}_2= ({\alpha}_{r+1}, \ldots, {\alpha}_n,$\\
$ {\beta}_{1}, \ldots,{\beta}_{n-1})^\top$.
Similar to \eqref{eq-expansion-hatbeta-beta}, we have
\begin{equation}\label{eq-beta0-exapnsion}
\boldsymbol{\widehat{\theta}}^0_2 - \boldsymbol{\theta}_2 = V_{22}^{-1}\bs{\bar{\mathbf{g}}}_2  - V_{22}^{-1}\widetilde{\mathbf{{h}}}_2,
\end{equation}
where $\mathbf{\widetilde{h}}_2 = (\tilde{h}_{r+1}, \ldots, \widetilde{h}_{2n-1})^\top$, $\mathbf{\widetilde{h}} = (\tilde{h}_{1}, \ldots, \tilde{h}_{2n-1})^\top$ and
\begin{equation}\label{defintion-tilde-h}
 \tilde h_{i,j}  = \frac{1}{2}\mu^{\prime\prime}( \tilde{\pi}_{ij}^{0} )(\widehat{\pi}_{ij}^{0} - \pi_{ij})^2, ~~
|\tilde h_{i}|\leq \frac{b_{n}^{6}\log n}{c_{n}^{5}}.
\end{equation}
In the above equation, $\tilde{\pi}_{ij}^0$ lies between $\pi_{ij}$ and $\widehat{\pi}_{ij}^0=\widehat{\alpha}_i^0 + \widehat{\beta}_j^0$.

Step 2. We derive the explicit expression of $B_1-B_1^0$.
Substituting \eqref{eq-expansion-hatbeta-beta} and \eqref{eq-beta0-exapnsion} into the expressions of $B_1$ in \eqref{lrt-a-beta-B1} and $B_1^0$ respectively, it yields
\begin{eqnarray*}\label{B1-expression}
2B_1 & = & \bs{\bar{\mathbf{g}}}^\top V^{-1} \bs{\bar{\mathbf{g}}} - \bs{h}^\top V^{-1} \bs{h},\\
\label{likelihood-beta-composite}
 2B_1^0 & = & \bs{\bar{\mathbf{g}}}_2^\top V_{22}^{-1} \bs{\bar{\mathbf{g}}}_2 -  \bs{\widetilde{h}}_2^\top V_{22}^{-1} \bs{\widetilde{h}}_2.
\end{eqnarray*}
By setting $V^{-1}=S+W$ and $V_{22}^{-1}=S_{22}+ \widetilde{W}_{22}$, we have
\begin{equation}\label{eq-theorem2-B10}
2(B_1 - B_1^0) = \sum_{i=1}^r \frac{ \bar{d}_i^{\,2} }{v_{i,i}}+\frac{ \bar{b}_n^{\,2} }{v_{2n,2n}}-\frac{(\bar{b}_n -\sum_{i=1}^{r}\bar{d}_i)^{2}}{\widetilde{v}_{2n,2n}}   + \bs{\bar{\mathbf{g}}}^\top W \bs{\bar{\mathbf{g}}} - \bs{\bar{\mathbf{g}}}_2^\top \widetilde{W}_{22} \bs{\bar{\mathbf{g}}}_2
-\bs{{h}}^\top V^{-1} \bs{{h}} + \bs{\widetilde{h}}_2^\top V_{22}^{-1} \bs{\widetilde{h}}_2.
\end{equation}

Step 3.  %, where we show the main term asymptotically follows a normal distribution and the remainder term goes to zero
We show four claims: (i) $(\sum_{i=1}^r  \bar{d}_i^{\,2}/v_{i,i}-r)/(2r)^{1/2}\stackrel{L}{\to} N(0,1)$;
(ii) $\frac{ \bar{b}_n^{\,2} }{r^{1/2}v_{2n,2n}}=o_p(1)$ and $\frac{(\bar{b}_n -\sum_{i=1}^{r}\bar{d}_i)^{2}}{r^{1/2}\widetilde{v}_{2n,2n}}=o_p(1)$;
(iii) $\bs{\bar{\mathbf{g}}}^\top W \bs{\bar{\mathbf{g}}}/r^{1/2}=o_p(1)$ and $\bs{\bar{\mathbf{g}}}_2^\top \widetilde{W}_{22} \bs{\bar{\mathbf{g}}}_2/r^{1/2}=o_p(1)$;
(iv) $\bs{{h}}^\top V^{-1} \bs{{h}}/r^{1/2}=o_p(1)$ and $\bs{\widetilde{h}}_2^\top V_{22}^{-1} \bs{\widetilde{h}}_2/r^{1/2}=o_p(1)$.
The first and three claims directly follows from Lemma \ref{lemma:weighte-degree-al} and Lemma \ref{lemma-clt-beta-W}, respectively.
By (\ref{ineq-union-d}), if $b_{n}=o(r^{1/2}/\log n)$, we have
\[
|\frac{ \bar{b}_n^{\,2} }{r^{1/2}v_{2n,2n}}|\leq\frac{1}{r^{1/2}}\cdot n\log n\cdot \frac{b_{n}}{n-1}=o(1).
\]
Let $v_{0,0}=\sum_{i=1}^{r}v_{i,i}$. Since $\sum_{i=1}^{r}\bar{d}_i=\sum_{i=1}^{r}\sum_{j=1}^{n-1}\bar{a}_{i,j}$, by the central limit theorem for bounded case [\cite{Loeve:1977} (p.289)], ${v}_{0,0}^{-1/2}{\sum_{i=1}^{r}\bar{d}_i}$ converges in distribution to the standard normal distribution if ${v}_{0,0}\rightarrow\infty$. Therefore, as $r\rightarrow\infty$,
\[
\frac{({\sum_{i=1}^{r}\bar{d}_i})^{2}}{v_{0,0}}\cdot\frac{v_{0,0}}{\widetilde{v}_{2n,2n}}\cdot\frac{1}{r^{1/2}}=o_{p}(1).
\]
This demonstrates claim (ii).
By \eqref{eqA4}, we have
\[
|\mathbf{h}^\top V^{-1} \mathbf{h}| \le (2n-1) \| \mathbf{h} \|_\infty \| V^{-1} \mathbf{h} \|_\infty
\lesssim
 (2n-1) \cdot \frac{b_n^6\log n}{c_n^{5}} \cdot \frac{ b_n^9 \log n}{nc_n^{7}} \lesssim \frac{b_n^{15} (\log n)^2}{c_n^{12}}.
\]
If $b_n^{15}/c_n^{12} =o( r^{1/2}/(\log n)^{2})$, then
\begin{equation}\label{eq-simi-aVh}
\frac{1}{r^{1/2}} |\mathbf{h}^\top V^{-1} \mathbf{h}| \lesssim \frac{ b_n^{15}(\log n)^2 }{r^{1/2}c_n^{12}}  = o(1).
\end{equation}
In view of \eqref{defintion-tilde-h}, with the same arguments as in the proof of the above inequality, we have
\begin{equation}\label{eq-simi-a}
\frac{1}{r^{1/2}} |\bs{\widetilde{h}}_2^\top V_{22}^{-1} \bs{\widetilde{h}}_2| \lesssim \frac{ (2n-1-r)b_n^{15}(\log n)^2 }{n r^{1/2}c_n^{12}}  = o(1).
\end{equation}
This demonstrates claim (iv).
It completes the proof.
\end{proof}

\section*{Conflict of Interest}

The authors declare no conflict of interest.

\section*{Acknowledgements}
We are grateful to two anonymous referees for their
insightful comments and suggestions.

\newpage
\begin{center}
{\Large Supplementary material for ``Testing degree heterogeneity in directed networks"} \\

\medskip
%Lu Pan
\end{center}

Supplementary Material contains the proofs of supported lemmas in the proofs of Theorems 1 and 2 as well as inequalities \eqref{eq-thereom1b-z43}, \eqref{ineq-B2-B20} and \eqref{ineq-B3-B20} in the main text, two additional algorithms for solving the restricted MLEs in Section 2,
and 7 additional figures in the simulation section.
This supplementary material is organized as follows.

Section \ref{section-th1b} presents proofs of supported lemmas in the proof of Theorem 1 (a).
This section is organized as follows.
Sections \ref{section-lemma7}, \ref{section-lemma8} and \ref{section-lemma9}
present the proofs of Lemmas \ref{lemma-tilde-W}, \ref{lemma-con-beta-b} and \ref{lemma-beta-homo-expan}, respectively.
Section \ref{section-proof-39-B20} presents the proof of \eqref{eq-thereom1b-z43} in the main text.

Section \ref{section-theorem2a} presents proofs of supported lemmas in the proof of Theorem 1 (b)
as well as  proof of \eqref{ineq-B2-B20} and \eqref{ineq-B3-B20} in the main text.
This section is organized as follows.
Sections \ref{section-proof-lemma1010}, \ref{le12} and \ref{section-proof-lemma13}
present the proofs of Lemmas \ref{w2-error-2b}, \ref{lemma-hat-beta-diff-2b} and \ref{lemma-gg0}, respectively.
Sections \ref{subsection:B2B20} and \ref{subsection-B3B30} presents the proofs of orders of two remainder terms
$B_2-B_2^0$ in \eqref{ineq-B2-B20} and $B_3-B_3^0$ in \eqref{ineq-B3-B20} in the main text, respectively.

Section \ref{section:beta-th1a} contains the proofs of supported lemmas in the proof of Theorem 2.
This section is organized as follows.
Sections \ref{subsection-proof-lemma2}, \ref{subsection-proof-lemma4} and \ref{subsection-proof-lemma5} present
the proofs of Lemmas \ref{lemma-clt-beta-W}, \ref{lemma-consi-beta} and \ref{lemma:beta3:err}, respectively.
Section \ref{section-bernstein} reproduces Bernstein's inequality for easy readability.
Section \ref{section:algorithms} presents the fixed-point iterative algorithms for solving the restricted MLE
under the specified null and the global MLE.
Section \ref{section:figure} presents 7 additional figures in the simulation section.

All notation is as defined in the main text unless explicitly noted otherwise. Equation and lemma numbering continue
in sequence with those established in the main text.

We first recall useful inequalities on the derivatives of $\mu(x)$, which will be used in the proofs repeatedly.
Recall that
\[
\mu(x) = \frac{ e^x }{ 1 + e^x}.
\]
A direct calculation gives that the derivative of $\mu(x)$ up to the third order are
\begin{eqnarray}\label{eq-derivative-mu-various}
\mu^\prime(x) = \frac{e^x}{ (1+e^x)^2 },~~  \mu^{\prime\prime}(x) = \frac{e^x(1-e^x)}{ (1+e^x)^3 },~~ \mu^{\prime\prime\prime}(x) =  \frac{ e^x [ (1-e^x)^2 - 2e^x] }{ (1 + e^x)^4 }.
\end{eqnarray}
Note that $\bs{\theta}=(\alpha_1, \ldots, \alpha_n, \beta_1, \ldots, \beta_{n-1})^{\top}$ denotes the data generating parameter, under which the data are generated.
Recall that
\[
\pi_{ij} = \alpha_i + \beta_j, ~~\widehat{\pi}_{ij}=\widehat{\alpha}_i + \widehat{\beta}_j,~~
\widehat{\pi}_{ij}^0 = \widehat{\alpha}_i^0 + \widehat{\beta}_j^0.
\]
According to the definition of $c_n$, we have
\begin{equation}\label{ineq-mu-deriv-bound1}
|\mu^\prime(\pi_{ij})| \le \frac{1}{c_n}, ~~
|\mu^{\prime\prime}(\pi_{ij} )| \le \frac{1}{c_n},~~ |\mu^{\prime\prime\prime}(\pi_{ij})| \le \frac{1}{c_n}.
\end{equation}
For a $\bs{\widetilde{\theta} }$ satisfying $\| \bs{\widetilde{\theta} } - \bs{\theta} \|_\infty =o(1)$, we also have
\begin{equation}\label{ineq-mu-tilde}
|\mu^\prime(\tilde{\pi}_{ij} )| \lesssim \frac{1}{c_n}, ~~ |\mu^{\prime\prime}(\tilde{\pi}_{ij})| \lesssim \frac{1}{c_n},~~ |\mu^{\prime\prime\prime}(\tilde{\pi}_{ij})| \lesssim \frac{1}{c_n}.
\end{equation}
These facts will be used in the proofs repeatedly.
Recall that
\[
\bar{a}_{i,j}=a_{i,j}-\E(a_{i,j})
\]
is the centered random variable of $a_{i,j}$ and $\bar{a}_{i,i}=0$ for all $i=1, \ldots, n$.
Correspondingly, denote $\bar{d}_i = d_i - \E(d_i)=\sum_j \bar{a}_{i,j}$, $\bar{b}_j = b_j - \E(b_j)= \sum_i \bar{a}_{i,j}$ and
$\bs{\bar{\mathbf{g}}}=(\bar{d}_1, \ldots, \bar{d}_n, \bar{b}_1,\ldots, \bar{b}_{n-1})^\top$.
%We first prove Lemma \ref{lemma-appro-beta-VS} and Lemma \ref{lemma:weighte-degree-al}.

\section{Proofs of supported lemmas in the proof of Theorem 1 (a)}
\label{section-th1b}

This section is organized as follows.
Sections \ref{section-lemma7}, \ref{section-lemma8} and \ref{section-lemma9}
present the proofs of Lemmas \ref{lemma-tilde-W}, \ref{lemma-con-beta-b} and \ref{lemma-beta-homo-expan}, respectively.
Section \ref{section-proof-39-B20} presents the proof of \eqref{eq-thereom1b-z43} in the main text.
\subsection{Proof of Lemma \ref{lemma-tilde-W}}
\label{section-lemma7}
\begin{proof}
%[Proof of Lemma \ref{lemma-tilde-W}]
Recall that  $\bs{\widetilde{\mathbf{g}}}=(\sum_{i=1}^r d_i, d_{r+1},\ldots, d_n,b_{1},\ldots, b_{n-1})^\top$ and
$\widetilde{W} = \widetilde{V} - \widetilde{S}$.
It is sufficient to demonstrate:
\begin{equation}\label{wd-expectation2}
\E[  (\bs{\widetilde{\mathbf{g}}} - \E\bs{\widetilde{\mathbf{g}}} )^\top \widetilde{W}  (\bs{\widetilde{\mathbf{g}}} - \E\bs{\widetilde{\mathbf{g}}} ) ] =0,
\end{equation}
and
\begin{equation}\label{Wd-op2}
\mathrm{Var}( (\bs{\widetilde{\mathbf{g}}} - \E\bs{\widetilde{\mathbf{g}}} )^\top \widetilde{W}  (\bs{\widetilde{\mathbf{g}}} - \E\bs{\widetilde{\mathbf{g}}} ) )=
O\left( \frac{ b_n^3 }{ c_n^3 } \right).
\end{equation}
The claim of \eqref{wd-expectation2} is due to that
\begin{eqnarray*}
\E[  (\bs{\widetilde{\mathbf{g}}} - \E\bs{\widetilde{\mathbf{g}}} )^\top \widetilde{W} (\bs{\tilde{d}}\bs{\widetilde{\mathbf{g}}} - \E\bs{\widetilde{\mathbf{g}}} ) ] & = &
\mathrm{tr} (\E[  (\bs{\widetilde{\mathbf{g}}} - \E\bs{\widetilde{\mathbf{g}}} )^\top (\bs{\widetilde{\mathbf{g}}} - \E\bs{\widetilde{\mathbf{g}}} )  ] \widetilde{W} ) \\
& = & \mathrm{tr} ( \widetilde{V} \widetilde{W} ) = \mathrm{tr} ( I_{2n-r} - \widetilde{V} \widetilde{S} ) = 0.
\end{eqnarray*}
Let
\[
R = \begin{pmatrix} \overline{W}_{11} &  \overline{W}_{12} \\
\overline{W}_{21} & \widetilde{W}_{22}
\end{pmatrix},
\]
where $\widetilde{W}_{22}$ is the bottom right $(2n-1-r)\times (2n-1-r)$ block of $\widetilde{W}$,
$\overline{W}_{11}$ is the $r\times r$ matrix with all its elements being equal to $\tilde{w}_{11}$,
and $\overline{W}_{12}$ is the $r \times (2n-1-r)$ matrix with all its row being equal to
the vector $(\tilde{w}_{12}, \ldots, \tilde{w}_{1,2n-r})$, and $\overline{W}_{21}$ is the transpose of $\overline{W}_{12}$.
Therefore, we have
\[
(\bs{\widetilde{\mathbf{g}}} - \E\bs{\widetilde{\mathbf{g}}} )^\top \widetilde{W}  (\bs{\widetilde{\mathbf{g}}} - \E\bs{\widetilde{\mathbf{g}}} ) = \bs{\widetilde{\mathbf{g}}}^\top R \bs{\widetilde{\mathbf{g}}}.
\]
Because
\[
\| R \|_{\max}=\| \widetilde{W} \|_{\max}  \lesssim \frac{ b_n^3 }{ n^2c_n^2 },
\]
with the same arguments as in the proof of Lemma \ref{lemma-clt-beta-W}, we have \eqref{Wd-op2}.
This completes the proof.
\end{proof}

\subsection{Proof of Lemma \ref{lemma-con-beta-b}}
\label{section-lemma8}
Recall that $\bs{\widehat{\theta}}^0$ denotes the restricted MLE under the null $H_0: \alpha_1=\cdots =\alpha_r$.
In what follows, $\bs{\widehat{\theta}}^0$ and $\bs{\theta}$ denote
respective vectors $(\widehat{\alpha}_1^0, \widehat{\alpha}_{r+1}^0, \ldots,
\widehat{\alpha}_n^0,\widehat{\beta}_1^0, \ldots,
\widehat{\beta}_{n-1}^0)^\top$ and $(\alpha_1, \alpha_{r+1}, \ldots, \alpha_n,\beta_1,  \ldots, \beta_{n-1})^\top$ with some ambiguity of notations.
Define a system of score functions based on likelihood equations:
\begin{equation}
\label{eqn:def:F1}
\begin{split}
F_1(\bs{\theta}) & =  \sum\limits_{i=1}^r \sum\limits_{k=1, k\neq i}^n \mu(\alpha_i + \beta_k) - \sum\limits_{i=1}^r d_i, \\
F_i( \bs \theta )& =\sum_{k=1;k\neq i}^{n}\mu(\alpha_i + \beta_k)-d_{i},  ~~~  i=r+1, \ldots, n, \\
F_{n+j}( \bs \theta )& =  \sum_{k=1;k\neq j}^{n}\mu(\alpha_k + \beta_j)-b_{j},  ~~~  j=1, \ldots, n-1, \\
\end{split}
\end{equation}
and $F(\bs{\theta})=(F_1(\bs{\theta}),F_{r+1}(\bs{\theta}), \ldots, F_{2n-1}(\bs{\theta}))^\top$, where
$\alpha_1=\cdots=\alpha_r$.

The proof proceeds three steps. Step 1 is about verifying condition \eqref{eq-kantororich-a}.
Step 2 is about verifying \eqref{eq-kantororich-b}. Step 3 is a combining step.

Step 1. We claim that
\begin{equation}\label{eq-con-ho-a}
\| [F^\prime(\bs{\theta})]^{-1} \{ F^\prime(\mathbf{x}) - F^\prime(\mathbf{y}) \}\| \lesssim \left( \frac{(n-r+2)b_{n}}{c_{n}} + \frac{b_n^3}{c_n^3} \right) \|\mathbf{x} - \mathbf{y}\|.
\end{equation}
This is verified as follows.
The Jacobian matrix $F^\prime(\bs{\theta})$ of $F(\bs{\theta})$ can be calculated as follows.
For $i=1$, we have
\[
    \begin{split}
    \cfrac{\partial F_{1}}{\partial \alpha_{l}}=0,~~l=r+1,\ldots,n;~~
\cfrac{\partial F_{1}}{\partial \alpha_{1}}=\sum_{k\neq 1}\mu'(\alpha_{1}+\beta_{k})+\cdots+\sum_{k\neq r}\mu'(\alpha_{1}+\beta_{k}),\\
\cfrac{\partial F_{1}}{\partial\beta_{l}}=(r-1)\mu'(\alpha_{1}+\beta_{l}),~~l=1,\ldots,r;~~
\cfrac{\partial F_{1}}{\partial\beta_{l}}=r\mu'(\alpha_{1}+\beta_{l}),~~l=r+1,\ldots,n-1,
\end{split}
    \]
and for $i\in \{r+1, \ldots, n\}$, we have
\[
    \begin{split}
    \cfrac{\partial F_{i}}{\partial \alpha_{l}}&=0,~~l=1,\ldots,n,l\neq i;~~
\cfrac{\partial F_{i}}{\partial \alpha_{i}}=\sum_{k=1;k\neq i}^{n}\mu'(\alpha_{i}+\beta_{k}),\\
\cfrac{\partial F_{i}}{\partial\beta_{l}}&=\mu'(\alpha_{i}+\beta_{l}),~~l=1,\ldots,n-1,l\neq i;~~\cfrac{\partial F_{i}}{\partial\beta_{i}}=0,\\
\end{split}
    \]
and for $j\in\{1, \ldots, n-1\}$,\\
\[
    \begin{split}
\cfrac{\partial F_{n+j}}{\partial \alpha_{1}}&=(r-1)\mu'(\alpha_{1}+\beta_{j}),~~j=1,\ldots,r;
~~\cfrac{\partial F_{n+j}}{\partial \alpha_{1}}=r\mu'(\alpha_{1}+\beta_{j}),~~j=r+1,\ldots,n-1,\\
\cfrac{\partial F_{n+j}}{\partial \alpha_{l}}&=\mu'(\alpha_{l}+\beta_{j}),~~l=r+1,\ldots,n,l\neq j;
~~\cfrac{\partial F_{n+j}}{\partial \alpha_{j}}=0;\cfrac{\partial F_{n+j}}{\partial\beta_{l}}=0,~~l\neq j,\\
\cfrac{\partial F_{n+j}}{\partial\beta_{j}}&=\sum_{k=r+1;k\neq j}^{n}\mu'(\alpha_{k}+\beta_{j})+(r-1)\mu'(\alpha_{1}+\beta_{j}),~~j=1,\ldots,r,\\
\cfrac{\partial F_{n+j}}{\partial\beta_{j}}&=\sum_{k=r+1;k\neq j}^{n}\mu'(\alpha_{k}+\beta_{j})+r\mu'(\alpha_{1}+\beta_{j}),~~j=r+1,\ldots,n-1.
\end{split}
    \]
Let
\[
F_i'(\bs{\theta}) = (F_{i,1}'(\bs{\theta}),F_{i,r+1}'(\bs{\theta}), \ldots, F_{i,2n-1}'(\bs{\theta}))^{\top}:=
( \frac{\partial F_i}{\partial \alpha_{1} },\frac{\partial F_i}{\partial \alpha_{r+1} }, \ldots,
\frac{\partial F_i}{\partial \beta_{n-1} })^{\top}.
\]
For $i=1$, we have
\[
    \begin{split}
    \cfrac{\partial^2 F_{1}}{\partial \alpha_{l}\partial \alpha_{1}}&=0,~~l\neq 1;~~
\cfrac{\partial^2 F_{1}}{\partial \alpha_{1}^2}=\sum_{k\neq 1}\mu^{\prime\prime}(\alpha_{1}+\beta_{k})+\cdots+\sum_{k\neq r}\mu^{\prime\prime}(\alpha_{1}+\beta_{k}),\\
\cfrac{\partial^2 F_{1}}{\partial \alpha_{1}\partial\beta_{l}}&=(r-1)\mu^{\prime\prime}(\alpha_{1}+\beta_{l}),~~l=1,\ldots,r;~~
 \cfrac{\partial^2 F_{1}}{\partial \alpha_{s}\partial \alpha_{l}}=0,\\
\cfrac{\partial^2 F_{1}}{\partial \alpha_{1}\partial\beta_{l}}&=r\mu^{\prime\prime}(\alpha_{1}+\beta_{l}),~~l=r+1,\ldots,n-1;~~
 \cfrac{\partial^2 F_{1}}{\partial \beta_{s}\partial \alpha_{l}}=0,\\
 \cfrac{\partial^2 F_{1}}{\partial\beta_{l}^2}&=(r-1)\mu^{\prime\prime}(\alpha_{1}+\beta_{l}),~~l=1,\ldots,r;~~\cfrac{\partial^{2} F_{1}}{\partial\beta_{s}\partial\beta_{l}}=0, l\neq s,\\
 \cfrac{\partial^2 F_{1}}{\partial\beta_{l}^2}&=r\mu^{\prime\prime}(\alpha_{1}+\beta_{l}),~~l=r+1,\ldots,n-1;~~\cfrac{\partial^{2} F_{1}}{\partial\beta_{l}\partial\alpha_{s}}=0, s\neq 1,
\end{split}
    \]
and for $i\in\{r+1, \ldots, n\}$, we have
\[
    \begin{split}
&\cfrac{\partial^{2} F_{i}}{\partial \alpha_{s}\partial \alpha_{l}}=0, \   s\neq l; \ \cfrac{\partial^{2} F_{i}}{\partial\alpha^{2}_{i}}=\sum_{k\neq i}\mu^{\prime\prime}(\alpha_{i}+\beta_{k}),\\
&\cfrac{\partial^{2} F_{i}}{\partial \beta_{s}\partial \alpha_{i}}=\mu^{\prime\prime}(\alpha_{i}+\beta_{s}), \   s=1, \ldots, n-1, s\neq i; ~~\cfrac{\partial^{2} F_{i}}{\partial\beta_{i}\partial\alpha_{i}}=0,\\
&\cfrac{\partial^{2} F_{i}}{\partial \beta^{2}_{l}}=\mu^{\prime\prime}(\alpha_{i}+\beta_{l}) ,\   l=1, \ldots, n-1,l\neq i;;~~ \cfrac{\partial^{2} F_{i}}{\partial\beta_{s}\partial\beta_{l}}=0, s\neq l.
\end{split}
    \]
By the mean value theorem for vector-valued functions (\cite{Lang:1993}, p.341), for $\mathbf{x}, \mathbf{\mathbf{y}} \in R^{2n-r}$, we have
\[
F'_i(\mathbf{x}) - F'_i(\mathbf{y}) = J^{(i)}(\mathbf{x}-\mathbf{y}),
\]
where
\[
J^{(i)}_{s,l} = \int_0^1 \frac{ \partial F'_{i,s} }{\partial \theta_l}(t\mathbf{x}+(1-t)\mathbf{y})dt,~~ s,l=1,\ldots, 2n-r.
\]
\iffalse
By (\ref{ineq-mu-deriv-bound}), we have
\begin{equation*}
\left|\frac{ \partial F'_{i,s} }{\partial \theta_l}\right|=
\begin{cases}
\frac{r(n-1)}{c_n}, & i=1,s=1,l=1, \\
\frac{r-1}{c_n}, & i=1, s=1, l=n-r+2,\ldots, n+1, \\
\frac{r}{c_n}, & i=1, s=1, l=n+2,\ldots, 2n-r,\\
\frac{r-1}{c_n}, & i=1, s=l=n-r+2,\ldots, n+1,, \\
\frac{r}{c_n}, & i=1, s=l=n+2,\ldots, 2n-r, \\
\frac{n-1}{c_n}, & i=2,\ldots, n-r+1, s=l=2,\ldots, n-r+1,\\
\frac{1}{c_n}, & i=2,\ldots, n-r+1, s=l=n-r+2,\ldots, 2n-r,\\
\frac{1}{c_n}, & i=s=2,\ldots, n-r+1,l=n-r+2,\ldots, 2n-r,s\neq l-(n-r),\\
0,&other.
\end{cases}
\end{equation*}
\fi
By (\ref{ineq-mu-deriv-bound1}), we have
\[
\sum_{s,l} |J^{(i)}|\le
\begin{cases}
\frac{ 3r(n-1) }{ c_n } , & i=1, \\
\frac{ (n-1)(n-r) }{ c_n }+\frac{2(n-2)}{c_{n}} , & i=2, \ldots, n-r+1.
\end{cases}
\]
Similarly, we also have
$F'_i(\mathbf{x}) - F'_i(\mathbf{y}) = J^{(i)}(\mathbf{x}-\mathbf{y})$ and $\sum_{s,l}|J^{(i)}_{s,l}|\le  2(2n-r) /c_n $ for $i=n-r+2, \ldots, 2n-r$.
This gives that
\begin{equation}\label{eq-wf-a}
\parallel \widetilde{S}(F'(\textbf{x})-F'(\textbf{y}))\parallel
\le \begin{cases}
\frac{ 3b_{n} }{ c_n } \| \mathbf{x}-\mathbf{y} \|_\infty, & i=1, \\
\frac{(n-r+2)b_n }{ c_n } \| \mathbf{x}-\mathbf{y} \|_\infty, & i=2, \ldots, 2n-r.
\end{cases}
\end{equation}
and, by \eqref{approxi-inv2-beta-ho},
\begin{equation}\label{eq-wf-b}
\parallel \widetilde{W}(F'(\textbf{x})-F'(\textbf{y}))\parallel
\lesssim
\frac{ b_n^3 }{ n^2c_n^2 } \times \frac{ 3r(n-1)}{ c_n }\| \mathbf{x}-\mathbf{y} \|_\infty \lesssim \frac{ 4 b_n^3 }{ c_n^3 }\| \mathbf{x}-\mathbf{y} \|_\infty.
\end{equation}
Note that $\widetilde{W}=\widetilde{V}^{-1} - \widetilde{S}$
and $\widetilde{S}$ is defined at (\ref{definition-S222}).
By combining \eqref{eq-wf-a} and \eqref{eq-wf-b}, we have \eqref{eq-con-ho-a}.

Step 2. We claim that with probability at least $1-2(2n-r)/n^2$, we have
\begin{equation}\label{ineq-union-da}
\|\widetilde{V}^{-1}F(\bs{\theta})\|_\infty \lesssim
\left\{ b_n + \frac{ b_n^3 }{ c_n^2} \left( \frac{ r^{1/2} }{ n } + \frac{2n-r-1}{n} \right) \right\} \sqrt{\frac{\log n}{n}}.
\end{equation}
Recall that $a_{i,j}$, $1\le i \neq j \le n$, are independent Bernoulli random variables
and $\bar{d}_i = \sum_{j\neq i} \bar{a}_{i,j}$.
By \citeauthor{Hoeffding:1963}'s \citeyearpar{Hoeffding:1963} inequality, we have
\begin{equation*}
\P\left( |\bar{d}_i | \ge \sqrt{n\log n}  \right) \le 2\exp \left(- 2\frac{n\log n}{n} \right) \le  \frac{2}{n^2},~~i=1, \ldots, n.
\end{equation*}
By the union bound, we have
\begin{eqnarray}
\label{ineq-d-upper}
\P\Bigg( \max_{i=r+1, \ldots, n} | \bar{d}_i | \ge \sqrt{ n\log n} \Bigg)
\le  \sum_{i=r+1}^n \P\left(|\bar{d}_i| \geq \sqrt{n\log n} \right)
\le  \frac{2(n-r)}{n^2 }.
\end{eqnarray}
Similarly, we have
\begin{eqnarray}
\label{ineq-d-upper}
\P\Bigg( \max_{j=1, \ldots, n-1} | \bar{b}_j | \ge \sqrt{ n\log n} \Bigg)
\le  \sum_{j=1}^{n-1} \P\left(|\bar{b}_j| \geq \sqrt{n\log n} \right)
\le  \frac{2(n-1)}{n^2 }.
\end{eqnarray}
Note that $\sum_{i=1}^r \bar{d}_i = \sum_{i=1}^r \sum_{j\neq i} \bar{a}_{i,j}$
and the terms in the above summation are independent.
\citeauthor{Hoeffding:1963}'s \citeyearpar{Hoeffding:1963} inequality gives that
\begin{align*}
 & \P\left( |\sum_{i=1}^r \bar{d}_i | \ge \sqrt{r(n-1) \log n}  \right)
\le  2\exp \left(- 2\frac{r(n-1)\log n}{r(n-1)}\right)\le  \frac{2}{n^2}.
\end{align*}
The above arguments imply that with probability at least $1-2(2n-r)/n^2$,
\[
\|\widetilde{S} F(\bs{\theta})\| \le \begin{cases}
\frac{b_n}{rn} \times \sqrt{ r(n-1)\log n} \le \frac{b_n}{r^{1/2}} \sqrt{\frac{\log n}{n}}, & i=1, \\
b_n \sqrt{\frac{\log n}{n}}, & i=2,\ldots, 2n-r,
\end{cases}
\]
and, by \eqref{approxi-inv2-beta-ho},
\begin{eqnarray*}
\|\widetilde{W}F(\bs{\theta})\| & \lesssim &
\frac{b_n^3}{n^2c_n^2} \times (\sqrt{ r(n-1)\log n} + (2n-r-1)\sqrt{n\log n} )\\
& \lesssim &
\frac{ b_n^3 }{ c_n^2} \left( \frac{ r^{1/2} }{ n } + \frac{2n-r-1}{n} \right) \sqrt{\frac{\log n}{n}}.
\end{eqnarray*}

Step 3. This step is one combining step. By \eqref{eq-con-ho-a}, we can set
\[
K=O\left( \frac{b_n}{c_n} + \frac{b_n^3}{c_n^3}  \right),
\]
and
\[
\eta = O\left(  \left\{ b_n + \frac{ b_n^3 }{ c_n^2} \left( \frac{ r^{1/2} }{ n } + \frac{2n-r-1}{n} \right) \right\} \sqrt{\frac{\log n}{n}} \right),
\]
 in Lemma \ref{lemma:Newton:Kantovorich}.
If $b_n^6 /c_n^5=o( (n/\log n)^{1/2})$,
then
\[
h=K\eta \lesssim  \left( \frac{b_n^2}{c_n} + \frac{ b_n^4 }{ c_n^3 } \cdot \left( \frac{ r^{1/2} }{ n } + \frac{2n-r-1}{n} \right) + \frac{ b_n^4}{c_n^3 }
+ \frac{ b_n^6 }{ c_n^5 } \left( \frac{ r^{1/2} }{ n} + \frac{2n-r-1}{n} \right) \right)\sqrt{\frac{\log n}{n}}\to 0.
\]
This completes the proof.

\subsection{Proof of Lemma \ref{lemma-beta-homo-expan}}
\label{section-lemma9}

\begin{proof}
%[Proof of Lemma \ref{lemma-beta-homo-expan}]
Since $\alpha_1=\cdots=\alpha_r$ and $\widehat{\alpha}_1^0 = \cdots = \widehat{\alpha}_r^0$ with $r\in\{1,\ldots,n-1\}$ under the null,
with some ambiguity of notations,
we still use $\bs{\widehat{\theta}}^0$ and $\bs{\theta}$
to denote vectors $(\widehat{\alpha}^0_1, \widehat{\alpha}_{r+1}^0, \ldots,  \widehat{\alpha}_{n}^0, \widehat{\beta}_{1}^0, \ldots, \widehat{\beta}_{n-1}^0)^\top$ and $(\alpha_1, \alpha_{r+1}, \ldots, \alpha_n, \beta_{1}, \ldots, \beta_{n-1})^\top$, respectively.
By Lemma \ref{lemma-con-beta-b}, if $b_n^6/c_n^5=o( (n/\log n)^{1/2} )$, then $\P(E_n) \ge 1 -4/n$, where
\[
E_n : = \left\{ \| \bs{\widehat{\theta}}^0 - \bs{\theta}  \|_\infty \lesssim  \frac{b_n^3}{c_n^2} \sqrt{ \frac{\log n}{n} } \right\}.
\]
The following calculations are based on the event $E_n$.

A second order Taylor expansion gives that
\begin{eqnarray*}
\mu( \widehat{\pi}_{ij}^0 ) & = & \mu( \pi_{ij} ) + \mu^{\prime}(\pi_{ij})( \widehat{\pi}_{ij}^0 - \pi_{ij})
+ \frac{1}{2} \mu^{\prime\prime}( \tilde{\pi}_{ij} ) ( \widehat{\pi}_{ij}^0 - \pi_{ij} )^2,
\end{eqnarray*}
where $\tilde{\pi}_{ij}$ lies between $\pi_{ij}$ and $\widehat{\pi}_{ij}^0$, and, for any $i,j$,
\[
\pi_{ij}=\alpha_i+\beta_j, ~~\widehat{\pi}_{ij}^0 = \widehat{\alpha}_i^0 + \widehat{\beta}_j^0, ~~
\tilde{\pi}_{ij}=\tilde{\alpha}_i + \tilde{\beta}_j.
\]
Let
\begin{align*}
 \tilde{h}_{ij}&=\frac{1}{2} \mu^{\prime\prime}( \tilde{\pi}_{ij} ) ( \widehat{\pi}_{ij}^0 - \pi_{ij} )^2, \\
\tilde h_{1} & =\sum_{j\neq1}\tilde h_{1j}+\sum_{j\neq2}\tilde h_{1j}+\cdots+\sum_{j\neq r}\tilde h_{1j}, \\
\tilde h_i& =\sum_{k\neq i}\tilde h_{ik} ,~~i=r+1, \ldots, n,\\
\tilde h_{n+i}&=\sum_{k\neq i}\tilde h_{ki} ,~~i=1, \ldots, n-1.
\end{align*}
Writing the above equations into a matrix form, we have
\begin{equation}\label{eq-expression-beta-star}
\bs{\widetilde{\mathbf{g}}} - \E \bs{\widetilde{\mathbf{g}}} = \widetilde{V}( \bs{\widehat{\theta}}^0 - \bs{\theta} ) + \bs{\widetilde{h}},
\end{equation}
where $\bs{\widetilde{h}}=(\tilde{h}_1, \tilde{h}_{r+1}, \ldots, \tilde{h}_{2n-1})^\top$.  It is equivalent to
\[
\bs{\widehat{\theta}}^0 - \bs{\theta} = \widetilde{V}^{-1} ( \bs{\widetilde{\mathbf{g}}} - \E \bs{\widetilde{\mathbf{g}}} ) - \widetilde{V}^{-1}\bs{\widetilde{h}}.
\]
In view of that $\max_{ij}|\mu^{\prime\prime}(\pi_{ij})| \le 1/c_n$ and the event $E_n$, we have
\begin{equation}\label{ineq-home-be-h1}
|\tilde h_1|  \lesssim  \frac{r(n-1)}{c_n} \|  \bs{\widehat{\theta}}^0 - \bs{\theta} \|_\infty^2
 \lesssim  \frac{ rb_n^6\log n }{c_n^5 },
\end{equation}
and, for $i=r+1, \ldots, 2n-1$,
\begin{equation}\label{ineq-home-be-hk}
|\tilde h_i| \lesssim  \frac{n-1}{c_n} \|  \bs{\widehat{\theta}}^0 - \bs{\theta} \|_\infty^2 \lesssim  \frac{ b_n^6\log n }{ c_n^5 }.
\end{equation}
By letting $\widetilde{V}^{-1} = \widetilde{S} + \widetilde{W}$, in view of \eqref{definition-S222}, \eqref{ineq-home-be-h1} and \eqref{ineq-home-be-hk}, we have
\begin{eqnarray}
\nonumber
\| \widetilde{V}^{-1}\bs{\tilde h} \|_\infty & \lesssim & \frac{ |\tilde h_1| }{ \tilde{v}_{11} } + \max_{i=r+1, \ldots, 2n-1} \frac{ |\tilde h_i| }{ v_{i,i} }+\frac{\tilde h_{2n}}{v_{2n,2n}}
+ \| \widetilde{W} \|_{\max} \left(\sum_{i=r+1}^{2n-1} |\tilde h_i| + |\tilde h_1| \right) \\
\nonumber
& \lesssim & \frac{ b_n^6 \log n}{c_n^5 } \cdot \frac{ b_n }{n } + \frac{ b_n^3 }{ n^2 c_n^2 } \cdot \left\{ \frac{rb_n^6\log n }{ c_n^5}
+ (2n-r)  \frac{ b_n^6 \log n}{c_n^5 } \right\} \\
\label{ineq-home-be-vh}
& \lesssim & \frac{ b_n^9 \log n}{ nc_n^7 }.
\end{eqnarray}
Now, we bound the error term $\| \widetilde{W} (\bs{\widetilde{\mathbf{g}}}- \E \bs{\widetilde{\mathbf{g}}}) \|_\infty$.
Note that
\begin{align*}
 &[\widetilde{W} (\bs{\widetilde{\mathbf{g}}}- \E \bs{\widetilde{\mathbf{g}}})]_i \\
 = & \tilde{w}_{i1} \sum_{j=1}^r \bar{d}_j + \sum_{j=r+1}^{n} \tilde{w}_{ij}\bar{d}_j+\sum_{j=n+1}^{2n-1} \tilde{w}_{ij}\bar{b}_j \\
 = & \tilde{w}_{i1} \sum_{j=1}^r \sum_{k\neq j} \bar{a}_{j,k}
 + \sum_{j=r+1}^{n}\tilde{w}_{ij}\sum_{k\neq j} \bar{a}_{j,k}+ \sum_{j=n+1}^{2n-1}\tilde{w}_{ij}\sum_{k\neq j} \bar{a}_{k,j}.
\end{align*}
Because $\E \bar{a}_{i,j}^2 \le 1/c_n$, we have
\begin{align*}
& \E \{[\widetilde{W} (\bs{\widetilde{\mathbf{g}}}- \E \bs{\widetilde{\mathbf{g}}})]_i \}^2 \\
\le & \left\{\frac{r(n-1)}{c_n} + \frac{(2n-1-r)(n-1)}{c_n} \right\}\|\widetilde{W} \|_{\max}^2 \\
 \lesssim & \frac{ n^2 }{c_n} \|\widetilde{W} \|_{\max}^2.
\end{align*}
It follows from Bernstein's inequality in Lemma \ref{lemma:bernstein}, with probability $1- 2n^{-2}$,  we have that
\begin{eqnarray*}
| [\widetilde{W} (\bs{\widetilde{\mathbf{g}}}- \E \bs{\widetilde{\mathbf{g}}})]_i | & \le & \sqrt{ 4\log n \left( \E \{[\widetilde{W} (\bs{\widetilde{\mathbf{g}}}- \E \bs{\widetilde{\mathbf{g}}})]_i \}^2   \right)} + \frac{4}{3} \cdot  \frac{ b_n }{ (n-1)c_n } \cdot \log n \\
& \lesssim & \frac{ b_n^3}{ n^2 c_n^2 } \cdot \frac{n (\log n)^{1/2}}{ c_n^{1/2}} \\
& \lesssim & \frac{ b_n^3(\log n)^{1/2} }{ n c_n^{5/2} }.
\end{eqnarray*}
By the uniform bound,  with probability at leas $1- 2/n$, we have
\begin{equation}\label{eq-hatbeta-bb}
\| \widetilde{W} (\bs{\widetilde{\mathbf{g}}}- \E \bs{\widetilde{\mathbf{g}}}) \|_\infty \lesssim \frac{ b_n^3(\log n)^{1/2} }{ n c_n^{5/2} }.
\end{equation}
By combining \eqref{eq-expression-beta-star}, \eqref{ineq-home-be-vh} and  \eqref{eq-hatbeta-bb}, with probability at least $1- O(n^{-1})$, we have
\begin{equation*}
\begin{array}{rcl}
\widehat{\alpha}_1^0 - \alpha_1 & = & \frac{ \sum_{i=1}^r \bar{d}_i }{ \tilde{v}_{11} }+\frac{\bar{b}_{n}}{v_{2n,2n}}+\gamma_1^{0}, \\
\widehat{\alpha}_i^0 - \alpha_i & = & \frac{ \bar{d}_i }{ v_{i,i} }+\frac{\bar{b}_{n}}{v_{2n,2n}} + \gamma_i^{0},~~ i=r+1, \ldots, n,\\
\widehat{\beta}_j^0 - \beta_j & = & \frac{ \bar{b}_j }{ v_{n+j,n+j} }-\frac{\bar {b}_{n}}{v_{2n,2n}} + \gamma_{n+j}^{0},~~ j=1, \ldots, n-1,
\end{array}
\end{equation*}
where $\gamma_1^{0}, \gamma_{r+1}^{0}, \ldots, \gamma_{2n-1}^{0}$ with probability at least $1- O(n^{-1})$ satisfy
\[
\gamma_i^{0} = (\widetilde{V}^{-1}\bs{\widetilde{h}})_i + [\widetilde{W} (\bs{\widetilde{\mathbf{g}}}- \E \bs{\widetilde{\mathbf{g}}})]_i = O\left( \frac{b_n^9 \log n}{ nc_n^7 } \right).
\]
\end{proof}

\subsection{Proof of \eqref{eq-thereom1b-z43}}
\label{section-proof-39-B20}
The expression of $B_2^0$ can be written as
\begin{eqnarray*}
-B_2^0  & = & \sum_{i=1}^{r}\sum_{j\neq i}\mu^{\prime\prime}(\pi_{ij}) (\widehat{\alpha}_1^0 - \alpha_1)^3
+\sum_{j=1}^{r} \sum_{i=r+1,i\neq j}^n (\mu^{\prime\prime}( \pi_{ij})+(r-1)\mu^{\prime\prime}(\pi_{1j}))( \widehat{\beta}_j^0 - \beta_j)^3  \\
&&+\sum_{j=r+1}^{n-1} \sum_{i=r+1,i\neq j}^n (\mu^{\prime\prime}( \pi_{ij})+r\mu^{\prime\prime}(\pi_{1j}))( \widehat{\beta}_j^0 - \beta_j)^3+ \sum_{i=r+1}^n \sum_{j\neq i}\mu^{\prime\prime}( \pi_{ij}) ( \widehat{\alpha}_i^0  - \alpha_i)^3 \\
&&+3(r -1)\sum_{j=1}^r \mu^{\prime\prime}(\pi_{1j}) ( \widehat{\alpha}_1^0  - \alpha_1)^2 ( \widehat{\beta}_j^0 - \beta_j)
+3r\sum_{j=r+1}^{n-1} \mu^{\prime\prime}(\pi_{1j}) ( \widehat{\alpha}_1^0  - \alpha_1)^2 ( \widehat{\beta}_j^0 - \beta_j)\\
&&+3(r -1)\sum_{j=1}^r \mu^{\prime\prime}(\pi_{1j}) ( \widehat{\alpha}_1^0  - \alpha_1) ( \widehat{\beta}_j^0 - \beta_j)^2
+3r\sum_{j=r+1}^{n-1} \mu^{\prime\prime}(\pi_{1j}) ( \widehat{\alpha}_1^0  - \alpha_1) ( \widehat{\beta}_j^0 - \beta_j)^2\\
&&+3\sum_{i=r+1}^n\sum_{j\neq i} \mu^{\prime\prime}(\pi_{ij}) ( \widehat{\alpha}_i^0  - \alpha_i)^2 ( \widehat{\beta}_j^0 - \beta_j)
+3\sum_{i=r+1}^n\sum_{j\neq i} \mu^{\prime\prime}(\pi_{ij}) ( \widehat{\alpha}_i^0  - \alpha_i) ( \widehat{\beta}_j^0 - \beta_j)^2.
\end{eqnarray*}
We shall in turn bound each term in the above summation.
To simplify notations, let
\begin{equation}
\begin{split}
  f_{ij} & = \mu^{\prime\prime}({\pi}_{ij} ), ~~~~f_{n+i}=\sum_{k=r+1;k\neq i}^{n}\mu^{\prime\prime}( \pi_{ki} ), ~~i=1,\ldots,n-1,\\
  f_{1} & = \sum_{i=1}^{r}\sum_{j\neq i}\mu^{\prime\prime}({\pi}_{ij} ),~~~~  f_{i} =\sum_{k=1;k\neq i}^{n-1} \mu^{\prime\prime}( \pi_{ik} ),~~ i=r+1,\ldots,n.
\end{split}
\end{equation}
In view of \eqref{ineq-mu-deriv-bound}, we have
\begin{equation}\label{ineq-fi-fij}
\max_{i=r+1,\ldots,n} |f_i| \le \frac{ n-1}{ c_n },~~\max_{i=1,\ldots,n-1} |f_{n+i}| \le \frac{ n-r}{ c_n }, ~~  |f_{1} | \le \frac{ r(n-1)}{c_n},~~ \max_{i,j} |f_{ij} | \le \frac{ 1}{c_n}.
\end{equation}
Because $\sum_{i=1}^r \bar{d}_i$ can be expressed as the sum of $r(n-1)$ independent and bounded random variables,
\[
\sum_{i=1}^r \bar{d}_i = \sum_{i=1}^r \sum_{j\neq i} \bar{a}_{i,j}
\]
and
\[
\E (\sum_{i=1}^r \sum_{j\neq i} \bar{a}_{i,j})^2
\le\frac{r(n-1)}{c_n},
\]
by Bernstern's inequality, with probability at least $ 1- 2(rn)^{-2}$, we have
\begin{equation}\label{eq-sum-hom-d}
\left| \sum_{i=1}^r \bar{d}_i \right| \lesssim \sqrt{ 2\log (rn) \times \frac{r(n-1)}{c_n} } + \frac{12}{3}\log n \lesssim
\sqrt{\frac{rn\log n}{c_n}}.
\end{equation}
By \eqref{alpha1} and \eqref{eq-sum-hom-d}, we have
\begin{eqnarray}
\nonumber
| f_1(\widehat{\alpha}_1^0 - \alpha_1)^3 | & \leq & \frac{r(n-1)}{c_n}  \left|\left( \frac{ \sum_{i=1}^r \bar{d}_i }{ \tilde{v}_{11} } +\frac{\bar{b}_{n}}{v_{2n,2n}}+ \gamma_1^{0} \right)^3 \right|\\
\nonumber
& =&  \frac{r(n-1)}{c_n}  \left|
 ( \frac{ \sum_{i=1}^r \bar{d}_i }{ \tilde{v}_{11} })^3 +(\frac{\bar{b}_{n}}{v_{2n,2n}})^{3}+(\gamma_1^{0})^{3} +3( \frac{ \sum_{i=1}^r \bar{d}_i }{ \tilde{v}_{11} })^2\frac{\bar{b}_{n}}{v_{2n,2n}} \right.\\
\nonumber
 &+&\left. 3 ( \frac{ \sum_{i=1}^r \bar{d}_i }{ \tilde{v}_{11} })^2 \gamma_1^{0}+3\frac{ \sum_{i=1}^r \bar{d}_i }{ \tilde{v}_{11} }(\frac{\bar{b}_{n}}{v_{2n,2n}})^2
 + 3  \frac{ \sum_{i=1}^r \bar{d}_i }{ \tilde{v}_{11} } (\gamma_1^{0})^2 +3\frac{\bar{b}_{n}}{v_{2n,2n}}(\gamma_1^{0})^2\right.\\
\nonumber
 &+&\left.3(\frac{\bar{b}_{n}}{v_{2n,2n}})^2\gamma_1^{0}+6 \frac{ \sum_{i=1}^r \bar{d}_i }{ \tilde{v}_{11} }\frac{\bar{b}_{n}}{v_{2n,2n}}\gamma_1^{0}
\right| \\
\nonumber
& \lesssim &  \frac{r(n-1)}{c_n} \left\{
\frac{b_n^3}{ (rn)^3}  \cdot (\frac{rn\log n}{c_n})^{3/2}+(\frac{\bar{b}_{n}}{v_{2n,2n}})^{3}+ (\frac{ b_n^9 \log n}{ nc_n^7})^3 \right. \\
\nonumber
&& \left.+
\frac{b_n^2}{ (rn)^2}\cdot\frac{rn\log n}{c_n}  \cdot \sqrt{n\log n} \cdot\frac{b_{n}}{n} +
\frac{b_n^2}{ (rn)^2} \cdot \frac{rn\log n}{c_n} \cdot \frac{ b_n^9 \log n}{ nc_n^7} \right. \\
\nonumber
&& \left.+
\frac{b_n}{ rn} \cdot \sqrt{\frac{rn\log n}{c_n}} \cdot n\log n  \cdot (\frac{b_{n}}{n})^2
+\frac{b_n}{ rn} \cdot \sqrt{\frac{rn\log n}{c_n}} \cdot (\frac{ b_n^9 \log n}{ nc_n^7})^2\right. \\
\nonumber
&& \left.+
 \sqrt{n\log n} \cdot\frac{b_{n}}{n}\cdot (\frac{ b_n^9 \log n}{ nc_n^7})^2+n\log n  \cdot (\frac{b_{n}}{n})^2 \cdot\frac{ b_n^9 \log n}{ nc_n^7}\right. \\
\nonumber
&& \left.+
 \sqrt{n\log n} \cdot\frac{b_{n}}{n}\cdot \frac{ b_n^9 \log n}{ nc_n^7}\cdot\frac{b_n}{ rn} \cdot \sqrt{\frac{rn\log n}{c_n}}
\right\} \\
\nonumber
& \lesssim &  \frac{ b_n^3 (\log n)^{3/2}}{ c_n^{5/2}(rn)^{1/2} }  + \frac{ b_n^{27}(\log n)^3r }{n^2 c_n^{22} } + \frac{ b_n^{3}(\log n)^{3/2}}{n^{1/2} c_n^{2} } + \frac{ b_n^{11}(\log n)^{2}}{n c_n^{9} } \\
\nonumber
&&+ \frac{ b_n^3 (\log n)^{3/2}(rn)^{1/2}}{ c_n^{3/2}n }
+ \frac{ b_n^{19} (\log n)^{5/2}(rn)^{1/2}}{ c_n^{31/2}n^2 }+ \frac{r b_n^{19}(\log n)^{5/2}}{n^{3/2} c_n^{15} }\\
\nonumber
&&+ \frac{ b_n^{11}(\log n)^{2}r}{n c_n^{8} }+ \frac{ b_n^{11}(\log n)^{2}r^{1/2}}{n c_n^{17/2} }+\frac{r(n-1)}{c_n}(\frac{\bar{b}_{n}}{v_{2n,2n}})^{3}.
\end{eqnarray}
Therefore, if $b_n^{11}/c_n^8=o( n/(\log n)^2r )$, then
\begin{equation}\label{ineq-1b-Q1}
| f_1(\widehat{\alpha}_1^0 - \alpha_1)^3 | = O\left(  \frac{ b_n^{11}(\log n)^{2}r}{n c_n^{8} } \right)+\frac{r(n-1)\bar{b}_{n}^3}{c_n v_{2n,2n}^3} = o(1)+\frac{r(n-1)\bar{b}_{n}^3}{c_n v_{2n,2n}^3} .
\end{equation}
With similar arguments as in the proof of Lemma \ref{lemma:beta3:err}, if $b_n^{11}/c_n^8=o( n/(\log n)^2(n-r) )$ we have
\begin{eqnarray}
\label{ineq-1b-Q2}% \nonumber to remove numbering (before each equation)
|\sum_{j=1}^{n-1} \sum_{i=r+1,i\neq j}^n \mu^{\prime\prime}( \pi_{ij})+r\mu^{\prime\prime}(\pi_{1j}))( \widehat{\beta}_j^0 - \beta_j)^3 |
 &=& o(1)-\frac{r(n-1)\bar{b}_{n}^3}{c_n v_{2n,2n}^3}-\frac{n(n-r)\bar{b}_{n}^3}{c_n v_{2n,2n}^3}.  \\
\label{ineq-1b-Q3}
|\sum_{i=r+1}^n \sum_{j\neq i}\mu^{\prime\prime}( \pi_{ij}) ( \widehat{\alpha}_i^0  - \alpha_i)^3| &=&   o(1)+\frac{n(n-r)\bar{b}_{n}^3}{c_n v_{2n,2n}^3}.\\
\label{ineq-1b-Q4}
|\sum_{i=r+1}^n\sum_{j\neq i} \mu^{\prime\prime}(\pi_{ij}) ( \widehat{\alpha}_i^0  - \alpha_i)^2 ( \widehat{\beta}_j^0 - \beta_j)|&=& o(1)-\frac{n(n-r)\bar{b}_{n}^3}{c_n v_{2n,2n}^3}.\\
\label{ineq-1b-Q5}
|\sum_{i=r+1}^n\sum_{j\neq i} \mu^{\prime\prime}(\pi_{ij}) ( \widehat{\alpha}_i^0  - \alpha_i) ( \widehat{\beta}_j^0 - \beta_j)^2|&=& o(1)+\frac{n(n-r)\bar{b}_{n}^3}{c_n v_{2n,2n}^3}.
\end{eqnarray}
We now consider the terms $(r -1)\sum_{j=1}^r \mu^{\prime\prime}(\pi_{1j}) ( \widehat{\alpha}_1^0  - \alpha_1)^2 ( \widehat{\beta}_j^0 - \beta_j)+r\sum_{j=r+1}^{n-1} \mu^{\prime\prime}(\pi_{1j}) ( \widehat{\alpha}_1^0  - \alpha_1)^2 ( \widehat{\beta}_j^0 - \beta_j)$.
\begin{eqnarray*}
&&r|\sum_{j=1}^{n-1} \mu^{\prime\prime}(\pi_{1j}) ( \widehat{\alpha}_1^0  - \alpha_1)^2 ( \widehat{\beta}_j^0 - \beta_j)|\\
&=& r |\sum_{j=1}^{n-1} \mu^{\prime\prime}(\pi_{1j}) (\frac{ \sum_{i=1}^r \bar{d}_i }{ \tilde{v}_{11} } +\frac{\bar{b}_{n}}{v_{2n,2n}}+ \gamma_1^{0})^2(\frac{ \bar{b}_j }{ v_{n+j,n+j} }-\frac{\bar {b}_{n}}{v_{2n,2n}} + \gamma_{n+j}^{0})|\\
& \lesssim &r \left|\sum_{j=1}^{n-1} \mu^{\prime\prime}(\pi_{1j})  \frac{( \sum_{i=1}^r \bar{d}_i)^2 }{ \tilde{v}_{11}^2 }\frac{ \bar{b}_j }{ v_{n+j,n+j} }- \sum_{j=1}^{n-1} \mu^{\prime\prime}(\pi_{1j})  \frac{( \sum_{i=1}^r \bar{d}_i)^2 }{ \tilde{v}_{11}^2 }\frac{\bar{b}_{n}}{v_{2n,2n}}\right. \\
& + &\left. \sum_{j=1}^{n-1} \mu^{\prime\prime}(\pi_{1j})  \frac{( \sum_{i=1}^r \bar{d}_i)^2 }{ \tilde{v}_{11}^2 }\gamma_{n+j}^{0}+\sum_{j=1}^{n-1} \mu^{\prime\prime}(\pi_{1j})  (\frac{\bar{b}_{n}}{v_{2n,2n}})^2\frac{ \bar{b}_j }{ v_{n+j,n+j} } -\sum_{j=1}^{n-1} \mu^{\prime\prime}(\pi_{1j})  (\frac{\bar{b}_{n}}{v_{2n,2n}})^3\right. \\
& + &\left.\sum_{j=1}^{n-1} \mu^{\prime\prime}(\pi_{1j})  (\frac{\bar{b}_{n}}{v_{2n,2n}})^2\gamma_{n+j}^{0}+ \sum_{j=1}^{n-1} \mu^{\prime\prime}(\pi_{1j})  (\gamma_1^{0})^2\frac{ \bar{b}_j }{ v_{n+j,n+j} }-\sum_{j=1}^{n-1} \mu^{\prime\prime}(\pi_{1j})  (\gamma_1^{0})^2\frac{ \bar{b}_n }{ v_{2n,2n} } \right. \\
& + &\left.\sum_{j=1}^{n-1} \mu^{\prime\prime}(\pi_{1j})  (\gamma_1^{0})^2\gamma_{n+j}^{0}\right|\\
&\lesssim&
\frac{ b_n^{11} (\log n)^2 }{ n c_n^9 }
+ \frac{ b_n^{2} (\log n)^{3/2}r }{ n^{3/2} c_n}+\frac{ b_n^{11} (\log n)^2 r}{ n c_n^8 }+\frac{ b_n^{18} (\log n)^2r }{ n c_n^{15} }-\frac{r(n-1)\bar{b}_{n}^{3}}{c_{n}v_{2n,2n}^{2}}.
\end{eqnarray*}
If $b_n^{11}/c_n^8=o( n/(\log n)^2r )$, then
\begin{equation}\label{ineq-1b-Q6}
  r|\sum_{j=1}^{n-1} \mu^{\prime\prime}(\pi_{1j}) ( \widehat{\alpha}_1^0  - \alpha_1)^2 ( \widehat{\beta}_j^0 - \beta_j)| =o(1)-\frac{r(n-1)\bar{b}_{n}^{3}}{c_{n}v_{2n,2n}^{2}}.
\end{equation}
Similarly, we have
\begin{equation}\label{ineq-1b-Q7}
  r|\sum_{j=1}^{n-1} \mu^{\prime\prime}(\pi_{1j}) ( \widehat{\alpha}_1^0  - \alpha_1) ( \widehat{\beta}_j^0 - \beta_j)^2| =o(1)+\frac{r(n-1)\bar{b}_{n}^{3}}{c_{n}v_{2n,2n}^{2}}.
\end{equation}
By combining the upper bounds of the above in \eqref{ineq-1b-Q1}, \eqref{ineq-1b-Q2}, \eqref{ineq-1b-Q3}, \eqref{ineq-1b-Q4}, \eqref{ineq-1b-Q5},  \eqref{ineq-1b-Q6} and \eqref{ineq-1b-Q7}, it shows \eqref{eq-thereom1b-z43} in the main text.

\section{Proofs of supported Lemmas in the proof of Theorem 1 (b)}
\label{section-theorem2a}
This section presents the proofs of supported Lemmas in the proof of Theorem 1 (b) and two vanishing remainder terms.
This section is organized as follows.
Sections \ref{section-proof-lemma1010}, \ref{le12} and \ref{section-proof-lemma13}
present the proofs of Lemmas \ref{w2-error-2b}, \ref{lemma-hat-beta-diff-2b} and \ref{lemma-gg0}, respectively.
Sections \ref{subsection:B2B20} and \ref{subsection-B3B30} present the proofs of orders of two remainder terms
$B_2-B_2^0$ in \eqref{ineq-B2-B20} and $B_3-B_3^0$ in \eqref{ineq-B3-B20} in the main text, respectively.

\subsection{Proof of Lemma \ref{w2-error-2b}}
\label{section-proof-lemma1010}
We partition $\widetilde{S}$ in \eqref{definition-S222} into four blocks:
\begin{equation*}
\widetilde{S} = \begin{pmatrix} \tilde{s}_{11} & \bs{\tilde{s}}_{12} \\
\bs{\tilde{s}}_{21} & \widetilde{S}_{22}
\end{pmatrix}=\begin{pmatrix} 1/\tilde{v}_{11} & \bs{0}^\top_{2n-1-r} \\
\bs{0}_{2n-1-r} &D_{22}
\end{pmatrix}+
\begin{array}{ccc}
\begin{pmatrix} \frac{1}{v_{2n,2n}}& \ldots& \frac{1}{v_{2n,2n}}&  -\frac{1}{v_{2n,2n}}&\ldots& -\frac{1}{v_{2n,2n}} \\
\vdots&&\vdots&\vdots& &\vdots\\
\frac{1}{v_{2n,2n}}& \ldots& \frac{1}{v_{2n,2n}}&  -\frac{1}{v_{2n,2n}}&\ldots& -\frac{1}{v_{2n,2n}} \\
-\frac{1}{v_{2n,2n}}& \ldots& -\frac{1}{v_{2n,2n}}&  \frac{1}{v_{2n,2n}}&\ldots& \frac{1}{v_{2n,2n}} \\
\vdots&&\vdots&\vdots& &\vdots\\
-\frac{1}{v_{2n,2n}}& \ldots& -\frac{1}{v_{2n,2n}}&  \frac{1}{v_{2n,2n}}&\ldots& \frac{1}{v_{2n,2n}} \\
\end{pmatrix}
&
\begin{tiny}
\begin{array}{l}
\left.\rule{-5mm}{11mm}\right\}{n-r+1}\\
\left.\rule{-5mm}{11mm}\right\}{n-1}
\end{array}
\end{tiny}
\\
\begin{tiny}
\begin{array}{cc}
\underbrace{\rule{37mm}{0mm}}_{n-r+1}
\underbrace{\rule{37mm}{0mm}}_{n-1}
\end{array}
\end{tiny}
\end{array}
\end{equation*}
Note that
\[
\left[
\begin{pmatrix} \tilde{s}_{11} & \bs{\tilde{s}}_{12} \\
\bs{\tilde{s}}_{21} & \widetilde{S}_{22}
\end{pmatrix}
+
\begin{pmatrix} \tilde{w}_{11} & \bs{\tilde{w}}_{12} \\
\bs{\tilde{w}}_{21} & \widetilde{W}_{22}
\end{pmatrix}
\right]
\begin{pmatrix} \tilde{v}_{11} & \bs{\tilde{v}}_{12} \\ \bs{\tilde{v}}_{21} & V_{22} \end{pmatrix}
=I_{(2n-r)\times (2n-r)},
\]
and
\[
\left[
\begin{pmatrix} S_{11} & S_{12} \\
S_{21} & S_{22}
\end{pmatrix}
+
\begin{pmatrix} W_{11} & W_{12} \\
W_{21} & W_{22}
\end{pmatrix}
\right]
\begin{pmatrix}
V_{11} & V_{12} \\
V_{21} & V_{22}
\end{pmatrix}
=I_{(2n-1)\times(2n-1)}.
\]
It follows that
\[
D_{22} V_{22} + \bs{\tilde{w}}_{21}\bs{\tilde{v}}_{12} + \widetilde{W}_{22}V_{22} =D_{22} V_{22} + {{W}}_{21}{{V}}_{12} +{W}_{22}V_{22},
\]
we have
\begin{align*}
  W_{22} - \widetilde{W}_{22} & = V_{22}^{-1}(\bs{\tilde{w}}_{21}\bs{\tilde{v}}_{12}-W_{21}V_{12}) \\
   & =H_{2}+D_{22}(\bs{\tilde{w}}_{21}\bs{\tilde{v}}_{12}-W_{21}V_{12})+\frac{1}{v_{2n,2n}}H_{1}\begin{array}{ccc}
\begin{pmatrix} \mathbf{1}& \mathbf{-1}\\
\mathbf{-1}& \mathbf{1} \\
\end{pmatrix}
&
\begin{tiny}
\begin{array}{l}
\left.\rule{-5mm}{4mm}\right\}{n-r}\\
\left.\rule{-5mm}{4mm}\right\}{n-1}
\end{array}
\end{tiny}
\\
\begin{tiny}
\begin{array}{cc}
\underbrace{\rule{8mm}{-15mm}}_{n-r}
\underbrace{\rule{8mm}{-15mm}}_{n-1},
\end{array}
\end{tiny}
\end{array}
\end{align*}
where
\[
H_{1}=\bs{\tilde{w}}_{21}\bs{\tilde{v}}_{12}-W_{21}V_{12},~~H_{2}=(V_{22}^{-1}-{S}_{22})(\bs{\tilde{w}}_{21}\bs{\tilde{v}}_{12}-W_{21}V_{12}).
\]
Note that $r$ is a fixed positive integer. In view of \eqref{ineq-V-S-appro-upper-b}, we have
\begin{eqnarray*}
&&|(V_{22}^{-1}-{S}_{22})(\bs{\tilde{w}}_{21}\bs{\tilde{v}}_{12}-W_{21}V_{12})| \\
&  = & |\sum_{k=1}^{2n-1-r} (V_{22}^{-1}-{S}_{22})_{ik} (\bs{\tilde{w}}_{21})_{k1}(\bs{\tilde{v}}_{12})_{1j}-\sum_{k=1}^{2n-1-r}\sum_{h=1}^r (V_{22}^{-1}-{S}_{22})_{ik}  (W_{21})_{k h} (V_{12})_{hj}| \\
& \lesssim & (2n-1-r)\cdot \| W \|_{\max}^2 \cdot \frac{1}{c_n}-(2n-1-r)\cdot r \cdot \| W \|_{\max}^2 \cdot \frac{1}{c_n} \lesssim \frac{ b_n^6 }{ n^3c_n^5 },
\end{eqnarray*}
and
\begin{eqnarray*}
&&|( D_{22}\bs{\tilde{w}}_{21}\bs{\tilde{v}}_{12})_{ij}-( D_{22} W_{21} V_{12})_{ij}|\\
 & = & |\sum_{k=1}^{2n-1-r}(D_{22})_{ik} (\bs{\tilde{w}}_{21})_{k1}(\bs{\tilde{v}}_{12})_{1j}-\sum_{k=1}^{2n-1-r}\sum_{h=1}^r ( D_{22})_{ik} (W_{21})_{k h} (V_{12})_{hj}| \\
& \le & \frac{ b_n }{n} \cdot \frac{ b_n^3 }{ n^2c_n^2 } \cdot \frac{1}{c_n} -r \cdot \frac{ b_n }{n} \cdot \frac{ b_n^3 }{ n^2c_n^2 } \cdot \frac{1}{c_n}  \lesssim \frac{b_n^4}{ n^2c_n^3}.
\end{eqnarray*}
This leads to \eqref{ineq-WW-diff-2b}.

\subsection{Proof of Lemma \ref{lemma-hat-beta-diff-2b}}
\label{le12}
By Theorem 2 in \cite{Yan:Leng:Zhu:2016}, we have
\begin{equation}\label{eq-a-zcc1}
\begin{split}
\widehat{\alpha}_i - \alpha_i& = \frac{ \bar{d}_i }{ v_{i,i} }+ \frac{\bar b_{n} }{  v_{2n,2n} }+ \gamma_i, ~~i=r+1, \ldots, n \\
\widehat{\beta}_j -\beta_j& = \frac{ \bar{b}_j }{ v_{n+j,n+j} }- \frac{\bar b_{n} }{  v_{2n,2n} }+ \gamma_{n+j}, ~~j=1, \ldots, n-1,
\end{split}
\end{equation}
where $\max\{ |\gamma_i|,|\gamma_{n+j}|\}\leq \frac{b_{n}^{9}\log n}{c_{n}^{7}n}$.
And by Lemma \ref{lemma-beta-homo-expan}, we have
\begin{eqnarray}
% \nonumber to remove numbering (before each equation)
\label{alphai1}
 \widehat{\alpha}_i^0 - \alpha_i & = & \frac{ \bar{d}_i }{ v_{i,i} }+\frac{\bar{b}_{n}}{v_{2n,2n}} + \gamma_i,~~ i=r+1, \ldots, n,\\
\label{betaj1}
\widehat{\beta}_j^0 - \beta_j & = & \frac{ \bar{b}_j }{ v_{n+j,n+j} }-\frac{\bar {b}_{n}}{v_{2n,2n}} + \gamma_{n+j},~~ j=1, \ldots, n-1,
\end{eqnarray}
where $\gamma_{r+1}, \ldots, \gamma_{2n-1}$ with probability at least $1- O(n^{-1})$ satisfy
\[
\gamma_i = (\widetilde{V}^{-1}\bs{\widetilde{h}})_i + [\widetilde{W} (\bs{\widetilde{\mathbf{g}}}- \E \bs{\widetilde{\mathbf{g}}})]_i = O\left( \frac{b_n^9 \log n}{ nc_n^7 } \right).
\]
By combining \eqref{eq-a-zcc1}, \eqref{alphai1} and \eqref{betaj1}, this completes the proof.

\subsection{Proof of Lemma \ref{lemma-gg0}}
\label{section-proof-lemma13}
By Lemma \ref{lemma-beta-homo-expan} and $\widetilde{V}^{-1}=\widetilde{S}+\widetilde{W}$, we have
\[
\gamma_i^0  = \frac{ \tilde h_i }{ v_{i,i} }+(-1)^{1{(i>n-r)}}\frac{\tilde h_{2n}}{v_{2n,2n}}+ (\widetilde{W}\bs{\widetilde{h}})_i + [\widetilde{W} (\bs{\widetilde{\mathbf{g}}}- \E \bs{\widetilde{\mathbf{g}}})]_i,~~i=r+1, \ldots, 2n-1.
\]
Because $r$ is a fixed constant, by \eqref{eq-homo-tildeh}, we have
\[
(\widetilde{W}\bs{\widetilde{h}})_i =(\widetilde{W}_{22}\bs{\widetilde{h}}_{2})_i +O_{p}(\frac{b_{n}^{9}\log n }{n^{2}}).
\]
By \eqref{approxi-inv2-beta-ho}, we have
\[
[\widetilde{W} (\bs{\widetilde{\mathbf{g}}}- \E \bs{\widetilde{\mathbf{g}}})]_i=[\widetilde{W}_{22} \bs{\mathbf{\bar g}}_{2}]_i+O_{p}(\frac{b_{n}^{3}(\log n)^{1/2} }{n^{3/2}}).
\]
It follows that for $i=r+1, \ldots, 2n-1$, we have
\begin{equation}\label{gamma0}
 \gamma_i^0  = \frac{ \tilde h_i }{ v_{i,i} }+(-1)^{1{(i>n-r)}}\frac{\tilde h_{2n}}{v_{2n,2n}}+ [\widetilde{W}_{22} \bs{\mathbf{\bar g}_{2}}]_i+(\widetilde{W}_{22}\bs{\widetilde{h}}_{2})_i+O_{p}(\frac{b_{n}^{3}(\log n)^{1/2} }{n^{3/2}}).
\end{equation}
Similarly, for $i=r+1, \ldots, 2n-1$, we have
\begin{equation}\label{gamma}
 \gamma_i  = \frac{  h_i }{ v_{i,i} }+(-1)^{1{(i>n-r)}}\frac{ h_{2n}}{v_{2n,2n}}+ [{W}_{22} \bs{\mathbf{\bar g}}_{2}]_i+({W}_{22}\bs{{h}}_{2})_i+O_{p}(\frac{b_{n}^{3}(\log n)^{1/2} }{n^{3/2}}).
\end{equation}
Combining \eqref{gamma0} and \eqref{gamma}  yields
 \begin{eqnarray}
  % \nonumber to remove numbering (before each equation)
  \nonumber
  \gamma_i-\gamma_i^0 &=&\underbrace{\frac{  h_i }{ v_{i,i} } - \frac{ \tilde h_i }{ v_{i,i} }}_{T_{1}}+ \underbrace{(-1)^{1{(i>n-r)}}\frac{ h_{2n}}{v_{2n,2n}}-(-1)^{1{(i>n-r)}}\frac{\tilde h_{2n}}{v_{2n,2n}}}_{T_{2}}\\
    \label{gamma-gamma0}
     &&+ \underbrace{[{W}_{22} \bs{\mathbf{\bar g}}_{2}]_i-[\widetilde{W}_{22} \bs{\mathbf{\bar g}_{2}}]_i}_{T_{3}}+\underbrace{({W}_{22}\bs{{h}}_{2})_i-(\widetilde{W}_{22}\bs{\widetilde{h}}_{2})_i}_{T_{4}}.
  \end{eqnarray}

We bound $T_{i}, i = 1, 2, 3, 4$ in turns. We first bound $T_{1}$. For $i=r+1, \ldots, n$, we have
\[
h_i- \tilde h_i=\frac{1}{2}\sum_{j=1,j\neq i}^{n} \left\{\mu^{\prime\prime}( \tilde{\pi}_{ij} ) (\widehat{\pi}_{ij} - \pi_{ij})^2- \mu^{\prime\prime}( \tilde{\pi}_{ij}^{0} ) (\widehat{\pi}_{ij}^{0} - \pi_{ij})^2\right\}.
\]
With the use of the mean value theorem and Lemmas \ref{lemma-con-beta-b} and \ref{lemma-hat-beta-diff-2b}, we have
\begin{align*}
& | \mu^{\prime\prime} (\tilde{\pi}_{ij})(\widehat{\pi}_{ij} - \pi_{ij} )^2
- \mu^{\prime\prime} (\tilde{\pi}_{ij}^0)(\widehat{\pi}_{ij}^0 - \pi_{ij} )^2 | \\
=&  | [\mu^{\prime\prime} (\tilde{\pi}_{ij}) - \mu^{\prime\prime} (\tilde{\pi}_{ij}^0)] (\widehat{\pi}_{ij} - \pi_{ij} )^2
+ \mu^{\prime\prime} (\tilde{\pi}_{ij}^0)[(\widehat{\pi}_{ij} - \pi_{ij} )^2-(\widehat{\pi}_{ij}^0 - \pi_{ij} )^2] |
\\
\le &  \mu^{\prime\prime\prime}( \dot{\pi}_{ij})|\tilde{\pi}_{ij}-\tilde{\pi}_{ij}^0|(\widehat{\pi}_{ij} - \pi_{ij} )^2 +  \mu^{\prime\prime} (\tilde{\pi}_{ij}^0)|\widehat{\pi}_{ij} - \widehat{\pi}_{ij}^0 |
\times ( | \widehat{\pi}_{ij}^0 - \pi_{ij}| +  |\widehat{\pi}_{ij} - \pi_{ij}| ) \\
\leq &\frac{1}{c_{n}}\cdot\frac{ b_n^{9} \log n }{ nc_n^7}\cdot\frac{b_{n}^{3}}{c_{n}^{2}}\sqrt{\frac{\log n}{n}}+\frac{1}{c_{n}}\left(\frac{b_{n}^{3}}{c_{n}^{2}}\sqrt{\frac{\log n}{n}}\right)^3\\
\leq &\frac{ b_n^{12} (\log n)^{3/2} }{ n^{3/2}c_n^{10}}.
\end{align*}
This gives that
\begin{equation}\label{hi}
  |h_i- \tilde h_i|\lesssim \frac{ b_n^{12} (\log n)^{3/2} }{ n^{1/2}c_n^{10}}.
\end{equation}
It follows that
\begin{equation}\label{T1}
  |T_{1}|=\frac{  |h_i- \tilde h_i|}{v_{i,i}}\lesssim \frac{ b_n^{13} (\log n)^{3/2} }{ n^{3/2}c_n^{10}}.
\end{equation}
Similarly, for $i=n+1, \ldots, 2n-1$, we also have $ |T_{1}|\lesssim \frac{ b_n^{13} (\log n)^{3/2} }{ n^{3/2}c_n^{10}}.$
With the same arguments, we have
\begin{equation}\label{T2}
  |T_{2}|\lesssim \frac{ b_n^{13} (\log n)^{3/2} }{ n^{3/2}c_n^{10}}.
\end{equation}

Next, we bound $T_{3}$. By \eqref{ineq-WW-diff-2b}, we have
\[
(W_{22} - \widetilde{W}_{22}) {\mathbf{\bar g}_{2}}=\frac{1}{v_{2n,2n}}H_{1}(-1)^{1{(i>n-r)}}(\bar b_{n}-\sum_{i=1}^{r}\bar d_{i})\mathbf{1}_{2n-r-1}+H_{2}{\mathbf{\bar g}_{2}}.
\]
By \eqref{H1h2}, we have
\begin{equation}\label{first}
\begin{split}
  & \|\frac{1}{v_{2n,2n}}H_{1}(-1)^{1{(i>n-r)}}(\bar b_{n}-\sum_{i=1}^{r}\bar d_{i})\mathbf{1}_{2n-r-1}\|\\
   \lesssim& \frac{b_{n}(n\log n)^{1/2}}{n}\cdot(2n-r-1)\cdot\frac{b_{n}^{4}}{n^{2}c_{n}^{3}}
    \lesssim\frac{b_{n}^{5}(\log n)^{1/2}}{n^{3/2}c_{n}^{3}}.
\end{split}
\end{equation}
We use Benstern's inequality to bound $H_{2}{\mathbf{\bar g}_{2}}$. Note that
\begin{align*}
    [H_{2} {\mathbf{\bar g}_{2}}]_{i}
  = \sum_{j=r+1}^{n} (H_{2})_{ij}\sum_{k\neq j}\bar a_{j,k}+\sum_{j=n+1}^{2n-1} (H_{2})_{ij}\sum_{k\neq j-n}\bar a_{k,j-n}.
\end{align*}
Because the terms involved in the sum are independent and bounded,
Benstern's inequality in Lemma \ref{lemma:bernstein} gives that with probability at least $1- O(n^{-2})$, we have
\begin{eqnarray}
\nonumber
|[H_{2} \bs{\bar{\mathbf{g}}}_2]_i | & \lesssim & \|H_{2}\|_{\max} \sqrt{ 4 \log n \cdot \frac{n(n-r)}{c_n}}
+ \frac{4}{3} \|H_{2}\|_{\max} \log n \\
\nonumber
& \lesssim & \frac{b_n^6}{n^3 c_n^5} \cdot \frac{n(\log n)^{1/2}}{ c_n^{1/2} } \\
\label{ew-WWd}
& \lesssim & \frac{ b_n^6 (\log n)^{1/2} }{ n^2 c_n^{11/2} },
\end{eqnarray}
where
\[
\E \left( [H_{2} \bs{\bar{\mathbf{g}}}_2]_i  \right)^2 \lesssim \frac{ n(n-r) \|H_{2}\|_{\max}^{2}}{ c_n}.
\]
By combining \eqref{first} and \eqref{ew-WWd}, with probability at least $1- O(n^{-2})$, we have
\begin{equation}\label{TT3}
  \max_{i=r+1, \ldots, 2n-1}|\{(W_{22} - \widetilde{W}_{22}) {\mathbf{\bar g}_{2}}\}_{i}|\lesssim\frac{b_{n}^{5}(\log n)^{1/2}}{n^{3/2}c_{n}^{3}}+\frac{ b_n^6 (\log n)^{1/2} }{ n^2 c_n^{11/2} }.
\end{equation}
Last, we bound $T_{4}$. By Lemma \ref{w2-error-2b}, we have
\begin{eqnarray}
% \nonumber to remove numbering (before each equation)
\nonumber
{W}_{22}\bs{{h}}_{2}-\widetilde{W}_{22}\bs{\widetilde{h}}_{2} &=& {W}_{22}\bs{{h}}_{2}-\widetilde{W}_{22}\bs{{h}}_{2}+\widetilde{W}_{22}\bs{{h}}_{2}-\widetilde{W}_{22}\bs{\widetilde{h}}_{2} \\
\nonumber
 & =&\frac{1}{v_{2n,2n}}H_{1}(-1)^{1{(i>n-r)}}(\sum_{i=r+1}^{n}h_{i}-\sum_{i=n+1}^{2n-1}h_{i})\mathbf{1}_{2n-r-1}+H_{2}\bs{{h}}_{2}+
  \widetilde{W}_{22}(\bs{{h}}_{2}-\bs{\widetilde{h}}_{2}).
 \end{eqnarray}
Therefore, we have
\begin{align*}
  |{W}_{22}\bs{{h}}_{2}-\widetilde{W}_{22}\bs{{h}}_{2}| & \lesssim \frac{b_{n}b_{n}^{6}\log n}{n}\cdot(2n-r-1)\cdot\frac{b_{n}^{4}}{n^{2}c_{n}^{3}}+\frac{ b_n^6 }{ n^3 c_n^5 } \cdot b_n^6 \log n\\
   &\lesssim\frac{b_{n}^{11}\log n}{n^{2}c_{n}^{3}}.
\end{align*}
By \eqref{hi}, we have
\[
|\widetilde{W}_{22}(\bs{{h}}_{2}-\bs{\widetilde{h}}_{2})|\lesssim\frac{ b_n^6 }{ n^2 c_n^2 }\cdot (2n-r)\cdot\frac{ b_n^{12} (\log n)^{3/2} }{ n^{1/2}c_n^{10}}\lesssim \frac{ b_n^{15} (\log n)^{3/2} }{ n^{3/2}c_n^{12}}.
\]
It follows that
\begin{eqnarray}
|{W}_{22}\bs{{h}}_{2}-\widetilde{W}_{22}\bs{\widetilde{h}}_{2}|
\label{T4}
 & \lesssim & \frac{ b_n^{15} (\log n)^{3/2} }{ n^{3/2}c_n^{12}}.
 \end{eqnarray}
Combining \eqref{T1}, \eqref{T2}, \eqref{TT3} and \eqref{T4} yields
\[
\max_{i=r+1, \ldots, 2n-1} | \gamma_i - \gamma_i^0 | \lesssim \frac{ b_n^{15} (\log n)^{3/2} }{ n^{3/2}c_n^{12}}.
\]

\subsection{The proof of \eqref{ineq-B2-B20} for bounding $B_2-B_2^0$}
\label{subsection:B2B20}
In this section, we present the proof of the error bound of $B_2-B_2^0$ in \eqref{ineq-B2-B20} in the main text,
reproduced below:
\begin{equation}\label{ineq-B2-B200}
  B_2-B_2^0 = O_p\left(  \frac{ b_n^{21} (\log n)^{5/2}}{ n^{1/2}c_{n}^{17}}\right).
\end{equation}
\begin{proof}
%[Proof of \eqref{ineq-B2-B20}]
Because $r$ is a fixed constant, $B_2 - B_2^0$ can be written as
\begin{eqnarray}
\nonumber
B_2^0 - B_2 & = &  \underbrace{ \sum_{i=r+1}^n  \{ (\widehat{\alpha}_i-\alpha_i)^3 - (\widehat{\alpha}_i^0-\alpha_i)^3 \} \sum_{j\neq i} \mu^{\prime\prime}( \pi_{ij}) }_{T_1}\\
\nonumber
&&+ \underbrace{ \sum_{i=r+1,i\neq j}^n\sum_{j}  \{ (\widehat{\beta}_j-\beta_j)^3 - (\widehat{\beta}_j^0-\beta_j)^3 \} \mu^{\prime\prime}( \pi_{ij} ) }_{T_2} \\
\nonumber
&& + \underbrace{ \sum_{i=r+1}^n \sum_{ j\neq i}  \left\{(\widehat{\alpha}_i-\alpha_i)^2(\widehat{\beta}_j-\beta_j)-(\widehat{\alpha}_i^0-\alpha_i)^2
(\widehat{\beta}_j^0-\beta_j)\right\} \mu^{\prime\prime}( \pi_{ij} ) }_{T_3}\\
\nonumber
&&+ \underbrace{ \sum_{i=r+1}^n \sum_{ j\neq i}  \left\{(\widehat{\alpha}_i-\alpha_i)(\widehat{\beta}_j-\beta_j)^2-(\widehat{\alpha}_i^0-\alpha_i)
(\widehat{\beta}_j^0-\beta_j)^2\right\} \mu^{\prime\prime}( \pi_{ij} ) }_{T_4}+o_{p}(1).
\end{eqnarray}
The terms $T_{1}$, $T_{2}$, $T_{3}$ and $T_{4}$ are bounded as follows. By \eqref{eq-a-zcc} and Lemma \ref{lemma-beta-homo-expan}, for $i=r+1, \ldots, n$, we have
\begin{eqnarray*}
& & | (\widehat{\alpha}_i-\alpha_i)^3 - (\widehat{\alpha}_i^0 -\alpha_i)^3 | \\
& \le & 2 |\gamma_i-\gamma_i^0| | (\widehat{\alpha}_i-\alpha_i)^2 + (\widehat{\alpha}_i^0 -\alpha_i)^2 | \\
& \lesssim &  \frac{ b_n^{15} (\log n)^{3/2}}{ n^{3/2}c_{n}^{12}}\cdot\left(\frac{ b_n^{3}}{c_{n}^{2}}\sqrt{\frac{\log n}{n}}  \right)^{2}\lesssim \frac{ b_n^{21} (\log n)^{5/2}}{ n^{5/2}c_{n}^{16}},
\end{eqnarray*}
where $\gamma_{i}$ and $\gamma_i^0$ are defined in Lemma \ref{lemma-gg0}. It follows that
\[
T_1 \lesssim \frac{ b_n^{21} (\log n)^{5/2}}{ n^{1/2}c_{n}^{17}}.
\]
Similarly, we have
\[
T_2 \lesssim \frac{ b_n^{21} (\log n)^{5/2}}{ n^{1/2}c_{n}^{17}}.
\]
Again, by \eqref{eq-a-zcc} and Lemmas \ref{lemma-beta-homo-expan}, \ref{lemma-gg0}. For $i=r+1, \ldots, n$, $j=1, \ldots, n-1$, we have
\begin{eqnarray*}
&&|(\widehat{\alpha}_i-\alpha_i)^2(\widehat{\beta}_j-\beta_j) -  (\widehat{\alpha}_i^0-\alpha_i)^2(\widehat{\beta}_j^0-\beta_j)| \\
& \le & |(\widehat{\alpha}_i-\alpha_i)^2(\widehat{\beta}_j-\beta_j) -  (\widehat{\alpha}_i-\alpha_i)^2(\widehat{\beta}_j^0-\beta_j)|
\\
& &+ |(\widehat{\alpha}_i-\alpha_i)^2(\widehat{\beta}_j^0-\beta_j) -  (\widehat{\alpha}_i^0-\alpha_i)^2(\widehat{\beta}_j^0-\beta_j)| \\
& \le & |\gamma_{n+j}-\gamma^{0}_{n+j}|\cdot |(\widehat{\alpha}_i-\alpha_i)^2|+|\widehat{\beta}_j^0-\beta_j| \cdot |\gamma_{i}-\gamma^{0}_{i}|\cdot(|\widehat{\alpha}_i-\alpha_i|+|\widehat{\alpha}_i^0-\alpha_i|)\\
& \lesssim &  \frac{ b_n^{15} (\log n)^{3/2}}{ n^{3/2}c_{n}^{12}}\cdot\left(\frac{ b_n^{3}}{c_{n}^{2}}\sqrt{\frac{\log n}{n}}  \right)^{2}\lesssim \frac{ b_n^{21} (\log n)^{5/2}}{ n^{5/2}c_{n}^{16}}.
\end{eqnarray*}
It follows that
\[
T_3 \lesssim \frac{ b_n^{21} (\log n)^{5/2}}{ n^{1/2}c_{n}^{17}}.
\]
Similarly, we have
\[
T_4 \lesssim \frac{ b_n^{21} (\log n)^{5/2}}{ n^{1/2}c_{n}^{17}}.
\]
By combining inequalities for $T_i, i=1,\ldots, 4$, it yields \eqref{ineq-B2-B200}.
\end{proof}

\subsection{The upper bound of $B_3 - B_3^0$ in the proof of Theorem 1 (b)}
\label{subsection-B3B30}

In this section, we present the proof of the error bound of $B_3-B_3^0$ in \eqref{ineq-B3-B20} in the main text,
reproduced below:
\begin{equation}\label{ineq-2a-B3300}
|B_3 - B_3^0| \lesssim \frac{b_n^{18} (\log n)^{5/2} }{ n^{1/2}c_n^{14}}.
\end{equation}

\begin{proof}
%[Proof of \eqref{ineq-B3-B20}]
Recall that $B_{3}$ in \eqref{lrt-a-beta-B3} is
\begin{eqnarray*}
% \nonumber to remove numbering (before each equation)
 -B_3 & = &  \sum_{i}  (\widehat{\alpha}_i-\alpha_i)^4 \sum_{j\neq i} \mu^{\prime\prime\prime}( \bar{\pi}_{ij} )
 +\sum_{j}  (\widehat{\beta}_j-\beta_j)^4 \sum_{j\neq i} \mu^{\prime\prime\prime}( \bar{\pi}_{ij} )\\
\nonumber
&+&4\sum_{i}\sum_{j,j\neq i}  (\widehat{\alpha}_i-\alpha_i)(\widehat{\beta}_j-\beta_j)^3\mu^{\prime\prime\prime}( \bar{\pi}_{ij} )
+6\sum_{i}\sum_{j,j\neq i}  (\widehat{\alpha}_i-\alpha_i)^2(\widehat{\beta}_j-\beta_j)^2\mu^{\prime\prime\prime}( \bar{\pi}_{ij}) \\
\nonumber
&+&4\sum_{i}\sum_{j, j\neq i}  (\widehat{\alpha}_i-\alpha_i)^3(\widehat{\beta}_j-\beta_j)\mu^{\prime\prime\prime}( \bar{\pi}_{ij} ),
\end{eqnarray*}
where $\bar{\pi}_{ij}$ lies between $\widehat{\pi}_{ij}$ and ${\pi}_{ij}$. Because $r$ is a fixed constant, in view of Lemma \ref{lemma-consi-beta}, we have
\begin{eqnarray*}
% \nonumber to remove numbering (before each equation)
 -B_3 & = &  \sum_{i=r+1}^{n}  (\widehat{\alpha}_i-\alpha_i)^4 \sum_{j\neq i} \mu^{\prime\prime\prime}( \bar{\pi}_{ij} )
 +\sum_{j}  (\widehat{\beta}_j-\beta_j)^4 \sum_{i\neq j,i=r+1 }^{n} \mu^{\prime\prime\prime}( \bar{\pi}_{ij} )\\
\nonumber
&+&4\sum_{i=r+1}^{n}\sum_{j,j\neq i}  (\widehat{\alpha}_i-\alpha_i)(\widehat{\beta}_j-\beta_j)^3\mu^{\prime\prime\prime}( \bar{\pi}_{ij} )
+6\sum_{i=r+1}^{n}\sum_{j,j\neq i}  (\widehat{\alpha}_i-\alpha_i)^2(\widehat{\beta}_j-\beta_j)^2\mu^{\prime\prime\prime}( \bar{\pi}_{ij}) \\
\nonumber
&+&4\sum_{i=r+1}^{n}\sum_{j, j\neq i}  (\widehat{\alpha}_i-\alpha_i)^3(\widehat{\beta}_j-\beta_j)\mu^{\prime\prime\prime}( \bar{\pi}_{ij} )+O_{p}\left( \frac{b_{n}^{12}(\log n)^2}{nc_{n}^{8}}\right).
\end{eqnarray*}
Similar to \eqref{lrt-a-beta-B3}, we have
\begin{eqnarray*}
% \nonumber to remove numbering (before each equation)
 -B_3^{0} & = &  \sum_{i=r+1}^{n}  (\widehat{\alpha}_i^{0}-\alpha_i)^4 \sum_{j\neq i} \mu^{\prime\prime\prime}( \bar{\pi}_{ij}^{0} )
 +\sum_{j}  (\widehat{\beta}_j^{0}-\beta_j)^4 \sum_{i\neq j,i=r+1 }^{n} \mu^{\prime\prime\prime}( \bar{\pi}_{ij}^{0} )\\
\nonumber
&+&4\sum_{i=r+1}^{n}\sum_{j,j\neq i}  (\widehat{\alpha}_i^{0}-\alpha_i)(\widehat{\beta}_j^{0}-\beta_j)^3\mu^{\prime\prime\prime}( \bar{\pi}_{ij}^{0} )
+6\sum_{i=r+1}^{n}\sum_{j,j\neq i}  (\widehat{\alpha}_i^{0}-\alpha_i)^2(\widehat{\beta}_j^{0}-\beta_j)^2\mu^{\prime\prime\prime}( \bar{\pi}_{ij}^{0}) \\
\nonumber
&+&4\sum_{i=r+1}^{n}\sum_{j, j\neq i}  (\widehat{\alpha}_i^{0}-\alpha_i)^3(\widehat{\beta}_j^{0}-\beta_j)\mu^{\prime\prime\prime}( \bar{\pi}_{ij}^{0} ),
\end{eqnarray*}
where $\bar{\pi}_{ij}^{0}$ lies between $\widehat{\pi}_{ij}^{0}$ and ${\pi}_{ij}$.  To show \eqref{ineq-B3-B20}, it is sufficient to bound the following five terms:
\begin{eqnarray*}
% \nonumber to remove numbering (before each equation)
  C_{1}:&=& \sum_{i=r+1}^{n}\sum_{j,j\neq i} \left\{ (\widehat{\alpha}_i-\alpha_i)^4  \mu^{\prime\prime\prime}( \bar{\pi}_{ij} )-  (\widehat{\alpha}_i^{0}-\alpha_i)^4 \mu^{\prime\prime\prime}( \bar{\pi}_{ij}^{0} )\right\}\\
  C_{2} :&=&\sum_{j} \sum_{i\neq j,i=r+1 }^{n}\left\{ (\widehat{\beta}_j-\beta_j)^4 \mu^{\prime\prime\prime}( \bar{\pi}_{ij} )- (\widehat{\beta}_j^{0}-\beta_j)^4  \mu^{\prime\prime\prime}( \bar{\pi}_{ij}^{0} ) \right\}\\
  C_{3} :&=&  \sum_{i=r+1}^{n}\sum_{j,j\neq i}  \left\{(\widehat{\alpha}_i-\alpha_i)(\widehat{\beta}_j-\beta_j)^3\mu^{\prime\prime\prime}( \bar{\pi}_{ij} )- (\widehat{\alpha}_i^{0}-\alpha_i)(\widehat{\beta}_j^{0}-\beta_j)^3\mu^{\prime\prime\prime}( \bar{\pi}_{ij}^{0} )\right\}\\
  C_{4} :&=& \sum_{i=r+1}^{n}\sum_{j,j\neq i} \left\{ (\widehat{\alpha}_i-\alpha_i)^2(\widehat{\beta}_j-\beta_j)^2\mu^{\prime\prime\prime}( \bar{\pi}_{ij})-  (\widehat{\alpha}_i^{0}-\alpha_i)^2(\widehat{\beta}_j^{0}-\beta_j)^2\mu^{\prime\prime\prime}( \bar{\pi}_{ij}^{0})\right\} \\
\nonumber
  C_{5}:&=& \sum_{i=r+1}^{n}\sum_{j, j\neq i} \left\{ (\widehat{\alpha}_i-\alpha_i)^3(\widehat{\beta}_j-\beta_j)\mu^{\prime\prime\prime}( \bar{\pi}_{ij} )-  (\widehat{\alpha}_i^{0}-\alpha_i)^3(\widehat{\beta}_j^{0}-\beta_j)\mu^{\prime\prime\prime}( \bar{\pi}_{ij}^{0} )\right\}
\end{eqnarray*}
Before bounding $C_1$, $C_2$, $C_3$, $C_4$ and $C_5$,
we drive one useful inequality. By finding the fourth derivative of $\mu(x)$ with respect to $x$, we have
\begin{eqnarray*}
\mu^{\prime\prime\prime\prime}(x) & = & \frac{ e^x( 1- 8 e^x + 3e^{2x} ) }{ ( 1 + e^x )^4 } - \frac{ 4 e^{2x} ( 1 - 4 e^x + 4e^{2x} )}{ ( 1+ e^x)^5 } \\
& = & \frac{ e^x ( 1 -11e^x + 11 e^{2x} - e^{3x})  }{ ( 1 + e^x )^5 } \\
& = & \frac{ e^x }{ ( 1+e^x)^2 } \cdot \frac{ 1 -11e^x + 11 e^{2x} - e^{3x} }{ (1+e^x)^3 }.
\end{eqnarray*}
It is easy to see that
\[
\frac{ 11 }{ 3} (1+e^x)^3 \ge 1  + 11e^x + 11e^{2x} + e^{3x} \ge | 1 -11e^x + 11 e^{2x} - e^{3x} |.
\]
It follows that
\[
|\mu^{\prime\prime\prime\prime}(x)| \le \frac{ 11e^x }{ 3( 1+e^x)^2 }.
\]
Therefore, for any $\dot{\pi}_{ij}$ satisfying $|\dot{\pi}_{ij} - \pi_{ij}|\to 0$, we have
\begin{equation}\label{eq-fourth-mu}
| \mu^{\prime\prime\prime\prime}(\dot{\pi}_{ij} )| \le \frac{ 11\mu^{\prime}( \dot{\pi}_{ij} ) }{ 3 } \lesssim \mu^{\prime}( \pi_{ij} ) \lesssim \frac{1}{c_n}.
\end{equation}
It follows from the mean value theorem that for any $\dot{\pi}_{ij}$ satisfying $|\dot{\pi}_{ij} - \pi_{ij}|\to 0$,
\begin{equation}\label{ineq-mu-thr-diff}
| \mu^{\prime\prime\prime}( \dot{\pi}_{ij} ) - \mu^{\prime\prime\prime}( \pi_{ij} ) | \lesssim \frac{1}{c_n} |\dot{\pi}_{ij}- \pi_{ij} |.
\end{equation}

By Lemmas \ref{lemma-consi-beta}, \ref{lemma-con-beta-b} and \ref{lemma-hat-beta-diff-2b}, for $i=r+1, \ldots, n$,  we have
\begin{eqnarray}
\nonumber
&&\left|(\widehat{\alpha}_i -\alpha_i)^4\mu^{\prime\prime\prime}( \bar{\pi}_{ij} )-
(\widehat{\alpha}_i^0 -\alpha_i)^4\mu^{\prime\prime\prime}( \bar{\pi}_{ij}^0) \right| \\
\nonumber
& \le & \left| (\widehat{\alpha}_i -\alpha_i)^4 \left\{\mu^{\prime\prime\prime}( \bar{\pi}_{ij} )
-  \mu^{\prime\prime\prime}( \bar{\pi}_{ij}^0 )\right\} \right| + \left| \left\{(\widehat{\alpha}_i -\alpha_i)^4
- (\widehat{\alpha}_i^0 -\alpha_i)^4 \right\} \mu^{\prime\prime\prime}( \bar{\pi}_{ij}^0 ) \right| \\
\nonumber
& \lesssim & \frac{1}{c_n} \left( | \widehat{\alpha}_i - \alpha_i |^4 \cdot ( | \widehat{\alpha}_i - \alpha_i | + |\widehat{\beta}_j - \beta_j | )
+ | \widehat{\alpha}_i - \widehat{\alpha}_i^0|^2  \cdot ( | \widehat{\alpha}_i - \alpha_i |^2 + | \widehat{\alpha}_i^0 - \alpha_i|^2 ) \right)\\
\nonumber
& \lesssim & \frac{ 1 }{ c_n }\cdot \left( \frac{b_n^{3}}{c_n^{2}} \sqrt{\frac{\log n}{n}} \right)^5 + \frac{1}{c_n}\cdot\left(\frac{ b_n^9 \log n }{ n c_n^{7} }\right)^2 \cdot \left(  \frac{b_n^{3}}{c_n^{2}} \sqrt{\frac{\log n}{n}} \right)^2 \\
\label{ineq-fourth-C1}
& \lesssim &  \frac{b_n^{15}(\log n)^{5/2} }{ n^{5/2}c_n^{11}} + \frac{ b_n^{24} (\log n)^3 }{ n^3 c_n^{19}},
\end{eqnarray}
where the second inequality is due to \eqref{ineq-mu-thr-diff}.
Similarly,
\begin{eqnarray}\label{ineq-fourth-C2}
\left|(\widehat{\beta}_j -\beta_j)^4\mu^{\prime\prime\prime}( \bar{\pi}_{ij} )-
(\widehat{\beta}_j^0 -\beta_j)^4\mu^{\prime\prime\prime}( \bar{\pi}_{ij}^0) \right|
 \lesssim   \frac{b_n^{15}(\log n)^{5/2} }{ n^{5/2}c_n^{11}} + \frac{ b_n^{24} (\log n)^3 }{ n^3 c_n^{19}}.
\end{eqnarray}
For $i=r+1, \ldots, n$, $j=1, \ldots, n-1$, $i\neq j$, we have
\begin{eqnarray}
\nonumber
&&\left| (\widehat{\alpha}_i -\alpha_i)(\widehat{\beta}_j -\beta_j)^3\mu^{\prime\prime\prime}( \bar{\pi}_{ij} )
 -(\widehat{\alpha}_i^0 -\alpha_i)(\widehat{\beta}_j^0 -\beta_j)^3\mu^{\prime\prime\prime}( \bar{\pi}_{ij}^0 ) \right| \\
\nonumber
& \le & \underbrace{\left| (\widehat{\alpha}_i -\alpha_i)(\widehat{\beta}_j -\beta_j)^3\mu^{\prime\prime\prime}( \bar{\pi}_{ij} )
-  (\widehat{\alpha}_i^0 -\alpha_i)(\widehat{\beta}_j^0 -\beta_j)^3\mu^{\prime\prime\prime}( \bar{\pi}_{ij} ) \right|}_{ E_1} \\
\nonumber
&&+  \left|(\widehat{\alpha}_i^0 -\alpha_i)(\widehat{\beta}_j^0 -\beta_j)^3\mu^{\prime\prime\prime}( \bar{\pi}_{ij} )
 -(\widehat{\alpha}_i^0 -\alpha_i)(\widehat{\beta}_j^0 -\beta_j)^3\mu^{\prime\prime\prime}( \bar{\pi}_{ij}^0 ) \right|\\
\label{ineq-fourth-C3}
 & \lesssim & \frac{ 1 }{ c_n }\cdot \left( \frac{b_n^{3}}{c_n^{2}} \sqrt{\frac{\log n}{n}} \right)^5 + \frac{1}{c_n}\cdot\left(\frac{ b_n^9 \log n }{ n c_n^{7} }\right) \cdot \left(  \frac{b_n^{3}}{c_n^{2}} \sqrt{\frac{\log n}{n}} \right)^3,
\end{eqnarray}
where the second inequality for $E_1$ follows from
\begin{eqnarray*}
&&|(\widehat{\alpha}_i -\alpha_i)(\widehat{\beta}_j -\beta_j)^3 - (\widehat{\alpha}_i^0 -\alpha_i)(\widehat{\beta}_j^0 -\beta_j)^3| \\
& \le & |(\widehat{\alpha}_i -\alpha_i)(\widehat{\beta}_j -\beta_j)^3 - (\widehat{\alpha}_i^0 -\alpha_i)(\widehat{\beta}_j -\beta_j)^3| \\
&& + |(\widehat{\alpha}_i^0 -\alpha_i)(\widehat{\beta}_j -\beta_j)^3 - (\widehat{\alpha}_i^0 -\alpha_i)(\widehat{\beta}_j^0 -\beta_j)^3| \\
& \lesssim & | \widehat{\beta}_j - \widehat{\beta}_j^0 |\{ (\widehat{\beta}_j -\beta_j)^2 + (\widehat{\beta}_j^0 -\beta_j)^2 \} | \widehat{\alpha}_i^{0} -\alpha_i | \\
&& + |(\widehat{\beta}_j -\beta_j)^3|\cdot |\widehat{\alpha}_i - \widehat{\alpha}_i^0| \\
& \lesssim & \left(\frac{ b_n^9 \log n }{ n c_n^{7} }\right) \cdot \left(  \frac{b_n^{3}}{c_n^{2}} \sqrt{\frac{\log n}{n}} \right)^3.
\end{eqnarray*}
Similarly,
\begin{eqnarray}
\nonumber
&&\left| (\widehat{\alpha}_i -\alpha_i)^3(\widehat{\beta}_j -\beta_j)\mu^{\prime\prime\prime}( \bar{\pi}_{ij} )
 -(\widehat{\alpha}_i^0 -\alpha_i)^3(\widehat{\beta}_j^0 -\beta_j)\mu^{\prime\prime\prime}( \bar{\pi}_{ij}^0 ) \right| \\
\label{ineq-fourth-C5}
 & \lesssim & \frac{ 1 }{ c_n }\cdot \left( \frac{b_n^{3}}{c_n^{2}} \sqrt{\frac{\log n}{n}} \right)^5 + \frac{1}{c_n}\cdot\left(\frac{ b_n^9 \log n }{ n c_n^{7} }\right) \cdot \left(  \frac{b_n^{3}}{c_n^{2}} \sqrt{\frac{\log n}{n}} \right)^3.
\end{eqnarray}
Again, for $i\neq j, i=r+1,\ldots, n, j=1, \ldots, n-1$, we have
\begin{eqnarray}
\nonumber
&&  (\widehat{\alpha}_i^0-\alpha_i)^2(\widehat{\beta}_j^0-\beta_j)^2\mu^{\prime\prime\prime}( \bar{\pi}_{ij}^0)
-(\widehat{\alpha}_i - \alpha_i)^2(\widehat{\beta}_j-\beta_j)^2\mu^{\prime\prime\prime}( \bar{\pi}_{ij})  \\
\nonumber
& \le & \underbrace{| (\widehat{\alpha}_i^0-\alpha_i)^2(\widehat{\beta}_j^0-\beta_j)^2\mu^{\prime\prime\prime}( \bar{\pi}_{ij}^0)
-(\widehat{\alpha}_i - \alpha_i)^2(\widehat{\beta}_j-\beta_j)^2\mu^{\prime\prime\prime}( \bar{\pi}_{ij}^0) |}_{E_2} \\
\nonumber
&& + |(\widehat{\alpha}_i-\alpha_i)^2(\widehat{\beta}_j-\beta_j)^2\mu^{\prime\prime\prime}( \bar{\pi}_{ij}^0)
-(\widehat{\alpha}_i - \alpha_i)^2(\widehat{\beta}_j-\beta_j)^2\mu^{\prime\prime\prime}( \bar{\pi}_{ij})| \\
\label{ineq-fourth-C4}
& \lesssim & \frac{ 1 }{ c_n }\cdot \left( \frac{b_n^{3}}{c_n^{2}} \sqrt{\frac{\log n}{n}} \right)^5 + \frac{1}{c_n}\cdot\left(\frac{ b_n^9 \log n }{ n c_n^{7} }\right) \cdot \left(  \frac{b_n^{3}}{c_n^{2}} \sqrt{\frac{\log n}{n}} \right)^3,
\end{eqnarray}
where the inequality for $E_2$ follows from
\begin{eqnarray*}
 & & | (\widehat{\alpha}_i^0-\alpha_i)^2(\widehat{\beta}_j^0-\beta_j)^2 - (\widehat{\alpha}_i - \alpha_i)^2(\widehat{\beta}_j-\beta_j)^2 | \\
& \le & | (\widehat{\alpha}_i^0-\alpha_i)^2(\widehat{\beta}_j^0-\beta_j)^2 - (\widehat{\alpha}_i - \alpha_i)^2(\widehat{\beta}_j^0-\beta_j)^2 | \\
& & + |(\widehat{\alpha}_i-\alpha_i)^2(\widehat{\beta}_j^0-\beta_j)^2 - (\widehat{\alpha}_i - \alpha_i)^2(\widehat{\beta}_j-\beta_j)^2 | \\
& \lesssim & | \widehat{\beta}_j^0-\beta_j |^2 | \widehat{\alpha}_i -\widehat{\alpha}_i^0 | ( |\widehat{\alpha}_i^0-\alpha_i| + | \widehat{\alpha}_i-\alpha_i | ) \\
&& + (\widehat{\alpha}_i-\alpha_i)^2 |\widehat{\beta}_j^0 - \widehat{\beta}_j| (|\widehat{\beta}_j^0-\beta_j|+ |\widehat{\beta}_j-\beta_j | ) \\
& \lesssim & \left(\frac{ b_n^9 \log n }{ n c_n^{7} }\right) \cdot \left(  \frac{b_n^{3}}{c_n^{2}} \sqrt{\frac{\log n}{n}} \right)^3.
\end{eqnarray*}
By \eqref{ineq-fourth-C1}, we have
\[
|C_1| \lesssim (n-r)\cdot n\cdot   \left( \frac{b_n^{15}(\log n)^{5/2} }{ n^{5/2}c_n^{11}} + \frac{ b_n^{24} (\log n)^3 }{ n^3 c_n^{19}} \right)  \lesssim \frac{b_n^{15}(\log n)^{5/2} }{ n^{1/2}c_n^{11}}.
\]
By \eqref{ineq-fourth-C2}, we have
\[
|C_2|\lesssim (n-r)\cdot n\cdot \left( \frac{b_n^{15}(\log n)^{5/2} }{ n^{5/2}c_n^{11}} + \frac{ b_n^{24} (\log n)^3 }{ n^3 c_n^{19}} \right)  \lesssim \frac{b_n^{15}(\log n)^{5/2} }{ n^{1/2}c_n^{11}}.
\]
By \eqref{ineq-fourth-C3}, we have
\[
|C_3| \lesssim (n-r)\cdot n\cdot  \left( \frac{b_n^{15}(\log n)^{5/2} }{ n^{5/2}c_n^{11}} + \frac{ b_n^{18} (\log n)^{5/2} }{ n^{5/2} c_n^{14}} \right)  \lesssim \frac{b_n^{18}(\log n)^{5/2} }{ n^{1/2}c_n^{14}}.
\]
By \eqref{ineq-fourth-C5}, we have
\[
|C_5| \lesssim (n-r)\cdot n\cdot   \left( \frac{b_n^{15}(\log n)^{5/2} }{ n^{5/2}c_n^{11}} + \frac{ b_n^{18} (\log n)^{5/2} }{ n^{5/2} c_n^{14}} \right)  \lesssim \frac{b_n^{18}(\log n)^{5/2} }{ n^{1/2}c_n^{14}}.
\]
By \eqref{ineq-fourth-C4}, we have
\[
|C_4| \lesssim (n-r)\cdot n\cdot   \left( \frac{b_n^{15}(\log n)^{5/2} }{ n^{5/2}c_n^{11}} + \frac{ b_n^{18} (\log n)^{5/2} }{ n^{5/2} c_n^{14}} \right)  \lesssim \frac{b_n^{18}(\log n)^{5/2} }{ n^{1/2}c_n^{14}}.
\]
By combining the above five inequalities, it yields \eqref{ineq-2a-B3300}.
This completes the proof.
\end{proof}

\section{Proofs of supported lemmas in the proof of Theorem 2} %% and equations under the $\beta$-model
\label{section:beta-th1a}
This section is organized as follows.
Sections \ref{subsection-proof-lemma2}, \ref{subsection-proof-lemma4} and \ref{subsection-proof-lemma5} present
the proofs of Lemmas \ref{lemma-clt-beta-W}, \ref{lemma-consi-beta} and \ref{lemma:beta3:err}, respectively.

\subsection{Proof of Lemma \ref{lemma-clt-beta-W}}
\label{subsection-proof-lemma2}
\begin{proof}
%[Proof of Lemma \ref{lemma-clt-beta-W}]
We first have
\begin{equation}\label{wd-expectation}
\E[ \bs{\bar{\mathbf{g}}}_2^\top \widetilde{W}_{22} \bs{\bar{\mathbf{g}}}_2 ] =0,
\end{equation}
which is due to that
\[
\E[  \mathbf{\bar{g}_2}^\top \widetilde{W}_{22}  \mathbf{\bar{g}_2} ]
= \mathrm{tr} (\E[  \mathbf{\bar{g}_2}^\top \mathbf{\bar{g}_2}  ] \widetilde{W}_{22} )
= \mathrm{tr} (V_{22}\widetilde{W}_{22}) = \mathrm{tr} ( I_{2n-1-r} - V_{22}S_{22} ) = 0.
\]

Let $\widetilde{W}_{22}=(\tilde{w}_{ij})_{(2n-1-r)\times (2n-1-r) }$.
Next, we bound the variance of $\sum_{i,j=r+1}^{2n-1} \bar{g}_i \tilde{w}_{(i-r)(j-r)} \bar{g}_j$.
Recall that $v_{i,j}=\mathrm{Var}(\bar{a}_{i,j})=\mu^\prime(\alpha_i + \beta_j)$.
There are four cases for calculating the covariance
\[
g_{ij\zeta\eta}=\mathrm{Cov}\big( \bar{g}_i  \tilde{w}_{(i-r)(j-r)} \bar{g}_j, \bar{g}_\zeta \tilde{w}_{(\zeta-r)(\eta-r)} \bar{g}_\eta ).
\]
Case 1: $i=j=\zeta=\eta$. For $i=r+1,\ldots,n,$
\begin{eqnarray*}
\mathrm{Var}( \bar{g}_i^{\,2}) = \mathrm{Cov} ( (\sum_{\alpha=1 }^n \bar{a}_{i,\alpha})^{\,2}, (\sum_{h=1 }^n \bar{a}_{i,h} )^{\,2} )
= \mathrm{Cov}( \sum_{\alpha=1}^n \sum_{\beta=1}^n \bar{a}_{i,\alpha} \bar{a}_{i,\beta}, \sum_{h=1}^n \sum_{g=1}^n \bar{a}_{i,h} \bar{a}_{i,g}).
\end{eqnarray*}
Note that the random variables $\bar{a}_{i,j}$ for $1\le i \neq j \le n$ are mutually independent.
There are only two cases in terms of $(\alpha, \beta, h, g)$ for which
$\mathrm{Cov}( \bar{a}_{i,\alpha} \bar{a}_{i,\beta}, \bar{a}_{i,h} \bar{a}_{i,g} )$ is not equal to zero:
(Case A) $\alpha=\beta=h=g \neq i$;
(Case B) $\alpha=h, \beta=g$ or $\alpha=g, \beta=h$.
By respectively considering Case A and Case B,  a direct calculation gives that
\begin{equation}\label{eq:variance:2}
\mathrm{Var}( \bar{g}_i^{\,2} )= 2 v_{i,i}^2 + \sum_{j} \mathrm{Var}( \bar{a}_{i,j}^2 ).
\end{equation}
Similarly, for $i=n+1,\ldots,2n-1$, we have
\begin{equation}\label{equ:varii}
\mathrm{Var}( \bar{g}_i^{\,2} ) = \sum_{i}\mathrm{Var}( \bar{a}_{i,j}^2 ) + 2 v_{i,i}^2.
\end{equation}
Let $p_{ij}=\mu(\alpha_i+\beta_j)$ and $q_{ij}=1-p_{ij}$. By \eqref{ineq-mu-deriv-bound},
\[
\max_{i,j} p_{ij}(1-p_{ij}) \le \frac{1}{c_n}.
\]
Note that
\[
\mathrm{Var}( \bar{a}_{i,j}^{\,2} ) = p_{ij}q_{ij}\{ p_{ij}^3 + q_{ij}^3 - p_{ij}q_{ij} \}
= p_{ij}q_{ij}
\le \frac{1}{c_n}.
\]
Thus, we have
\begin{equation*}
|g_{iiii}|\le \tilde{w}_{(i-r)(i-r)}^2\cdot \left( \frac{2(n-1)^2}{c_n^2} + \frac{n-1}{c_n} \right).
\end{equation*}
%Similarly, we have the upper bounds in other cases.\\
Case 2: Three indices among the four indices are the same. Without loss of generality, we assume that
$j=\zeta=\eta$ and $i\neq j$. For $i=r+1,\ldots,n$,
we have
\[
\mathrm{Cov}( \bar{g}_i \bar{g}_j, \bar{g}_j^{\,2}) = \sum_{k,h,\alpha,\gamma} ( \E \bar{a}_{i,k}\bar{a}_{j,h} \bar{a}_{j,\alpha}
\bar{a}_{j,\gamma} - \E \bar{a}_{i,k}\bar{a}_{j,h} \E \bar{a}_{j,\alpha} \bar{a}_{j,\gamma} )
\]
and, for distinct $k,h,\alpha,\gamma$,
\begin{eqnarray*}
\E \bar{a}_{i,k}\bar{a}_{j,h} \bar{a}_{j,\alpha}
\bar{a}_{j,\gamma} & = & 0, \\
\E \bar{a}_{i,j}\bar{a}_{j,i} \bar{a}_{j,\alpha}
\bar{a}_{j,\gamma} & = & 0, \\
\E \bar{a}_{i,h}\bar{a}_{j,h} \bar{a}_{j,\alpha}
\bar{a}_{j,\gamma} & = & 0.
\end{eqnarray*}
It follows that
\[
\mathrm{Cov}( \bar{g}_i \bar{g}_j, \bar{g}_j^{\,2}) = \E \bar{a}_{i,j}\bar{a}_{j,i}^3 - \E \bar{a}_{i,j}\bar{a}_{j,i}\E \bar{a}_{j,i}^2
+ 2\sum_{h\neq i,j} \E \bar{a}_{i,j}\bar{a}_{j,i} \E \bar{a}_{j,h}^{\,2}.
\]
Therefore,  by \eqref{ineq-mu-deriv-bound},
\begin{eqnarray*}
|g_{ijjj}|&\le & |\tilde{w}_{(i-r)(j-r)}\tilde{w}_{(j-r)(j-r)}|\cdot \frac{n}{c_n^2}.
\end{eqnarray*}
For $i=n+1,\ldots,2n-1$, we also have $|g_{ijjj}|\le  |\tilde{w}_{(i-r)(j-r)}\tilde{w}_{(j-r)(j-r)}|\cdot \frac{n}{c_n^2}$.
Similarly, we have the upper bounds in other cases.\\
Case 3. Two indices among the four are the same (e.g. $i=j$ or $j=\zeta$):
\begin{eqnarray*}
|g_{ii\eta\zeta}|&=&|\tilde{w}_{(i-r)(i-r)}\tilde{w}_{(\zeta-r)(\eta-r)}(2v_{i\zeta}v_{i\eta}+v_{ii}v_{\zeta\eta})|
\le |\tilde{w}_{(i-r)(i-r)}\tilde{w}_{(\zeta-r)(\eta-r)}|\cdot \frac{n}{c_n^2};\\
|g_{ijj\eta}|&=&|\tilde{w}_{(i-r)(i-r)}\tilde{w}_{(j-r)(\eta-r)}(2v_{ji}v_{j\eta}+v_{ij}v_{j\eta})|
\le 3|\tilde{w}_{(i-r)(i-r)}\tilde{w}_{(j-r)(\eta-r)}|\cdot \frac{1}{c_n^2}.
\end{eqnarray*}
Case 4: All four indices are different
\begin{eqnarray*}
|g_{ij\zeta\eta}|&=& |\tilde{w}_{(i-r)(j-r)}\tilde{w}_{(\zeta-r)(\eta-r)}(v_{i\zeta}v_{j\eta}+v_{i\eta}v_{j\zeta})|\le 2|\tilde{w}_{(i-r)(j-r)}\tilde{w}_{(\zeta-r)(\eta-r)}|\frac{1}{c_n^2}.
\end{eqnarray*}
Consequently, by \eqref{ineq-V-S-appro-upper-b}, we have
\begin{align*}
\mathrm{Var} ( \bs{\bar{\mathbf{g}}}_2^{\top} \widetilde{W}_{22} \bs{\bar{\mathbf{g}}}_2 )
& =  \sum_{i,j,\zeta,\eta={r+1}}^{2n-1} \mathrm{Cov}\big( \bar{g}_i  \tilde{w}_{(i-r)(j-r)} \bar{g}_j,
\bar{g}_\zeta \tilde{w}_{(\zeta-r)(\eta-r)} \bar{g}_\eta ) \\
& \lesssim   \left( \frac{b_n^3}{n^2 c_n^2 } \right)^2 \times \left( (2n-1-r)\cdot \frac{ n^2 }{c_n^2 }
+ (2n-1-r)^2 \cdot \frac{n}{c_n^2} \right.\\
&\left.+ (2n-1-r)^3 \cdot \frac{n}{c_n^2} + (2n-1-r)^4 \cdot \frac{1}{c_n^2 } \right) \\
& \lesssim \frac{b_n^6(2-r/n)^3}{c_n^6}.
\end{align*}
It follows that from Chebyshev's inequality and \eqref{wd-expectation}, we have
\begin{align*}
 & \P( \left( |\bs{\bar{\mathbf{g}}}_2^\top \widetilde{W}_{22}  \bs{\bar{\mathbf{g}}}_2  |
\ge  \rho_n\frac{ b_n^3(2-r/n)^{3/2} }{c_n^3} \right) \\
\le & \frac{ c_n^6 }{ b_n^6(2-r/n)^3\rho_n^2 }\times \mathrm{Var} ( \mathbf{\bar{\mathbf{g}}_2}^{\top} W_{22} \mathbf{\bar{\mathbf{g}}_2} ) \\
\lesssim & \frac{1}{\rho_n^2} \to 0,
\end{align*}
where $\{\rho_n\}_{n=1}^\infty$ is any positive sequence tending to infinity.
This completes the proof.
\end{proof}

\subsection{Proof of Lemma \ref{lemma-consi-beta}}
\label{subsection-proof-lemma4}

Before beginning the proof of Lemma \ref{lemma-consi-beta}, we introduce one useful lemma.
Let $F(\mathbf{x}): \R^n \to \R^n$ be a function vector on $\mathbf{x}\in\R^n$.
We say that a Jacobian matrix $F^\prime(\mathbf{x})$ with $\mathbf{x}\in \R^n$ is Lipschitz continuous on a convex set $D\subset\R^n$ if
for any $\mathbf{x},\mathbf{y}\in D$, there exists a constant $\lambda>0$ such that
for any vector $\mathbf{v}\in \R^n$ the inequality
\begin{equation*}
\| [F^\prime (\mathbf{x})] \mathbf{v} - [F^\prime (\mathbf{y})] \mathbf{v} \|_\infty \le \lambda \| \mathbf{x} - \mathbf{y} \|_\infty \|\mathbf{v}\|_\infty
\end{equation*}
holds.
The following Newton's iterative theorem of \cite{Kantorovich-Akilov1964} to prove the existence and consistency of the MLE.

\begin{lemma}[Theorem 6 in \cite{Kantorovich-Akilov1964}]\label{lemma:Newton:Kantovorich}
Let $X$ and $Y$ be Banach spaces, $D$ be an open convex subset of $X$ and
$F:D \subseteq X \to Y$ be Fr\'{e}chet differentiable.
Assume that, at some $\mathbf{x}_0 \in D$, $F^\prime(\mathbf{x}_0)$ is invertible and that
\begin{eqnarray}
\label{eq-kantororich-a}
  \|(F'(\mathbf{x}_0))^{-1}(F'(\mathbf{x})-F'(\mathbf{y}))\| \le K\|\mathbf{x}-\mathbf{y}\|,\mathbf{x},\mathbf{y} \in D, \\
\label{eq-kantororich-b}
\|(F'(\mathbf{x}_0))^{-1}F(\mathbf{x}_0)\|\leq \eta, \quad h = K\eta\leq 1/2,\\
\nonumber
\bar{S}(\bs{x}_0, t^*) \subseteq D, t^*=2\eta/( 1+ \sqrt{ 1-2h}).
\end{eqnarray}
Then:
  \begin{enumerate}
        \item The Newton iterates $\mathbf{x}_{n+1} = \mathbf{x}_n-(F'(\mathbf{x}_n))^{-1}F(\mathbf{x}_n), n\geq 0$ are well-defined, lie in $\bar{S}(\mathbf{x}_0,t^*)$ and converge to a solution $\mathbf{x}^*$ of $F(\mathbf{x})=0$.
        \item The solution $\mathbf{x}^*$ is unique in $S(\mathbf{x}_0,t^{**})\cap {D}, t^{**}=(1+\sqrt{1-2h})/K$ if $2h<1$ and in $\bar{S}(\mathbf{x}_0,t^{**})$ if $2h=1$.
        \item $ \| \mathbf{x}^* - \mathbf{x}_0 \| \le t^*$ if $n=0$ and $ \| \mathbf{x}^* - \mathbf{x}_{n}\| = 2^{1-n} (2h)^{ 2^n -1} \eta $ if $n\ge 1$.
         \end{enumerate}
   \end{lemma}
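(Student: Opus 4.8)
The plan is to prove this via Kantorovich's majorant method, reducing the Banach-space Newton iteration to a scalar Newton iteration on the quadratic majorant $g(t)=\tfrac12 Kt^2-t+\eta$. First I would record the elementary properties of $g$: since $2h=2K\eta\le 1$, the discriminant $1-2K\eta$ is nonnegative, so $g$ has real roots $t^*=\tfrac{2\eta}{1+\sqrt{1-2h}}\le t^{**}=\tfrac{1+\sqrt{1-2h}}{K}$, while $g(t)>0$ and $g'(t)=Kt-1<0$ on $[0,t^*)$. The scalar Newton sequence $t_0=0$, $t_{n+1}=t_n-g(t_n)/g'(t_n)$ is then monotonically increasing and converges to $t^*$; because $g$ is convex and we start to the left of its smaller root with $g(t_0)>0$ and $g'(t_0)<0$, each tangent underestimates $g$, which is the routine computation forcing $t_n\uparrow t^*$.

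The core is a simultaneous induction establishing, for every $n$, that (i) $F'(\mathbf{x}_n)$ is invertible, (ii) $\|\mathbf{x}_{n+1}-\mathbf{x}_n\|\le t_{n+1}-t_n$, and (iii) $\|\mathbf{x}_n-\mathbf{x}_0\|\le t_n\le t^*$, so that $\mathbf{x}_n\in\bar S(\mathbf{x}_0,t^*)\subseteq D$. For invertibility I would invoke the Banach perturbation lemma: since $\|(F'(\mathbf{x}_0))^{-1}(F'(\mathbf{x}_n)-F'(\mathbf{x}_0))\|\le K\|\mathbf{x}_n-\mathbf{x}_0\|\le Kt_n<1$ by \eqref{eq-kantororich-a}, the operator $F'(\mathbf{x}_n)$ is invertible with $\|F'(\mathbf{x}_n)^{-1}F'(\mathbf{x}_0)\|\le (1-Kt_n)^{-1}=-1/g'(t_n)$. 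For the step bound I would write $F(\mathbf{x}_{n+1})=F(\mathbf{x}_{n+1})-F(\mathbf{x}_n)-F'(\mathbf{x}_n)(\mathbf{x}_{n+1}-\mathbf{x}_n)$ (the Newton definition cancels $F(\mathbf{x}_n)+F'(\mathbf{x}_n)(\mathbf{x}_{n+1}-\mathbf{x}_n)$), then apply the integral mean-value form together with the Lipschitz bound \eqref{eq-kantororich-a} to obtain $\|(F'(\mathbf{x}_0))^{-1}F(\mathbf{x}_{n+1})\|\le\tfrac{K}{2}\|\mathbf{x}_{n+1}-\mathbf{x}_n\|^2\le\tfrac{K}{2}(t_{n+1}-t_n)^2=g(t_{n+1})$. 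Composing the two estimates yields $\|\mathbf{x}_{n+2}-\mathbf{x}_{n+1}\|\le\|F'(\mathbf{x}_{n+1})^{-1}F'(\mathbf{x}_0)\|\cdot\|(F'(\mathbf{x}_0))^{-1}F(\mathbf{x}_{n+1})\|\le -g(t_{n+1})/g'(t_{n+1})=t_{n+2}-t_{n+1}$, closing the induction.

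Once the induction holds, convergence is immediate: $\|\mathbf{x}_{n+m}-\mathbf{x}_n\|\le\sum_{k=n}^{n+m-1}(t_{k+1}-t_k)=t_{n+m}-t_n$, so $\{\mathbf{x}_n\}$ is Cauchy and, by completeness of $X$, converges to some $\mathbf{x}^*\in\bar S(\mathbf{x}_0,t^*)$; passing to the limit in $\|(F'(\mathbf{x}_0))^{-1}F(\mathbf{x}_n)\|\le g(t_n)\to g(t^*)=0$ and using continuity of $F$ gives $F(\mathbf{x}^*)=0$. The error estimates in part 3 then follow from the scalar bounds $t^*-t_n$ for the quadratic majorant, which telescope to the stated closed form. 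For uniqueness in part 2 I would suppose $\mathbf{y}^*$ is a second zero in the relevant ball and majorize $\|\mathbf{y}^*-\mathbf{x}_n\|$ by $t^{**}-t_n$ through the same perturbation-and-Taylor machinery, forcing $\mathbf{y}^*=\mathbf{x}^*$.

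The main obstacle I expect is the bookkeeping of the simultaneous induction, so that invertibility, the step bound, and the containment $\mathbf{x}_n\in D$ are all propagated together; in particular one must guarantee $Kt_n<1$ at every stage for the perturbation lemma to apply, and treat separately the borderline case $2h=1$, where the two majorant roots coincide ($t^*=t^{**}$), the quadratic convergence rate degrades to linear, and uniqueness must be stated on the closed rather than the open ball. The analytic heart is the residual estimate $\|(F'(\mathbf{x}_0))^{-1}F(\mathbf{x}_{n+1})\|\le\tfrac{K}{2}\|\mathbf{x}_{n+1}-\mathbf{x}_n\|^2$, which is precisely where the Lipschitz hypothesis \eqref{eq-kantororich-a} is consumed.
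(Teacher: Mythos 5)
There is a point of context to settle first: the paper offers no proof of this lemma at all — it is imported verbatim as Theorem 6 of Kantorovich and Akilov (1964) and used as an off-the-shelf tool in the consistency proofs (Lemmas \ref{lemma-con-beta-b} and \ref{lemma-consi-beta}). So there is no internal argument to compare yours against; your proposal must be judged as a reconstruction of the classical result. Most of it is the standard majorant-method proof and is sound: the quadratic majorant $g(t)=\tfrac12 Kt^2-t+\eta$, the simultaneous induction coupling the Banach perturbation bound $\|F'(\mathbf{x}_n)^{-1}F'(\mathbf{x}_0)\|\le -1/g'(t_n)$ with the residual estimate $\|(F'(\mathbf{x}_0))^{-1}F(\mathbf{x}_{n+1})\|\le \tfrac{K}{2}\|\mathbf{x}_{n+1}-\mathbf{x}_n\|^2=g(t_{n+1})$, and the telescoping Cauchy argument; convexity of $D$ together with $\bar{S}(\mathbf{x}_0,t^*)\subseteq D$ keeps every segment inside the region where \eqref{eq-kantororich-a} applies. (Incidentally, the ``$=$'' in part 3 of the statement should be ``$\le$''; that is a transcription slip in the paper, not your problem.)

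The genuine gap is in your uniqueness step. You propose to majorize $\|\mathbf{y}^*-\mathbf{x}_n\|$ by $t^{**}-t_n$ and conclude $\mathbf{y}^*=\mathbf{x}^*$, but $t_n\to t^*$, so $t^{**}-t_n\to t^{**}-t^*>0$ whenever $2h<1$: the bound does not shrink and forces nothing. It works only in the borderline case $2h=1$, where the roots coincide — exactly the opposite of how you allocated the difficulty in your closing paragraph. The standard repair carries a geometric factor through the induction: if $F(\mathbf{y}^*)=0$ and $\|\mathbf{y}^*-\mathbf{x}_0\|\le\rho<t^{**}$, set $q=\rho/t^{**}<1$ and prove by induction that $\|\mathbf{y}^*-\mathbf{x}_n\|\le q^{2^n}(t^{**}-t_n)$. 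The induction step uses precisely the recursion you already have, $\|\mathbf{y}^*-\mathbf{x}_{n+1}\|\le K\|\mathbf{y}^*-\mathbf{x}_n\|^2/\{2(1-Kt_n)\}$, together with the exact identity $K(t^{**}-t_n)^2/\{2(1-Kt_n)\}=t^{**}-t_{n+1}$, which follows from $g(t^{**})=0$ expanded about $t_n$. Since $q^{2^n}\to 0$, this gives $\mathbf{y}^*=\lim_n\mathbf{x}_n=\mathbf{x}^*$, and taking $q=1$ recovers the closed-ball statement when $2h=1$. With that fix — plus an explicit induction establishing the closed-form rate $t^*-t_n\le 2^{1-n}(2h)^{2^n-1}\eta$ in part 3, which you only assert ``telescopes'' — your outline would be a complete proof of the cited theorem.
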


\begin{proof}
%[Proof of Lemma \ref{lemma-consi-beta}]
Under the null $H_0: (\alpha_1, \ldots, \alpha_r)^\top=(\alpha_1^0, \ldots, \alpha_r^0)^\top$, $\alpha_1, \ldots, \alpha_r$ are known and $\alpha_{r+1}, \ldots, \alpha_n, \beta_{1}, \ldots, \beta_{n-1}$ are unknown.
Recall that $\bs{\widehat{\theta}}^0$ denotes the restricted MLE under the null space,
where $\widehat{\alpha}_i = \alpha_i^0$, $i=1,\ldots,r$.
For convenience, we will use $\bs{\theta}$ and $\bs{\widehat{\theta}}^0$ to denote
the vectors $(\alpha_{r+1}, \ldots, \alpha_n, \beta_{1}, \ldots, \beta_{n-1})^\top$ and $(\widehat{\alpha}_{r+1}^{0}, \ldots, \widehat{\alpha}_n^{0}, \widehat{\beta}_{1}^{0}, \ldots, \widehat{\beta}_{n-1}^{0})^\top$ in this proof, respectively.
Note that $\bs{\widehat{\theta}}^0=\bs{\widehat{\theta}}$ when $r=0$.

Define a system of score functions based on likelihood equations:
\begin{equation}
\label{eqn:def:F}
\begin{split}
F_i( \bs \theta )& =\sum_{k=1;k\neq i}^{n}\mu(\alpha_i + \beta_k)-d_{i},  ~~~  i=1, \ldots, n, \\
F_{n+j}( \bs \theta )& =  \sum_{k=1;k\neq j}^{n}\mu(\alpha_k + \beta_j)-b_{j},  ~~~  j=1, \ldots, n-1, \\
\end{split}
\end{equation}
and $F(\bs{\theta})=(F_{r+1}(\bs{\theta}), \ldots, F_{2n-1}(\bs{\theta}))^\top$.

%Let $B(\bs{\theta}, 1/(2b_n))=\{\bs{\gamma}=(\gamma_{r+1}, \ldots, \gamma_{2n-1})\in \R^{2n-1-r}: \| \bs{\theta} - \bs{\gamma}\|_\infty \le 1/(2b_n) \}$ be a convex set containing $\bs{\theta}$.
We will derive the error bound between $\bs{\widehat{\theta}}^0$ and $\bs{\theta}$ through
obtaining the convergence rate of the Newton iterative sequence $\bs{\theta}^{(n+1)}= \bs{\theta}^{(n)} - [F^\prime (\bs{\theta}^{(n)})]^{-1}
F (\bs{\theta}^{(n)})$,
where we choose the true parameter $\bs{\theta}$ as the starting point $\bs{\theta}^{(0)}:=\bs{\theta}$.
To this end, it is sufficient to demonstrate the conditions in Lemma \ref{lemma:Newton:Kantovorich}.
%where we set $D=B(\bs{\theta}, 1/(2c_n))$.
%The Kantorovich conditions require the Lipschitz continuous of $F^\prime(\bs{\theta})$ and the upper bounds of
%$F(\bs{\theta}^*)$.
%The fact that when $r=0$, $V=F'(\bs{\beta}) \in \mathcal{L}_n( b_{n}^{-1}, c_n^{-1})$ will be used in the proof repeatedly.
The proof proceeds three steps. Step 1 is about the Lipschitz continuous property of the Jacobian matrix $F'( \bs{\theta} )$.
Step 2 is about the tail probability of $F(\bs{\theta})$. Step 3 is a combining step.

Step 1. We claim that
the Jacobian matrix $F'( \bs{\theta} )$ of $F(\bs{\theta})$ on $\bs{\theta}$ is Lipschitz continuous with the Lipschitz coefficient  $2(2n-r-1)/c_{n}$.
This is verified as follows.
Let $(\alpha_1, \ldots, \alpha_r)=(\alpha_1^0, \ldots, \alpha_r^0)$.
The Jacobian matrix $F^\prime(\bs{\theta})$ of $F(\bs{\theta})$ can be calculated as follows.
For $i\in \{r+1, \ldots, n\}$, we have
\[
    \begin{split}
    \cfrac{\partial F_{i}}{\partial \alpha_{l}}&=0,~~l=r+1,\ldots,n,l\neq i;~~
\cfrac{\partial F_{i}}{\partial \alpha_{i}}=\sum_{k=1;k\neq i}^{n}\mu'(\alpha_{i}+\beta_{k}),\\
\cfrac{\partial F_{i}}{\partial\beta_{l}}&=\mu'(\alpha_{i}+\beta_{l}),~~l=1,\ldots,n-1,l\neq i;~~\cfrac{\partial F_{i}}{\partial\beta_{i}}=0,\\
\end{split}
    \]
and for $j\in\{1, \ldots, n-1\}$,\\
\[
    \begin{split}
\cfrac{\partial F_{n+j}}{\partial \alpha_{l}}&=\mu'(\alpha_{l}+\beta_{j}),~~l=r+1,\ldots,n,l\neq j;
~~\cfrac{\partial F_{n+j}}{\partial \alpha_{j}}=0,\\
\cfrac{\partial F_{n+j}}{\partial\beta_{l}}&=0,~~l=1,\ldots,n-1,l\neq j;~~\cfrac{\partial F_{n+j}}{\partial\beta_{j}}=\sum_{k=1;k\neq j}^{n}\mu'(\alpha_{k}+\beta_{j}).
\end{split}
    \]
Let
\[
F_i'(\bs{\theta}) = (F_{i,r+1}'(\bs{\theta}), \ldots, F_{i,2n-1}'(\bs{\theta}))^{\top}:=
(\frac{\partial F_i}{\partial \alpha_{r+1} }, \ldots, \frac{\partial F_i}{\partial \alpha_{n} }, \frac{\partial F_i}{\partial \beta_1 }, \ldots,
\frac{\partial F_i}{\partial \beta_{n-1} })^{\top}.
\]
Then, for $i\in\{r+1, \ldots, n\}$, we have
\[
    \begin{split}
&\cfrac{\partial^{2} F_{i}}{\partial \alpha_{s}\partial \alpha_{l}}=0, \   s\neq l, \ \cfrac{\partial^{2} F_{i}}{\partial\alpha^{2}_{i}}=\sum_{k\neq i}\mu^{\prime\prime}(\alpha_{i}+\beta_{k}),\\
&\cfrac{\partial^{2} F_{i}}{\partial \beta_{s}\partial \alpha_{i}}=\mu^{\prime\prime}(\alpha_{i}+\beta_{s}), \   s=1, \ldots, n-1, s\neq i, \cfrac{\partial^{2} F_{i}}{\partial\beta_{i}\partial\alpha_{i}}=0,\\
&\cfrac{\partial^{2} F_{i}}{\partial \beta^{2}_{l}}=\mu^{\prime\prime}(\alpha_{i}+\beta_{l}) ,\  l=1, \ldots, n-1,l\neq i  \cfrac{\partial^{2} F_{i}}{\partial\beta_{s}\partial\beta_{l}}=0, s\neq l.
\end{split}
    \]
By the mean value theorem for vector-valued functions \cite[][p.341]{Lang:1993}, % (\citeauthor{Lang:1993}, \citeyear{Lang:1993}, p.341), 
for $\mathbf{x}, \mathbf{\mathbf{y}} \in R^{2n-r-1}$, we have
\[
F'_i(\mathbf{x}) - F'_i(\mathbf{y}) = J^{(i)}(\mathbf{x}-\mathbf{y}),
\]
where
\[
J^{(i)}_{s,l} = \int_0^1 \frac{ \partial F'_{i,s} }{\partial \theta_l}(t\mathbf{x}+(1-t)\mathbf{y})dt,~~ s,l=r+1,\ldots, 2n-1.
\]
By (\ref{ineq-mu-deriv-bound}), we have
\[
\max_s \sum_{l=r+1}^{2n-1} |J^{(i)}|\le (2n-r-1)/c_{n}, ~~\sum_{s,l}|J^{(i)}_{s,l}|\le 2(2n-r-1)/c_{n}.
\]
Similarly, we also have
$F'_i(\mathbf{x}) - F'_i(\mathbf{y}) = J^{(i)}(\mathbf{x}-\mathbf{y})$ and $\sum_{s,l}|J^{(i)}_{s,l}|\le 2(2n-r-1)/c_{n}$ for $i\in\{n+1, \ldots, 2n-1\}$.
Consequently,
\[
\| F_i'(\mathbf{x}) - F_i'(\mathbf{y}) \|_\infty \le \| J^{(i)} \|_\infty \| \mathbf{x} - \mathbf{y}\|_\infty \le \frac{2n-r-1}{c_{n}}\| \mathbf{x} - \mathbf{y}\|_\infty,~~~i=r+1,\ldots, 2n-1,
\]
and for any vector $\mathbf{v}\in R^{2n-r-1}$,
\begin{eqnarray*}
\| [F'(\mathbf{x}) - F'(\mathbf{y})]\mathbf{v} \|_\infty  & = & \max_{i=r+1,\ldots,2n-1} |\sum_{j=r+1}^{2n-1} ( F_{i,j}'(\mathbf{x}) - F_{i,j}'(\mathbf{y}) ) v_j | \\
& = & \max_{i=r+1,\ldots,2n-1} |(\mathbf{x}-\mathbf{y})J^{(i)} \mathbf{v} | \\
& \le & \|\mathbf{x}-\mathbf{y}\|_\infty \| \mathbf{v}\|_\infty \sum_{s,l}|J^{(i)}_{s,l}|\\
&\le& \frac{2(2n-r-1)}{c_{n}}\|\mathbf{x}-\mathbf{y}\|_\infty \| \mathbf{v}\|_\infty.
\end{eqnarray*}

Step 2. We give the tail probability of $\|F(\bs{\theta}) \|_\infty$ satisfying
\begin{equation}\label{ineq-union-d}
\P\Bigg( \parallel F(\bs{\theta})\parallel_{\infty} \le \sqrt{n\log n} \Bigg)  \ge  1 - \frac{ 4 }{ n }.
\end{equation}
This is verified as follows.
Recall that $a_{i,j}, 1\leq i\neq j\leq n$ are independent Bernoulli random variables
and $F_i(\bs{\theta}) = \sum_{k\neq i} (\E a_{i,k} - a_{i,k})$ for $i=1,\ldots,n$, $F_{n+j}(\bs{\theta}) = \sum_{k\neq j} (\E a_{k,j} - a_{k,j})$ for $j=1,\ldots,n-1$.
By \citeauthor{Hoeffding:1963}'s \citeyearpar{Hoeffding:1963} inequality, we have
\begin{equation*}
\P\left( |F_i(\bs{\theta}) | \ge \sqrt{n\log n}  \right) \le 2\exp (- \frac{2n\log n}{n} )  =  \frac{2}{n^2},~~i=1, \ldots, n.
\end{equation*}
By the union bound, we have
\begin{eqnarray*}
\P\Bigg( \max_{i=1, \ldots, n} |F_i(\bs{\theta})| \ge \sqrt{n\log n} \Bigg)
\le  \sum_{i=1}^n \P\left(|F_i(\bs{\theta})| \geq \sqrt{n\log n} \right)
\le  \frac{2}{n }.
\end{eqnarray*}
Similarly, we have
\begin{eqnarray*}
\P\Bigg( \max_{j=1, \ldots, n-1} |F_{n+j}(\bs{\theta})| \ge \sqrt{n\log n} \Bigg)
\le  \frac{2}{n }.
\end{eqnarray*}
Consequently,
\begin{align*}
&\P\Bigg( \max\{\max_{i=1, \ldots, n} |F_i(\bs{\theta})|, \max_{j=1, \ldots, n-1} |F_{n+j}(\bs{\theta})|\} \ge \sqrt{n\log n} \Bigg) \\
\leq ~&\P\Bigg( \max_{i=1, \ldots, n} |F_i(\bs{\theta})| \ge \sqrt{n\log n} \Bigg)+\P\Bigg( \max_{j=1, \ldots, n-1} |F_{n+j}(\bs{\theta})| \ge \sqrt{n\log n} \Bigg)\\
\leq ~& \frac{4}{n},
\end{align*}
such that
\begin{align*}
   & \P\Bigg( \max\{\max_{i=r+1, \ldots, n} |F_i(\bs{\theta}), \max_{j=1, \ldots, n-1} |F_{n+j}(\bs{\theta})|\} \leq \sqrt{n\log n} \Bigg) \\
  \geq~ & \P\Bigg( \max\{\max_{i=1, \ldots, n} |F_i(\bs{\theta})|, \max_{j=1, \ldots, n-1} |F_{n+j}(\bs{\theta})|\} \leq\sqrt{n\log n} \Bigg)
\geq 1- \frac{4}{n}.
\end{align*}

Step 3. This step is one combining step.
The following calculations are based on the event $E_n$:
\[
E_n = \{ \max_{i=r+1,\ldots, 2n-1} |F_i(\bs{\theta})| \le   (n\log n)^{1/2}  \}.
\]
First, we calculate $K$. Recall that $V_{22}=(v_{i,j})=  F^\prime(\bs{\theta})$ and $\widetilde{W}_{22}=V_{22}^{-1}-S_{22}$, where $S_{22}$ is defined at (\ref {definition-S221}).
Then, we have
\begin{equation*}\label{seq5}
\begin{split}
&\parallel F^\prime(\boldsymbol{\theta})^{-1} ( F^\prime(\textbf{x}) - F^\prime(\textbf{y}))\parallel_{\infty}\\
\leq ~&\parallel \widetilde{W}_{22}(F'(\textbf{x})-F'(\textbf{y}))\parallel_{\infty}
+\parallel S_{22}(F'(\textbf{x})-F'(\textbf{y}))\parallel_{\infty}\\
\leq~&\frac{b_{n}^{3}(2n-1-r)}{(n-1)^{2}c_{n}^{2}}\times \frac{2(2n-r-1)}{c_{n}}\parallel\mathbf{x-y}\parallel_{\infty}
+\frac{2b_{n}(2n-r-1)}{c_{n}(n-1)}\parallel\mathbf{x-y}\parallel_{\infty},
\end{split}
 \end{equation*}
where the last equations is due to Lemma \ref{lemma-appro-beta-VS} and Step 1.
By the event $E_n$, we have $\| F(\bs{\theta}) \|_\infty  \le (n\log n)^{1/2}.$
Next, we calculate $\eta$. Note that
\[
    \begin{split}
\|[F'(\boldsymbol{\theta})]^{-1}F(\boldsymbol{\theta})\|_{\infty}
&\leq\parallel \widetilde{W}_{22}F(\boldsymbol{\theta})\parallel_{\infty}
+\parallel S_{22}F(\boldsymbol{\theta})\parallel_{\infty}\\
&\leq \frac{b_{n}^{3}(2n-1-r)\|F(\boldsymbol{\theta})\|_{\infty}}{(n-1)^{2}c_{n}^2}
+\frac{|F_{2n}(\boldsymbol{\theta})|}{\widetilde v_{2n,2n}}
+\max_{i=r+1,\cdots,2n-1}\frac{|F_{i}(\boldsymbol{\theta})|}{v_{i,i}}\\
&\leq  \frac{b_{n}^{3}(2n-1-r)}{(n-1)^{2}c_{n}^2}\times\sqrt{n\log n}
+\frac{b_{n}}{n-1}\times\sqrt{n\log n}\\
&\leq \frac{b_{n}^{3}}{c_{n}^2}\sqrt{\frac{\log n}{n}}.
\end{split}
    \]
Note that for any $r\in [0,n-1]$, $2n-1-r\le 2n-1$.
Therefore, if $ b_n^6/c_n^{5}=o( (n/\log n)^{1/2} )$, then
\begin{eqnarray*}
K\eta  \le  \frac{2b_{n}^{3}(2n-1-r)^2}{(n-1)^{2}c_{n}^{3}}
\times   \frac{b_{n}^{3}}{c_{n}^2}\sqrt{\frac{\log n}{n}}
 = \frac{ 2 b_{n}^{6}(2n-1-r)^{2}}{(n-1)^{2}c_{n}^5}\sqrt{\frac{\log n}{n}} =o(1).
\end{eqnarray*}
The above arguments verify the conditions.
By Lemma \ref{lemma:Newton:Kantovorich}, it yields that
\begin{equation}
\label{eq-hatbeta-upper}
\| \bs{\widehat{\theta}}^0 - \bs{\theta} \|_\infty \le \frac{b_{n}^{3}}{c_{n}^2}\sqrt{\frac{\log n}{n}}.
\end{equation}
Step 2 implies $\P (E_n) \geq 1 - 4/n$. This completes the proof.
\end{proof}

\subsection{Proof of Lemma \ref{lemma:beta3:err}}
\label{subsection-proof-lemma5}
\begin{proof}
%[Proof of Lemma \ref{lemma:beta3:err}]
Note that $\alpha_1, \ldots, \alpha_r$ are known and $\alpha_{r+1}, \ldots, \alpha_n,\beta_{1},\ldots,\beta_{n-1}$ are unknown.
Following Theorem 2 in \cite{Yan:Leng:Zhu:2016},
let $E_{n1}$ be the event that
\begin{equation}\label{eq-a-zcc}
\begin{split}
E_{n1} = \{ \widehat{\alpha}_i - \alpha_i& = \frac{ \bar{d}_i }{ v_{i,i} }+ \frac{\bar b_{n} }{  v_{2n,2n} }+ \gamma_i, ~~i=r+1, \ldots, n, \\
\widehat{\beta}_j -\beta_j& = \frac{ \bar{b}_j }{ v_{n+j,n+j} }- \frac{\bar b_{n} }{  v_{2n,2n} }+ \gamma_{n+j}, ~~j=1, \ldots, n-1 \},
\end{split}
\end{equation}
where $\max\{ |\gamma_i|,|\gamma_{n+j}|\}\leq \frac{b_{n}^{9}\log n}{c_{n}^{7}n}.$ Let $E_{n2}$ be the event
\begin{equation}\label{eq-upp-bard}
E_{n2} =\{ \| \bs{\bar{\mathbf{g}} } \|_\infty \le (n\log n)^{1/2}\}.
\end{equation}
By \eqref{ineq-union-d},
$E_{n1}\bigcap E_{n2}$ holds with probability at least $1-8/n$.
The following calculations are based on $E_{n1}\bigcap E_{n2}$.
Let
\begin{equation}
\begin{split}
  f_{ij} & = \mu^{\prime\prime}({\pi}_{ij} ), ~~~~
    f_{i} =\sum_{k=1;k\neq i}^{n-1} \mu^{\prime\prime}( \pi_{ik} ),~~ i=r+1,\ldots,n,\\
    f_{n+i}&=\sum_{k=r+1;k\neq i}^{n}\mu^{\prime\prime}( \pi_{ki} ), ~~i=1,\ldots,n-1.
\end{split}
\end{equation}
In view of \eqref{ineq-mu-deriv-bound}, we have
\begin{equation}\label{ineq-fi-fij}
\max_{i=r+1,\ldots,n} |f_i| \le \frac{ n-1}{ c_n },~~\max_{i=1,\ldots,n-1} |f_{n+i}| \le \frac{ n-r}{ c_n }, ~~ \max_{i,j} |f_{ij} | \le \frac{ 1}{c_n}.
\end{equation}
By substituting the expression of $\widehat{\alpha}_i - \alpha_i$ in \eqref{eq-a-zcc} into $f_i(\widehat{\alpha}_i-\alpha_i)^3$, we get
\begin{equation}\label{eq-a-za}
\begin{split}
\sum_{i=r+1}^{n}  f_i(\widehat{\alpha}_i-\alpha_i)^3= &  \sum_{i=r+1}^{n}  f_i(\frac{ \bar{d}_i }{ v_{i,i} }+ \frac{\bar b_{n} }{ v_{2n,2n} }+ \gamma_i)^3 \\
 =&  \sum_{i=r+1}^{n}  f_i(\frac{ \bar{d}_i }{ v_{i,i} })^3+\sum_{i=r+1}^{n}  f_i(\frac{\bar b_{n} }{ v_{2n,2n} })^3+\sum_{i=r+1}^{n}  f_i\gamma_i^{3}+3\sum_{i=r+1}^{n}  f_i(\frac{ \bar{d}_i }{ v_{i,i} })^2\frac{\bar b_{n} }{ v_{2n,2n} }\\
 +&3\sum_{i=r+1}^{n}  f_i(\frac{ \bar{d}_i }{ v_{i,i} })^2\gamma_i+3\sum_{i=r+1}^{n}  f_i\frac{ \bar{d}_i }{ v_{i,i} }(\frac{\bar b_{n} }{ v_{2n,2n} })^2+3\sum_{i=r+1}^{n}  f_i\frac{ \bar{d}_i }{ v_{i,i} }\gamma_i^2\\
 +&3\sum_{i=r+1}^{n}  f_i\frac{\bar b_{n} }{ v_{2n,2n} }\gamma_i^2+3\sum_{i=r+1}^{n}  f_i(\frac{\bar b_{n} }{ v_{2n,2n} })^2\gamma_i+6\sum_{i=r+1}^{n}  f_i\frac{ \bar{d}_i }{ v_{i,i} }\frac{\bar b_{n} }{ v_{2n,2n} }\gamma_i.
\end{split}
\end{equation}
We bound the nine terms in the above right hand in an inverse order.
\begin{eqnarray}
\nonumber
|\sum_{i=r+1}^{n}  f_i\frac{ \bar{d}_i }{ v_{i,i} }\frac{\bar b_{n} }{ v_{2n,2n} }\gamma_i| & \le & (n-r) \cdot \max_i \frac{1}{v_{i,i}} \cdot \| \bs{\bar{d}} \|_\infty \cdot\max_{i=r+1,\ldots,n} |f_i|\cdot \frac{\bar b_{n} }{ v_{2n,2n} }\cdot\max_i |\gamma_i| \\
\nonumber
& \lesssim & (n-r)\cdot (\frac{b_n}{n})^2 \cdot (n\log n) \cdot \frac{ n }{c_n } \cdot (\frac{ b_n^9 \log n }{ nc_n^7}) \\
\label{eq-a-ten}
& = &  \frac{ (n-r)b_n^{11} (\log n)^2 }{ n c_n^8 }.
\end{eqnarray}
\begin{eqnarray}
\nonumber
|\sum_{i=r+1}^{n}  f_i(\frac{\bar b_{n} }{ v_{2n,2n} })^2\gamma_i| & \le & (n-r) \cdot\max_{i=r+1,\ldots,n} |f_i|\cdot (\frac{\bar b_{n} }{ v_{2n,2n} })^2\cdot\max_i |\gamma_i| \\
\nonumber
& \lesssim & (n-r)\cdot (\frac{b_n}{n})^2 \cdot (n\log n) \cdot \frac{ n }{c_n } \cdot (\frac{ b_n^9 \log n }{ nc_n^7}) \\
\label{eq-a-nine}
& = &  \frac{ (n-r)b_n^{11} (\log n)^2 }{ n c_n^8 }.
\end{eqnarray}
\begin{eqnarray}
\nonumber
|\sum_{i=r+1}^{n}  f_i\frac{\bar b_{n} }{ v_{2n,2n} }\gamma_i^2| & \le &  (n-r) \cdot\max_{i=r+1,\ldots,n} |f_i|\cdot \frac{\bar b_{n} }{ v_{2n,2n} }\cdot\max_i |\gamma_i^2| \\
\nonumber
& \lesssim & (n-r)\cdot \frac{b_n}{n} \cdot \sqrt{n\log n} \cdot \frac{ n }{c_n } \cdot (\frac{ b_n^9 \log n }{ nc_n^7})^2 \\
\label{eq-a-eight}
&= &  \frac{ (n-r)b_n^{19} (\log n)^{5/2} }{ n^{3/2} c_n^{15} }.
\end{eqnarray}
\begin{eqnarray}
\nonumber
|\sum_{i=r+1}^{n}  f_i\frac{ \bar{d}_i }{ v_{i,i} }\gamma_i^2| & \le & (n-r) \cdot \max_i \frac{1}{v_{i,i}} \cdot \| \bs{\bar{d}} \|_\infty \cdot\max_{i=r+1,\ldots,n} |f_i|\cdot\max_i |\gamma_i^2| \\
\nonumber
& \lesssim & (n-r)\cdot \frac{b_n}{n} \cdot \sqrt{n\log n} \cdot \frac{ n }{c_n } \cdot (\frac{ b_n^9 \log n }{ nc_n^7})^2 \\
\label{eq-a-seven}
& = &  \frac{ (n-r)b_n^{19} (\log n)^{5/2} }{ n^{3/2} c_n^{15} }.
\end{eqnarray}
Similarly,
\begin{eqnarray}
|\sum_{i=r+1}^{n}  f_i\frac{ \bar{d}_i }{ v_{i,i} }(\frac{\bar b_{n} }{ v_{2n,2n} })^2|
\label{eq-a-six}
& \lesssim &  \frac{(n-r)^{1/2}b_n^{2} (\log n)^{3/2} }{ n^{1/2} c_n }.\\
|\sum_{i=r+1}^{n}  f_i(\frac{ \bar{d}_i }{ v_{i,i} })^2\gamma_i|
\label{eq-a-five}
& \lesssim &  \frac{ (n-r)^{1/2}b_n^{10} \log n }{ n c_n^8 }.\\
|\sum_{i=r+1}^{n}  f_i(\frac{ \bar{d}_i }{ v_{i,i} })^2\frac{\bar b_{n} }{ v_{2n,2n} }|
\label{eq-a-four}
& \lesssim &  \frac{ (n-r)^{1/2} b_n \log n }{ n^{1/2} c_n }.\\
|\sum_{i=r+1}^{n}  f_i\gamma_i^{3} |
\label{eq-a-three}
& \lesssim&  \frac{ (n-r) b_n^{27}(\log n)^3 }{ n^2 c_n^{22} }.
\end{eqnarray}
Now, we bound the first term.
By Lemma 17 in \cite{yan2025likelihood}, we have that
\begin{equation}\label{ineq-d-firstaa}
\mathrm{Var}( \sum_{i=r+1}^n \frac{f_i\bar{d}_i^3}{v_{i,i}^3} ) \le \max_i |f_i|^2 \mathrm{Var}( \sum_{i=r+1}^n \frac{\bar{d}_i^3}{v_{i,i}^3} )
\lesssim \frac{ n^2}{c_n^2 } \cdot \frac{ b_n^6 }{ n^6 } \cdot \frac{ n^3(n-r) }{ c_n^3}= \frac{ (n-r)b_n^6 }{ n c_n^5 }.
\end{equation}
Because
\[
\E \bar{d}_{i}^{\,3} = \sum_{\alpha, \gamma, \zeta} \E \bar{a}_{i,\alpha}\bar{a}_{i,\gamma}\bar{a}_{i,\zeta} = \sum_{\alpha\neq i} \E \bar{a}_{i,\alpha}^{\,3},
\]
we have
\[
|\E \bar{a}_{i,j}^{\,3}|  = | p_{ij}q_{ij}( p_{ij}^2 - q_{ij}^2)|\le\frac{1}{c_n},
\]
such that
\begin{equation}\label{eq-ed33}
| \sum_{i=r+1}^n  \left (  \frac{\E \bar{d}_i^3 }{ v_{i,i}^3 } \right)f_i | \lesssim (n-r) \cdot \frac{n}{c_n} \cdot \frac{b_n^3}{n^3} \cdot \frac{n-r}{c_n} = \frac{(n-r)^2b_n^3}{n^2c_n^2}.
\end{equation}
In view of \eqref{ineq-d-firstaa} and \eqref{eq-ed33}, we have
\begin{equation}\label{ineq-d-first}
|\sum_{i=r+1}^n  \left (  \frac{ \bar{d}_i^3 }{ v_{i,i}^3 } \right)f_i | = O_p\left( \frac{ b_n^3 }{c_n^{5/2}} \left( \frac{ n-r }{ n} \right)^{1/2} \right).
\end{equation}
By combining the upper bounds of the above nine terms in \eqref{eq-a-ten}, \eqref{eq-a-nine}, \eqref{eq-a-eight}, \eqref{eq-a-seven}, \eqref{eq-a-six}, \eqref{eq-a-five}, \eqref{eq-a-four}, \eqref{eq-a-three} and \eqref{ineq-d-first}, it yields that

\begin{align*}
  \sum_{i=r+1}^{n}  f_i(\widehat{\alpha}_i-\alpha_i)^3= &O_p(  \frac{ (n-r)b_n^{11} (\log n)^2 }{ n c_n^8 }
+ \frac{ (n-r)b_n^{19} (\log n)^{5/2} }{ n^{3/2} c_n^{15} } +
 \frac{(n-r)^{1/2}b_n^{2} (\log n)^{3/2} }{ n^{1/2} c_n } \\
  + & \frac{ (n-r)^{1/2}b_n^{10} \log n }{ n c_n^8 }+ \frac{ (n-r)^{1/2} b_n \log n }{ n^{1/2} c_n }+ \frac{ (n-r) b_n^{27}(\log n)^3 }{ n^2 c_n^{22} }\\
  +&\frac{ b_n^3 }{c_n^{5/2}} \left( \frac{ n-r }{ n} \right)^{1/2})+\sum_{i=r+1}^{n}  f_i\frac{\bar b_{n}^3 }{ v_{2n,2n}^3 }.
\end{align*}
This leads to \eqref{eq-S1-bound}. The proof for \eqref{eq-S2-bound} is similar and we omit it.

By substituting the expression of $\widehat{\alpha}_i - \alpha_i$ and $\widehat{\beta}_j - \beta_j$ in \eqref{eq-a-zcc} into $\sum_{i=r+1 }^n \sum_{j=1,j\neq i}^{n-1} (\widehat{\alpha}_i-\alpha_i)^2(\widehat{\beta}_j-\beta_j)\mu^{\prime\prime}( \pi_{ij} )$, we get
\begin{equation}\label{eq-a-za}
\begin{split}
&\sum_{i=r+1 }^n \sum_{j=1,j\neq i}^{n-1} (\widehat{\alpha}_i-\alpha_i)^2(\widehat{\beta}_j-\beta_j)\mu^{\prime\prime}( \pi_{ij} )\\= &  \sum_{i=r+1 }^n \sum_{j=1,j\neq i}^{n-1}(\frac{ \bar{d}_i }{ v_{i,i} }+ \frac{\bar b_{n} }{ v_{2n,2n} }+ \gamma_i)^2(\frac{ \bar{b}_j }{ v_{n+j,n+j} }- \frac{\bar b_{n} }{ v_{2n,2n} }+ \gamma_{n+j}) f_{ij} \\
 =& \sum_{i=r+1 }^n \sum_{j=1,j\neq i}^{n-1} f_{ij}\frac{ \bar{d}_i^2 }{ v_{i,i}^2 }\frac{ \bar{b}_j }{ v_{n+j,n+j}}-\sum_{i=r+1 }^n \sum_{j=1,j\neq i}^{n-1} f_{ij}\frac{ \bar{d}_i^2 }{ v_{i,i}^2 }\frac{\bar b_{n} }{ v_{2n,2n} }+\sum_{i=r+1 }^n \sum_{j=1,j\neq i}^{n-1} f_{ij}\frac{ \bar{d}_i^2 }{ v_{i,i}^2 }\gamma_{n+j}\\
+&\sum_{i=r+1 }^n \sum_{j=1,j\neq i}^{n-1} f_{ij}\frac{ \bar{b}_n^2 }{ v_{2n,2n}^2 }\frac{ \bar{b}_j }{ v_{n+j,n+j}}-\sum_{i=r+1 }^n \sum_{j=1,j\neq i}^{n-1} f_{ij}(\frac{\bar b_{n} }{ v_{2n,2n} })^3+\sum_{i=r+1 }^n \sum_{j=1,j\neq i}^{n-1} f_{ij}\frac{ \bar{b}_n^2 }{ v_{2n,2n}^2 }\gamma_{n+j}\\
+&\sum_{i=r+1 }^n \sum_{j=1,j\neq i}^{n-1} f_{ij}\gamma_{i}^2\frac{ \bar{b}_j }{ v_{n+j,n+j}}-\sum_{i=r+1 }^n \sum_{j=1,j\neq i}^{n-1} f_{ij}\frac{\bar b_{n} }{ v_{2n,2n} }\gamma_{i}^2+\sum_{i=r+1 }^n \sum_{j=1,j\neq i}^{n-1} f_{ij}\gamma_{i}^2\gamma_{n+j}\\
+&2\sum_{i=r+1 }^n \sum_{j=1,j\neq i}^{n-1} f_{ij}\frac{ \bar{d}_i }{ v_{i,i} }\frac{\bar b_{n} }{ v_{2n,2n} }\frac{ \bar{b}_j }{ v_{n+j,n+j}}-2\sum_{i=r+1 }^n \sum_{j=1,j\neq i}^{n-1} f_{ij}\frac{ \bar{d}_i }{ v_{i,i} }\frac{\bar b_{n}^2 }{ v_{2n,2n}^2 }+2\sum_{i=r+1 }^n \sum_{j=1,j\neq i}^{n-1} f_{ij}\frac{ \bar{d}_i }{ v_{i,i} }\frac{\bar b_{n} }{ v_{2n,2n} }\gamma_{n+j}\\
+&2\sum_{i=r+1 }^n \sum_{j=1,j\neq i}^{n-1} f_{ij}\frac{ \bar{d}_i }{ v_{i,i} }\gamma_{i}\frac{ \bar{b}_j }{ v_{n+j,n+j}}-2\sum_{i=r+1 }^n \sum_{j=1,j\neq i}^{n-1} f_{ij}\frac{ \bar{d}_i }{ v_{i,i} }\gamma_{i}\frac{\bar b_{n} }{ v_{2n,2n} }+2\sum_{i=r+1 }^n \sum_{j=1,j\neq i}^{n-1} f_{ij}\frac{ \bar{d}_i }{ v_{i,i} }\gamma_{i}\gamma_{n+j}\\
+&2\sum_{i=r+1 }^n \sum_{j=1,j\neq i}^{n-1} f_{ij}\frac{\bar b_{n} }{ v_{2n,2n} }\gamma_{i}\frac{ \bar{b}_j }{ v_{n+j,n+j}}-2\sum_{i=r+1 }^n \sum_{j=1,j\neq i}^{n-1} f_{ij}\frac{\bar b_{n}^2 }{ v_{2n,2n}^2 }\gamma_{i}+2\sum_{i=r+1 }^n \sum_{j=1,j\neq i}^{n-1} f_{ij}\frac{\bar b_{n} }{ v_{2n,2n} }\gamma_{i}\gamma_{n+j}.
\end{split}
\end{equation}
Now, we bound the first term.
By Lemma 18 in \cite{yan2025likelihood}, we have that
\begin{equation}\label{ineq-d-firsta}
\mathrm{Var}( \sum_{i=r+1 }^n \sum_{j=1,j\neq i}^{n-1} f_{ij}\frac{ \bar{d}_i^2 }{ v_{i,i}^2 }\frac{ \bar{b}_j }{ v_{n+j,n+j}})
\lesssim  \frac{ b_n^6 }{ n^6 c_n } \cdot \frac{ n^5(n-r) }{ c_n^3} = \frac{ (n-r)b_n^6 }{ n c_n^4 },
\end{equation}
and
\begin{equation}\label{eq-ed3}
| \sum_{i=r+1 }^n \sum_{j=1,j\neq i}^{n-1} f_{ij}\frac{ \E\bar{d}_i^2\bar{b}_j }{ v_{i,i}^2 v_{n+j,n+j}} | \lesssim n \cdot \frac{n}{c_n} \cdot \frac{b_n^3}{n^3} \cdot \frac{n-r}{c_n} = \frac{(n-r)b_n^3}{nc_n^2}.
\end{equation}
By Chebyshev's inequality, we have
\[
|\sum_{i=r+1 }^n \sum_{j=1,j\neq i}^{n-1} f_{ij}\frac{ \bar{d}_i^2 }{ v_{i,i}^2 }\frac{ \bar{b}_j }{ v_{n+j,n+j}}|=O_{p}\left(\frac{ (n-r)^{1/2}b_n^3 }{ n^{1/2} c_n^2}\right).
\]
The remainder terms are calculated similarly to formula \eqref{eq-S1-bound}. By directing calculation, we have
\begin{equation}\label{eq-a-za}
\begin{split}
&\sum_{i=r+1 }^n \sum_{j=1,j\neq i}^{n-1} (\widehat{\alpha}_i-\alpha_i)^2(\widehat{\beta}_j-\beta_j)\mu^{\prime\prime}( \pi_{ij} )\\
\leq & \frac{ (n-r)^{1/2}b_n^3 }{ n^{1/2} c_n^2}+\frac{ (n-r)^{1/2} b_n \log n }{ n^{1/2} c_n } +\frac{ (n-r)^{1/2}b_n^{10} \log n }{ n c_n^8 } +\frac{ (n-r) b_n^{27}(\log n)^3 }{ n^2 c_n^{22} }\\
 +&\frac{ (n-r)b_n^{11} (\log n)^2 }{ n c_n^8 }
+ \frac{ (n-r)b_n^{19} (\log n)^{5/2} }{ n^{3/2} c_n^{15} }
-\sum_{i=r+1 }^n \sum_{j=1,j\neq i}^{n-1} f_{ij}(\frac{\bar b_{n} }{ v_{2n,2n} })^3.
\end{split}
\end{equation}
This leads to \eqref{eq-S3-bound}. The proof for \eqref{eq-S4-bound} is similar and we omit it.
\end{proof}

\section{Bernstein's inequality}
\label{section-bernstein}

This section collects a user-friendly version of the Bernstein inequality on bounded random variables.
The following Bernstein inequality can be easily found in textbooks such as \cite{boucheron2013concentration}. The proof is omitted.
%The next lemma is about a user-friendly version of the Bernstein inequality.

\begin{lemma}[Bernstein's inequality]\label{lemma:bernstein}
Suppose $n$ independent random variables $x_{i}$ ($1\le i \le n$)
each satisfying $\left|x_{i}\right|\leq B$. For any $a\geq2$, one
has
\[
\left|\sum_{i=1}^{n}x_{i}-\E\left[\sum_{i=1}^{n}x_{i}\right]\right|\leq\sqrt{2a\log n\sum_{i=1}^{n}\E\left[x_{i}^{2}\right]}+\frac{2a}{3}B\log n
\]
with probability at least $1-2n^{-a}$.
\end{lemma}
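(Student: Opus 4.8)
The plan is to derive the stated bound from the classical Cram\'er--Chernoff form of Bernstein's inequality and then specialize the deviation level to the value $t_\star=\sqrt{2a\log n\sum_{i}\E[x_i^2]}+\tfrac{2a}{3}B\log n$ appearing in the statement. First I would center the summands, writing $Y_i=x_i-\E x_i$ and $S=\sum_{i=1}^n Y_i$, and apply the exponential Markov inequality: for $0<\lambda<3/B$, independence gives $\P(S\ge t)\le e^{-\lambda t}\prod_{i=1}^n\E[e^{\lambda Y_i}]$. Using the boundedness $|x_i|\le B$ together with the elementary moment-generating estimate $\E[e^{\lambda Y_i}]\le\exp\big(\tfrac{\lambda^2\E[Y_i^2]/2}{1-\lambda B/3}\big)$ and summing the exponents, I would set $v:=\sum_{i=1}^n\E[x_i^2]$, which dominates $\sum_i\E[Y_i^2]=\mathrm{Var}(S)$, and optimize over $\lambda$ to obtain the two-sided tail $\P(|S-\E S|\ge t)\le 2\exp\!\big(-\tfrac{t^2/2}{v+Bt/3}\big)$. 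This part is entirely standard and can be quoted from \cite{boucheron2013concentration}.

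The only genuine work is then to check that at $t=t_\star$ the exponent is at least $a\log n$, so that the right-hand side is bounded by $2n^{-a}$. Writing $p=\sqrt{2a\log n\,v}$ and $q=\tfrac{2a}{3}B\log n$, so that $t_\star=p+q$, $p^2/2=av\log n$, and $\tfrac{aB\log n}{3}=q/2$, a direct expansion gives
\begin{equation*}
\begin{aligned}
\frac{t_\star^2}{2}-a\log n\Big(v+\frac{Bt_\star}{3}\Big)
&=\Big(av\log n+pq+\tfrac{q^2}{2}\Big)-\Big(av\log n+\tfrac{pq}{2}+\tfrac{q^2}{2}\Big)\\
&=\frac{pq}{2}\ \ge\ 0 .
\end{aligned}
\end{equation*}
Hence $t_\star^2/2\ge a\log n\,(v+Bt_\star/3)$, i.e. $\tfrac{t_\star^2/2}{v+Bt_\star/3}\ge a\log n$, and substituting into the classical tail yields the claimed probability $1-2n^{-a}$ immediately.

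I expect the main obstacle to be the bookkeeping of constants rather than anything conceptual. The classical inequality is usually stated for centered summands, whose range may be as large as $2B$, whereas the lemma is phrased through the uncentered bound $|x_i|\le B$ and the uncentered second moment $v=\sum_i\E[x_i^2]$. I would therefore invoke the one-sided form of the moment-generating estimate, which requires controlling only the upward deviation of $Y_i$, and observe that replacing $\mathrm{Var}(S)$ by the larger quantity $v$ while retaining the denominator factor $B/3$ only weakens the tail, so the stated form is valid as written. The hypothesis $a\ge2$ plays no essential role in the chain of inequalities above; it is a mild regularity condition inherited from the reference that keeps the resulting tail $2n^{-a}$ in the small-probability regime relevant to the union bounds invoked elsewhere in the paper. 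Finally, the lower tail follows by applying the same argument to $-x_i$, which accounts for the factor $2$ in the probability.
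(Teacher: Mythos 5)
The paper offers no proof of this lemma at all: it is stated with the remark that it can be found in textbooks such as Boucheron, Lugosi and Massart (2013), and the proof is omitted. So your write-up supplies what the paper only cites. Your overall route is the standard one, and its only substantive content—the verification that at $t_\star=p+q$, with $p=\sqrt{2a\log n\,v}$ and $q=\tfrac{2a}{3}B\log n$, the exponent satisfies $\tfrac{t_\star^2/2}{v+Bt_\star/3}\ge a\log n$—is correct: your identity $t_\star^2/2-a\log n\,(v+Bt_\star/3)=pq/2\ge0$ checks out, the choice $\lambda=t/(v+Bt/3)$ is admissible since $\lambda B<3$, and the two-sided form follows by applying the same argument to $-x_i$. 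Your observation that $a\ge2$ plays no role in the algebra is also accurate.

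One intermediate claim is misstated, though harmlessly for your conclusion. The bound $\E[e^{\lambda Y_i}]\le\exp\bigl(\tfrac{\lambda^2\E[Y_i^2]/2}{1-\lambda B/3}\bigr)$, which pairs the \emph{centered} second moment in the numerator with the \emph{uncentered} range $B$ in the denominator, is false in general: take $x\in\{-B,B\}$ with $\P(x=B)=p$ small and $\lambda=1/B$; then $\E[Y^2]=4p(1-p)B^2$, the right side is $\approx 1+3p$, while the left side is $(1-p)e^{-2p}+pe^{2(1-p)}\approx 1+(e^2-3)p>1+3p$. Your fallback to the one-sided MGF estimate does not rescue this, because the upward deviation of $Y_i=x_i-\E x_i$ is bounded only by $B-\E x_i\le 2B$, not by $B$. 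The correct path to the uncentered form is the Bernstein moment condition $|\E[x_i^k]|\le B^{k-2}\E[x_i^2]$ for $k\ge2$, which gives $\E[e^{\lambda x_i}]\le\exp\bigl(\lambda\E x_i+\tfrac{\lambda^2\E[x_i^2]/2}{1-\lambda B/3}\bigr)$ for $0<\lambda<3/B$; the mean term cancels exactly in $\E[e^{\lambda Y_i}]=e^{-\lambda\E x_i}\,\E[e^{\lambda x_i}]$, and summing yields $\P(|S-\E S|\ge t)\le 2\exp\bigl(-\tfrac{t^2/2}{v+Bt/3}\bigr)$ with $v=\sum_i\E[x_i^2]$, which is precisely the tail your $t_\star$ computation consumes. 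Since everything downstream of the MGF step in your argument uses only this inequality with the uncentered $v$, substituting the corrected estimate repairs the proof without any other change, and the lemma follows as you claim.
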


\newpage
\section{Algorithms}
\label{section:algorithms}

\begin{algorithm}
\caption{Restricted MLE under $H_0: \alpha_i = \alpha_i^0$ for $i=1,...,r$}
\label{algorithm:b}
\begin{algorithmic}[1]
\Require Adjacency matrix $A=(a_{ij})$, fixed $\alpha_1^0,...,\alpha_r^0$, tolerance $\epsilon$
\State Initialize: $\boldsymbol{\alpha}^{(0)} \gets (\alpha_1^0,...,\alpha_r^0,\alpha_{r+1},...,\alpha_n)$, $\boldsymbol{\beta}^{(0)}$, $k \gets 0$
\State Compute out-degrees $d_i \gets \sum_{j\neq i}a_{ij}$ and in-degrees $b_j \gets \sum_{i\neq j}a_{ij}$

\Repeat
    \For{$i = 1$ \textbf{to} $n$}
        \State $\alpha_i^{(k+1)} \gets \log d_i - \log\sum\limits_{j\neq i}\frac{e^{\beta_j^{(k)}}}{1+e^{\alpha_i^{(k)}+\beta_j^{(k)}}}$
    \EndFor

    \For{$j = 1$ \textbf{to} $n$}
        \State $\beta_j^{(k+1)} \gets \log b_j - \log\sum\limits_{i\neq j}\frac{e^{\alpha_i^{(k)}}}{1+e^{\alpha_i^{(k)}+\beta_j^{(k)}}}$
    \EndFor

    \State Let $\alpha_{i}^{(k+1)}=(\alpha_{1}^{0}, \ldots, \alpha_{r}^{0}, \alpha_{r+1}^{(k+1)}, \ldots, \alpha_{n}^{(k+1)})$
    \State Compute convergence:
    \State $y_i \gets \frac{1}{d_i}\sum\limits_{j\neq i}\frac{e^{\alpha_i^{(k+1)}+\beta_j^{(k+1)}}}{1+e^{\alpha_i^{(k+1)}+\beta_j^{(k+1)}}}$ for $i=r+1,...,n$
    \State $z_j \gets \frac{1}{b_j}\sum\limits_{i\neq j}\frac{e^{\alpha_i^{(k+1)}+\beta_j^{(k+1)}}}{1+e^{\alpha_i^{(k+1)}+\beta_j^{(k+1)}}}$ for $j=1,...,n$
    \State $k \gets k + 1$
\Until{$\max(\max_i|y_i-1|, \max_j|z_j-1|) < \epsilon$}

\State \Return $\hat{\boldsymbol{\alpha}} \gets \boldsymbol{\alpha}^{(k)}$, $\hat{\boldsymbol{\beta}} \gets \boldsymbol{\beta}^{(k)}$
\end{algorithmic}
\end{algorithm}
\begin{algorithm}
\caption{Unrestricted MLE}
\label{alg:unrestricted_mle}
\begin{algorithmic}[1]
\Require Adjacency matrix $A = (a_{ij})_{n \times n}$, tolerance $\epsilon$
\State Initialize: $\boldsymbol{\alpha}^{(0)}$, $\boldsymbol{\beta}^{(0)}$, $k \leftarrow 0$
\State Compute out-degrees $d_i \leftarrow \sum_{j\neq i}a_{ij}$ and in-degrees $b_j \leftarrow \sum_{i\neq j}a_{ij}$
\Repeat
    \For{$i = 1$ to $n$}
        \State $\alpha^{(k+1)}_i \leftarrow \log d_i - \log \sum_{j\neq i} \frac{e^{\beta^{(k)}_j}}{1+e^{\alpha^{(k)}_i+\beta^{(k)}_j}}$
    \EndFor
    \For{$j = 1$ to $n$}
        \State $\beta^{(k+1)}_j \leftarrow \log b_j - \log \sum_{i\neq j} \frac{e^{\alpha^{(k)}_i}}{1+e^{\alpha^{(k)}_i+\beta^{(k)}_j}}$
    \EndFor
    \State Normalize:
    \State $\boldsymbol{\alpha}^{(k+1)} \leftarrow \boldsymbol{\alpha}^{(k+1)} + \beta^{(k+1)}_n$
    \State $\boldsymbol{\beta}^{(k+1)} \leftarrow \boldsymbol{\beta}^{(k+1)} - \beta^{(k+1)}_n$
    \State Compute convergence:
    \State $y_i \leftarrow \frac{1}{d_i}\sum_{j\neq i} \frac{e^{\alpha^{(k+1)}_i+\beta^{(k+1)}_j}}{1+e^{\alpha^{(k+1)}_i+\beta^{(k+1)}_j}}$ for $i = 1,\ldots,n$
    \State $z_j \leftarrow \frac{1}{b_j}\sum_{i\neq j} \frac{e^{\alpha^{(k+1)}_i+\beta^{(k+1)}_j}}{1+e^{\alpha^{(k+1)}_i+\beta^{(k+1)}_j}}$ for $j = 1,\ldots,n$
    \State $k \leftarrow k + 1$
    \Until{$\max(\max_i|y_i-1|, \max_j|z_j-1|) < \epsilon$}
\State \Return $\hat{\boldsymbol{\alpha}} \leftarrow \boldsymbol{\alpha}^{(k)}$, $\hat{\boldsymbol{\beta}} \leftarrow \boldsymbol{\beta}^{(k)}$
\end{algorithmic}
\end{algorithm}

\newpage
\section{Figures}
\label{section:figure}

\begin{figure}
\centering
\subfigure[QQ-plot for normalized log-likelihood ratio statistic]{\includegraphics[width=0.9\textwidth]{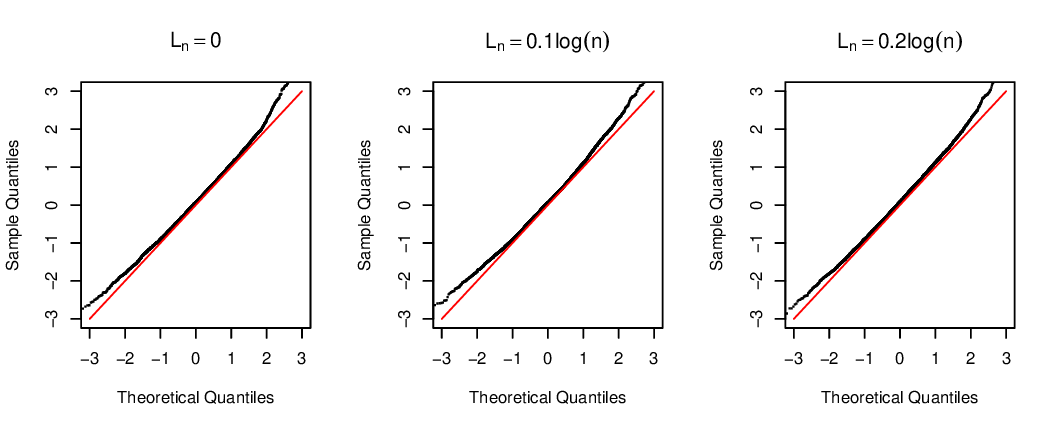}}
\subfigure[QQ-plot for log-likelihood ratio statistic]{\includegraphics[width=0.9\textwidth]{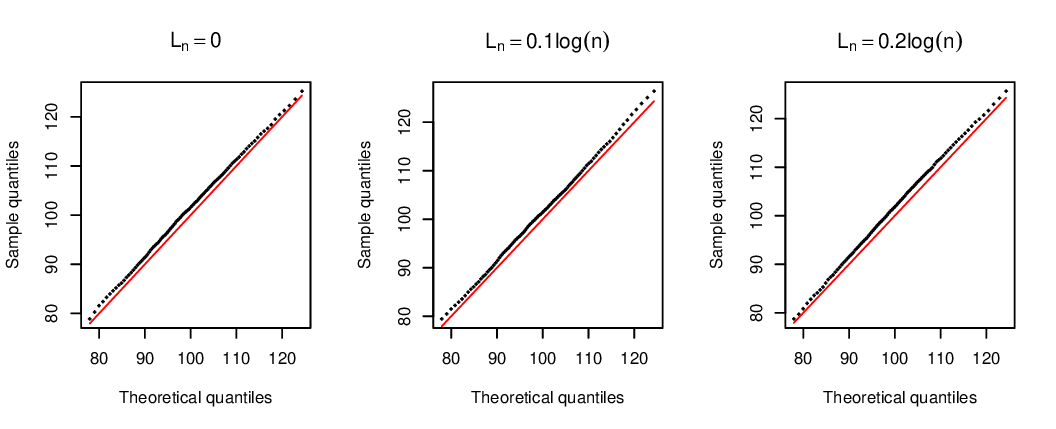}}
\caption{QQ plots for the $p_{0}$-model under $H_{01}$ (n=100). The horizontal and vertical axes in each QQ-plot are the respective theoretical and empirical quantiles.
The straight lines correspond to $y=x$.}
\label{fig:p11}
\end{figure}

\begin{figure}
\centering
\subfigure[QQ-plot for normalized log-likelihood ratio statistic]{\includegraphics[width=0.9\textwidth]{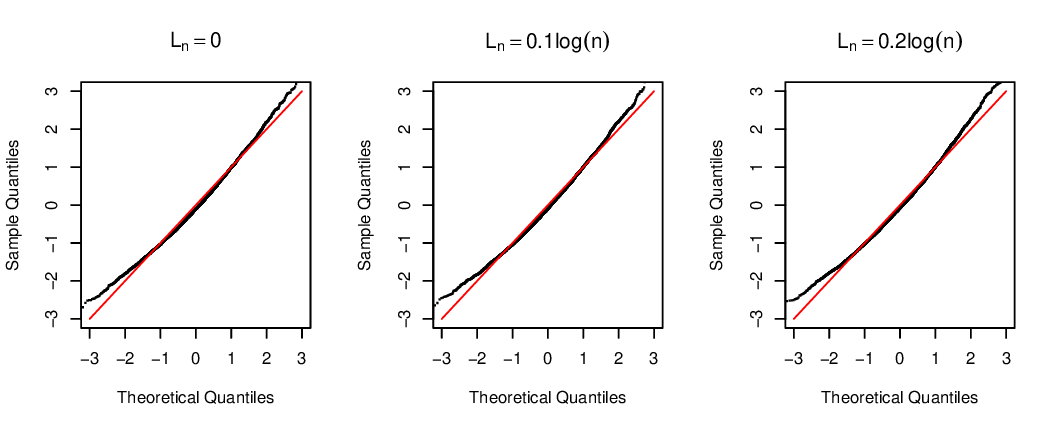}}
\subfigure[QQ-plot for log-likelihood ratio statistic]{\includegraphics[width=0.9\textwidth]{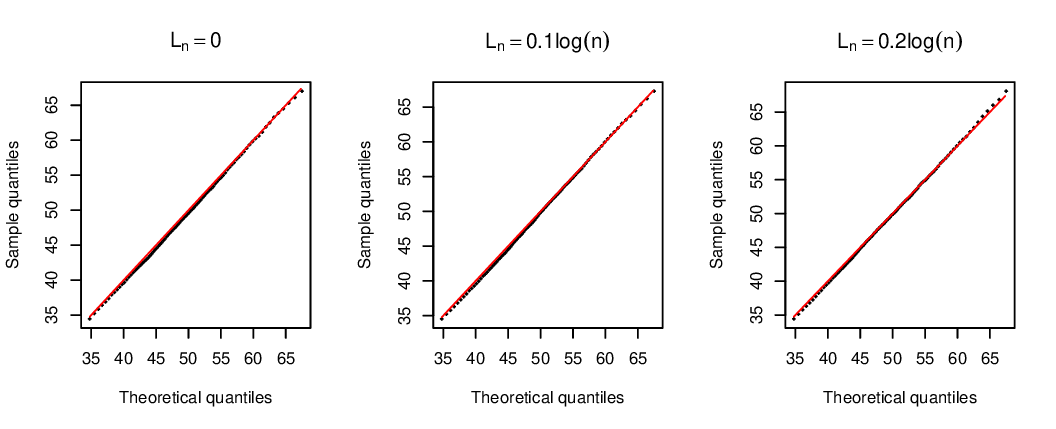}}
\caption{QQ plots for the $p_{0}$-model under $H_{02}$ (n=100). The horizontal and vertical axes in each QQ-plot are the respective theoretical and empirical quantiles.
The straight lines correspond to $y=x$.}
\label{fig:p21}
\end{figure}

\begin{figure}
\centering
{\includegraphics[width=0.9\textwidth]{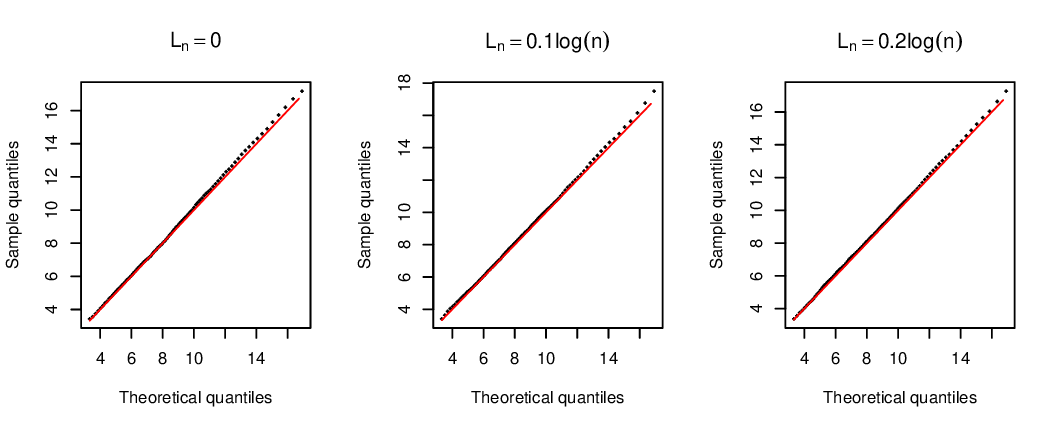}}
\caption{QQ plots for the $p_{0}$-model under $H_{03}$ (n=100). The horizontal and vertical axes in each QQ-plot are the respective theoretical and empirical quantiles.
The straight lines correspond to $y=x$.}
\label{fig:p31}
\end{figure}

\begin{figure}
\centering
{\includegraphics[width=0.9\textwidth]{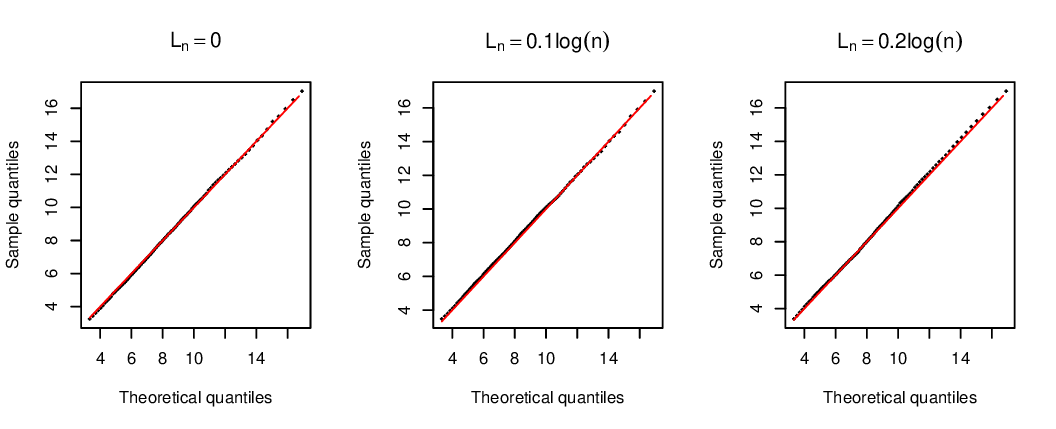}}
\caption{QQ plots for the $p_{0}$-model under $H_{03}$ (n=200). The horizontal and vertical axes in each QQ-plot are the respective theoretical and empirical quantiles.
The straight lines correspond to $y=x$.}
\label{fig:p32}
\end{figure}

\begin{figure}
\centering
\subfigure[QQ-plot for normalized log-likelihood ratio statistic]{\includegraphics[width=0.9\textwidth]{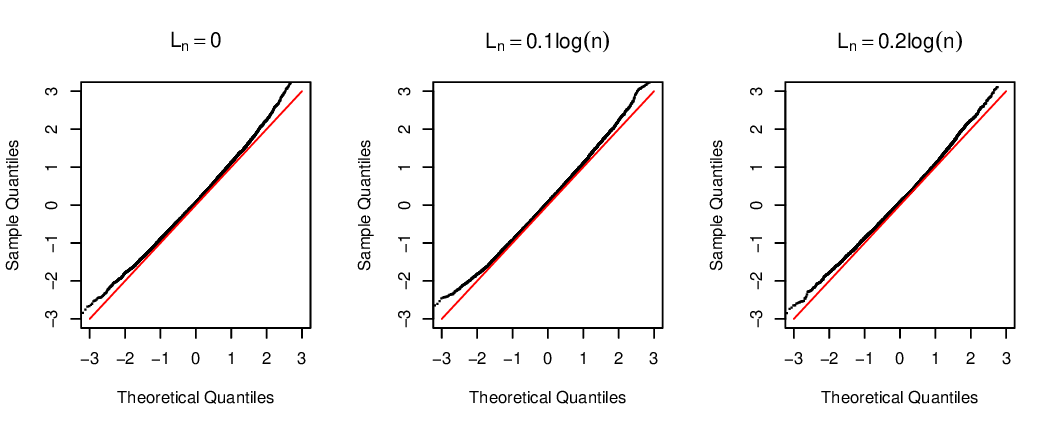}}
\subfigure[QQ-plot for log-likelihood ratio statistic]{\includegraphics[width=0.9\textwidth]{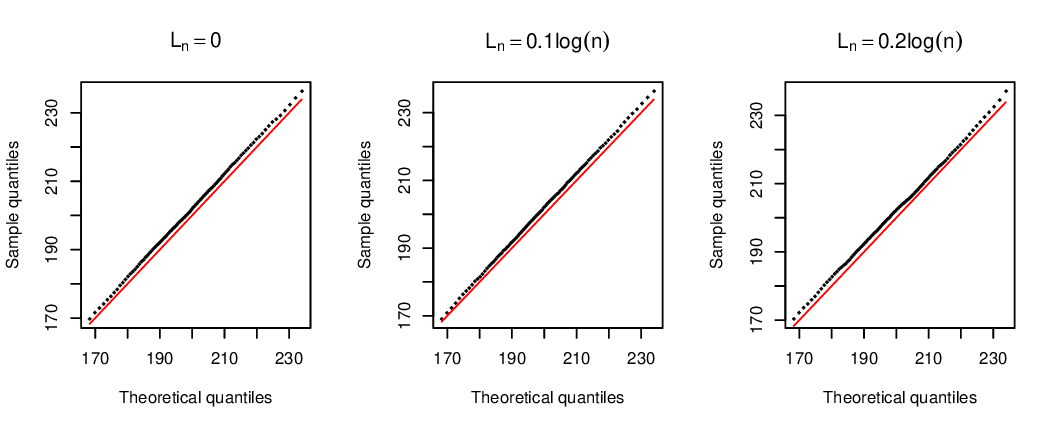}}
\caption{QQ plots for the $p_{0}$-model under $H_{01}$ (n=200). The horizontal and vertical axes in each QQ-plot are the respective theoretical and empirical quantiles.
The straight lines correspond to $y=x$.}
\label{fig:p12}
\end{figure}

\begin{figure}
\centering
\subfigure[QQ-plot for normalized log-likelihood ratio statistic]{\includegraphics[width=0.9\textwidth]{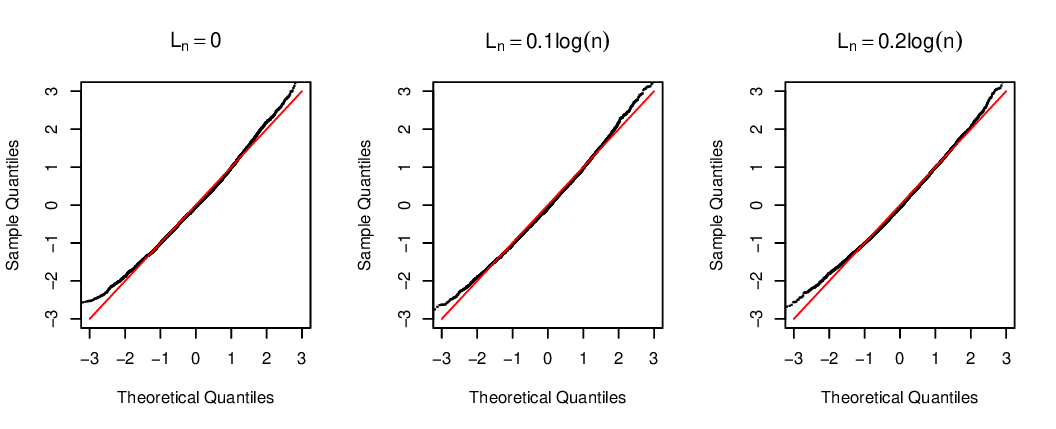}}
\subfigure[QQ-plot for log-likelihood ratio statistic]{\includegraphics[width=0.9\textwidth]{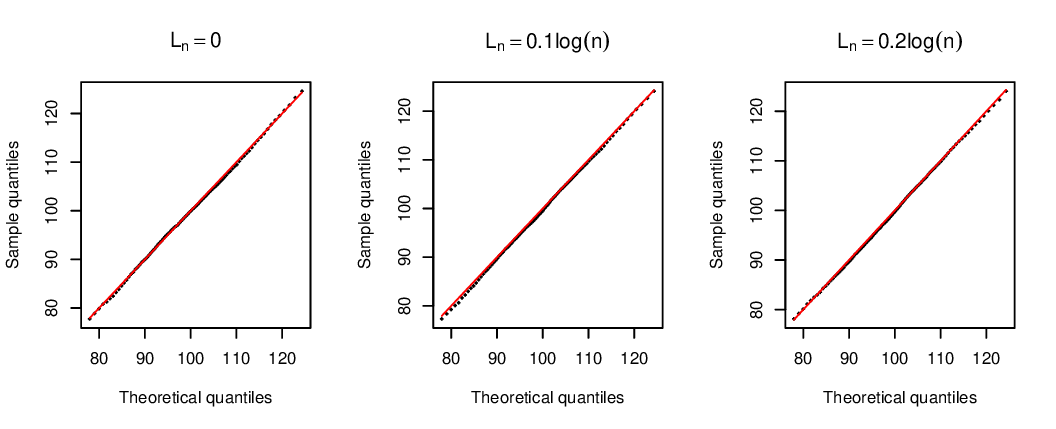}}
\caption{QQ plots for the $p_{0}$-model under $H_{02}$ (n=200). The horizontal and vertical axes in each QQ-plot are the respective theoretical and empirical quantiles.
The straight lines correspond to $y=x$.}
\label{fig:p22}
\end{figure}

\begin{figure}
\centering
{\includegraphics[width=0.9\textwidth]{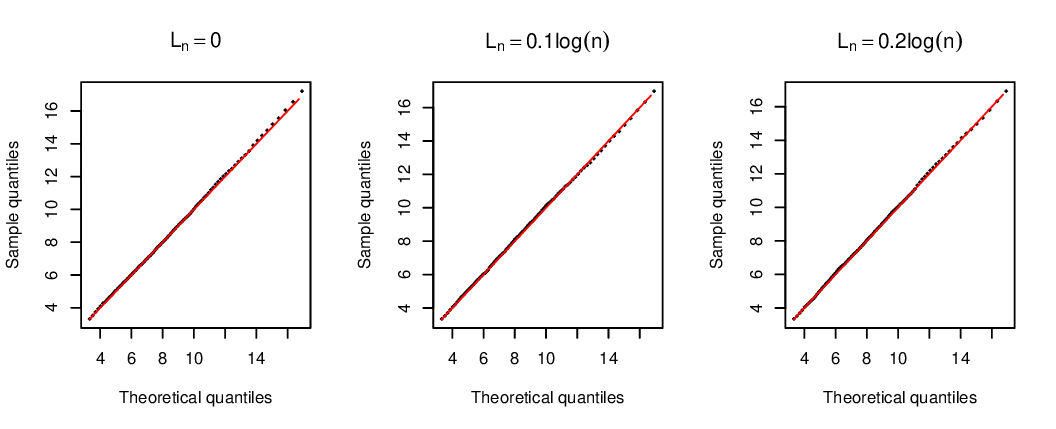}}
\caption{QQ plots under $H_{03}$ (n=500). The horizontal and vertical axes in each QQ-plot are the respective theoretical and empirical quantiles.
The straight lines correspond to $y=x$.}
\label{fig:p3}
\end{figure}

\end{document}